\newtheorem{theorem}{Theorem}[section]
\newtheorem{conjecture}[theorem]{Conjecture}
\newtheorem{lemma}[theorem]{Lemma}
\newtheorem{proposition}[theorem]{Proposition}
\newtheorem{corollary}[theorem]{Corollary}
\newtheorem{fact}[theorem]{Fact}
\theoremstyle{definition}
\numberwithin{equation}{section}
\def\d{\,\mathrm{d}}
\def\RR{\mathbb{R}}
\def\TT{\mathbb{T}}
\def\ZZ{\mathbb{Z}}
\def\BM{\mathrm{BM}}
\def\p{\!\cdot\!}
\def\id{\mathrm{id}}
\def\Vol{\mathrm{Vol}}
\def\exp{\mathrm{exp}}
\def\pr{\mathrm{p}}
\title{A nonabelian Brunn--Minkowski inequality}
\author{Yifan Jing}
\address{Mathematical Institute, University of Oxford, Oxford, UK}
\email{yifan.jing@maths.ox.ac.uk}
\author{Chieu-Minh Tran}
\address{Department of Mathematics, National University of Singapore, Singapore}
\email{trancm@nus.edu.sg}
\author{Ruixiang Zhang}
\address{Department of Mathematics, University of California - Berkeley, CA, USA}
\email{ruixiang@berkeley.edu}
\thanks{YJ was supported by  Arnold O. Beckman Research Award (Campus Research Board RB21011), by the Department Fellowship and the Trjitzinsky Fellowship from UIUC, and Ben Green’s Simons Investigator Grant, ID:376201}
\thanks{CMT was supported by Anand Pillay's NSF Grant-2054271.}
\thanks{RZ was supported by the NSF grant DMS-1856541, the Ky Fan and Yu-Fen Fan Endowment Fund at the Institute for Advanced Study and the NSF grant DMS-1926686}
\subjclass[2010]{Primary 22D05; Secondary 43A05, 49Q20, 60B15, 05D99}
\date{}
\begin{document}

\begin{abstract}
Henstock and Macbeath asked in 1953 whether the Brunn--Minkowski inequality can be generalized to nonabelian locally compact groups; questions along the same line were also asked by Hrushovski, McCrudden, and Tao. We obtain here such an inequality and prove that it is sharp for helix-free locally compact groups, which includes real linear algebraic groups, Nash groups, semisimple Lie groups with  finite center, solvable Lie groups, etc. The proof follows an induction on dimension strategy;  new ingredients include an understanding of the role played by maximal compact subgroups of Lie groups, a necessary modified form of the inequality which is also applicable to nonunimodular locally compact groups, and a  proportionated averaging trick. 
\end{abstract}

\maketitle

\tableofcontents

\section{Introduction}

\subsection{Background} Let  $\mu$  be the usual  Lebesgue measure  on $\RR^{d}$, let $X$ and $Y$ be nonempty and compact subsets of $\RR^{d}$, and set $X+Y := \{x+y: x \in X, y \in Y\}$. The Brunn--Minkowski inequality says that 
\begin{equation}\label{eq: BM for unimodular}
    \mu(X+Y)^{1/d} \geq  \mu(X)^{1/d} + \mu(Y)^{1/d}. 
\end{equation}
For fixed $\mu(X)$ and $\mu(Y)$, the inequality provides us with the minimum value of $\mu(X+Y)$ which is obtained, for example, when $X$, $Y$, and $X+Y$ are $d$-dimensional hypercubes with side length $\mu(X)^{1/d}$, $\mu(Y)^{1/d}$, and $\mu(X)^{1/d} + \mu(Y)^{1/d}$, respectively.

Under the further assumption that $X$ and $Y$ are convex, the inequality in an equivalent form
was proven by Brunn~\cite{Brunn} in 1887. In the celebrated Geometrie der Zahlen (Geometry of Numbers)~\cite{GeometryOfNumbers} published in 1896, Minkowski introduced the current form of the inequality and established that the equality occurs in \eqref{eq: BM for unimodular} if and only if $X$ and $Y$
are homothetic convex sets.
Lyusternik~\cite{Lyusternik} removed the convexity assumption in 1935. However, his proof that the same condition for equality still holds contained some errors, a situation eventually corrected by Henstock and Macbeath~\cite{HenstockMacbeath} in 1953. The Brunn--Minkowski inequality is widely considered a cornerstone of convex geometry. See~\cite{Gardner} for an excellent survey on its numerous generalizations and applications.

In this paper, we consider the problem of generalizing the Brunn--Minkowski inequality to a locally compact group $G$. Here, up to  positive constant factors, we have a unique left Haar measure $\mu$ generalizing the Lebesgue measure in $\RR^d$; see Appendix B for the precise definitions.

We temporarily further assume that $\mu$ is also invariant under the right translations. Such $G$ is called  {\it unimodular}.  This assumption holds when $G=\RR^d$ and in  many other situations (e.g., when $G$ is compact, discrete, a nilpotent Lie group, a semisimple Lie group, etc). Set $XY=\{xy: x \in X, y \in Y\}$ for nonempty compact $X, Y\subseteq G$. The translation invariance property of  $\mu$ implies that $$\mu(XY) \geq \max\{\mu(X), \mu(Y)\}$$ and should intuitively be even larger, hinting at a meaningful generalization of the Brunn--Minkowski inequality to this setting. This will be shown to  be the case.

 For an arbitrary locally compact group $G$, $\mu$ might no longer be right invariant. Hence, we still have $\mu(XY) \geq \mu(Y)$, but we might have $\mu(XY) < \mu(X)$.  By a result of Macbeath~\cite{Macbeath60} in 1960,  the trivial inequality $\mu(XY) \geq \mu(Y)$ for nonunimodular $G$ is already sharp in the sense that for any $\alpha, \beta, \varepsilon >0$, there are nonempty compact $X,Y \subseteq G$ with  
$$\mu(X)=\alpha, \mu(Y) = \beta, \ \text{and}\ 
\mu(XY)< \mu(Y) +\varepsilon.$$
We will later see in this paper  that there is still a meaningful generalization of the Brunn--Minkowski inequality involving both $\mu$ and a right Haar measure $\nu$. Surprisingly, it turns out that if one only cares about unimodular cases, the nonunimodular cases are still needed for our proof. We will keep the setting and notation of this paragraph throughout the rest of the paper.

The problem of generalizing the Brunn--Minkowski inequality was proposed  in 1953 by  Henstock and Macbeath~\cite{HenstockMacbeath}; different variations of this problem were also later suggested by Hrushovski~\cite{HrushovskiPC}, by McCrudden~\cite{Mccrudden69}, and by Tao~\cite{TaoBlog}. 
In the direction of the intuition described earlier, Kemperman~\cite{Kemperman} showed in 1964 that $\mu(XY) \geq \mu(X)+\mu(Y)$ when $G$ is connected, unimodular and noncompact. Even more important for us is the generalization~\eqref{eq: nonunimodular by Kemperman}, also in the same paper~\cite{Kemperman}, to all connected noncompact locally compact groups:
\begin{equation}\label{eq: nonunimodular by Kemperman}
\frac{\nu(X)}{\nu(XY)}+ \frac{\mu(Y)}{\mu(XY)}\leq1.
\end{equation}

While applicable to all locally compact groups, Kemperman's inequalities are not sharp: even for $\RR^2$, they give a weaker conclusion than the Brunn--Minkowski inequality. The most definite result toward the correct lower bound was obtained by McCrudden~\cite{Mccrudden69} in 1969.  In effect, he showed that when $G$ is a unimodular solvable Lie group of dimension $d$, and $m$ is the dimension of the maximal compact subgroup, we have 
\begin{equation}\label{eq: McCrudden solvable BM ineq}
 \mu(XY)^{1/(d-m)} \geq  \mu(X)^{1/(d-m)} + \mu(Y)^{1/(d-m)}. 
 \end{equation}
The above differs from McCrudden's original statement in that $m$ was defined using an inductive idea in~\cite{Mccrudden69}; the form in~\eqref{eq: McCrudden solvable BM ineq} is more suitable to get the later generalizations (Theorems~\ref{thm: main Lie} and~\ref{thm: main}) and to show that it is indeed sharp (Theorem~\ref{thm: mainsharp}). A number of special cases of this result were rediscovered by Gromov~\cite{Gromov}, by Hrushovski~\cite{HrushovskiLieModel}, by Leonardi and Mansou~\cite{LeoMasnou}, and by Tao~\cite{TaoBlog}. Sharpness for nilpotent groups was essentially proven by Monti~\cite{Monti}; see also Tao~\cite{TaoBlog}.

\subsection{Statement of main results} 

Suppose $G$ is a Lie group with connected component $G_0$. Following Levi decomposition (Fact~\ref{fact: Lie group decomp Levi}), we have an exact sequence of Lie groups
$$ 1 \to Q  \to G_0 \to S \to 1 $$
where $Q$ is solvable and $S$ is semisimple. It is known that the center $Z(S)$ is a finitely generated abelian group of rank $h$; see Lemma~\ref{lem: helixandnoncompact} and Facts~\ref{fact: centerless1}, \ref{fact: centerless2}. We call $h$ the {\bf helix dimension} of $G$.  As an example, $\text{SL}_2(\RR)$ has helix dimension $0$ while its universal cover has helix dimension $1$. If $h=0$, equivalently $S$ has finite center, we say that $G$ is {\bf helix-free}. Real linear algebraic groups and more generally, Nash groups (equivalently, semialgebraic Lie groups or groups definable in the field of real numbers) are helix free; see~\cite[Lemma 4.5]{BJO} and the subsequent discussion in the same paper.
Our first main result is a generalization of Brunn--Minkowski inequality to Lie groups whose exponent will be seen to be sharp for helix-free Lie groups:

\begin{theorem}\label{thm: main Lie}
Suppose $G$ is a Lie group, $\mu$ is a left Haar measure, $\nu$ is a right Haar measure, the dimension of $G$ is $d$, the maximal dimension of a compact subgroup of $G$ is $m$, the helix dimension of $G$ is $h$, and $X,Y$ are compact subsets of $G$ with positive measure.  Then
\begin{equation}\label{eq: BM for nonunimodular}
\frac{\nu(X)^{1/(d-m-h)}}{\nu(XY)^{1/(d-m-h)}}+ \frac{\mu(Y)^{1/(d-m-h)}}{\mu(XY)^{1/(d-m-h)}}\leq1;
\end{equation}
the left-hand-side is interpreted as $\max\{ \nu(X)/\nu(XY) , \mu(Y)/\mu(XY)\}$ if $d-m-h=0$.
In particular,  if $G$ is unimodular, then $   \mu(XY)^{\tfrac{1}{d-m-h}} \geq \mu(X)^{\tfrac{1}{d-m-h}} + \mu(Y)^{\tfrac{1}{d-m-h}}. $
\end{theorem}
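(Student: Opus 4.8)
The proof of Theorem~\ref{thm: main Lie} will proceed by induction on the dimension $d$ of $G$. The base case $d = 0$ is the discrete case; here $m = h = 0$, the left-hand side is $\max\{\nu(X)/\nu(XY), \mu(Y)/\mu(XY)\}$, and since $G$ is unimodular with counting measure (up to scaling) one uses $|XY| \ge \max\{|X|,|Y|\}$, which is immediate. The substantive content is the inductive step, and the key structural idea — foreshadowed in the abstract — is to quotient by a well-chosen one-parameter closed subgroup or, when the group is semisimple with infinite center, to first pass to a helix-reducing cover. Concretely, I would split into cases according to the structure of the connected component $G_0$: (i) $G$ has a nontrivial connected normal subgroup $N$ that is either central of dimension one, or a vector group normalized appropriately, giving an exact sequence $1 \to N \to G \to G/N \to 1$; (ii) $G_0$ is semisimple; (iii) $G$ is (essentially) compact, i.e. $d = m$.

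**The fibration/averaging step.** In case (i), let $\pi\colon G \to G/N$ be the quotient map, so $\dim(G/N) = d-1$. The plan is to apply a Fubini-type disintegration of Haar measure along the fibers of $\pi$ together with the inductive hypothesis on $G/N$, and on each fiber (a coset of $N$, which is essentially $\RR$ or $S^1$) the one-dimensional Brunn--Minkowski inequality or the trivial bound. The delicate point is that the fibers of $XY$ over a point $\pi(x)\pi(y)$ contain, but are generally larger than, products of fibers of $X$ and $Y$; to extract the sharp exponent one needs the \emph{proportionated averaging trick} mentioned in the abstract, i.e. one does not integrate the fiberwise inequality uniformly but weights it by an exponent-matching proportion of the total measures, so that Hölder/Minkowski-type summation reproduces the exponent $1/(d-m-h)$ rather than a lossy $1/(d-1-m-h)$. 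Tracking how $m$ and $h$ behave under the quotient is essential: passing to $G/N$ can decrease $d$ by $1$ while $m$ and $h$ stay fixed (if $N$ is noncompact central or a noncompact vector part) so that $d-m-h$ drops by exactly one and the induction closes; if instead $N \subseteq$ maximal compact then $m$ also drops and $d-m-h$ is unchanged, which is why the maximal compact subgroup must be handled with care.

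**Reduction to unimodular and the main statement.** The ``in particular'' clause follows from the general inequality~\eqref{eq: BM for nonunimodular}: when $G$ is unimodular we may take $\nu = \mu$, so \eqref{eq: BM for nonunimodular} reads $(\mu(X)/\mu(XY))^{1/(d-m-h)} + (\mu(Y)/\mu(XY))^{1/(d-m-h)} \le 1$; multiplying through by $\mu(XY)^{1/(d-m-h)}$ gives $\mu(X)^{1/(d-m-h)} + \mu(Y)^{1/(d-m-h)} \le \mu(XY)^{1/(d-m-h)}$, which is exactly the claimed Brunn--Minkowski form (here $d - m - h > 0$ automatically for a noncompact unimodular $G$, since $d = m$ would force $G$ compact and then one argues separately or the statement is vacuous). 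So the whole theorem reduces to establishing the nonunimodular form~\eqref{eq: BM for nonunimodular}, and it is precisely for the inductive bookkeeping of $m$ and $h$ under quotients that the nonunimodular version is unavoidable even if one only cares about unimodular $G$: a unimodular group can have nonunimodular quotients.

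**Main obstacle.** The hardest part will be the semisimple case (ii) and the interface between the helix dimension and the induction. When $G_0$ is semisimple with infinite center, there is no one-dimensional normal subgroup to quotient by; instead one must use the structure $1 \to Z(S) \to S \to S/Z(S) \to 1$ with $Z(S)$ discrete of rank $h$, reducing helix dimension by passing through finite-rank central extensions, and crucially identifying a maximal compact subgroup $K$ of $G$ whose role (via the $KAK$ or Iwasawa-type decomposition) lets one ``absorb'' the compact directions and reduce the effective dimension from $d$ to $d - m - h$. Making the averaging trick compatible with the noncommutativity of the group law on these non-fibered pieces — ensuring that $XY$ is large enough in the right quotient without a clean Fubini — is where the genuinely new work lies; everything else is a careful but expected induction.
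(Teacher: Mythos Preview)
Your outline has the right flavor for the solvable part but contains a genuine gap in the semisimple case, and misplaces where the ``proportionated averaging trick'' actually does its work.

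For case (ii), you propose to use the exact sequence $1 \to Z(S) \to S \to S/Z(S) \to 1$. But $Z(S)$ is discrete, so this does not reduce the dimension at all; and $S/Z(S)$ is still semisimple with no nontrivial connected normal subgroup, so you are stuck in the same place. The paper's actual move is quite different: take an Iwasawa decomposition $G = KAN$ and reduce the Brunn--Minkowski inequality on $G$ to the one on the \emph{solvable} subgroup $Q = AN$. The crucial difficulty is that $AN$ is \emph{not normal} in $G$, so no Fubini/quotient-integral argument of the type you sketch in case (i) is available. This is precisely where the proportionated averaging trick (Proposition~\ref{Prop: Cocompact reduction}) enters: one averages over random $K$-cosets with carefully chosen \emph{nonuniform} probability weights, applies the BM inequality on $AN$ fiberwise, and uses H\"older to recombine. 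That argument needs only that $G = KH$ with $K$ compact and $H$ closed, not that $H$ be normal, and it preserves the exponent exactly because $\dim(AN) = d - m - h$ (Proposition~\ref{prop: helixandnoncompact}).

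This also reveals why the nonunimodular inequality is unavoidable: $AN$ is typically \emph{not} unimodular, so once you have reduced the semisimple $G$ to $AN$ you genuinely need \eqref{eq: BM for nonunimodular} for a nonunimodular solvable group. The paper handles this by a separate reduction (Proposition~\ref{prop: reduce to unimodular}) along the modular function $\Delta_{AN}\colon AN \to (\RR^{>0},\times)$, which is where the ``spillover'' argument reappears in a more delicate approximate form. Your remark that ``a unimodular group can have nonunimodular quotients'' is not quite the mechanism; the nonunimodularity arises from passing to a non-normal cocompact subgroup, not a quotient. Finally, your placement of the averaging trick in case (i) conflates it with McCrudden's exponent-splitting (Proposition~\ref{lem: quotient unimodular}), which is a different, older tool used for exact sequences with normal kernel; both are needed, but at different stages.
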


 We require that  $X$ and $Y$ be compact in our results since there are measurable sets $X$ and $Y$ such that $XY$ is not measurable. 
  By using the regularity property of Haar measure, the conclusions in our main theorems still hold for measurable $X$ and $Y$ if we replace $\mu(XY)$ and $\nu(XY)$ with inner Haar measures.

Now consider an arbitrary locally compact group $G$. Using the Gleason--Yamabe Theorem (Fact~\ref{fact: Gleason}), one can choose an open subgroup $G'$ of $G$ and a normal compact subgroup $H$ of $G'$ such that $G'/H$ is a Lie group. It is shown in Proposition~\ref{prop: welldefinednomcompactdim} that 
\[
n = \dim(G'/H) - \max\{ \dim(K) : K \text{ is a compact subgroup of } G'/H\} 
\]
is independent of the choice of $G'$ and $H$ satisfying the above properties.
We call $n$ the {\bf noncompact Lie dimension} of $G$. Let $Q$ be the radical (i.e., the maximal connected closed solvable normal subgroup, see Fact \ref{fact: radical}
) of $G'/H$. Note that $(G'/H)_0/Q$ has discrete center $Z(G'/H)_0/Q$ by Facts~\ref{fact: centerless1} and \ref{fact: centerless2}. We call
\[
h = \mathrm{rank}(Z((G'/H)_0/Q))
\]
the {\bf helix dimension} of $G$. We will also show that the helix dimension $h$ of $G'/H$ is independent of the choice of $G'$ and $H$ in Proposition~\ref{prop: welldefinednomcompactdim}. Here is our second main result.
\begin{theorem}\label{thm: main}
Suppose $G$ is a locally compact group with noncompact Lie dimension $n$ and helix dimension $h$,  $\mu$ is a left Haar measure, $\nu$ is a right Haar measure, and $X,Y$ are compact subsets of $G$ with positive measure. Then 
\begin{equation}\label{eq: BM in theorem1.2}
\frac{\nu(X)^{1/(n-h)}}{\nu(XY)^{1/(n-h)}}+ \frac{\mu(Y)^{1/(n-h)}}{\mu(XY)^{1/(n-h)}}\leq1;
\end{equation}
the left-hand-side is interpreted as $\max\{ \nu(X)/\nu(XY) , \mu(Y)/\mu(XY)\}$ when $n-h=0$.
In particular, if $G$ is unimodular, then $ \mu(XY)^{\tfrac{1}{n-h}} \geq  \mu(X)^{\tfrac{1}{n-h}} + \mu(Y)^{\tfrac{1}{n-h}}. $
\end{theorem}
 When $G$ is as in Theorem~\ref{thm: main Lie}, the noncompact Lie dimension  $n$ is simply $d-m$, so Theorem~\ref{thm: main} is a generalization of Theorem~\ref{thm: main Lie}. 
 
Our last main result tells us that when $G$ is helix-free, the exponent $1/(n-h)= 1/n$ in Theorem~\ref{thm: main Lie} and Theorem~\ref{thm: main} are sharp even when we assume further that $X=Y$. As usual in the current setting, we write $X^k$ for the $k$-fold product of $X$.

\begin{theorem}\label{thm: mainsharp}
 Suppose $G$ is a locally compact group with noncompact Lie dimension $n$,  $\mu$ is a left Haar measure, and $\nu$ is a right Haar measure. Then
\begin{enumerate}
    \item  When $n=0$, there is a compact set $X$ with positive left and right measure in $G$ such that $\mu(X^2) = \mu (X)$ and $\nu(X^2) = \nu(X)$.
    \item When $n > 0$, for every $\varepsilon > 0$, there is a  compact set $X$ with positive left and right measure in $G$ such that
\begin{equation}\label{eq: BM in theorem1.3}
\frac{\nu(X)^{\frac{1}{n}-\varepsilon}}{\nu(X^2)^{\frac{1}{n}-\varepsilon}}+ \frac{\mu(X)^{\frac{1}{n}-\varepsilon}}{\mu(X^2)^{\frac{1}{n}-\varepsilon}}>1.
\end{equation}
\end{enumerate}

As a corollary, if $G$ is unimodular with $n>0$, for every $\varepsilon'>0$, there is a compact set $X$ in $G$ such that $\mu(X^2) < (2^n+\varepsilon') \mu(X)$.
\end{theorem}

The upper bound given in Theorem~\ref{thm: mainsharp} matches the lower bound given in Theorem~\ref{thm: main} when the group is helix-free, that is a group has helix dimension $0$, which essentially means the semisimple part of the group has  finite center.  Hence, for these groups,  our theorems resolve the problem of generalizing the Brunn--Minkowski inequality. 

We believe that the exponent in Theorem~\ref{thm: mainsharp} should be correct for all locally compact groups, which is made precise by the following conjecture:


\begin{conjecture}[Nonabelian Brunn--Minkowski Conjecture]\label{conj: main}
Suppose $G$ is a locally compact group with noncompact Lie dimension $n$,  $\mu$ is a left Haar measure, $\nu$ is a right Haar measure, and $X,Y$ are compact subsets of $G$ with positive measure. Then 
\begin{equation*}
\frac{\nu(X)^{1/n}}{\nu(XY)^{1/n}}+ \frac{\mu(Y)^{1/n}}{\mu(XY)^{1/n}}\leq1;
\end{equation*}
the left-hand-side is interpreted as $\max\{ \nu(X)/\nu(XY) , \mu(Y)/\mu(XY)\}$ when $n=0$.
\end{conjecture}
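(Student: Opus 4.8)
The plan is to refine the induction-on-dimension argument behind Theorems~\ref{thm: main Lie} and~\ref{thm: main} so that the central ``helix'' directions are treated as genuine noncompact directions rather than being quotiented away. First I would run the reductions already used in the paper: the Gleason--Yamabe reduction (passing to an open subgroup with a compact normal quotient) to replace a general locally compact group by a connected Lie group, the Levi decomposition $1\to Q\to G_0\to S\to 1$, and the fiber/base principle for a closed normal subgroup $N\trianglelefteq G$ that combines the inequality on $N$ with the inequality on $G/N$. Since the solvable part and every semisimple factor with finite center already satisfy the sharp inequality (with exponent their own noncompact Lie dimension) by Theorem~\ref{thm: main Lie}, and these inputs feed compatibly into the fiber/base principle, the problem collapses to the essential case $G=S$, a connected semisimple Lie group whose center $Z=Z(S)$ has positive free rank $h$. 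As in Theorem~\ref{thm: main Lie} one must carry the nonunimodular $\mu/\nu$ version throughout, since the recursion passes through nonunimodular subquotients.

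\textbf{The helix as an unwound torus.} Write $S_{\mathrm{ad}}=S/Z$ for the adjoint group, $K_{\mathrm{ad}}$ for a maximal compact subgroup, and $X_{\mathrm{sym}}=S_{\mathrm{ad}}/K_{\mathrm{ad}}$ for the associated symmetric space, whose dimension is $n-h$ (here $n$ denotes the noncompact Lie dimension of $S$). The maximal compact $K\subseteq S$ is a covering group of $K_{\mathrm{ad}}$, and up to isogeny its identity component is $K'\times\RR^h$, the $h$ circle factors of $K_{\mathrm{ad}}$ having been unwound into copies of $\RR$ inside $S$; this is precisely why $n(S)=(n-h)+h$. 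As a smooth manifold $S$ is therefore a fiber bundle over $X_{\mathrm{sym}}$ with fiber $\RR^h\times K'$, and Haar measure disintegrates accordingly. The helix phenomenon is that parallel transport of the $\RR^h$-fiber around a loop in $X_{\mathrm{sym}}$ is a nonzero translation (a symplectic area of the loop, in the Hermitian case), so for compact $X,Y\subseteq S$ the set $XY$ need not contain a full Minkowski sum $X_{\RR^h}+Y_{\RR^h}$ in the fiber direction; this is exactly the defect that forced the weaker exponent $1/(n-h)$ in Theorem~\ref{thm: main Lie}.

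\textbf{A twisted fiber--base inequality.} The new ingredient I would aim to establish is a Brunn--Minkowski inequality adapted to this twisted bundle: for compact $X,Y\subseteq S$ of positive measure,
\[
\frac{\nu(X)^{1/n}}{\nu(XY)^{1/n}}+\frac{\mu(Y)^{1/n}}{\mu(XY)^{1/n}}\le 1,
\]
obtained by gluing together (a) the sharp inequality on $X_{\mathrm{sym}}$, equivalently on $S_{\mathrm{ad}}$, available from Theorem~\ref{thm: main Lie} because $S_{\mathrm{ad}}$ is helix-free, and (b) an extra gain of $h$ dimensions from the $\RR^h$-fiber. For (b) one works over a small ball $B\subseteq X_{\mathrm{sym}}$ on whose preimage the bundle is trivialized and the twisting is $O(\mathrm{diam}\,B)$-close to trivial; there the product set behaves in the $\RR^h$-coordinate like a genuine Minkowski sum up to an absorbable error, recovering the additional $\RR^h$-contribution locally. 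The global inequality would then be assembled from these local versions by a Knothe-type transport along the base, or by the proportionated averaging trick of the paper, with the trivializing balls taken small enough that the accumulated twisting error is negligible.

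\textbf{The main obstacle.} The hard point is the global control of the twisting. Locally the $\RR^h$-fiber contributes a clean extra dimension, but a Macbeath-type construction shows the fiber contribution can globally collapse, so the averaging over the base cannot be done naively: one must rule out that an adversary makes $X$ and $Y$ simultaneously thin in the base and aligned with the holonomy, exploiting a large fiber translation while paying little in base measure. What is needed is a quantitative trade-off --- presumably an isoperimetric or symplectic-area estimate on $X_{\mathrm{sym}}$ that bounds the holonomy translation of a region by its base measure --- showing that any gain obtained in the $\RR^h$-direction is at least cancelled by a loss in the base direction. Making this precise is, I expect, where essentially all the difficulty lies, and is presumably why the statement is recorded as a conjecture rather than a theorem.
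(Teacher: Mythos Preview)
The statement you are addressing is Conjecture~\ref{conj: main}, and the paper does \emph{not} prove it; it is explicitly left open. What the paper does prove in this direction is Theorem~\ref{thm: mainreduction}: assuming the conjecture for all simply connected simple Lie groups, it holds for all locally compact groups. Your first block (``Reduction to the semisimple core'') is essentially a sketch of that reduction, and indeed matches the paper's argument --- Gleason--Yamabe, passage to connected Lie groups, Levi decomposition, and the exponent-splitting/fiber-base machinery of Sections~4--6 combine exactly as you say to collapse the problem to connected (and then simple) Lie groups with infinite center. So on the reduction step you are aligned with the paper, and you correctly flag at the end that what remains is the reason this is recorded as a conjecture.

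Where your proposal is not a proof is the third and fourth blocks, and you acknowledge this yourself. The ``twisted fiber--base inequality'' you describe is precisely what is missing: the paper's Proposition~\ref{Prop: Cocompact reduction} only extracts $\dim(AN)=n-h$ from the Iwasawa $AN$ piece, and the extra $h$ directions in the $\RR^h$ fiber over $K_{\mathrm{ad}}$ are not recovered by any argument in the paper. Your local-trivialization-plus-averaging idea is plausible heuristics, but the obstacle you name --- global control of the holonomy, ruling out adversarial alignment of $X,Y$ with the twisting --- is a genuine gap, not a technicality, and you do not supply the ``quantitative trade-off'' you say is needed. In short: your reduction is correct and matches the paper; your attack on the residual simple case is a reasonable research outline but not a proof, and the paper offers no proof of that case either.
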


While we do not know whether the exponent in Theorem~\ref{thm: main} is sharp, it does have the correct order of magnitude. This is because the helix dimension
$h$ of $G$ is always at most $n/3$, where $n$ is the noncompact Lie dimension of $G$; see Corollary~\ref{cor: n/3}.  In particular, if there is a compact subset $X$ of a connected noncompact unimodular group $G$ with small $\mu(X^2)/\mu(X)$, Theorem~\ref{thm: main} still allows us to deduce that $G$ has a compact subgroup of small codimension, which
constrains the structure of $G$. In an upcoming paper~\cite{AJTZ} by An and the authors, we apply Theorem~\ref{thm: main} to resolve the last open case of the Kemperman Inverse problem, also answering the connected case of a question by Tao. Recently, Fanlo and Hrushovski similarly use Theorem~\ref{thm: main} to obtain a structural result for metric approximate groups~\cite{Fanlo, FanloHru}.

The next result shows that one can reduce Conjecture~\ref{conj: main} to simply connected simple Lie groups. As a consequence of this, the only remaining cases are what one might regard initially as the simplest cases.

\begin{theorem} \label{thm: mainreduction}
Suppose the nonabelian Brunn--Minkowski conjecture holds for all simply connected simple Lie groups, then it holds for all locally compact groups. 
\end{theorem}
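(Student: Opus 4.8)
The plan is to assemble Conjecture~\ref{conj: main} along the same structural reductions that already underlie Theorem~\ref{thm: main}, invoking the assumed validity of the conjecture for simply connected simple Lie groups exactly at the point where those reductions would otherwise lose the helix dimension. By the reductions used to prove Theorem~\ref{thm: main} --- the Gleason--Yamabe theorem (Fact~\ref{fact: Gleason}), passing to an open subgroup, and quotienting by a normal compact subgroup, all of which preserve the noncompact Lie dimension and the helix dimension and are compatible with the inequality --- one reduces to the case that $G$ is a connected Lie group. I would then pass to the universal cover $\widetilde{G}$, treat the conjecture for $\widetilde{G}$ first, and descend back to $G$ afterward. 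Being simply connected, $\widetilde{G}$ admits a Levi decomposition as a semidirect product $\widetilde{G}=\widetilde{S}\ltimes\widetilde{Q}$ with $\widetilde{Q}$ simply connected solvable and $\widetilde{S}$ simply connected semisimple (Fact~\ref{fact: Lie group decomp Levi}), and $\widetilde{S}=\widetilde{S}_1\times\cdots\times\widetilde{S}_k$ is a direct product of simply connected simple Lie groups. Theorem~\ref{thm: main Lie} supplies the conjecture for $\widetilde{Q}$ with exponent $1/n(\widetilde{Q})$ (solvable groups have helix dimension $0$, so there is no loss), the hypothesis supplies it for each $\widetilde{S}_i$ with exponent $1/n(\widetilde{S}_i)$, and the fibration principle that drives the induction behind Theorem~\ref{thm: main} --- which combines the inequality for a closed connected normal subgroup $N$ and for the quotient into the inequality for the ambient group, the noncompact Lie dimensions being additive --- applied first to the factors of $\widetilde{S}$ and then to the normal subgroup $\widetilde{Q}$ of $\widetilde{G}$, yields Conjecture~\ref{conj: main} for $\widetilde{G}$ with exponent $1/n(\widetilde{G})$.

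The remaining and decisive step is to descend along the covering $\pi\colon\widetilde{G}\to G=\widetilde{G}/Z$, where $Z=\pi_1(G)$ is a discrete, hence finitely generated abelian, central subgroup of $\widetilde{G}$. Writing $Z\cong F\times\mathbb{Z}^{r}$ with $F$ finite, the quotient by $F$ is harmless: $\pi^{-1}$ sends compact sets to compact sets, multiplies each of $\mu(X),\nu(X),\mu(Y),\nu(Y),\mu(XY),\nu(XY)$ by $|F|$, and $\pi^{-1}(XY)=\pi^{-1}(X)\pi^{-1}(Y)$, so the inequality for $\widetilde{G}$ transfers verbatim to $\widetilde{G}/F$, which has the same noncompact Lie dimension; iterating, it suffices to treat the quotient by a single central copy of $\mathbb{Z}=\langle z_0\rangle$. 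This is the crux, and the step I expect to be the main obstacle. Here the noncompact Lie dimension strictly drops, $n(G)=n(\widetilde{G})-h'$ with $h'$ the helix contribution of this $\mathbb{Z}$, so a crude lift cannot suffice: the conjecture for $G$ with the inherited exponent $1/n(\widetilde{G})$ would be \emph{false} --- for $\mathrm{SL}_2(\RR)$ it would assert $\mu(X^{2})\geq 8\mu(X)$, contradicting Theorem~\ref{thm: mainsharp} --- so the passage must genuinely convert $1/n(\widetilde{G})$ into $1/n(G)$.

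My proposed mechanism for this $\langle z_0\rangle$-quotient is to lift and then fold carefully. The covering $\widetilde{G}\to G$ is classified by the identity homomorphism $\pi_1(G)\cong\mathbb{Z}\to\mathbb{Z}$, equivalently by a continuous map $G\to\TT$; pulling back along $\pi$ and lifting, using that $\widetilde{G}$ is simply connected, produces a continuous $\varphi\colon\widetilde{G}\to\RR$ with $\varphi(z_0\widetilde{g})=\varphi(\widetilde{g})+1$. Since a simply connected semisimple group has no nontrivial homomorphism to $\RR$, $\varphi$ is only a quasimorphism, but its bounded defect still makes each slab $\varphi^{-1}\big([t,t+1)\big)$ a Borel fundamental domain for $\langle z_0\rangle$, with the product of two such slabs contained in boundedly many translates of one of them. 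Given compact $X,Y\subseteq G$, one lifts them to compact sets inside one fundamental domain of equal left, respectively right, Haar measure, applies the conjecture for $\widetilde{G}$ to the lifted product, and pushes the outcome down to $G$; the issue is to recover the sharp comparison between the measure of the lifted product upstairs and $\mu(XY)$. A single wrapping-multiplicity estimate is not sharp enough, as it loses a constant governed by the defect of $\varphi$, so I would bring in the proportionated averaging trick from the proof of Theorem~\ref{thm: main}: average the lifting over a continuum of slabs $\varphi^{-1}\big([t,t+1)\big)$ with the proportions of the shift parameters dictated by $\mu(X)^{1/n(G)}$ and $\mu(Y)^{1/n(G)}$, so that the overlap losses cancel precisely and the exponent drops from $1/n(\widetilde{G})$ to $1/n(G)$, using also that $z_0$ sits inside, and accounts for the extra circle factor of, a maximal compact subgroup of $G$. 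An alternative and perhaps cleaner route is to re-run the entire dimension induction of Theorem~\ref{thm: main} with Conjecture~\ref{conj: main} for simply connected simple Lie groups installed as a new base case: the only place that argument degrades $1/n$ to $1/(n-h)$ is at semisimple groups with infinite center, whose simply connected simple factors are now given the sharp inequality and are reached losslessly through finite covers, products, and the fibration principle, so the stronger input propagates to exponent $1/n$ everywhere. In both approaches the genuine difficulty is the sharp control of this one helix direction --- how the central $\mathbb{Z}$, the maximal compact subgroup it generates, and the averaging conspire to turn $1/n(\widetilde{G})$ exactly into $1/n(G)$.
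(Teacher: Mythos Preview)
Your alternative route is the paper's actual proof, and it works cleanly --- but you have not seen why, and you wrongly believe a $\mathbb{Z}$-descent is still lurking. The induction machinery of Theorem~\ref{thm: main} (Propositions~\ref{lem: quotient unimodular}, \ref{prop: reduce to unimodular}, \ref{Prop: Cocompact reduction}, \ref{prop: Reduceinequalitytoopensubgroups}, \ref{prop: availableinduction}) runs verbatim with the exponent $1/n$ in place of $1/(n-h)$: all of the exact sequences used there have additive noncompact Lie dimension (Propositions~\ref{prop: additivitydim1}.1 and \ref{prop: additivityofdimension2}.1), so nothing is lost. This reduces the conjecture to connected \emph{simple} Lie groups $G$. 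Now the crucial point you miss is Fact~\ref{fact: centersimpleLie}: the center of a simply connected simple Lie group has rank at most $1$. If $G$ has finite center, it is helix-free and Theorem~\ref{thm: main Lie} already gives $\BM(n)$. If $Z(G)$ is infinite, then it has rank $\geq 1$; since $Z(\widetilde{G})$ has rank $\leq 1$ and $Z(G)=Z(\widetilde{G})/\ker\rho$, both have rank exactly $1$ and $\ker\rho$ is \emph{finite}. So the covering $\widetilde{G}\to G$ is finite, and the hypothesis for $\widetilde{G}$ transfers to $G$ by the trivial finite-kernel lift you already described. No quotient by $\mathbb{Z}$ ever occurs.

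Your primary approach --- pass to the universal cover of the whole connected Lie group, prove the conjecture there, then descend through the central $\mathbb{Z}^r$ --- has a genuine gap. The quasimorphism-and-slab mechanism you sketch does not obviously recover the sharp exponent: lifting $X,Y$ to a fundamental domain and applying $\BM(n(\widetilde{G}))$ upstairs gives a lower bound on $\mu_{\widetilde{G}}(\widetilde{X}\widetilde{Y})$, but $\widetilde{X}\widetilde{Y}$ can cover points of $XY$ with unbounded multiplicity, and no averaging over shifts of the slab cancels this in a way that converts $1/n(\widetilde{G})$ into the strictly smaller $1/n(G)$. In fact, making that descent work for a general connected $G$ is at least as hard as proving the conjecture for $G$ directly. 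The paper avoids this entirely by reducing to simple groups \emph{first}, so that only finite covers remain.
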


\subsection{Further remarks}

The results of this paper continue a line of work by the first two authors \cite{JT20} on small measure expansions in locally compact groups. Through classifying groups $G$ and compact subsets $X$ and $Y$ of $G$ with nearly minimal expansion, it is shown there that when $G$ is a compact semisimple Lie group and $\mu(X)$ sufficiently small,
$$ \mu(X^2) >  (2+c)\mu(X)$$
for a positive constant $c$ independent of the choice of $G$. 
It is conjectured by Breuillard and Green~\cite[Problem 78]{BenBook} that $\mu(X^2)>3.99\mu(X)$ when $G=\mathrm{SO}_3(\RR)$ and $X$ sufficiently small. 
The following corollary of Theorem~\ref{thm: main Lie} 
resolves the counterpart of the aforementioned conjecture for all noncompact simple Lie group $G$. 
\begin{corollary}\label{cor: 1.6BG}
Let $G$ be a noncompact semisimple Lie group, and let $X\subseteq G$ be compact. Then $\mu(X^2)\geq 4\mu(X)$. 
\end{corollary}

Equality can occur in~\eqref{eq: BM for nonunimodular} and~\eqref{eq: BM in theorem1.2} of Theorems~\ref{thm: main Lie} and~\ref{thm: main}, for example, when $G=\RR^d$ for $d\geq1$ as described earlier. 
In fact, the classical Brunn--Minkowski inequality in $\RR^d$ comes along with a complete classification of when equality occurs. For connected locally compact abelian group $G$ with noncompact Lie dimension $1$ (automatically has helix dimension $0$ by Corollary~\ref{cor: n/3}), it follows from the work of Kneser~\cite{kneser} that equality occurs in~\eqref{eq: BM in theorem1.2} exactly when there is a continuous surjective group homomorphism $\chi:G\to\RR$ with the compact kernel, and $X$, $Y$ are preimages of compact intervals in $\RR$. The same conclusion is recently obtained by An and the authors~\cite{AJTZ} for general locally compact groups, using the results of the current paper as ingredients.

On the other hand, a result by McCrudden~\cite{MccruddenExample} shows that when $G$ is the Heisenberg group, equality cannot occur. It would be interesting to determine those groups with helix dimension $0$ and compact subsets inside them where equality can occur. This question can also be asked without the assumption of helix dimension $0$ once Conjecture~\ref{conj: main} is resolved. 
It would also be interesting to understand when equality nearly happens and develop a theory similar to that of Christ, Figalli, and Jerison~\cite{christ2012near,FigaliJerisonearlier, Figalli} for $\RR^d$.

Like the Brunn--Minkowski inequality for $\RR^d$, our results do not rely on the normalization of Haar measures. However, by fixing a Haar measure $\mu$ on a unimodular group $G$, it would be interesting to determine the value of
\[
\min \{ \mu(XY) : X,Y \subseteq G \text{ are compact}, \mu(X) = \alpha, \mu(Y)=\beta  \},
\]
for given $\alpha,\beta \in \RR^{>0}$, and to classify the situations where equality happens. We do not pursue this question here.

\subsection{Overview of the proof}
In this subsection, we discuss the idea of the proof of the main results and the organization of the paper. For expository purposes, we restrict our attention to  helix-free locally compact groups,  where we can fully prove Conjecture \ref{conj: main}. The proof of the full versions of Theorems~\ref{thm: main Lie} and \ref{thm: main} requires a more involved discussion of the helix dimension, which is developed in Section 2.

In each of our results, the exponent of the inequalities is controlled by the noncompact Lie dimension $n$ of $G$ instead of just its topological dimension $d$ as in the simpler versions for $\RR^d$.  Recall that, for a Lie group $G$, $n=d-m$  where $m$ is the maximum dimension of a compact subgroup of $G$.  The proof of Theorem~\ref{thm: mainsharp} explains the critical role of $m$: our construction is essentially a small neighborhood of a compact subgroup of $G$ having maximal dimension, see Figure~\ref{fig: example}. 
One may conjecture that the above construction is optimal. Theorems~\ref{thm: main Lie}  and \ref{thm: main} confirm this intuition for helix-free groups.

\begin{figure}[h]
\centering
    \includegraphics[width=2.6in]{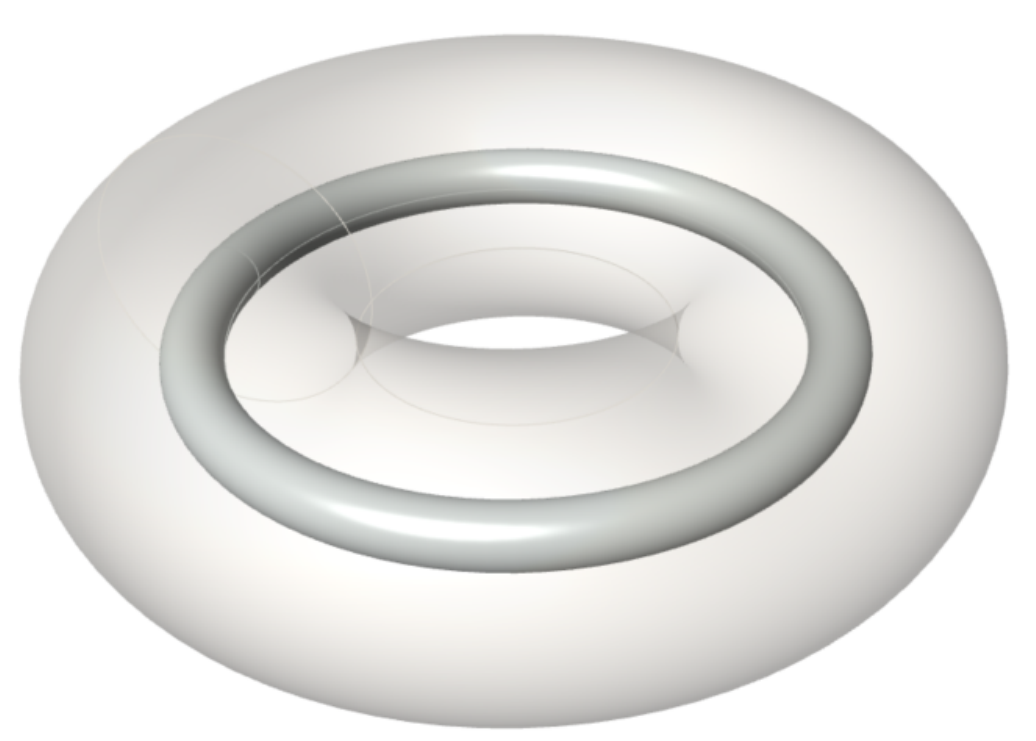}
    \caption{Let $G=\mathrm{SL}(2,\mathbb{R})$ (the open region bounded by the outer torus), and let $K=\mathrm{SO}(2,\RR)$ be the maximal compact subgroup of $G$. If we take $X$ to be a small closed neighborhood of $K$ (closed region bounded by the shaded inner torus), Theorem~\ref{thm: mainsharp} says when $X$ is sufficiently small, $\mu_G(X^2)$ will be very close to $4\mu_G(X)$ instead of $8\mu_G(X)$, although $G$ has topological dimension $3$.}
    \label{fig: example}
\end{figure}

To motivate our proofs of Theorems~\ref{thm: main Lie}  and \ref{thm: main}, we first recall some proofs of the known cases of the Brunn--Minkowski inequality.   Over $\RR^d$, the usual strategy is to induct on dimensions. This is generalized by McCrudden~\cite[Theorem 1.2]{Mccrudden69} to obtain the following ``unimodular exponent splitting'' theorem: Given an exact sequence of \emph{unimodular} locally compact groups
$$ 1 \to H \to G \to G/H \to 1, $$
if $H$ and $G/H$ satisfy  Brunn--Minkowski inequalities with exponents $1/n_1$ and $1/n_2$, respectively, then the group $G$ satisfies a Brunn--Minkowski inequality with exponent $1/(n_1+n_2)$.  McCrudden applied his unimodular exponent splitting theorem to obtain the Brunn--Minkowski inequality for unimodular solvable groups with sharp exponents. A simpler proof of his theorem is given in Section 4 for completeness. In the proof of our main theorems, one important ingredient will be an exponent splitting result (that is a generalization of his). 

We now sketch a proof of McCrudden's theorem and introduce a ``spillover'' argument which we will also use later on; some of the ideas presented here are also available in McCrudden's original approach.
For each $g$ in $G$, we call $X \cap gH$  a \emph{fiber} of $X$, and refer to the Haar measure of $g^{-1}X\cap H$ in $H$ as its \emph{length}. Let $\pi:G\to G/H$ be the quotient map. We now partition $X$ and $Y$ each into $N$ parts. Suppose $X=\bigcup_{i=1}^N X_i$ and $Y=\bigcup_{i=1}^N Y_i$, we require that the images under $\pi$ of the $X_i$'s are pairwise disjoint, the shortest fiber-length in each $X_i$ is at least the longest fiber-length in $X_{i-1}$, and likewise for the $Y_i$'s.

The induction hypotheses, i.e., the Brunn--Minkowski inequalities, 
in $H$ and $G/H$ give us a lower bound $l_N$ on fiber-lengths in $X_NY_N$ and a lower bound $w_N$ on the size of  $\pi(X_NY_N)$ in $G/H$.  Their product $l_Nw_N$ is a lower bound for $\mu(X_N Y_N)$.  Next we consider $(X_{N-1}\cup X_N)(Y_{N-1}\cup Y_N)$.  Again a lower bound $l_{N-1}$ on fiber lengths in this set and a lower bound $w_{N-1}$ on the size of its image under $\pi$ can be obtained from the induction hypotheses on $H$ and $G/H$. From our method, we have $l_{N-1} \leq l_N$ and $w_{N-1} \geq w_N$. Then $l_{N-1}w_{N-1}$ will be a weak lower bound for $\mu((X_{N-1}\cup X_N)(Y_{N-1}\cup Y_N))$ since the fibers in $X_NY_N$ are ``exceptionally long''. Taking all of these into account, a stronger lower bound is 
$$  l_Nw_N +l_{N-1}(w_{N-1} -w_N).   $$
Repeating the above process, we get that $\mu(XY)=\mu((\bigcup_{i=1}^NX_i)(\bigcup_{i=1}^N Y_i))$ is at least
\[
l_Nw_N+ \sum_{i=1}^{N-1}l_{N-i}(w_{N-i}-w_{N-i+1}).
\]
By choosing $X_i$ and $Y_i$ ``proportionately'' to their fiber lengths, applying suitable H\"older's inequalities, and taking the limit $N \rightarrow \infty$, we have the ``spillover'' argument which yields McCrudden's theorem.

Induction via the exact sequence method completely stops working 
when one is looking to prove Brunn--Minkowski for simple groups since there is no nontrivial closed normal subgroup to induct from. Next, we explain how we overcome this main difficulty. Our method  turns out to work also for semisimple groups in the same way and we will explain it in this more general setting. 

Let us assume $G$ is a connected semisimple Lie group with the finite center (hence helix-free and automatically unimodular) and think about how we can prove the special case of Theorem~\ref{thm: main Lie} for it. One can consider the Iwasawa decomposition $G=KAN$ where $K$ has a compact Lie algebra and $Q=AN$ is solvable and try to prove the Brunn--Minkowski inequality for $G$ from a suitable Brunn--Minkowski-type inequality for $Q$. However, $Q$ may not be unimodular in general, so the classical form of the Brunn--Minkowski inequality does not apply to $Q$. Let $\Delta_Q$ be the modular function on $G$. One can compromise by choosing $Q'=\ker(\Delta_Q)$, which is unimodular, and try to use the known Brunn--Minkowski inequality for $Q'$ to prove the Brunn--Minkowski inequality on $G$. This is indeed a good direction to go but along this direction one inevitably gives up on the sharp exponent $1/n$ and can at best prove a weaker inequality with the worse exponent $1/(n-1)$, where $n$ is the noncompact Lie dimension of $G$.

 Because of this, we need an analog of \eqref{eq: BM for unimodular} for nonunimodular groups. We propose the inequality \eqref{eq: BM for nonunimodular}, which seems to be new in the literature. To prove \eqref{eq: BM for nonunimodular} for $AN$, we need a nonunimodular exponent splitting result for the exact sequence $1\to \ker(\Delta_G)\to G\to G/\ker(\Delta_G)\to 1$ coming out from the modular function $\Delta_G: G\to\RR^{>0}$.  It turns out that the spillover method can also be used to reduce the problem to the case where the modular function is almost constant on $X$ and $Y$. We work this out in Lemma~\ref{lem: similar modular value}. In the next more involved step in the same section, we obtain an approximate version of McCrudden's theorem (Proposition~\ref{prop: reduce to unimodular}), which involves another use of the  spillover method, to finish off the proof.

In the next crucial step, we prove that the Brunn--Minkowski for a semisimple $G$ follows from \eqref{eq: BM for nonunimodular} for the solvable $AN$. Our method was motivated by a recent paper~\cite{JT20} by the first two authors, which characterizes nearly minimal expansion sets. Over there, a key idea is to choose a fiber $f$ uniformly at random in $Y$ and use $Xf$ to estimate $XY$. For our current proof, we also choose two fibers $f_X$ and $f_Y$ randomly from $X$ and $Y$, but with respect to two carefully chosen probability measures $\pr_X$ and $\pr_Y$ that are in general nonuniform. We show that by constructing $\pr_X$ and $\pr_Y$ based on the structure of $X$ and $Y$, $\mu(XY)$ can be estimated by the expected size of $f_Xf_Y$ in $AN$, and the latter is well controlled by the Brunn--Minkowski inequality~\eqref{eq: BM for nonunimodular} for $AN$. This part is done in Section 6. It is worth noting that in this case, our inequality matches the upper bound construction when the semisimple group has a finite center.

 With the above preparation, we now explain how we prove Brunn--Minkowski for a general helix-free Lie group $G$. By Proposition~\ref{prop: Reduceinequalitytoopensubgroups}, we can replace $G$ with its identity component and thus assume $G$ is connected. Next, we use Proposition~\ref{prop: reduce to unimodular} to replace $G$ with $\ker(\Delta_G)$ and reduce the problem to the case where $G$ is unimodular. 
  Such $G$  can be decomposed into a semi-direct product of a unimodular solvable group $Q$ and a semisimple group $S$ via the Levi decomposition. We already know how to handle $S$ from the discussion in Section 6. McCrudden's
 theorem  can then be used to deal with $Q$ and to deduce the desired inequality for $G$.

In many of our reductions, we have an exact sequence of groups $1 \to H \to G\to G/H \to 1$ and want to deduce the Brunn--Minkowski for $G$ from the Brunn--Minkowski for $H$ and $G/H$. One tricky issue is that this inductive method only gives sharp results if the sum of the noncompact Lie dimensions and helix dimensions of $H$ and $G/H$ is equal to the noncompact Lie dimension of $G$. Unfortunately, this is not always true (see the examples after the proof of Lemma~\ref{lem: IWasawaexact}). With this warning in mind, we must ensure the above property is always satisfied in the whole reduction. Our discussion in Section 2 guarantees this.

In the remaining part, we discuss some new challenges in the proof of Theorem~\ref{thm: main} for a helix-free locally compact group $G$.
The Gleason--Yamabe Theorem tells us that $G$ contains an open subgroup $G'$ that has a Lie quotient $G'/H$ with $H$ compact. 
For the start, we need to handle the nonuniqueness in the choice of $G'$ and $H$ in order to show that noncompact Lie dimension and helix dimension are well-defined for locally compact groups. This is also done in Section 2, making heavy use of Lie theory and the Gleason--Yamabe Theorem.

Unlike the Lie group case discussed earlier, we cannot directly replace $G$ by its identity component, as the latter may not be open. We must take the alternating path of deducing the Brunn--Minkowski inequality for $G$ from its open subgroup $G'$ given by the Gleason--Yamabe theorem and its Lie quotient $G'/H$. When $G$ is unimodular, this can be achieved using Proposition~\ref{prop: reducetoopensubgroup2} and Proposition~\ref{lem: quotient unimodular}. When $G$ is not unimodular, we encounter an extra technical difficulty as we do not find an easy way to generalize Proposition~\ref{prop: reducetoopensubgroup2}. Instead, we reduce the problem to the unimodular case using Proposition~\ref{prop: reduce to unimodular}, Proposition~\ref{prop: Reduceinequalitytoopensubgroups}, and Lemma~\ref{Lem: dichotomy2} to replace $G$ by $\ker(\Delta_G)$ which is unimodular.





\section{Noncompact Lie dimension and helix dimension}

In this section, we show that noncompact Lie dimensions and helix dimensions are well-defined in locally compact groups and that they behave well in many exact sequences. The latter is the underlying reason that the lower bound in Theorem~\ref{thm: main Lie} and Theorem~\ref{thm: main} matches the upper bound in Theorem~\ref{thm: mainsharp} for helix-free locally compact groups. 

 Throughout this section, all groups are locally compact, and we will use various definitions and facts from Appendices C, D, and E. The following lemma discusses the behavior of Iwasawa decomposition undertaking a quotient by a compact normal subgroup.

\begin{lemma} \label{IWasawaandquotientbycompact}
Suppose $G$ is a connected semisimple Lie group, $H$ is a (not necessarily connected) compact subgroup of $G$. Then we have the following.
\begin{enumerate}
    \item There is an Iwasawa decomposition $G= KAN$ such that $H \leq K$.
    \item Assume further that $H$ is a normal subgroup of $G$, $G= KAN$ is an Iwasawa decomposition such that $H \leq K$, $G'= G/H$, and $\pi: G\to G'$ is the quotient map. Then there is an Iwasawa decomposition $G' = K'A'N'$ such that $\pi(K)=K'$.
\end{enumerate}
\end{lemma}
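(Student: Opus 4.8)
The plan is to derive part (1) from the fixed point property of the Riemannian symmetric space of $G$, and part (2) by transporting the Iwasawa data through the quotient map; throughout I write $\mathfrak g,\mathfrak k,\mathfrak p,\mathfrak a,\mathfrak n$ for the Lie algebras of $G,K,A,N$ and $\theta$ for the Cartan involution with $\mathfrak g=\mathfrak k\oplus\mathfrak p$. For (1), fix any Iwasawa decomposition $G=K_0A_0N_0$ and consider $M=G/K_0$. This is a complete, simply connected manifold of nonpositive sectional curvature, hence a $\mathrm{CAT}(0)$ space, on which $G$ acts by isometries: even when $Z(G)$ is infinite and $K_0$ is itself noncompact, a $G$-invariant metric exists because it only needs an $\operatorname{Ad}(K_0)$-invariant inner product on $\mathfrak g/\mathfrak k_0\cong\mathfrak p_0$, and the Killing form restricted to $\mathfrak p_0$ provides one, while $M\cong A_0\times N_0$ is contractible. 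Since $H$ is compact, its orbit $H\cdot eK_0$ is bounded, so the Cartan fixed point theorem gives a fixed point $gK_0$; then $g^{-1}Hg\subseteq K_0$, and conjugating by $g$ produces an Iwasawa decomposition $G=KAN$ with $K=gK_0g^{-1}\supseteq H$. (Alternatively one may cite that every compact subgroup is conjugate into a maximal compact subgroup together with the relation between maximal compact subgroups and Iwasawa factors.)

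For (2) I would first work at the Lie algebra level. As $H\trianglelefteq G$, $\mathfrak h:=\operatorname{Lie}(H)$ is an ideal of the semisimple $\mathfrak g$, so $\mathfrak g=\mathfrak h\oplus\mathfrak h'$ for a complementary ideal $\mathfrak h'$, and since $H\leq K$ we have $\mathfrak h\subseteq\mathfrak k$, hence $\mathfrak h\cap\mathfrak p=0$. The decomposition $\mathfrak g=\mathfrak h\oplus\mathfrak h'$ is $\theta$-stable, so $\mathfrak p\subseteq\mathfrak h'$, hence $\mathfrak a\subseteq\mathfrak h'$; and because $[\mathfrak a,\mathfrak h]\subseteq[\mathfrak h',\mathfrak h]=0$ the ideal $\mathfrak h$ lies in the centralizer of $\mathfrak a$, whence every restricted root space $\mathfrak g_\alpha$ with $\alpha\neq0$, and in particular $\mathfrak n$, lies in $\mathfrak h'$ (for $X=X_{\mathfrak h}+X_{\mathfrak h'}\in\mathfrak g_\alpha$ and $A\in\mathfrak a$ one has $\alpha X=[A,X]=[A,X_{\mathfrak h'}]\in\mathfrak h'$, so $X_{\mathfrak h}=0$). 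Thus $\mathfrak k=\mathfrak h\oplus(\mathfrak k\cap\mathfrak h')$ while $\mathfrak a,\mathfrak n\subseteq\mathfrak h'$, so the differential $\pi_*$ of the quotient map restricts to an isomorphism $\mathfrak h'\xrightarrow{\sim}\mathfrak g/\mathfrak h=\operatorname{Lie}(G/H)$; under it $\theta$ becomes a Cartan involution $\theta'$, $\mathfrak a$ stays maximal abelian in the $(-1)$-eigenspace, and the positive system defining $\mathfrak n$ still makes sense, so $\pi_*(\mathfrak k),\pi_*(\mathfrak a),\pi_*(\mathfrak n)$ are exactly the Iwasawa data of $\mathfrak g/\mathfrak h$ attached to $\theta'$.

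Finally I would exponentiate. The connected semisimple group $G'=G/H$ has an Iwasawa decomposition $G'=K'A'N'$ attached to $\theta'$, with $K',A',N'$ the analytic subgroups of $\pi_*(\mathfrak k),\pi_*(\mathfrak a),\pi_*(\mathfrak n)$; from $\pi\circ\exp_G=\exp_{G'}\circ\pi_*$ one gets $\pi(A)=A'$, $\pi(N)=N'$, $\pi(K)=K'$, and moreover $\pi|_A,\pi|_N$ are isomorphisms onto $A',N'$ (since $A\cap H\subseteq A\cap K=\{e\}$, likewise for $N$) while $\pi|_K\colon K\to K'$ is the quotient by $H$. A short diagram chase, obtained by dividing the diffeomorphism $K\times A\times N\to G$ by the $H$-action on both sides, shows that multiplication $K'\times A'\times N'\to G'$ is a bijective local diffeomorphism, hence a diffeomorphism, so $G'=K'A'N'$ is genuinely an Iwasawa decomposition with $\pi(K)=K'$. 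I expect the main obstacle to be part (1): obtaining an Iwasawa decomposition whose compact factor actually contains $H$ uses a genuinely geometric input (the fixed point theorem, or equivalently the conjugacy theory of maximal compact subgroups), and one must be careful because the Iwasawa factor need not be compact when $G$ has infinite center; part (2), once one notices that $\mathfrak h\subseteq\mathfrak k$ makes $\mathfrak h$ transverse to and commuting with $\mathfrak a$ and $\mathfrak n$, is essentially bookkeeping.
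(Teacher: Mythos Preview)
Your proposal is correct. For part (2) it is essentially the paper's argument: both split $\mathfrak g=\mathfrak h\oplus\mathfrak h'$ as a sum of ideals (the paper via Killing-orthogonal complements), observe $\mathfrak h\subseteq\mathfrak k$, push the Cartan involution $\theta$ to $\mathfrak h'\cong\mathfrak g/\mathfrak h$, and check it is Cartan there with fixed algebra $\pi_*(\mathfrak k)$; your root-space computation showing $\mathfrak a,\mathfrak n\subseteq\mathfrak h'$ and the diffeomorphism check for $K'\times A'\times N'\to G'$ are extra detail the paper does not need, since the statement only asks for $\pi(K)=K'$ and the paper simply invokes the existence of an Iwasawa decomposition attached to the constructed Cartan involution.

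For part (1) you take a genuinely different, more geometric route. The paper passes to the centerless quotient $G/Z(G)$, where $K_0'$ becomes an honest maximal compact subgroup, uses conjugacy of maximal compacts (Fact~\ref{fact: maximal compact}.1) to move $K_0'$ onto a maximal compact containing the image of $H$, conjugates the Cartan involution accordingly, and then lifts back to $G$ using $Z(G)\subseteq K$. You instead work directly with the Riemannian symmetric space $G/K_0$ and apply the Cartan fixed point theorem to the compact group $H$; this sidesteps the pass to $G/Z(G)$ but requires the (standard) fact that $G/K_0$ is a Hadamard manifold even when $Z(G)$ is infinite, which you justify correctly via the Killing form on $\mathfrak p_0$. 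The two arguments are morally the same---your fixed point theorem is precisely the geometric incarnation of the conjugacy theorem the paper cites---but yours is self-contained on $G$ while the paper's stays within the algebraic toolbox (Cartan involutions and Facts~\ref{fact: maximal compact}, \ref{fact: Lie group decomp Iwasawa}) already set up in the appendices.
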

 \begin{proof}
 
 We first  prove (1). Let $Z(G)$ be the center of $G$, $G'=  G/Z(G)$ and $\rho: G \to G'$ be the quotient map, and $H'=\rho(H)$. By Facts~\ref{fact: centerless1} and \ref{fact: centerless2}, $\rho$ is a covering map and $G'$ is centerless. Let $\mathfrak{g}$ be the common Lie algebra of $G$ and $G'$, and $\exp:\mathfrak{g} \to G$ and $\exp':\mathfrak{g} \to G'$  be the exponential maps.  Using Fact~\ref{fact: Lie group decomp Iwasawa}.2, it suffices to construct a Cartan involution $\tau$ of $\mathfrak{g}$ such that if $\mathfrak{k}$ is the subalgebra of $\mathfrak{g}$ fixed by $\tau$ and $\exp(\mathfrak{k}) = K$, then $H \leq K$. Take a maximal compact subgroup $K'$ of $G'$ that contains $H'$.  Let $\tau_0$ be an arbitrary Cartan involution of $\mathfrak{g}$ (this exists because of Fact~\ref{fact: ExistenceCartaninvolution}). Let $\mathfrak{k}_0$ be the  the subalgebra of $\mathfrak{g}$ fixed by $\tau_0$, and $K'_0= \exp(\mathfrak{k}_0)$ in $G'$. Then by Fact~\ref{fact: Lie group decomp Iwasawa}.2 about Iwasawa decomposition and the earlier observation that $G'$ is centerless, $K'_0$ is a maximal compact subgroup of $G'$. By Fact~\ref{fact: maximal compact}.1 and the assumption that $G$ is connected, there is an automorphism $\sigma'$ of $G'$ such that $\sigma'(K'_0)=K'$. Let $\alpha$ be the automorphism of $\mathfrak{g}$ obtained by taking the tangent map of $\sigma'$, and let
 $$\tau = \alpha \tau_0 \alpha^{-1} \text{ and } \mathfrak{k}= \alpha(\mathfrak{k}_0)$$
  As every Cartan--Killing form is invariant under automorphisms of $\mathfrak{g}$, we get that $\tau$ is a Cartan involution. It is also easy to check that $\mathfrak{k}$ is the subalgebra of $\mathfrak{g}$ fixed by $\tau$. Using the functoriality of the exponential function (Fact~\ref{fact:functoriality of exponential function}), we get $K' =\exp'(\mathfrak{k})$. Now set
 $K= \exp(\mathfrak{k}) $. By Fact~\ref{fact: Lie group decomp Iwasawa}, we get an Iwasawa decomposition $G= KAN$. Therefore, by the functoriality of the exponential function (Fact~\ref{fact:functoriality of exponential function}), $K' =\rho(K)$. Now as $H' \leq K'$, every element of $H$ is in $Z(G)K$. By Fact~\ref{fact: Lie group decomp Iwasawa}.2, we have $Z(G) \subseteq K$, so $H \leq K$ as desired.

 We now prove (2). Set $K'=\pi(K)$.  Let $\mathfrak{g}$, $\mathfrak{h}$, and $\mathfrak{k}$ be the Lie algebras of $G$, $H$, and $K$, and let $\kappa_{\mathfrak{g}}$, $\kappa_{\mathfrak{h}}$, $\kappa_{\mathfrak{k}}$ be the Cartan--Killing form of $\mathfrak{g}$, $\mathfrak{h}$, and $\mathfrak{k}$. Then, $\mathfrak{g}' = \mathfrak{g}/\mathfrak{h}$ is the Lie algebra of $G'$, and $\mathfrak{k}' = \mathfrak{k}/\mathfrak{h}$ is the Lie algebra of $K'$ by Fact~\ref{fact: CorrespondenceLiegroupandalgebra}. Let $\tau$ be a Cartan involution of $\mathfrak{g}$ that fixes $\mathfrak{k}$. We will construct from this a Cartan involution $\tau'$ of $\mathfrak{g}'$ which fixes $\mathfrak{k}'$. If we have done so, then using Fact~\ref{fact: Lie group decomp Iwasawa}, we obtain $A'$ and $N'$ such that $G'=K'A'N'$ is an Iwasawa decomposition, which completes the proof.
 
 Now we construct $\tau'$ as described earlier. As $\mathfrak{g}$ is semisimple, the Lie algebras $\mathfrak{h}$ and $\mathfrak{k}$ are also semisimple. With $\mathfrak{q}$ the orthogonal complement of $\mathfrak{k}$ in $\mathfrak{g}$ with respect to $\kappa_{\mathfrak{g}}$ and $\mathfrak{c}$ the orthogonal complement of $\mathfrak{h}$ in $\mathfrak{k}$ with respect to $\kappa_{\mathfrak{k}}$, we have  $\mathfrak{g} =\mathfrak{k}\oplus \mathfrak{q}$ and  $\mathfrak{k} =\mathfrak{h}\oplus \mathfrak{c}$ by Fact~\ref{fact: orthogonaldecomposition}. By the same fact, with $\kappa_{\mathfrak{q}}$ and $\kappa_{\mathfrak{c}}$ the Cartan--Killing forms of $\mathfrak{q}$  and $\mathfrak{c}$, we have   $\kappa_\mathfrak{g} = \kappa_\mathfrak{k}  \oplus \kappa_{\mathfrak{q}}$ and $\kappa_\mathfrak{k} = \kappa_\mathfrak{h}  \oplus \kappa_{\mathfrak{c}}$. It is then easy to see that every element of $\mathfrak{c} \oplus \mathfrak{q}$ is orthogonal to $\mathfrak{h}$ with respect to $\kappa_{\mathfrak{g}}$. A dimension comparison gives us $\mathfrak{c} \oplus \mathfrak{q} =  \mathfrak{d}$ with $\mathfrak{d}$ the orthogonal complement of $\mathfrak{h}$  in $\mathfrak{g}$. In summary, we have
  \begin{equation} \label{eq: Liedecomposition}
     \mathfrak{g} =  \mathfrak{k}  \oplus \mathfrak{q} =  \mathfrak{h}  \oplus \mathfrak{c}  \oplus \mathfrak{q} =  \mathfrak{h} \oplus \mathfrak{d} \quad  \text{and}  \quad \kappa_\mathfrak{g} =  \kappa_\mathfrak{k}  \oplus \kappa_\mathfrak{q} =  \kappa_\mathfrak{h}  \oplus \kappa_\mathfrak{c}  \oplus \kappa_\mathfrak{q} =  \kappa_\mathfrak{h} \oplus \kappa_\mathfrak{d}.
 \end{equation}

As a particular consequence, the quotient map from $\mathfrak{g}$ to $\mathfrak{g}'$ restricts to isomorphisms of Lie algebras from  $\mathfrak{d}$ to $\mathfrak{g}' = \mathfrak{g}/ \mathfrak{h}$ and from $\mathfrak{c}$ to $\mathfrak{k}' = \mathfrak{k}/ \mathfrak{h}$.  Since $\mathfrak{h}$ is a subalgebra of $\mathfrak{k}$, $\tau$ fixes $\mathfrak{h}$. As Cartan--Killing forms are invariant under automorphisms, $\tau$ restricts to an endomorphism of $\mathfrak{d}$, which is the orthogonal complement of $\mathfrak{h}$ in $\mathfrak{g}$ under $\kappa_\mathfrak{g}$. Therefore, $\tau|_\mathfrak{d}$ is an involution of $\mathfrak{d}$. The bilinear form 
$$\mathfrak{d} \times \mathfrak{d}:  (x, y) \mapsto -\kappa_\mathfrak{d}(x, \tau|_\mathfrak{d}(y))$$
is positive definite as it is simply the restriction to $\mathfrak{d}$ of the positive definite bilinear form $\mathfrak{g} \times \mathfrak{g}:  (x, y) \mapsto -\kappa_\mathfrak{d}(x, \tau(y))$. Hence,  $\tau|_\mathfrak{d}$ is 
a Cartan involution of $\mathfrak{d}$. Recall that the subalgebra of $\mathfrak{g}$ fixed by $\tau$ is $\mathfrak{k}$. By the first part of \eqref{eq: Liedecomposition}, the subalgebra of $\mathfrak{d}$ fixed by $\tau|_\mathfrak{d}$ is $\mathfrak{c}$. Finally, let $\tau'$ be the pushforward of $\tau|_\mathfrak{d}$ under the quotient map from $\mathfrak{g}$ to $\mathfrak{g}'$. It is easy to see that $\tau'$ satisfies the desired requirement.
  \end{proof}

The following lemma allows us to compute the noncompact Lie dimension of the universal cover of a compact Lie group. Recall that a finitely generated abelian group $G$ is isomorphic to a direct sum $\mathbb{Z}^h\oplus G^{\mathrm{tor}}$, with $h\geq 0$ and $G^{\mathrm{tor}}$ finite; the integer $h$ is called the \emph{rank} of $G$.

\begin{lemma} \label{lem: helixandnoncompact}
 Let $K$ be a covering group of a compact Lie group $K'$ with covering map $\rho: K \to K'$, and assume that $K$ and $K'$ are connected. 
Suppose $\ker(\rho)$ is a finitely generated abelian group of rank $h$, and $m$ is the maximum dimension of a compact subgroup of $K$. Then $h= \dim(K)-m$.
\end{lemma}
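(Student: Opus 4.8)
The plan is to pass to a common universal cover and reduce the statement to a bookkeeping computation with lattices. Let $\widetilde K$ be the universal cover of $K$; since $K$ and $K'$ are connected and $\rho$ is a covering, $\widetilde K$ is also the universal cover of $K'$, and we may write $K = \widetilde K/\Gamma$ and $K' = \widetilde K/\Gamma'$ with $\Gamma \leq \Gamma'$ discrete central subgroups of $\widetilde K$ and $\Gamma'/\Gamma \cong \ker(\rho)$, so $\mathrm{rank}(\Gamma'/\Gamma) = h$. By the structure theory of compact connected Lie groups together with the fact that a compact semisimple Lie group has finite fundamental group, $\widetilde K \cong \RR^k \times L$ for some $k$, where $L$ is a compact, simply connected, semisimple Lie group; in particular $Z(\widetilde K) = \RR^k \times Z(L)$ with $Z(L)$ finite, and $\dim K = \dim K' = k + \dim L$. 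Write $p \colon \RR^k \times Z(L) \to \RR^k$ for the projection.

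First I would pin down the ranks. The key elementary point is that whenever $\Delta \leq \RR^k \times Z(L)$ is discrete, $p(\Delta)$ is a discrete (hence closed, hence a lattice in its span) subgroup of $\RR^k$ of the same rank as $\Delta$: indeed $\Delta_0 := \Delta \cap (\RR^k \times \{e\})$ has finite index in $\Delta$ and is a genuine discrete subgroup of $\RR^k$, and a finitely generated subgroup of the torsion-free group $\RR^k$ containing a finite-index discrete subgroup is discrete. Applying this to $\Gamma'$ and observing that the continuous surjection $K' \twoheadrightarrow \RR^k/p(\Gamma')$ forces $\RR^k/p(\Gamma')$ to be compact, hence $p(\Gamma')$ to be a full lattice, gives $\mathrm{rank}(\Gamma') = k$; therefore $r := \mathrm{rank}(\Gamma) = k - h$. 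Applying the same point to $\Gamma$, the lattice $\Lambda := p(\Gamma)$ has rank $r$; set $W = \mathrm{span}_\RR \Lambda$ (so $\dim W = r$) and fix a linear complement $\RR^k = W \oplus U$.

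Next I would exhibit $K$, up to isomorphism of Lie groups, as $H \times \RR^{k-r}$ with $H$ compact. Since $\Lambda \subseteq W$ we have $\Gamma \subseteq W \times Z(L)$, so $H := (W \times L)/\Gamma$ is a closed connected subgroup of $K$, and a fundamental-domain argument (using that $\Lambda$ is a full lattice in $W$ and $L$ is compact) shows $H$ is compact. On the other hand $\Gamma \cap (U \times \{e\}) = \{0\}$, so $U \times \{e\}$ embeds in $K$ as a closed central subgroup $U' \cong \RR^{k-r}$ with $H \cap U' = \{e\}$ and $H \cdot U' = K$ (because $W \oplus U = \RR^k$); hence $K \cong H \times \RR^{k-r}$. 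Since $\RR^{k-r}$ has no nontrivial compact subgroup, every compact subgroup of $K$ lies in $H$, so $H$ is a maximal compact subgroup of $K$ and $m = \dim H = \dim W + \dim L = r + \dim L$. Consequently $\dim K - m = (k + \dim L) - (r + \dim L) = k - r = h$, which is the assertion.

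The step I expect to demand the most care is the rank bookkeeping in the second paragraph — in particular checking that the projected subgroups of $\RR^k$ really are discrete, so that ranks add along the short exact sequences and the relevant quotients are honest tori — together with a correct invocation of the structure theory for compact connected Lie groups and their universal covers from the appendices. Once the ranks are in place, the geometric splitting $K \cong H \times \RR^{k-r}$ and the identification of the maximal compact subgroup are routine.
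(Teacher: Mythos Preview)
Your argument is correct, but it follows a different path from the paper's. The paper proceeds by first treating the solvable case directly (where $K'\cong\TT^k$, so $K\cong\RR^h\times\TT^{k-h}$ and the claim is immediate), and then in the general case decomposes along the radical: with $Q_K$ the radical of $K$ and $S_K=K/Q_K$, one shows $S_K$ is compact semisimple with finite center, so $\ker\rho$ essentially lives in $Q_K$ and has the same rank there; the solvable case gives $h+m_1=\dim Q_K$, compactness of $S_K$ gives $m_2=\dim S_K$, and additivity of the maximal-compact dimension along the exact sequence $1\to Q_K\to K\to S_K\to 1$ (Fact~\ref{fact: maximal compact}.2) yields $h+m=\dim K$.

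Your route instead invokes the global structure theorem for the universal cover of a compact connected group, $\widetilde{K'}\cong\RR^k\times L$ with $L$ compact semisimple and simply connected, and then does all the work via lattice bookkeeping in the $\RR^k$ factor. This is more hands-on and arguably more transparent, and it avoids appealing to Fact~\ref{fact: maximal compact}.2; on the other hand it leans on the splitting $\widetilde{K'}\cong\RR^k\times L$, which is standard but is not stated as such in the paper's appendices (it follows from the Levi decomposition together with Fact~\ref{fact: semisimple compact}). The paper's argument is more modular and reuses machinery already set up for later sections. Your rank-preservation step under projection to $\RR^k$ is the one place that deserves a careful sentence (as you note): the point that every coset representative of $p(\Delta)/p(\Delta_0)$ has a multiple in $p(\Delta_0)\subseteq W$, hence lies in $W$, is what forces $p(\Delta)$ to be a lattice in $W$ rather than merely a dense finitely generated subgroup.
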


\begin{proof}
We first consider the case when $K$ is a solvable group. Then $K'\cong \TT^{k}$ where $k$ is the dimension of $K$ by Fact~\ref{fact: classification}.2. Recall that $K$ is a quotient of the universal cover of $K'$, which is $\RR^k$. Hence, $K \cong \RR^h \times \TT^{k-h}$. It is easy to see that the maximum dimension of a compact subgroup of $K$ is $k-h$, which gives us the desired conclusion in this case.

We now prove the statement of the Lemma when $K$ is not solvable. Let $Q_K$ be the radical of $K$, $Q_{K'}$ the radical of $K'$,  $S_K= K/Q_K$, and $S_{K'}= K'/Q_{K'}$. Note that $K$ and $K'$ have the same Lie algebra $\mathfrak{k}$. By Fact~\ref{fact: radical}, $Q_K$ and $Q_{K'}$ have the same Lie algebra $\mathfrak{q}$, which is the radical of $\mathfrak{k}$. Moreover, by the functoriality of the exponential function (Fact~\ref{fact:functoriality of exponential function}), $\rho$ restricts to a covering map from $Q_K$ to $Q_{K'}$ with kernel $\ker\rho \cap Q_K$. By Fact~\ref{fact: CorrespondenceLiegroupandalgebra}, the Lie algebras of $S_K$ and $S_{K'}$ are both isomorphic to $\mathfrak{k}/\mathfrak{q}$. Hence, $S_K$ is a connected semisimple Lie group with compact Lie algebra. Using Fact~\ref{fact: semisimple compact}, we get $S_K$ is compact with finite center $Z(S_K)$.  Let $\pi: K \to S_K$ be the quotient map. Note that $\ker \rho$ is a subgroup of the center of $K$ by Fact~\ref{fact: covering group}. Hence,  the image of $\pi|_{\ker \rho}$ is a subset of $Z(S_K)$, which is finite.  As a consequence,  $\ker{\rho} \cap Q_K$, which is the kernel of  $\pi|_{\ker \rho}$, has the same rank $h$ as $\ker \rho$. Let $m_1$ and $m_2$ be the maximum dimensions of a compact subgroup of $Q_K$ and of $S_K$ respectively. Then $m= m_1+m_2$ by Fact~\ref{fact: maximal compact}.2. By the special case for the solvable group $K$ proven earlier, $h+m_1= \dim Q_K$. As $S_K$ is compact, $m_2 = \dim S_K$. Thus, $h+m= h+m_1+m_2 = \dim(Q_K)+ \dim(S_K)$ as desired.
\end{proof}

The following proposition links the noncompact Lie dimension and the helix dimension.

\begin{proposition} \label{prop: helixandnoncompact}
Suppose $G$ is a connected semisimple Lie group of dimension $d$, $m$ is the maximal dimension of a compact subgroup of $G$, $h$ is the helix dimension of $G$, and $G=KAN$ is an Iwasawa decomposition of $G$. Then $h= \dim K-m$, or equivalently, $d-m-h =\dim (AN)$. 
\end{proposition}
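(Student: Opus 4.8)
The plan is to reduce to the centerless semisimple case, where the Iwasawa factor is an honest maximal compact subgroup, and then feed the covering $G\to G/Z(G)$ into Lemma~\ref{lem: helixandnoncompact}. First note that, since $G$ is connected and semisimple, its radical $Q$ is trivial: the Lie algebra $\mathfrak{g}$ is semisimple, hence has no nonzero solvable ideal, so by Fact~\ref{fact: radical} the radical $Q$ has trivial Lie algebra and is therefore the trivial group. Thus the Levi sequence $1\to Q\to G\to S\to 1$ reads $S=G$, and by definition the helix dimension of $G$ is $h=\mathrm{rank}(Z(G))$.

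Let $\rho:G\to\bar G:=G/Z(G)$ be the quotient map; by Facts~\ref{fact: centerless1} and~\ref{fact: centerless2} this is a covering map onto the centerless connected semisimple group $\bar G$. By functoriality of the exponential map (Fact~\ref{fact:functoriality of exponential function}), $\bar K:=\rho(K)$ is the $K$-part of an Iwasawa decomposition of $\bar G$; since $\bar G$ is centerless, Fact~\ref{fact: Lie group decomp Iwasawa}.2 tells us $\bar K$ is a maximal compact subgroup of $\bar G$, in particular compact. Because $Z(G)\subseteq K$ (again Fact~\ref{fact: Lie group decomp Iwasawa}.2), we get $\rho^{-1}(\bar K)=K$, so the restriction $\rho|_K:K\to\bar K$ is a covering map with kernel $Z(G)$, a discrete group of rank $h$, and both $K$ and $\bar K$ are connected. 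Lemma~\ref{lem: helixandnoncompact} applied to $\rho|_K$ then gives $h=\dim K-m'$, where $m'$ denotes the maximal dimension of a compact subgroup of $K$.

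It remains to identify $m'$ with $m$. Since $K$ is a subgroup of $G$, we have $m'\leq m$ immediately. For the reverse, let $C$ be any compact subgroup of $G$ and $C_0$ its identity component. Then $\rho(C_0)$ is a compact connected subgroup of $\bar G$, so by Fact~\ref{fact: maximal compact} there is $\bar g\in\bar G$ with $\bar g\,\rho(C_0)\,\bar g^{-1}\subseteq\bar K$; lifting $\bar g$ to some $g\in G$ and using $\rho^{-1}(\bar K)=K$ we obtain $gC_0g^{-1}\subseteq K$, whence $\dim C=\dim C_0\leq m'$. Therefore $m\leq m'$, so $m=m'$ and $h=\dim K-m$. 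Finally, the Iwasawa decomposition provides a diffeomorphism $G\cong K\times A\times N$, so $d=\dim K+\dim(AN)$, and rearranging $h=\dim K-m$ gives the equivalent form $d-m-h=\dim(AN)$. The only delicate points are the identity $\rho^{-1}(\bar K)=K$ and the conjugation of $\rho(C_0)$ into $\bar K$, i.e.\ making sure the (possibly infinite) center $Z(G)$ is correctly accounted for; the rest is routine bookkeeping with the structure-theoretic facts from the appendices.
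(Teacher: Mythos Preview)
Your proof is correct and follows essentially the same route as the paper: pass to the centerless quotient $\bar G=G/Z(G)$, use that $\bar K=\rho(K)$ is compact there, apply Lemma~\ref{lem: helixandnoncompact} to the covering $K\to\bar K$, and identify the maximal compact dimension in $K$ with that in $G$. Your justification of $m=m'$ via conjugation in $\bar G$ is exactly the content of Lemma~\ref{IWasawaandquotientbycompact}.1 (which the paper cites instead), and your use of functoriality of $\exp$ to obtain the Iwasawa decomposition of $\bar G$ is arguably cleaner than the paper's appeal to Lemma~\ref{IWasawaandquotientbycompact}.2, whose hypothesis literally asks for a \emph{compact} normal subgroup whereas $Z(G)$ is only discrete (the argument there still goes through because the Lie algebra of $Z(G)$ is trivial).
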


\begin{proof}

Let $Z(G)$ be the center of $G$. Then $Z(G)$ has rank $h$ by definition. By Fact~\ref{fact: Lie group decomp Iwasawa}.2, we have $Z(G)$ is a subset of $K$. Let $G'= G/ Z(G)$, and $K'= K/Z(G)$. Using Lemma~\ref{IWasawaandquotientbycompact}.2, we obtain $A'$ and $N'$ such that 
 $G'=K'A'N'$ is an Iwasawa decomposition.  Let $\rho: G \to G'$ be the quotient map. The group $Z(G)$ is discrete by Fact~\ref{fact: centerless1}, so $\rho$ and $\rho|_K$ are covering maps.

Now, the maximum dimension of a compact subgroup of $G$ is the same as that of $K$ by Lemma~\ref{IWasawaandquotientbycompact}.1.  Applying Lemma~\ref{lem: helixandnoncompact} to $K$, we have that $h=\dim K -m$. Note that $d = \dim(K)+\dim(AN)$ by Fact~\ref{fact: Lie group decomp Iwasawa}, so we also get  $d-m-h =\dim (AN)$.
\end{proof}

The next lemma discusses the noncompact Lie dimensions and the helix dimensions of a Lie group and its open subgroups.

\begin{lemma} \label{dimensionsforopensubgroup}
Suppose $G$ is a Lie group, and $G'$ is an open subgroup of $G$. Then $G$ and $G'$ have the same dimension, the same maximum dimension of a compact subgroup, and the same helix dimension.
\end{lemma}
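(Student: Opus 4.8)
The plan is to reduce all three assertions to the single observation that an open subgroup and the ambient group share the same identity component. First I would recall the standard fact that every open subgroup of a topological group is also closed: its complement is a union of (open) cosets. Hence $G'$ is clopen in $G$. Since the identity component $G_0$ is connected and contains $e$, the set $G_0 \cap G'$ is clopen in $G_0$ and nonempty, so $G_0 \subseteq G'$; conversely $(G')_0 \subseteq G_0$ because $(G')_0$ is a connected subset of $G$ containing $e$. Therefore $(G')_0 = G_0$. In particular $G$ and $G'$ have the same Lie algebra, which gives the equality of dimensions at once.

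For the maximum dimension of a compact subgroup, the key point is that this quantity depends only on the identity component. Given a compact subgroup $K \le G$, its identity component $K_0$ is a connected compact subgroup with $\dim K_0 = \dim K$ (the identity component of a compact group is a closed, hence compact, subgroup of full dimension), and $K_0 \subseteq G_0 \subseteq G'$. Thus every dimension realized by a compact subgroup of $G$ is realized by a compact subgroup of $G'$; the reverse inequality is immediate since a compact subgroup of $G'$ is a compact subgroup of $G$. Hence the two maxima coincide, and likewise both equal the corresponding maximum for $G_0$.

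For the helix dimension, recall that it is defined purely through the Levi decomposition $1 \to Q \to G_0 \to S \to 1$ of the identity component together with the rank of $Z(S)$. Since $(G')_0 = G_0$, this exact sequence, and hence the integer $h$, is literally the same for $G$ and for $G'$.

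There is no genuine obstacle here: the lemma is essentially a bookkeeping statement, and the only point that deserves to be written out carefully is the passage from a compact subgroup $K$ to its identity component $K_0$ — namely that $K_0$ is again a compact subgroup, that $\dim K_0 = \dim K$, and that $K_0 \subseteq G_0$. Once these are in place, all three equalities follow formally from $(G')_0 = G_0$.
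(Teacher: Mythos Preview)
Your proof is correct and follows essentially the same approach as the paper: both reduce the helix dimension claim to the equality $(G')_0 = G_0$, and both handle the compact-subgroup dimension by producing, from a compact $K \le G$, a compact subgroup of $G'$ of the same dimension. The only cosmetic difference is that the paper uses $K \cap G'$ (open in $K$, hence of full dimension and compact since $G'$ is closed) rather than your $K_0$, but this is the same idea.
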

\begin{proof}

It is clear that $G$ and $G'$ have the same dimension. Any compact subgroup of $G'$ is a compact subgroup of $G$. If $K$ is a compact subgroup of $G$, then $K \cap G'$ is an open subgroup of $K$, hence $K \cap G'$ has the same dimension as $K$. Therefore the maximum dimension of a compact subgroup of $G$ is the same as that of $G'$. Finally, note that $G$ and $G'$ have the same identity component $G_0$, and the helix dimension is defined using $G_0$. Thus, $G$ and $G'$ have the same helix dimension.
\end{proof}

The following lemma tells us the behavior of the radical under quotient by a compact normal subgroup.

\begin{lemma} \label{lem: Leviandquotientbycompact}
Suppose $G$ is a Lie group, $H$ is a compact normal subgroup of $G$, $G' = G/H$, and $\pi: G \to G'$ is the quotient map. Let $Q$ be the radical of $G$, and let $S= G/Q$. Then we have the following:
\begin{enumerate}
    \item  with $Q'=\pi(Q)$, and $S'= G'/ Q'$,  we have $HQ$ is closed in $G$, $Q'= HQ/H$, and $S'=G'/(HQ/H) =(G/H)/(HQ/H)$ is canonically isomorphic as a topological group to  both $G/HQ$ and $(G/Q)/(HQ/Q) = S/(HQ/Q)$;
    \item  $Q'$ is the radical of $G'$.
   \end{enumerate}
\end{lemma}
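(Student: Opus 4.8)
The plan is to establish part (1) first, then deduce part (2) from it together with the characterization of the radical as the preimage of a trivial radical. For part (1), I would start from the basic observation that $H$ is normal in $G$ and $Q$ is normal in $G$, so $HQ$ is a subgroup of $G$; moreover $HQ$ is closed because $H$ is compact and $Q$ is closed (the product of a compact set and a closed set in a topological group is closed — this is where compactness of $H$ is essential). Then $HQ/H$ is a closed subgroup of $G' = G/H$, and since $\pi$ is the quotient map, $\pi(Q) = QH/H = HQ/H$, giving $Q' = HQ/H$. The three canonical isomorphisms in (1) are then instances of the second and third isomorphism theorems for topological groups: $G'/(HQ/H) = (G/H)/(HQ/H) \cong G/HQ$ (third isomorphism theorem), and $G/HQ \cong (G/Q)/(HQ/Q) = S/(HQ/Q)$ (again third isomorphism theorem, rewriting $HQ$ as a subgroup of $G$ containing $Q$). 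One must check these are isomorphisms of topological groups, not just abstract groups, but that follows because all the maps involved are continuous, open quotient maps between locally compact second-countable groups, so the induced bijections are homeomorphisms.

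For part (2), I would use the defining property of the radical (Fact~\ref{fact: radical}): the radical of a Lie group is the unique maximal connected closed solvable normal subgroup, equivalently the preimage under $G' \to G'/\mathrm{rad}(G')$ of the trivial group, and $G'/\mathrm{rad}(G')$ is semisimple (has trivial radical). So it suffices to show two things: that $Q' = HQ/H$ is connected, closed, solvable, and normal in $G'$; and that $G'/Q'$ has trivial radical, i.e.\ is semisimple. Normality and closedness of $Q'$ are immediate from (1) and the fact that $\pi$ is a surjective open homomorphism. Connectedness: $Q$ is connected, so $\pi(Q) = Q'$ is connected as a continuous image of a connected set. Solvability: $Q'$ is a quotient of $Q$ (indeed $Q' \cong Q/(Q \cap H)$), and quotients of solvable groups are solvable. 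Finally, $G'/Q' \cong G/Q = S$ by the isomorphisms in (1), and $S$ is semisimple by definition of the radical of $G$, so $G'/Q'$ is semisimple and hence has trivial radical. Therefore $Q'$ is the radical of $G'$.

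The main obstacle I anticipate is not any single hard step but rather the bookkeeping of topological (as opposed to purely algebraic) isomorphisms: one needs the product $HQ$ to be closed, which genuinely uses compactness of $H$, and one needs the isomorphism theorems in their topological-group form, which requires knowing the relevant quotient maps are open and the groups are nice enough (Lie groups, hence locally compact and $\sigma$-compact) for a continuous bijective homomorphism with the appropriate properties to be a homeomorphism. A secondary point requiring a little care is that Fact~\ref{fact: radical} is stated for the radical of a \emph{Lie group}, and one must confirm $G' = G/H$ is again a Lie group — but this holds because $H$ is a closed (indeed compact) normal subgroup of the Lie group $G$, so $G/H$ is a Lie group. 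With these topological points handled, the argument is a routine application of the isomorphism theorems and the functorial behavior of the radical.
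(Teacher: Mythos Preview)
Your approach matches the paper's essentially line for line: closedness of $HQ$ via compactness of $H$ (Lemma~\ref{lemma:productofclosedandcompact}), the third isomorphism theorem for the identifications in (1), and for (2) the verification that $Q'$ is connected, closed, solvable, normal together with semisimplicity of $G'/Q'$.

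There is one slip in your argument for (2). You write that ``$G'/Q' \cong G/Q = S$ by the isomorphisms in (1)''. But the isomorphisms in (1) give $G'/Q' \cong G/HQ \cong S/(HQ/Q)$, not $G'/Q' \cong G/Q$. In general $H$ need not lie inside $Q$ (e.g.\ $H$ could be a compact simple group such as $\mathrm{SO}(3)$), so $HQ/Q$ is typically nontrivial and $G'/Q'$ is a proper quotient of $S$. The paper handles this correctly by noting only that $G'/Q'$ is a \emph{quotient} of the semisimple group $S$, and then invoking Fact~\ref{fact: preservationsemisimple} (quotients of semisimple Lie groups are semisimple). Your argument is easily repaired the same way; just replace the incorrect isomorphism claim with this observation.
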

\begin{proof}
We prove (1). As $H$ is compact, we get that $HQ$ is closed in $G$ by Lemma~\ref{lemma:productofclosedandcompact}. Then $Q' = HQ/H$, and $S'=G'/(HQ/H) =(G/H)/(HQ/H)$. The remaining part of (1) is a consequence of the third isomorphism theorem (Fact~\ref{fact: homomorphism facts}.3).

We next prove (2). As $Q'$ is a quotient of the solvable group $Q$, it is solvable. Moreover, $Q'$ is a connected  closed normal subgroup of $G'$ as $Q$ is a connected closed normal subgroup of $G$. By (1), $G'/Q'$ is a quotient of the semisimple group $S$. Hence, $G'/Q'$ is semisimple. Therefore, $Q'$ is the maximal connected solvable closed normal subgroup of $G'$. In other words,  $Q'$ is the radical of $G'$.
\end{proof}

The next lemma says in a Lie group, taking a quotient by a normal compact subgroup does not change the helix dimension. Doing so also does not change the difference between the dimension and the dimension of a maximal compact subgroup.

\begin{lemma} \label{dimensionunderquotientbycompactgroup}
Suppose $G$ is a Lie group, $H$ is a compact normal subgroup of $G$, and $G' = G/H$. Let $d$, $m$, and $h$ be the dimension, the maximal dimension of a compact subgroup, and the helix dimension of $G$,  respectively. Define $d'$, $m'$, and $h'$ likewise for $G'$. Then:
\begin{enumerate}
    \item $d= d'+\dim(H)$ and $m= m'+\dim(H)$;
    \item $h= h'$.
\end{enumerate}
\end{lemma}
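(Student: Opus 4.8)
The plan is to prove (1) and (2) separately, leaning on the structural lemmas already established. For part (1), the statement $d = d' + \dim(H)$ is immediate from the exact sequence $1 \to H \to G \to G' \to 1$ of Lie groups: since $H$ is a closed subgroup, $\dim G = \dim H + \dim(G/H)$. For $m = m' + \dim(H)$, the plan is to argue both inequalities. First I would take a maximal compact subgroup $K'$ of $G'$ realizing $m' = \dim K'$, and let $K = \pi^{-1}(K')$ where $\pi \colon G \to G'$ is the quotient map; then $K$ is a closed subgroup of $G$ sitting in an exact sequence $1 \to H \to K \to K' \to 1$ with $H$ and $K'$ compact, so $K$ is compact and $\dim K = \dim K' + \dim H = m' + \dim H$, giving $m \geq m' + \dim H$. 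Conversely, given a compact subgroup $K$ of $G$ with $\dim K = m$, its image $\pi(K)$ is a compact subgroup of $G'$, and since $\ker(\pi|_K) = K \cap H$ has dimension at most $\dim H$, we get $m' \geq \dim \pi(K) \geq \dim K - \dim H = m - \dim H$. Combining the two inequalities yields $m = m' + \dim H$.

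For part (2), the cleanest route is to reduce to the connected semisimple quotient and invoke Proposition~\ref{prop: helixandnoncompact} together with part (1). By definition the helix dimension of $G$ depends only on the identity component $G_0$, and likewise $h'$ depends only on $(G')_0 = \pi(G_0)$; moreover $H_0 := H \cap G_0$ is a compact normal subgroup of $G_0$ with $G_0/H_0 \cong (G')_0$ (up to the usual identifications), so I may assume $G$ is connected. Let $Q$ be the radical of $G$ and set $S = G/Q$, a connected semisimple Lie group whose center has rank $h$ by definition. By Lemma~\ref{lem: Leviandquotientbycompact}, $Q' = \pi(Q) = HQ/H$ is the radical of $G'$, and $S' = G'/Q'$ is canonically isomorphic to $S/(HQ/Q)$; note $HQ/Q$ is the image of the compact group $H$ in $S$, hence a compact normal subgroup of $S$. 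So the problem reduces to: a connected semisimple Lie group $S$ and a compact normal subgroup $L := HQ/Q$ of $S$, with $S' = S/L$; show $S$ and $S'$ have the same helix dimension.

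For this final reduction I would use Proposition~\ref{prop: helixandnoncompact}, which says the helix dimension of a connected semisimple Lie group equals $\dim K - m$ for an Iwasawa decomposition $KAN$, equivalently $\dim S - m_S - h_S = \dim(AN)$ where $m_S$ is the maximal compact subgroup dimension. By Lemma~\ref{IWasawaandquotientbycompact}.1 there is an Iwasawa decomposition $S = KAN$ with $L \leq K$, and by Lemma~\ref{IWasawaandquotientbycompact}.2 there is an Iwasawa decomposition $S' = K'A'N'$ with $K' = \pi(K)$; functoriality of the quotient then gives $A' \cong A$ and $N' \cong N$, so $\dim(A'N') = \dim(AN)$. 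Applying Proposition~\ref{prop: helixandnoncompact} to both $S$ and $S'$: the helix dimension of $S$ is $\dim S - m_S - \dim(AN)$ read off as $\dim K - m_S$, and that of $S'$ is $\dim K' - m_{S'}$. Now $\dim K' = \dim K - \dim L$ since $L \leq K$ is a closed subgroup, and by part (1) applied to the compact pair $1 \to L \to K \to K' \to 1$ (or directly, since $K$ is compact) we have $m_{S'} = m_S - \dim L$; hence $\dim K' - m_{S'} = (\dim K - \dim L) - (m_S - \dim L) = \dim K - m_S$, so $h(S) = h(S')$, i.e.\ $h = h'$.

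The main obstacle I anticipate is the bookkeeping in the reduction to the connected case and the careful matching of Iwasawa data under the quotient: one must verify that the canonical isomorphisms of Lemma~\ref{lem: Leviandquotientbycompact} interact correctly with the Iwasawa decompositions produced by Lemma~\ref{IWasawaandquotientbycompact}, in particular that $\pi$ really does restrict to an isomorphism on the $AN$ part (this follows because $L \leq K$ meets $AN$ trivially, as $AN \cap K = \{e\}$ in an Iwasawa decomposition). None of the individual steps is deep given the lemmas already proved, but the argument requires threading several exact sequences together without dimension-counting errors; the identity $m = m' + \dim H$ from part (1) is what makes the helix computation close up exactly.
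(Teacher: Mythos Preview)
Your proposal is correct and follows essentially the same route as the paper: part~(1) via the preimage/image argument on compact subgroups, and part~(2) by reducing first to the connected case (using $(G')_0 = \pi(G_0) \cong G_0/(G_0\cap H)$), then via Lemma~\ref{lem: Leviandquotientbycompact} to the connected semisimple case, and finally invoking Lemma~\ref{IWasawaandquotientbycompact} and Proposition~\ref{prop: helixandnoncompact} together with part~(1) to compare $\dim K - m_S$ with $\dim K' - m_{S'}$. One small caveat: your parenthetical ``since $K$ is compact'' is not generally true (only when $Z(S)$ is finite), and Lemma~\ref{IWasawaandquotientbycompact}.2 does not literally assert $A'\cong A$, $N'\cong N$; but neither point is needed, since your actual computation only uses $m_{S'} = m_S - \dim L$ (which is part~(1) applied to $S$ and $L$) and $\dim K' = \dim K - \dim L$.
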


\begin{proof}

We prove (1). Clearly, $d=d'+\dim(H)$. If $K$ is a compact subgroup of $G$ and $K'=\pi(K)$, then $K'$ is a compact subgroup of $G'$, then $\dim(K') +\dim(H) = \dim(K)$. Conversely, if $K'$ is a compact subgroup of $G'$, then $K=\pi^{-1}(K')$ is a compact subgroup of $G$ by Lemma~\ref{lem: inverse image of compact sets}, and Lemma~\ref{lem: helixandnoncompact} implies that $\dim(K)=\dim(K')+\dim(H)$. Therefore, $m=m'+\dim(H)$.

We now prove (2). First further assume that both $G$ and $G'$ are semisimple. Let $\pi: G \to G'$ be the quotient map. 
Using Lemma~\ref{IWasawaandquotientbycompact}.1, we obtain an Iwasawa decomposition $G=KAN$ of $G$ such that $H \subseteq K$.  By Lemma~\ref{IWasawaandquotientbycompact}.2, we obtain an Iwasawa decomposition $G'=K'A'N'$ with $K' = \pi(K)$, $A' = \pi(A)$, and $N' = \pi(N)$. Let $m_K$ be the maximum dimension of a compact subgroup of $K$, and $m'_K$ be the maximum dimension of a compact subgroup of $K'$. By Proposition~\ref{prop: helixandnoncompact}, $m_K+h =\dim(K)$, and $m_{K'}+h'= \dim(K')$. Now, by (1) applied to $K$, we have $m_K = {m_K'}+\dim(H)$. Therefore, we get $h=h'$.

Next,  consider the case where $G$ is connected. Let $Q$ be the radical of $G$, $S=G/Q$, $Q'=\pi(Q)$, and $S'= G'/Q'$. Then by Lemma~\ref{lem: Leviandquotientbycompact}.2, $Q'$ is the radical of $G'$. Hence, it suffices to show that $S$ and $S'$ have the same helix dimension. By Lemma~\ref{lem: Leviandquotientbycompact}.1, $S'$ is isomorphic as a topological group to $S/(HQ/Q)$. Note that $HQ/Q$ is isomorphic as a topological group to $H/(H\cap Q)$ by the second isomorphism theorem for Lie groups (Fact~\ref{fact: iso theorems Lie}.2). In particular, $HQ/Q$  is compact, and $S'$ is the quotient of $S$ by a compact group. Applying the known case for semisimple groups, we get the desired conclusion.

Finally, we address the general case. Let $G_0$ be the identity component of $G$. Then $G_0$ is open by Fact~\ref{factLieid}, and $G_0H/H$ is an open subgroup of $G'=G/H$. Hence, by Lemma~\ref{dimensionsforopensubgroup}, $G$ has the same helix dimension as $G_0$, and $G'$ has the same helix dimension as $G_0H/H$. By the second isomorphism theorem (Fact~\ref{fact: homomorphism facts}.2), $G_0H/H$ is isomorphic as a topological group to $G_0/(G_0\cap H)$, which is a quotient of $G_0$ by a compact subgroup. Thus, we get the desired conclusion for the general case from the known case discussed above for connected groups.
\end{proof}

\begin{lemma}\label{lem: Common Lie refinement} 
Suppose $G$ is an almost-Lie group, $H_1$ and $H_2$ are closed normal subgroups of $G$ such that $G/H_1$ and $G/H_2$ are Lie groups, and $H= H_1 \cap H_2$. Then $G/H$ is a Lie group. 
\end{lemma}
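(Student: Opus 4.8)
The plan is to use the \emph{almost Lie} hypothesis to fix a compact normal subgroup $N$ of $G$ with $G/N$ a Lie group, and then to exhibit $G/H$ as an extension
\[
1 \longrightarrow M \longrightarrow G/H \longrightarrow (G/H)/M \longrightarrow 1
\]
in which $M := NH/H$ is a \emph{compact} Lie group and the quotient $(G/H)/M$ is a Lie group; the conclusion will then follow from the fact that a locally compact group which is such an extension has no small subgroups and hence is a Lie group.

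I would begin with the routine reductions. Since $N$ is compact and $H=H_1\cap H_2$ is closed, $NH$ is a closed normal subgroup of $G$ by Lemma~\ref{lemma:productofclosedandcompact}; hence $M=NH/H$ is a well-defined compact normal subgroup of $G/H$, and by the second isomorphism theorem (Fact~\ref{fact: homomorphism facts}) it is isomorphic as a topological group to $N/(N\cap H)$. Likewise $NH/N$ is a closed normal subgroup of the Lie group $G/N$, so by the third isomorphism theorem $(G/H)/M \cong G/NH \cong (G/N)/(NH/N)$ is a Lie group.

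The one real point is to show that $M\cong N/(N\cap H)$ is a Lie group, and this is where $H_1$ and $H_2$ are used separately. For $i=1,2$ the group $N/(N\cap H_i)\cong NH_i/H_i$ is a compact, hence closed, subgroup of the Lie group $G/H_i$, so it is a compact Lie group by Cartan's closed subgroup theorem. Since $N\cap H=(N\cap H_1)\cap(N\cap H_2)$, the natural diagonal map
\[
N/(N\cap H)\longrightarrow N/(N\cap H_1)\times N/(N\cap H_2)
\]
is an injective continuous homomorphism; because its domain is compact and its target Hausdorff, it is a homeomorphism onto a closed subgroup of the compact Lie group on the right, which is again a Lie group by Cartan's theorem. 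Thus $M$ is a compact Lie group. The reason this works — and the reason the hypothesis that $G$ is almost Lie is invoked — is exactly that $N$ is compact: once we pass to $N$, the non-closed-image obstructions that make a direct embedding $G/H\hookrightarrow (G/H_1)\times(G/H_2)$ fail to be a topological embedding simply disappear.

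Finally I would assemble the pieces. The group $G/H$ is locally compact, contains the compact normal Lie subgroup $M$, and has $(G/H)/M$ a Lie group; combining a neighbourhood of the identity of $M$ containing no nontrivial subgroup with one of $(G/H)/M$ of the same kind produces such a neighbourhood in $G/H$, so $G/H$ has no small subgroups, and a locally compact group with no small subgroups is a Lie group (Fact~\ref{fact: Gleason}). I expect no serious obstacle here: the substance of the argument is the diagonal-embedding step of the previous paragraph, while the rest is bookkeeping with the isomorphism theorems and Cartan's closed subgroup theorem.
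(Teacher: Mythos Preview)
Your proof is correct, but it takes a genuinely different route from the paper's.

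The paper argues more directly: it first notes that $G/H$ is almost-Lie (Lemma~\ref{lem: almostliesubgroupandquotient}), and then builds an NSS neighbourhood in $G/H$ by pulling back NSS neighbourhoods $U_1\subseteq G/H_1$ and $U_2\subseteq G/H_2$ along the natural projections $p_i:G/H\to G/H_i$ and intersecting. If $K\subseteq p_1^{-1}(U_1)\cap p_2^{-1}(U_2)$ is a compact subgroup, then each $p_i(K)$ is trivial, forcing $\pi^{-1}(K)\subseteq H_1\cap H_2=H$, so $K$ is trivial; Fact~\ref{fact: Gleason}.2 then finishes. This is really the same ``diagonal into $G/H_1\times G/H_2$'' idea as yours, but executed at the level of open sets rather than subgroups, which lets the paper avoid introducing the auxiliary compact $N$, Cartan's closed-subgroup theorem, and the extension argument. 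Your approach, by contrast, makes the diagonal map into an honest topological embedding by first restricting to the compact $N$; this is a nice conceptual point and explains transparently \emph{why} compactness is what rescues the naive embedding $G/H\hookrightarrow G/H_1\times G/H_2$, at the cost of a longer argument.

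One small caveat: in your final sentence you cite Fact~\ref{fact: Gleason} for ``a locally compact group with no small subgroups is a Lie group''. As stated in the paper, Fact~\ref{fact: Gleason}.2 only gives this for almost-Lie groups. The fix is a single line: $G/H$ is a quotient of the almost-Lie group $G$, hence almost-Lie by Lemma~\ref{lem: almostliesubgroupandquotient}, so Fact~\ref{fact: Gleason}.2 applies. (Alternatively you may invoke the full Gleason theorem, but then you should say so rather than point to Fact~\ref{fact: Gleason}.)
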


\begin{proof}
By Fact~\ref{lem: almostliesubgroupandquotient}, $G/H$ is an almost-Lie group. In light of Fact~\ref{fact: Gleason}.2, we want to construct an open neighborhood  $U $ of the identity in $G/H$ that contains no nontrivial compact subgroup. Let $\pi: G \to G/H$, $\pi_1: G \to G/H_1$, and $\pi_2: G \to G/H_2$ be the quotient maps. Using Fact~\ref{fact: homomorphism facts}.3, we get continuous surjective group homomorphisms $p_1: G/H \to G/H_1$ and $p_2: G/H \to G/H_2$ such that 
\[
\pi_1 = p_1 \circ \pi \quad \text{and} \quad \pi_2= p_2 \circ \pi.
\]
As $G/H_1$ is a Lie group, we can use  Fact~\ref{fact: Gleason}.2 to choose an open neighborhood $U_1$ of the identity in $G/H_1$ such that $U_1$ contains no nontrivial compact subgroup of $G/H_1$. Choose an open neighborhood $U_2$ of the identity in $G/H_2$ likewise, and set 
\[
U = p_1^{-1}(U_1) \cap p_2^{-1}(U_2).
\]
If $K \subseteq U$ is a compact subgroup of $G/H$, then $p_1(K)$ is a compact subgroup of $U_1$. By our choice of $U_1$,  $p_1(K) =\{ \id_{G/H_1}\}$, which implies that $\pi_1^{-1}(p_1(K)) = \pi^{-1}(K)$ is a subgroup of $H_1$. A similar argument yields that $\pi_2^{-1}(p_2(K)) = \pi^{-1}(K)$ is a subgroup of $H_2$. Hence, $\pi^{-1}(K)$ must be a subgroup of $H = H_1 \cap H_2$. It follows that $K =
\{ \id_{G/H}\}$, which is the desired conclusion.
\end{proof}

Proposition~\ref{prop: welldefinednomcompactdim} below ensures us the notion of noncompact Lie dimension and helix dimension of a locally compact group as described in the introduction are well defined.

\begin{proposition} \label{prop: welldefinednomcompactdim}
Suppose $G'$ is an open subgroup of a locally compact group  $G$, and $H \vartriangleleft G'$ is compact such that $G'/H$ is a Lie group with dimension $d$, with  $m$ the maximum dimension of a compact subgroup, and helix dimension $h$. Then $d-m$ and $h$ are independent of the choice of $G'$ and $H$.
\end{proposition}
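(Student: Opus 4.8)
The plan is to reduce any two admissible pairs $(G_1',H_1)$ and $(G_2',H_2)$ first to a common open subgroup, and then to a common compact normal subgroup, at which point the two invariants $d-m$ and $h$ can be read off directly from the lemmas already proved.

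First I would pass to the open subgroup $G' = G_1' \cap G_2'$, which is open in $G$ and hence open in each $G_i'$. For $i=1,2$ the group $N_i := G' \cap H_i$ is a compact normal subgroup of $G'$, and since the quotient map $G_i' \to G_i'/H_i$ is open and $G'$ is open in $G_i'$, the image $G'H_i/H_i$ is an open subgroup of the Lie group $G_i'/H_i$. As $H_i$ is compact and $G'$ is clopen (so $G'H_i$ is closed by Lemma~\ref{lemma:productofclosedandcompact}), the second isomorphism theorem (Fact~\ref{fact: homomorphism facts}.2) identifies $G'H_i/H_i$ with $G'/N_i$ as topological groups; in particular $G'/N_i$ is a Lie group. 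By Lemma~\ref{dimensionsforopensubgroup}, the open subgroup $G'H_i/H_i$ has the same dimension, the same maximal dimension of a compact subgroup, and the same helix dimension as $G_i'/H_i$. Hence the quantities ``$d-m$'' and ``$h$'' attached to $(G_i',H_i)$ coincide with those attached to $(G',N_i)$, and it suffices to treat the case of a fixed open subgroup $G'$ together with two compact normal subgroups $N_1, N_2 \vartriangleleft G'$ with $G'/N_1$ and $G'/N_2$ Lie.

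Next I would set $N = N_1 \cap N_2$, a compact normal subgroup of $G'$. Since $G'$ carries the compact normal subgroup $N_1$ with Lie quotient $G'/N_1$, it is an almost Lie group, so Lemma~\ref{lem: Common Lie refinement} applies and $G'/N$ is a Lie group. For each $i$, $N_i/N$ is a compact normal subgroup of $G'/N$, and by the third isomorphism theorem (Fact~\ref{fact: homomorphism facts}.3) one has $(G'/N)/(N_i/N) \cong G'/N_i$ as topological groups. Applying Lemma~\ref{dimensionunderquotientbycompactgroup} to the Lie group $G'/N$ and its compact normal subgroup $N_i/N$ gives
\[
d(G'/N_i) - m(G'/N_i) = d(G'/N) - m(G'/N) \qquad\text{and}\qquad h(G'/N_i) = h(G'/N).
\]
The right-hand sides are independent of $i$, so $d(G'/N_1)-m(G'/N_1) = d(G'/N_2)-m(G'/N_2)$ and $h(G'/N_1) = h(G'/N_2)$, which combined with the reduction of the previous paragraph yields the claim.

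I do not expect a conceptual obstacle here; the substance of the statement is carried entirely by Lemmas~\ref{dimensionsforopensubgroup}, \ref{dimensionunderquotientbycompactgroup}, and \ref{lem: Common Lie refinement}. The only genuinely delicate points are bookkeeping: one must be careful that the identifications $G'H_i/H_i \cong G'/N_i$ and $(G'/N)/(N_i/N) \cong G'/N_i$ are isomorphisms of \emph{topological} groups (it is the compactness of $H_i$ and $N_i$, together with the openness of $G'$ in $G_i'$, that upgrades the abstract isomorphism theorems to topological ones), and that the hypothesis needed to invoke Lemma~\ref{lem: Common Lie refinement} — namely that $G'$ is an almost Lie group — is indeed verified by the reduced data.
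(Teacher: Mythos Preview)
Your argument follows the same two-step strategy as the paper --- reduce to a common open subgroup, then to a common compact normal kernel via Lemma~\ref{lem: Common Lie refinement} and Lemma~\ref{dimensionunderquotientbycompactgroup} --- and the overall structure is correct. There is, however, one false step: the sentence ``Since $G'$ carries the compact normal subgroup $N_1$ with Lie quotient $G'/N_1$, it is an almost Lie group'' is not true in general. A locally compact group can have a single compact normal subgroup with Lie quotient without being almost-Lie in the sense of the paper (every neighborhood of the identity must contain such a subgroup). For a concrete counterexample, take $K=\prod_{n\in\ZZ}\ZZ/2$ with the shift action of $\ZZ$ and set $G=K\rtimes\ZZ$; then $G/K\cong\ZZ$ is Lie, but the only shift-invariant subgroup of $K$ contained in the open set $\{x:x_0=0\}$ is trivial, and $G$ itself is not Lie. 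Hence the hypothesis of Lemma~\ref{lem: Common Lie refinement} is not verified by your reasoning.

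The paper avoids this by not taking $G'=G_1'\cap G_2'$ directly: it first applies the Gleason--Yamabe theorem (Fact~\ref{fact: Gleason}.1) to extract an open almost-Lie subgroup $G'$ of $G_1'\cap G_2'$, and only then runs your second paragraph. With that one change your proof goes through verbatim. Alternatively, you can keep $G'=G_1'\cap G_2'$ and argue directly that $G'/N$ is Lie: the proof of Lemma~\ref{lem: Common Lie refinement} still shows $G'/N$ has no small subgroups, and then Gleason--Yamabe gives an open almost-Lie (hence, by Fact~\ref{fact: Gleason}.2, Lie) subgroup of $G'/N$, so $G'/N$ is locally Euclidean and therefore Lie. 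Either route is a one-line fix; the rest of your bookkeeping is correct.
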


\begin{proof}

We first prove a simpler statement: if $G'$ is an almost-Lie subgroup of $G$, $H$ is a compact subgroup of $G'$, and we define $d$, $m$, and $h$ as in the statement of the proposition, then $d-m$ and $h$ are independent of the choice of $H$.  Let $H_1$ and $H_2$ be compact normal subgroups of $G$ such that both $G/H_1$ and $G/H_2$ are Lie groups. Then by Lemma~\ref{lem: Common Lie refinement}, $G/(H_1\cap H_2)$ is also a Lie group. Note that $G/H_1$ and $G/H_2$ are quotients of $G/(H_1 \cap H_2)$ by compact subgroups by the third isomorphism theorem (Fact~\ref{fact: homomorphism facts}.3). Hence, it follows from Lemma~\ref{dimensionunderquotientbycompactgroup} that $G/H_1$ and $G/(H_1 \cap H_2)$ have the same difference between the dimension and the maximum dimension of a compact subgroup, and the same helix dimension. A similar statement holds for $G/H_2$ and $G/(H_1 \cap H_2)$. This completes the proof of the simpler statement.

Now we prove the proposition in general.
Let $G'_1$ and $G'_2$ be open subgroups of $G$, and let $H_1$ and $H_2$ be compact normal subgroups of $G'_1$ and $G'_2$ respectively such that $G'_1/H_1$ and $G'_2/H_2$ are Lie groups.  Using the Gleason--Yamabe Theorem (Fact~\ref{fact: Gleason}), we get an open subgroup $G'$ of $G_1' \cap G_2'$ which is an almost-Lie group. Then $G'$ is an open subgroup of $G$. Note that $G'\cap H_1$ and $G'\cap H_2$ are compact subgroups of $G'$. Then $G'/(G' \cap H_1)$ is an open subgroup of $G'_1/H_1$.  It follows from Lemma~\ref{dimensionunderquotientbycompactgroup} that $G'/H_1$ and $G'/(G' \cap H_1)$ have the same difference between the dimension and the maximum dimension of a compact subgroup and the same helix dimension. A similar statement holds for $G'/H_2$ and $G'/(G' \cap H_2)$. Thus, from the simpler statement we proved in the preceding paragraph, $G_1'/H_1$ and $G_2'/H_2$ have the same noncompact dimension and the same helix dimension.
\end{proof}

We have the following two corollaries. 

\begin{corollary} \label{cor: prop 2 (1)}
If $H$ is an open subgroup of $G$, then $H$ has the same noncompact Lie dimension and helix dimension as $G$. 
\end{corollary}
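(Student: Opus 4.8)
The statement to prove is Corollary~\ref{cor: prop 2 (1)}: if $H$ is an open subgroup of $G$, then $H$ has the same noncompact Lie dimension and helix dimension as $G$. The strategy is to reduce everything to a single Lie-group quotient shared by $G$ and $H$, and then invoke the well-definedness already established in Proposition~\ref{prop: welldefinednomcompactdim} together with the open-subgroup invariance at the Lie level in Lemma~\ref{dimensionsforopensubgroup}.

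\textbf{Main steps.} First, I would apply the Gleason--Yamabe Theorem (Fact~\ref{fact: Gleason}) to $G$ to obtain an open subgroup $G'$ of $G$ and a compact normal subgroup $N \vartriangleleft G'$ with $G'/N$ a Lie group; by definition, the noncompact Lie dimension and helix dimension of $G$ are computed from $G'/N$, and Proposition~\ref{prop: welldefinednomcompactdim} guarantees this does not depend on the choice. Next, set $G'' = G' \cap H$. Since $H$ is open in $G$ and $G'$ is open in $G$, the intersection $G''$ is open in $G$, hence also open in $H$ and open in $G'$. Put $N'' = N \cap G''$, a compact normal subgroup of $G''$. Then $G''/N'' = (G' \cap H)/(N \cap G' \cap H)$ is, by the second isomorphism theorem, canonically isomorphic (as a topological group) to the subgroup $N G''/N$ of $G'/N$; since $G''$ is open in $G'$, this image is an open subgroup of the Lie group $G'/N$, and therefore $G''/N''$ is itself a Lie group. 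So $G''$ is an open subgroup of $H$ with a compact normal subgroup $N''$ whose quotient is a Lie group, which means the noncompact Lie dimension and helix dimension of $H$ can be computed from $G''/N''$ (again using Proposition~\ref{prop: welldefinednomcompactdim}). Finally, since $G''/N''$ is an open subgroup of the Lie group $G'/N$, Lemma~\ref{dimensionsforopensubgroup} tells us the two Lie groups have the same dimension, the same maximum dimension of a compact subgroup, and the same helix dimension; hence the same $d - m$ and the same $h$. Chaining these equalities gives that $G$ and $H$ share both invariants.

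\textbf{Expected obstacle.} The only mildly delicate point is verifying that $G''/N''$ is genuinely a Lie group and that it realizes the invariants of $H$ in the sense required by Proposition~\ref{prop: welldefinednomcompactdim} --- i.e., that $G''$ is an \emph{open} subgroup of $H$ (not merely a subgroup) and $N''$ is compact and normal in $G''$. Openness of $G'' = G' \cap H$ in $H$ is immediate because $G'$ is open in $G$; compactness of $N'' = N \cap G''$ is clear since $N$ is compact and $G''$ is closed in $G'$ (being open, hence closed, or simply because it is a subgroup that is closed in the subspace topology of the open set); normality of $N''$ in $G''$ follows from normality of $N$ in $G'$. The identification of $G''/N''$ with an open subgroup of $G'/N$ is then the second isomorphism theorem (Fact~\ref{fact: homomorphism facts}.2 or the Lie-group version Fact~\ref{fact: iso theorems Lie}.2). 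No new analytic input is needed beyond what was used in proving Proposition~\ref{prop: welldefinednomcompactdim}, so this is essentially a bookkeeping argument.

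\begin{proof}
Apply the Gleason--Yamabe Theorem (Fact~\ref{fact: Gleason}) to obtain an open subgroup $G'$ of $G$ and a compact normal subgroup $N \vartriangleleft G'$ such that $G'/N$ is a Lie group. By Proposition~\ref{prop: welldefinednomcompactdim}, the noncompact Lie dimension and helix dimension of $G$ equal $d - m$ and $h$, where $d$, $m$, $h$ are the dimension, maximum dimension of a compact subgroup, and helix dimension of $G'/N$.

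Set $G'' = G' \cap H$. Since $G'$ is open in $G$, $G''$ is an open subgroup of $H$; likewise $G''$ is an open subgroup of $G'$. Put $N'' = N \cap G''$. Then $N''$ is a compact normal subgroup of $G''$. By the second isomorphism theorem for topological groups (Fact~\ref{fact: homomorphism facts}.2), $G''/N'' = G''/(N \cap G'')$ is canonically isomorphic as a topological group to $N G''/N$, which is an open subgroup of the Lie group $G'/N$ because $G''$ is open in $G'$. In particular, $G''/N''$ is a Lie group. Thus $G''$ is an open subgroup of $H$ with a compact normal subgroup $N''$ whose quotient $G''/N''$ is a Lie group, so by Proposition~\ref{prop: welldefinednomcompactdim} the noncompact Lie dimension and helix dimension of $H$ are computed from $G''/N''$.

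Finally, $G''/N''$ is (isomorphic to) an open subgroup of $G'/N$, so by Lemma~\ref{dimensionsforopensubgroup} the two Lie groups $G''/N''$ and $G'/N$ have the same dimension, the same maximum dimension of a compact subgroup, and the same helix dimension. Hence they yield the same value of $d - m$ and the same helix dimension $h$. Combining with the two previous paragraphs, $G$ and $H$ have the same noncompact Lie dimension and the same helix dimension.
\end{proof}
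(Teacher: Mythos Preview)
Your proof is correct and follows essentially the same approach as the paper: both arguments produce a common open subgroup of $G$ and $H$ admitting a compact normal subgroup with Lie quotient, then appeal to Proposition~\ref{prop: welldefinednomcompactdim}. The paper's version is terser (it simply notes that Gleason--Yamabe yields a common almost-Lie open subgroup), while you spell out the construction $G'' = G' \cap H$, $N'' = N \cap G''$ and invoke Lemma~\ref{dimensionsforopensubgroup} explicitly; the content is the same.
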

\begin{proof}
Proposition~\ref{prop: welldefinednomcompactdim} implies that the noncompact Lie dimension and helix dimension of a locally compact group is the same as those of its open almost-Lie subgroups, if those exist. Hence, it suffices to show that there is a common  almost-Lie open subgroup of $G$ and $H$. This is an immediate consequence of the Gleason--Yamabe Theorem (Fact~\ref{fact: Gleason}.1).
\end{proof}

\begin{corollary} \label{cor: prop 2 (2)}
If $H$ is  a compact normal subgroup of $G$, then $G/H$ has the same noncompact Lie dimension and helix dimension as $G$. 
\end{corollary}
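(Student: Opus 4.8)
The plan is to reduce the statement to two facts already in hand: the invariance of the noncompact Lie dimension and helix dimension under passage to an open subgroup (Corollary~\ref{cor: prop 2 (1)}), and their invariance when a \emph{Lie} group is divided by a compact normal subgroup (Lemma~\ref{dimensionunderquotientbycompactgroup}). First I would apply the Gleason--Yamabe Theorem (Fact~\ref{fact: Gleason}.1) to fix an open subgroup $G'$ of $G$ that is an almost-Lie group, together with a compact normal subgroup $N \vartriangleleft G'$ with $G'/N$ a Lie group. By Proposition~\ref{prop: welldefinednomcompactdim} together with the definitions in the introduction, the noncompact Lie dimension of $G$ is $\dim(G'/N)-m(G'/N)$ (with $m$ the maximal dimension of a compact subgroup) and the helix dimension of $G$ is the helix dimension of $G'/N$.

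Next I would exhibit a convenient open almost-Lie subgroup of $G/H$. Let $\pi\colon G\to G/H$ be the quotient map, which is open, and set $H'=G'\cap H$, a compact normal subgroup of $G'$. Since $G'$ is open, $G'H=\bigcup_{h\in H}G'h$ is open in $G$, so $\pi(G')=G'H/H$ is an open subgroup of $G/H$, and by the second isomorphism theorem (Fact~\ref{fact: homomorphism facts}.2) it is isomorphic as a topological group to $G'/H'$. By Corollary~\ref{cor: prop 2 (1)}, the noncompact Lie dimension and helix dimension of $G/H$ are those of $G'/H'$.

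The heart of the argument is then to produce a common Lie quotient of $G'/N$ and $G'/H'$. I would take the set $NH'$, which is a subgroup of $G'$ because $N$ and $H'$ are both normal, is compact as a product of two compact sets, hence closed, and is normal in $G'$. Thus $NH'/H'$ is a compact normal subgroup of $G'/H'$, and by the third isomorphism theorem (Fact~\ref{fact: homomorphism facts}.3) we have $(G'/H')/(NH'/H')\cong G'/NH'\cong (G'/N)/(NH'/N)$; since $NH'/N\cong H'/(H'\cap N)$ is compact, $G'/NH'$ is a quotient of the Lie group $G'/N$ by a closed normal subgroup and hence is a Lie group. Applying Proposition~\ref{prop: welldefinednomcompactdim} to $G/H$ via the open subgroup $G'H/H\cong G'/H'$ and the compact normal subgroup $NH'/H'$ shows that the noncompact Lie dimension and helix dimension of $G/H$ equal $\dim(G'/NH')-m(G'/NH')$ and the helix dimension of $G'/NH'$. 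On the other hand, $G'/NH'=(G'/N)/(NH'/N)$ is the quotient of the Lie group $G'/N$ by the compact normal subgroup $NH'/N$, so Lemma~\ref{dimensionunderquotientbycompactgroup} gives that $G'/NH'$ and $G'/N$ have the same value of $\dim-m$ and the same helix dimension. Chaining these equalities with the first paragraph yields the corollary.

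I expect the only genuinely delicate point to be the bookkeeping required to legitimately invoke Proposition~\ref{prop: welldefinednomcompactdim} for $G/H$: one must verify that $NH'$ is a closed normal subgroup of $G'$, that $NH'/H'$ is a bona fide compact normal subgroup of $G'/H'$ with Lie quotient, and that the isomorphisms above are isomorphisms of topological groups so that the open subgroup $G'H/H$ of $G/H$ genuinely witnesses the definition. Everything else is a routine diagram chase through the isomorphism theorems (Fact~\ref{fact: homomorphism facts}) combined with the two cited invariance results.
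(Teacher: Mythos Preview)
Your argument is correct and follows essentially the same strategy as the paper: reduce via Gleason--Yamabe and the second isomorphism theorem to an open almost-Lie subgroup, and then compare Lie quotients. The packaging differs slightly: the paper organizes the proof in three stages (first the case where $G/H$ is already Lie, invoking the definition directly; then the case where $G/H$ has a compact normal $K$ with Lie quotient, pulling $K$ back via $\pi^{-1}$; finally the general case), whereas you do everything in one pass by explicitly constructing the compact normal subgroup $NH'\vartriangleleft G'$ and invoking Lemma~\ref{dimensionunderquotientbycompactgroup} for the Lie comparison $G'/N$ versus $G'/NH'$. Both routes rest on the same ingredients, and your version is arguably more direct.
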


\begin{proof}
Let $\pi$ be the projection from $G$ to $G/H$. If $G/H$ is a Lie group, then from the definitions, $G$ has the same noncompact Lie dimension and helix dimension as $G/H$. Hence, the conclusion holds in this special case. 

Suppose there is a compact $K \vartriangleleft G/H$ such that $(G/H)/K$ is a Lie group, then $(G/H)/K$ is isomorphic as a topological group to $G/ \pi^{-1}(K)$ by the third isomorphism theorem (Fact~\ref{fact: homomorphism facts}.3). By Lemma~\ref{lem: inverse image of compact sets}, $\pi^{-1}(K)$ is compact. Hence $(G/H)/K$ is a quotient of $G$ by a compact normal subgroup, and we can use the previous case to get the desired conclusion. 
 
 Now we treat the general situation. By the Gleason--Yamabe Theorem, we get an almost-Lie open subgroup $G'$ of $G$. Then $G'H$ is an open subgroup of $G$ and hence has the same noncompact Lie dimension and helix dimension as $G$ by Corollary~\ref{cor: prop 2 (1)}. By the second isomorphism theorem (Fact~\ref{fact: homomorphism facts}.2), we get that $G' / (G' \cap H)$ is isomorphic to  $G'H/H$ which is an open subgroup of $G/H$. In particular, $G' / (G' \cap H)$ has the same noncompact Lie dimension and helix dimension as $G/H$ by Corollary~\ref{cor: prop 2 (1)}. Note that $G' / (G' \cap H)$ is an almost-Lie group by Fact~\ref{lem: almostliesubgroupandquotient}. Hence, we can find a compact subgroup $K$ such that $(G'/(G' \cap H))/K$ is a Lie group. We are back to the earlier known situation in the second paragraph.
\end{proof}

We have the following lemma about Iwasawa decompositions. 

\begin{lemma} \label{lem: IWasawaexact}
Suppose  $1 \to H \to G \overset{\pi\ }{\to} G/H \to 1$ is an exact sequence of connected semisimple Lie groups. Then there are Iwasawa decompositions $G=KAN$, $H=K_1A_1N_1$, and $G/H= K_2A_2N_2$ such that $K_1 = (K \cap H)_0$, and $K_2= \pi(K)$.
\end{lemma}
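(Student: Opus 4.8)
The plan is to combine the two parts of Lemma~\ref{IWasawaandquotientbycompact} with a choice of maximal compact subgroup that is adapted to the subgroup $H$. First I would invoke Lemma~\ref{IWasawaandquotientbycompact}.1: since $H$ is a connected semisimple Lie subgroup of $G$, it has a maximal compact subgroup, say $L$; applying part (1) with the roles played by a maximal compact subgroup of $H$ gives an Iwasawa decomposition $G = KAN$ with $L \leq K$. The point is then to arrange that $L$ is exactly $(K \cap H)_0$. Since $K \cap H$ is a compact subgroup of $H$ containing the maximal compact $L$, and $L$ is connected, we get $L = (K\cap H)_0$ automatically once we know $L$ is a maximal compact subgroup of $H$ — here one uses that $H$ is connected, so its maximal compact subgroups are connected (Fact about Iwasawa/maximal compact), hence $(K \cap H)_0 \leq (K\cap H)$ is a compact subgroup of $H$ sitting between $L$ and some maximal compact, forcing equality. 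Then restricting the Cartan involution $\tau$ of $\mathfrak{g}$ that fixes $\mathfrak{k}$ to the subalgebra $\mathfrak{h}$: exactly as in the proof of Lemma~\ref{IWasawaandquotientbycompact}.2 (the orthogonal-decomposition argument with $\kappa_{\mathfrak{g}} = \kappa_{\mathfrak{h}} \oplus \kappa_{\mathfrak{d}}$), $\tau|_{\mathfrak{h}}$ is a Cartan involution of $\mathfrak{h}$ whose fixed subalgebra is $\mathrm{Lie}(L)$, so by Fact~\ref{fact: Lie group decomp Iwasawa}.2 we obtain $A_1, N_1$ with $H = K_1 A_1 N_1$ an Iwasawa decomposition and $K_1 = L = (K\cap H)_0$.

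For the quotient, since $H$ is normal in $G$ and $H \leq \pi^{-1}$ of the identity, I would apply Lemma~\ref{IWasawaandquotientbycompact}.2 — but that lemma is stated for a \emph{compact} normal subgroup, whereas here $H$ need not be compact. So instead I would repeat the algebraic construction directly at the Lie-algebra level: with $\mathfrak{g} = \mathfrak{h} \oplus \mathfrak{d}$ the $\kappa_{\mathfrak{g}}$-orthogonal decomposition, $\mathfrak{d} \cong \mathfrak{g}/\mathfrak{h} = \mathrm{Lie}(G/H)$, and $\tau|_{\mathfrak{d}}$ is a Cartan involution of $\mathfrak{d}$ (again by the positive-definiteness argument, restricting the positive definite form $-\kappa_{\mathfrak{g}}(x,\tau y)$ to $\mathfrak{d}$). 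Push $\tau|_{\mathfrak{d}}$ forward along the quotient map $\mathfrak{g}\to\mathfrak{g}/\mathfrak{h}$ to get a Cartan involution $\tau_2$ of $\mathrm{Lie}(G/H)$; its fixed subalgebra is the image of $\mathfrak{c} := \mathfrak{k}\cap\mathfrak{d}$, which corresponds to $\pi(K)$ via functoriality of $\exp$ (Fact~\ref{fact:functoriality of exponential function}). Then Fact~\ref{fact: Lie group decomp Iwasawa}.2 yields $A_2, N_2$ with $G/H = K_2 A_2 N_2$ and $K_2 = \pi(K)$.

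The main obstacle I anticipate is precisely the normality/compactness mismatch: Lemma~\ref{IWasawaandquotientbycompact}.2 was only proved for compact normal $H$, so the quotient part must be redone rather than cited, and one must check that $\pi(K)$ is actually \emph{closed} (so that it is a genuine maximal compact subgroup of $G/H$ and not just an abstract subgroup). This should follow because $\pi(K)$ is the image of a compact set under a continuous map, hence compact, hence closed; then the identification $\mathrm{Lie}(\pi(K)) = $ image of $\mathfrak{c}$ together with connectedness of $K$ (as $G$ is connected and $K$ is a maximal compact of a connected semisimple group) forces $\pi(K) = \exp_{G/H}(\text{image of }\mathfrak{c})$, matching the $K_2$ produced by the Cartan involution $\tau_2$. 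A secondary, more routine point is verifying $K_1 = (K\cap H)_0$ rather than merely $K_1 \leq K \cap H$: here I use that $K_1$ is a \emph{maximal} compact subgroup of $H$ and is connected, so any compact subgroup of $H$ containing $K_1$ has the same identity component, giving $K_1 = (K\cap H)_0$. The rest is a transcription of the orthogonal-complement computation already carried out in the proof of Lemma~\ref{IWasawaandquotientbycompact}.
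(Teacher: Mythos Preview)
Your overall plan---work at the Lie-algebra level with the $\kappa_{\mathfrak g}$-orthogonal splitting $\mathfrak g=\mathfrak h\oplus\mathfrak d$ and produce compatible Cartan involutions on $\mathfrak h$ and $\mathfrak d\cong\mathfrak g/\mathfrak h$---is the right idea and is close to the paper's argument. But the direction you run it in creates two genuine gaps.

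First, you start from an Iwasawa decomposition of $G$ (obtained via Lemma~\ref{IWasawaandquotientbycompact}.1 applied to a maximal compact $L\leq H$) and then \emph{restrict} the resulting Cartan involution $\tau$ to $\mathfrak h$. For this to make sense you need $\tau(\mathfrak h)=\mathfrak h$. You justify this by pointing to the proof of Lemma~\ref{IWasawaandquotientbycompact}.2, but there the key line is ``since $\mathfrak h$ is a subalgebra of $\mathfrak k$, $\tau$ fixes $\mathfrak h$''---that step uses compactness of $H$ and does not carry over here. (It is in fact true that a Cartan involution preserves every ideal of a semisimple $\mathfrak g$, but this requires a separate argument about how involutive automorphisms act on simple summands, which you do not supply.) The paper sidesteps this entirely by going the other direction: it first picks Cartan involutions $\tau_1$ on $\mathfrak h$ and $\tau_2$ on $\mathfrak c\cong\mathfrak g/\mathfrak h$, and then \emph{defines} $\tau=\tau_1\oplus\tau_2$ on $\mathfrak g=\mathfrak h\oplus\mathfrak c$, so $\tau$-invariance of $\mathfrak h$ is built in. With this construction the detour through $L$ and Lemma~\ref{IWasawaandquotientbycompact}.1 becomes unnecessary.

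Second, two of your supporting claims are false when the centers are infinite. You assert that $K\cap H$ is compact (to force $L=(K\cap H)_0$) and later that $\pi(K)$ is compact because $K$ is. But by Fact~\ref{fact: Lie group decomp Iwasawa}.2, $K$ is only a maximal compact when $Z(G)$ is finite; in general $K$ is merely the connected closed subgroup $\exp(\mathfrak k)$ and need not be compact, and likewise $(K\cap H)_0$ need not equal the maximal compact $L$ of $H$. The correct identification $K_1=(K\cap H)_0$ comes instead from the observation that both are connected subgroups of $H$ with the same Lie algebra $\mathfrak k\cap\mathfrak h$ (which is how the paper concludes, via ``a dimension calculation''), and $K_2=\pi(K)$ follows from functoriality of $\exp$ (Fact~\ref{fact:functoriality of exponential function}) together with $d\pi(\mathfrak k)=\mathfrak k_2$, not from compactness of $K$.
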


\begin{proof}
Let $\mathfrak{g}$ and $\mathfrak{h}$ be the Lie algebras of $G$ and $H$, and let $\kappa_\mathfrak{g}$ and $\kappa_\mathfrak{h}$ be the Cartan--Killing form of $\mathfrak{g}$ and $\mathfrak{h}$. Then $\mathfrak{g}/\mathfrak{h}$ is the Lie algebra of $G/H$, and $\mathfrak{g}$ ,$\mathfrak{h}$, and $\mathfrak{g}/\mathfrak{h}$ are semisimple. By Fact~\ref{fact: orthogonaldecomposition}, $$\mathfrak{g} = \mathfrak{h} \oplus \mathfrak{c}  \text{ and } \kappa_\mathfrak{g} = \kappa_\mathfrak{h} \oplus \kappa_\mathfrak{c}  $$ where $\kappa_\mathfrak{c}$ is the orthogonal complement of $\kappa_\mathfrak{h}$ with respect to $\kappa_\mathfrak{g}$, and  $\kappa_\mathfrak{c}$ is the Cartan--Killing form of $\kappa_\mathfrak{c}$. Therefore, the quotient map from $\mathfrak{g}$ to $\mathfrak{g}/\mathfrak{h}$ induces an isomorphism from $\mathfrak{c}$ to $\mathfrak{g}/\mathfrak{h}$, so we can identify $\mathfrak{g}/\mathfrak{h}$ with $\mathfrak{c}$.
Let $\tau_1$ and $\tau_2$  be Cartan involutions of $\mathfrak{h}$ and $\mathfrak{c}$. Then $\tau = \tau_1 \oplus \tau_2$ is an involution of $\mathfrak{g}$. As $\tau_1$ and $\tau_2$ are Cartan involutions, the bilinear forms $\mathfrak{h} \times \mathfrak{h}: (x_1, y_1) \mapsto -\kappa_\mathfrak{h}(x_1, \tau_1(y_1))$ and $\mathfrak{c} \times \mathfrak{c}: (x_2, y_2) \mapsto -\kappa_\mathfrak{c}(x_2, \tau_2(y_2))$ are positive definite. Hence, the bilinear from  $\mathfrak{g} \times \mathfrak{g}: (x, y) \mapsto -\kappa_\mathfrak{g}(x, \tau(y))$ is also positive definite. Therefore, $\tau$ is a Cartan involution of $\mathfrak{g}$.  Let $\mathfrak{k}$, $\mathfrak{k}_1$, and $\mathfrak{k}_2$ be the Lie subalgebras of $\mathfrak{g}$,  $\mathfrak{h}$, and $\mathfrak{c}$ fixed by $\tau$, $\tau_1$, and $\tau_2$ respectively. It is easy to see that $\mathfrak{k}= \mathfrak{k}_1 \oplus \mathfrak{k}_2$. Let $\exp: \mathfrak{g} \to G$, $\exp_1: \mathfrak{h} \to H$, and $\exp_2: \mathfrak{c} \to G/H $ be the exponential maps, and set  $$K= \exp(\mathfrak{k}), K_1=\exp_1(\mathfrak{k}_1) \text{ and } K_2= \exp(\mathfrak{k}_2).$$  From Fact~\ref{fact: Lie group decomp Iwasawa}, we obtain Iwasawa decompositions $G=KAN$, $H=K_1A_1N_1$, and $G/H= K_2A_2N_2$. By the functoriality of the exponential function (Fact~\ref{fact:functoriality of exponential function}), we get $K_1 \leq K \cap H$ and $K_2 = \pi(K)$. Since $K_1$ is connected, by a dimension calculation we have $K_1=(K\cap H)_0$. 
\end{proof}

In a short exact sequence of locally compact groups, one may hope that the noncompact Lie dimension and the helix dimension of the middle term is the sum of those of the outer terms. This is not true in general. For instance, in the exact sequence 
\[
1 \to \ZZ \to \RR \to \RR/\ZZ \to 1,
\]
the noncompact Lie dimension of $\RR$ is $1$, while both $\ZZ$ and $\TT = \RR/ \ZZ$ has noncompact Lie dimension $0$. Another example is the following. Let $H$ be the universal cover of $\mathrm{SL}(2,\RR)$. We identify the kernel of the covering map $\rho: H\to  \mathrm{SL}(2,\RR)$ with the additive group $\ZZ$ of integers. 
Let $G=(H\times \RR)/\{(n,n): n\in\ZZ\}$. Then we have the exact sequence
\[
1 \to H \to G \to \TT \to 1,
\]
the helix dimension of $H$ is $1$, but the helix dimensions of $G$ and $\TT$ are $0$.

Nevertheless, we have the summability of noncompact Lie dimensions and helix dimensions in many short exact sequences of interest: 
\begin{proposition}\label{prop: additivitydim1}
Suppose $1 \to H \to G \overset{\pi\ }{\to} G/H \to 1$ is an exact sequence of  connected Lie groups. Then we have the following:
\begin{enumerate}
    \item If $n$, $n_1$, and $n_2$ are the noncompact Lie dimensions of $G$, $H$, and $G/H$ respectively, then $n=n_1+n_2$;
    \item If $G$ is moreover semisimple, and $h$, $h_1$, and $h_2$ are the helix dimensions of $G$, $H$, and $G/H$ respectively, then $h=h_1+h_2$.
\end{enumerate}
\end{proposition}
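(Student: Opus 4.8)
The plan is to deduce both statements from the previously established structural results, dealing with the solvable and semisimple parts separately via the Levi decomposition. For part (1), the key tool is Proposition~\ref{prop: helixandnoncompact}, which identifies the noncompact Lie dimension of a connected semisimple group with $\dim(AN)$ in an Iwasawa decomposition, together with McCrudden-style additivity of dimensions of the solvable parts. First I would reduce to the case where $G$ is either solvable or semisimple. Indeed, let $Q$ be the radical of $G$; then $Q\cap H$ is the radical of $H$ (it is a connected closed solvable normal subgroup of $H$, and $H/(Q\cap H)\hookrightarrow G/Q$ is semisimple), and $\pi(Q)$ together with Lemma~\ref{lem: Leviandquotientbycompact}-type reasoning controls the radical of $G/H$. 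This splits the exact sequence $1\to H\to G\to G/H\to 1$ into a short exact sequence of radicals and a short exact sequence of semisimple quotients. For the solvable case, I would use the elementary structure $Q\cong\RR^a\times\TT^b$-type description (via Fact~\ref{fact: classification} and the universal-cover picture used in Lemma~\ref{lem: helixandnoncompact}) to see that the noncompact Lie dimension of a connected solvable Lie group equals its dimension minus the maximal compact subgroup dimension, and that all three quantities are additive under the exact sequence because $\dim$ is additive and the maximal torus dimensions add up.

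For the semisimple case of part (1), I would invoke Lemma~\ref{lem: IWasawaexact}: choose compatible Iwasawa decompositions $G=KAN$, $H=K_1A_1N_1$, $G/H=K_2A_2N_2$ with $K_1=(K\cap H)_0$ and $K_2=\pi(K)$. Then $\dim(AN)$, $\dim(A_1N_1)$, $\dim(A_2N_2)$ are the respective noncompact Lie dimensions by Proposition~\ref{prop: helixandnoncompact}. Since $\mathfrak{g}=\mathfrak{h}\oplus\mathfrak{c}$ orthogonally with $\mathfrak{c}\cong\mathfrak{g}/\mathfrak{h}$, and the Cartan involution was built as $\tau=\tau_1\oplus\tau_2$ respecting this splitting, the $\mathfrak{k}$-parts satisfy $\dim\mathfrak{k}=\dim\mathfrak{k}_1+\dim\mathfrak{k}_2$, hence $\dim(AN)=\dim\mathfrak{g}-\dim\mathfrak{k}=(\dim\mathfrak{h}-\dim\mathfrak{k}_1)+(\dim\mathfrak{c}-\dim\mathfrak{k}_2)=\dim(A_1N_1)+\dim(A_2N_2)$, giving $n=n_1+n_2$.

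For part (2), with $G$ semisimple, all of $H$, $G$, $G/H$ are connected semisimple, so by Proposition~\ref{prop: helixandnoncompact} the helix dimension equals $\dim K-m$ in each case, i.e.\ it equals the noncompact Lie dimension of the corresponding maximal compact subgroup $K$, $K_1$, $K_2$. Using the compatible Iwasawa decompositions from Lemma~\ref{lem: IWasawaexact}, $K_1=(K\cap H)_0$ is essentially the kernel of $\pi|_K\colon K\to K_2=\pi(K)$ up to finitely many components, so we get an exact sequence $1\to K_1\to K\to K_2\to 1$ of connected compact Lie groups. Applying part (1) to this exact sequence yields that the noncompact Lie dimension of $K$ is the sum of those of $K_1$ and $K_2$, which is precisely $h=h_1+h_2$. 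The main obstacle I anticipate is the bookkeeping around connectedness and finite index: $K\cap H$ need not be connected, so one must argue carefully (as in the proof of Lemma~\ref{lem: helixandnoncompact}, where a central finite subgroup is quotiented out) that passing to identity components does not change dimensions or ranks, and that the induced map $K\to K_2$ is genuinely surjective with the expected kernel up to a finite group. Once that is handled, everything reduces to additivity of $\dim$ and of maximal-torus dimensions, which is routine.
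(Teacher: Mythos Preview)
For part~(1) you are working much harder than necessary, and the reduction step has real problems. The paper proves~(1) in two lines by invoking Fact~\ref{fact: maximal compact}: for a maximal compact $K\leq G$, the subgroup $K\cap H$ is maximal compact in $H$ and $\pi(K)$ is maximal compact in $G/H$; since all three groups are connected, all maximal compacts are conjugate, so $m=m_1+m_2$ and hence $n=n_1+n_2$. Your Levi reduction, by contrast, requires that $Q_G\cap H$ be connected (so that it equals $Q_H$) and that $Q_GH$ be closed (so that $\pi(Q_G)$ is a closed subgroup of $G/H$), neither of which you verify. More seriously, the final combination step---passing from $(n_{Q_G},n_{S_G})$ to $n_G$ via the Levi sequence $1\to Q_G\to G\to S_G\to 1$---is itself an instance of~(1) for an exact sequence that is neither purely solvable nor purely semisimple, so the argument is circular unless you invoke Fact~\ref{fact: maximal compact} there anyway.

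For part~(2) there is a genuine gap. The kernel of $\pi|_K$ is $K\cap H$, not $K_1=(K\cap H)_0$, so the exact sequence you actually obtain is $1\to K_1\to K\to K_2'\to 1$ with $K_2'=K/K_1$ merely a cover of $K_2$. You describe the discrepancy as ``finite index bookkeeping,'' but $(K\cap H)/K_1$ is only known to be discrete, not finite---indeed $K$ itself need not be compact when the helix dimension is positive, so your phrase ``connected compact Lie groups'' is wrong here. Consequently you cannot conclude $n_{K_2'}=n_{K_2}$ directly. The paper avoids this by proving two separate inequalities: the easy bound $h\le h_1+h_2$ follows from $Z(G)\cap H\leq Z(H)$ and $\pi(Z(G))\leq Z(G/H)$; for $h\ge h_1+h_2$ one only needs $n_{K_2'}\ge n_{K_2}$ (compact subgroups of $K_2'$ push forward to compact subgroups of $K_2$ of the same dimension under the covering map), and then part~(1) applied to $1\to K_1\to K\to K_2'\to 1$ gives $h=n_K=n_{K_1}+n_{K_2'}\ge h_1+h_2$. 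Your outline omits the first inequality entirely and does not justify the equality you claim from the second.
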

\begin{proof}
 We first prove (1). Let $m$ be the maximum dimension of a compact subgroup in $G$.  As $G$ is connected, $m$ is also the dimension of an arbitrary maximal compact subgroup of $G$ by Fact~\ref{fact: maximal compact}.1. Defining $m_1$ and $m_2$ likewise for $H$ and $G/H$, we get similar conclusions for them from the connectedness of $H$ and $G/H$.  Let $K$ be a maximal compact subgroup of $G$.
By Fact~\ref{fact: maximal compact}.2, $K \cap H$ is a maximal compact subgroup in $H$, and $\pi(K)$ is a maximal compact subgroup in $G/H$. The kernel of $\pi|_K$ is $K \cap H$, and the image is $\pi (K)$. Hence,  $m = m_1+ m_2$. This gives us (1) recalling that $m+n =\dim(G)$, $m_1+n_1=\dim(H)$, $m_2+n_2 =\dim(G/H)$, and $\dim(G)= \dim(H)+\dim(G/H)$.

We now prove (2). Since $Z(G)\cap H\leq Z(H)$, and $\pi(Z(G))\leq Z(G/H)$, we have $h\leq h_1+h_2$. It remains to show $h\geq h_1+h_2$. As $G$ is semisimple, $H$ and $G/H$ are semisimple by Fact~\ref{fact: preservationsemisimple}. Take Iwasawa decompositions $G=KAN$, $H=K_1A_1N_1$, and $G/H= K_2A_2N_2$ as in Lemma~\ref{lem: IWasawaexact}. By the first isomorphism theorem for Lie groups (Fact~\ref{fact: iso theorems Lie}.1),  $1\to K\cap H \to K \to K_2 \to 1$ is an exact sequence of Lie groups. With $K_2'=K/K_1$, we have an exact sequence 
\begin{equation}\label{eq: exact seq}
   1\to K_1 \to K \to K_2' \to 1.
\end{equation} 
As $K_1=(K\cap H)_0$, by the third isomorphism theorem, we have $K_2=K/(K\cap H)=(K/K_1)/((K\cap H)/K_1)=K_2'/((K\cap H)/K_1)$. Since $(K\cap H)/K_1$ is discrete, $K_2'$ is a covering group of $K_2$. Let $\phi: K_2'\to K_2$ be the covering map. Note that $\phi$ has a discrete kernel, and $K_2$, $K_2'$ have the same dimension. Suppose $S$ is a compact subgroup of $K_2'$ with the maximum dimension. Then $\phi(S)$ is a compact subgroup of $K_2$, and $S$ and $\phi(S)$ have the same dimension. This shows that the noncompact Lie dimension of $K_2'$ is at least the noncompact Lie dimension of $K_2$. By \eqref{eq: exact seq} and Statement (1), the noncompact Lie dimension of $K$ is the sum of noncompact Lie dimensions of $K_1$ and $K_2'$, hence it is at least  the sum of noncompact Lie dimensions of $K_1$ and $K_2$. It then follows from  Proposition~\ref{prop: helixandnoncompact} that $h\geq h_1+h_2$.
\end{proof}

\begin{lemma} \label{lem: connectedkernelofmodular}
Suppose $1 \to H \to G \overset{\pi\ }{\to} (\RR^{>0}, \times) \to 1$ is an exact sequence of Lie groups, then $H_0=H\cap G_0$. In particular, if $G$ is connected, then $H$ is connected.
\end{lemma}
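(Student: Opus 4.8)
The plan is to use the long exact sequence of homotopy groups (or, more elementarily, a covering-space argument) for the fibration $H \hookrightarrow G \to \RR^{>0}$. Since $\RR^{>0}$ is contractible, it is in particular connected and simply connected, so the inclusion of the fiber should carry all of the topology. Concretely, I would argue as follows.

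\begin{proof}[Proof sketch]
First I would observe that $\pi$ is a surjective continuous open homomorphism of Lie groups with kernel $H$, so it is a locally trivial fiber bundle with fiber $H$; in particular $\pi$ is an open map and $G$ is, topologically, covered by the cosets $gH$. Let $G_0$ be the identity component of $G$; by hypothesis $G=G_0$. The key point is that $\pi(H) = \{1\}$ while $\pi$ restricted to $G_0$ is surjective onto $\RR^{>0}$, so it suffices to show every element of $G$ lies in the subgroup generated by $H$ together with a connected set mapping onto $\RR^{>0}$. More precisely, pick a connected open neighborhood $U$ of $1$ in $G$; then $\pi(U)$ is an open neighborhood of $1$ in $\RR^{>0}$, and since $\RR^{>0}$ is connected it is generated by $\pi(U)$, hence $G = H \cdot \langle U \rangle$. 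Because $U$ is connected and contains $1$, the subgroup $\langle U\rangle$ it generates is connected (a countable increasing union of the connected sets $U\cdot U^{-1}\cdots$), and in fact $\langle U \rangle = G_0 = G$. This already shows $G$ is generated by $H$ and the connected subgroup $G$ — not yet enough, so I refine: the real content is that $H$ itself is connected, which I extract from the bundle structure.

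For that, I would use the homotopy exact sequence of the fibration $H \to G \xrightarrow{\pi} \RR^{>0}$:
\[
\pi_1(\RR^{>0}) \to \pi_0(H) \to \pi_0(G) \to \pi_0(\RR^{>0}).
\]
Since $\RR^{>0}$ is contractible, $\pi_1(\RR^{>0})$ and $\pi_0(\RR^{>0})$ are both trivial, and since $G$ is connected $\pi_0(G)$ is trivial; exactness forces $\pi_0(H)$ to be trivial, i.e.\ $H$ is connected. Alternatively, avoiding homotopy groups: choose a continuous (local, then global since $\RR^{>0}$ is contractible) section $s:\RR^{>0}\to G$ of $\pi$ with $s(1)=1$; then $(t,h)\mapsto s(t)h$ is a homeomorphism $\RR^{>0}\times H \to G$, so $H$ is homeomorphic to a retract of the connected space $G$ via the map $g \mapsto s(\pi(g))^{-1}g$, and the continuous image of a connected space is connected, hence $H$ is connected.

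The main obstacle, and the only place requiring care, is producing the global continuous section $s:\RR^{>0}\to G$ (or equivalently establishing the bundle triviality). This is where I would invoke that $\RR^{>0}\cong\RR$ is contractible and paracompact, so the principal $H$-bundle $\pi:G\to\RR^{>0}$ is trivial; even more elementarily, one can build $s$ directly by lifting the homomorphism-like exponential path: since $\RR^{>0}$ is a simply connected Lie group, any local section at $1$ extends, by the standard monodromy/lifting argument for the fiber bundle $\pi$ over a simply connected base, to a global continuous section. Once the section exists the rest is immediate. (If one wants to stay purely within Lie theory, one can instead note $\pi$ splits the Lie algebra as $\mathfrak{g} = \mathfrak{h} \oplus \RR$, exponentiate the complementary line to get a one-parameter subgroup $L\cong\RR^{>0}$ with $\pi|_L$ an isomorphism, and then $G = H\cdot L$ with $H\cap L = \{1\}$; connectedness of $G$ together with connectedness of $L$ then forces $H$ to meet every component of $G$, i.e.\ $H$ is connected since $G$ is.)
\end{proof}
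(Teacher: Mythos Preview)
Your proof is correct and takes a genuinely different route from the paper. The paper argues via the classification of one-dimensional connected Lie groups: with $H_0$ the identity component of $H$, the quotient $G/H_0$ is a connected Lie group of dimension $1$ (since $H/H_0$ is discrete and $G/H \cong \RR^{>0}$), hence isomorphic to $\RR$ or to $\TT$; as it surjects onto the noncompact group $(\RR^{>0},\times)$ it must be $\RR$, and the only discrete subgroup of $\RR$ with quotient $\RR$ is trivial, forcing $H/H_0$ to be trivial. Your approach via the long exact sequence of homotopy groups for the fibration $H \to G \to \RR^{>0}$ (or equivalently via a global section, using that $\RR^{>0}$ is contractible) is more topological and arguably cleaner; it generalizes immediately to any simply connected base, whereas the paper's argument is tailored to the one-dimensional target. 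On the other hand, the paper's proof stays entirely within elementary Lie theory and avoids invoking bundle triviality or the homotopy exact sequence, which keeps the prerequisites aligned with the toolkit used elsewhere in the paper. Your opening detour through $\langle U\rangle$ is, as you yourself note, a false start; the substance is in the homotopy/section argument, and that is sound.
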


\begin{proof}
As Lie groups are locally path connected, $G_0$ is open in $G$. 
As $G_0$ is an open connected subgroup of $G$, the map $\pi|_{G_0}$ is continuous and open.  Hence, its image $\pi(G_0)$ is an open connected subgroup of  $(\RR^{>0}, \times)$. Therefore,  $\pi(G_0) = (\RR^{>0}, \times)$, and  $\pi|_{G_0}$ is a quotient map by the first isomorphism theorem (Fact~\ref{fact: homomorphism facts}.1). The kernel of $\pi|_{G_0}$ is $H \cap G_0$, so we get the exact sequence of Lie groups
 $$ 1 \to H \cap G_0 \to G_0 \overset{\pi|_{G_0}}{\to} (\RR^{>0},\times)\to 1. $$
We now prove that  $H_0 = H \cap G_0$. 
 The forward inclusion is immediate by definition. By the third isomorphism theorem (Fact~\ref{fact: homomorphism facts}.3), we get the exact sequence of Lie groups
$$  1 \to (H \cap G_0)/ H_0 \to  G_0/H_0 \to (\RR^{>0},\times) \to 1.  $$
 The group $(H \cap G_0)/ H_0$ is discrete. Hence, $G_0/H_0$ is a Lie group with dimension $1$.  As $G_0$ is connected, the Lie group $G_0/H_0$ is also connected. By Fact~\ref{fact: classification}, $G_0/H_0$ is either isomorphic to $\RR$ or $\TT$. But since $G_0/H_0$ has  $(\RR^{>0},\times)$ as a quotient, it cannot be compact, and therefore must be isomorphic to $\RR$. This implies that $(H \cap G_0)/ H_0$ is trivial, and hence $H_0= H \cap G_0$.
\end{proof}

The next proposition gives us a summability result of noncompact Lie dimensions along a short exact sequence of locally compact groups when the quotient group is $(\RR^{>0}, \times)$.

\begin{proposition}\label{prop: additivityofdimension2}
Suppose $1 \to H \to G \overset{\pi\ }{\to} (\RR^{>0}, \times) \to 1$ is an exact sequence of locally compact groups. Then we have the following:
\begin{enumerate}
    \item If $n$, $n_1$, and $n_2$ are the noncompact Lie dimensions of $G$, $H$, and $(\RR^{>0}, \times)$ respectively, then $n= n_1+n_2 = n_1+1$.
    \item $G$ and $H$ have the same helix dimension.
\end{enumerate}
\end{proposition}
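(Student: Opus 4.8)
The plan is to reduce to the case of connected Lie groups, where Proposition~\ref{prop: additivitydim1}.1 and Lemma~\ref{lem: connectedkernelofmodular} handle the noncompact Lie dimension, and then to treat the helix dimension by a separate Levi-structure argument, since Proposition~\ref{prop: additivitydim1}.2 is only available when the middle group is semisimple.

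First I would perform three reductions, each preserving both the noncompact Lie dimension and the helix dimension by Corollaries~\ref{cor: prop 2 (1)} and \ref{cor: prop 2 (2)}. (a) Using the Gleason--Yamabe Theorem, replace $G$ by an open almost-Lie subgroup $G'$; since $(\RR^{>0},\times)$ is connected, $\pi(G')=(\RR^{>0},\times)$, and restricting $\pi$ gives an exact sequence $1\to H\cap G'\to G'\to(\RR^{>0},\times)\to 1$ with $H\cap G'$ open in $H$. (b) Choose a compact normal $N\vartriangleleft G'$ with $G'/N$ a Lie group; because $\pi(N)$ is a compact subgroup of $(\RR^{>0},\times)$ it is trivial, so $N\leq H\cap G'$, and $\pi$ descends to $1\to (H\cap G')/N\to G'/N\to(\RR^{>0},\times)\to 1$. (c) Pass to the identity component of the Lie group $G'/N$; it is open, its image under the descended map is an open connected, hence full, subgroup of $(\RR^{>0},\times)$, and its intersection with $(H\cap G')/N$ is open there. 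After these steps we may assume $G$ is a \emph{connected} Lie group, $H\vartriangleleft G$ is closed, and $G/H\cong(\RR^{>0},\times)$; Lemma~\ref{lem: connectedkernelofmodular} then gives that $H$ is connected.

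For part (1) I would invoke Proposition~\ref{prop: additivitydim1}.1 applied to $1\to H\to G\to(\RR^{>0},\times)\to 1$, using that the noncompact Lie dimension of $(\RR^{>0},\times)$ is $1$ (it is one-dimensional with no nontrivial compact subgroup), so $n=n_1+1$. For part (2) I would argue with the radical. Let $Q$ be the radical of $G$ and $S=G/Q$. Since $G/H\cong(\RR^{>0},\times)$ is solvable and $S$ is semisimple, the group $S/(HQ/Q)\cong G/HQ$ is simultaneously semisimple and solvable, hence trivial; thus $G=HQ$. Then $Q/(H\cap Q)\cong HQ/H=G/H\cong(\RR^{>0},\times)$, so Lemma~\ref{lem: connectedkernelofmodular} applied to $1\to H\cap Q\to Q\to(\RR^{>0},\times)\to 1$ shows $H\cap Q$ is connected. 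Being a connected solvable closed normal subgroup of $H$ with semisimple quotient $H/(H\cap Q)\cong HQ/Q=G/Q=S$, it is the radical of $H$. Hence the semisimple part of $H$ is isomorphic as a topological group to that of $G$, their centers have the same rank, and $H$ and $G$ have the same helix dimension; transporting back through the reductions finishes the proof.

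The main obstacle is the bookkeeping in part (2): one must establish that $H\cap Q$ is genuinely connected, so that it is eligible to be the radical of $H$ rather than merely containing it with the right identity component, and the non-obvious input for this is Lemma~\ref{lem: connectedkernelofmodular} applied inside $Q$, which in turn relies on the identity $G=HQ$. Everything else — the three preliminary reductions and the appeal to Proposition~\ref{prop: additivitydim1}.1 — is routine once the invariance results of Section~2 are available.
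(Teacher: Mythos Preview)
Your proposal is correct and follows essentially the same route as the paper: reduce via Gleason--Yamabe, quotient by a compact normal subgroup, and passage to the identity component (using Corollaries~\ref{cor: prop 2 (1)} and \ref{cor: prop 2 (2)}) to the connected Lie case, then invoke Proposition~\ref{prop: additivitydim1}.1 for part (1), and for part (2) show $G=HQ$, apply Lemma~\ref{lem: connectedkernelofmodular} inside $Q$ to get $H\cap Q$ connected, and identify $H/(H\cap Q)\cong G/Q$. The only cosmetic difference is that the paper establishes $G=HQ$ by arguing directly that $\pi(Q)$, being a connected subgroup of $(\RR^{>0},\times)$, is either trivial (forcing $Q\leq H$ and a contradiction) or full, whereas you phrase it via $G/HQ$ being simultaneously semisimple and solvable; your version tacitly needs $HQ$ closed, but since $\pi(Q)\in\{\{1\},(\RR^{>0},\times)\}$ forces $HQ\in\{H,G\}$, this is harmless.
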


\begin{proof}
First, we consider the case when $G$ is a connected Lie group. Then by Lemma~\ref{lem: connectedkernelofmodular}, $H$ is also connected. Hence, (1) for this case is a consequence of Proposition~\ref{prop: additivitydim1}.1.  

We prove (2) for this special case. Let $Q$ be the radical of $G$. We claim that $QH=G$, or equivalently, that $\pi(Q)= (\RR^{>0}, \times)$. Suppose this is not true. Then $\pi(Q)$ is a connected subgroup of $(\RR^{>0}, \times)$, so it must be $\{1\}$. Hence, $Q \subseteq H$. Then $(\RR^{>0}, \times) =G/H$ which is isomorphic as a topological group to $(G/Q)/(H/Q)$ by the third isomorphism theorem (Fact~\ref{fact: homomorphism facts}.3).  This is a contradiction, because $(G/Q)/(H/Q)$ is semisimple as a quotient of the semisimple group $G/Q$, while $(\RR^{>0}, \times)$ is solvable.

We next show that $Q \cap H$ is the radical of $H$.  The radical of $H$ is a characteristic closed subgroup of $H$ (by Fact~\ref{fact: radical}), hence a connected solvable closed normal subgroup of $G$. Thus, the radical of $H$ is a subgroup of $Q\cap H$. It is straightforward that $Q\cap H$ is solvable. We also have that $Q\cap H$ is second countable as both $Q$ and $H$ are second countable. From the preceding paragraph, $\pi(Q)=(\RR^{>0}, \times)$.
Using the first isomorphism theorem for Lie groups (Fact~\ref{fact: iso theorems Lie}.1), we have the exact sequence
$$ 1 \to Q\cap H \to Q \to (\RR^{>0}, \times) \to 1. $$ Applying Lemma~\ref{lem: connectedkernelofmodular}, we learn that $Q\cap H$ is connected. This completes the proof that  $Q \cap H$ is the radical of $H$.

Note that $QH=G$ and $Q$ is a closed subgroup of $G$. Hence, by the second isomorphism theorem for Lie groups (Fact~\ref{fact: iso theorems Lie}.2), $H/(Q\cap H)$ is isomorphic as a topological group to $HQ/Q= G/Q$. Therefore $G$ and $H$ have the same helix dimension.

Next, we address the slightly more general case where $G$ is a Lie group but not necessarily connected. Let $G_0$ be the connected component of $G$. Then $\pi(G_0)$ is an open subgroup of $(\RR^{>0}, \times)$, so $\pi(G_0) = (\RR^{>0}, \times)$. By the first isomorphism theorem for Lie groups (Fact~\ref{fact: iso theorems Lie}.1), we have the exact sequence $$1 \to G_0\cap H \to G_0 \to (\RR^{>0}, \times)\to 1.$$ Applying  Lemma~\ref{prop: Reduceinequalitytoopensubgroups} and the known case of the current lemma where the middle term of the exact sequence is a connected Lie group, we obtain both (1) and (2) for this more general case.

Using the Gleason--Yamabe theorem and a similar argument as in the preceding paragraph, we can reduce (1) and (2) for general locally compact groups to the case where we assume that $G$ is an almost-Lie group. Hence, there is a compact normal subgroup $K$ of $G$ such that $G/K$ is a Lie group. As $K$  is compact, $\pi(K)$ is a compact subgroup of $(\RR^{>0}, \times)$, so $\pi(K)=\{ 1\}$.
Hence $K\vartriangleleft H$. By the third isomorphism theorem (Fact~\ref{fact: homomorphism facts}.3), we have the exact sequence 
$1 \to H/K \to G/K \to  (\RR^{>0}, \times)\to 1$.  Applying  Lemma~\ref{dimensionunderquotientbycompactgroup} and the known case of the current lemma where the middle term of the exact sequence is a Lie group, we obtain both (1) and (2) for this remaining case.
\end{proof}

We discuss the relationship between the noncompact Lie dimension and helix dimension of a locally compact group $G$.

\begin{corollary}\label{cor: n/3}
Suppose $G$ has noncompact Lie dimension $n$ and helix dimension $h$. Then we have $h\leq n/3$.
\end{corollary}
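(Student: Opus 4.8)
The plan is to reduce to the case of a connected semisimple Lie group and then to a short Lie-algebra estimate. Write $n=n(G)$ and $h=h(G)$. Unwinding the definitions from the introduction, there are an open subgroup $G'\le G$ and a compact normal $H\vartriangleleft G'$ with $L:=G'/H$ a Lie group, so that $n=n(L)$ and $h=\mathrm{rank}(Z(S))$ where $S:=L_0/Q$, $L_0$ being the identity component of $L$ and $Q$ the radical of $L_0$. Here $S$ is a connected semisimple Lie group and, since its radical is trivial, the helix dimension of the Lie group $S$ is $\mathrm{rank}(Z(S))$; thus $h=h(S)$. First I would record the reduction $n\ge n(S)$: $L_0$ is open in $L$, so $n(L)=n(L_0)$ by Lemma~\ref{dimensionsforopensubgroup}, and applying Proposition~\ref{prop: additivitydim1}.1 to the exact sequence $1\to Q\to L_0\to S\to1$ of connected Lie groups gives $n(L_0)=n(Q)+n(S)\ge n(S)$. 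Hence it suffices to prove $3h(S)\le n(S)$ for every connected semisimple Lie group $S$, since then $3h=3h(S)\le n(S)\le n$.

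Next I would fix an Iwasawa decomposition $S=KAN$ with associated Cartan decomposition $\mathfrak{g}=\mathfrak{k}\oplus\mathfrak{p}$ of the Lie algebra of $S$, where $\mathfrak{p}$ is the orthogonal complement of $\mathfrak{k}$ under the Killing form $\kappa$, on which $\kappa$ is positive definite. By Proposition~\ref{prop: helixandnoncompact}, $n(S)=\dim(AN)+h(S)=\dim\mathfrak{p}+h(S)$, so $3h(S)\le n(S)$ is equivalent to the purely Lie-algebraic statement $2h(S)\le\dim\mathfrak{p}$. I would deduce this from two inequalities: (a) $h(S)\le\dim\mathfrak{z}(\mathfrak{k})$, where $\mathfrak{z}(\mathfrak{k})$ is the centre of $\mathfrak{k}$; and (b) $\dim\mathfrak{z}(\mathfrak{k})\le\tfrac12\dim\mathfrak{p}$. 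Together they give $2h(S)\le 2\dim\mathfrak{z}(\mathfrak{k})\le\dim\mathfrak{p}$.

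For (a): $\mathfrak{k}$ is the Lie algebra of a compact group (a maximal compact subgroup of $S/Z(S)$), hence reductive, so $\mathfrak{k}=\mathfrak{z}(\mathfrak{k})\oplus[\mathfrak{k},\mathfrak{k}]$ with $[\mathfrak{k},\mathfrak{k}]$ semisimple of compact type. The simply connected group with Lie algebra $[\mathfrak{k},\mathfrak{k}]$ is compact by Fact~\ref{fact: semisimple compact}, so the analytic subgroup $K_1\le K$ with $\mathrm{Lie}(K_1)=[\mathfrak{k},\mathfrak{k}]$, being a continuous homomorphic image of it, is a compact subgroup of $S$ of dimension $\dim[\mathfrak{k},\mathfrak{k}]$. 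Therefore the maximal dimension $m$ of a compact subgroup of $S$ satisfies $m\ge\dim[\mathfrak{k},\mathfrak{k}]=\dim\mathfrak{k}-\dim\mathfrak{z}(\mathfrak{k})$, and since $h(S)=\dim K-m=\dim\mathfrak{k}-m$ by Proposition~\ref{prop: helixandnoncompact}, we get $h(S)\le\dim\mathfrak{z}(\mathfrak{k})$. For (b): each $X\in\mathfrak{k}$ acts on $\mathfrak{p}$ by $\mathrm{ad}(X)|_{\mathfrak{p}}$ (using $[\mathfrak{k},\mathfrak{p}]\subseteq\mathfrak{p}$), and this operator is skew-symmetric for $\kappa|_{\mathfrak{p}}$ by invariance of $\kappa$. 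Restricting to $X\in\mathfrak{z}(\mathfrak{k})$ gives a Lie algebra homomorphism $\mathfrak{z}(\mathfrak{k})\to\mathfrak{so}(\mathfrak{p},\kappa|_{\mathfrak{p}})$; it is injective since $[X,\mathfrak{p}]=0$ together with $[X,\mathfrak{k}]=0$ forces $X\in\mathfrak{z}(\mathfrak{g})=0$. Its image is thus an abelian subalgebra of $\mathfrak{so}(\mathfrak{p})$, i.e.\ a commuting family of skew-symmetric operators on a Euclidean space of dimension $\dim\mathfrak{p}$; simultaneously block-diagonalizing such a family into $2\times 2$ rotation blocks and zeros shows its dimension is at most $\lfloor\tfrac12\dim\mathfrak{p}\rfloor$. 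Hence $\dim\mathfrak{z}(\mathfrak{k})\le\tfrac12\dim\mathfrak{p}$.

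The one step demanding genuine care is inequality (a): the helix dimension is an invariant of the \emph{discrete} group $Z(S)$, and the argument above relates it to the structure of $\mathfrak{k}$ only through Proposition~\ref{prop: helixandnoncompact} and the observation that the semisimple part of $K$ is compact. Inequality (b) is a soft representation-theoretic estimate, and the reduction in the first paragraph is bookkeeping with the results of this section. The bound $h\le n/3$ is sharp, the extremal case being $S=\widetilde{\mathrm{SL}_2(\mathbb{R})}$, where $h(S)=1$, $\dim\mathfrak{z}(\mathfrak{k})=1$, $\dim\mathfrak{p}=2$ and $n(S)=3$.
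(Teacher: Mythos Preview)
Your argument is correct and takes a genuinely different route from the paper's proof. The paper reduces all the way down to \emph{simple} Lie groups, invokes Fact~\ref{fact: centersimpleLie} (the center of a simply connected simple Lie group has rank $\le 1$) to pin down $h\le 1$ in the simple case, and then appeals to Fact~\ref{fact: classification2} (that $\dim(AN)\ge 2$ for every noncompact simple Lie group) to conclude $n\ge 3$ there; the semisimple case is then handled by induction on dimension using Proposition~\ref{prop: additivitydim1}. By contrast, you stop at the connected semisimple level and prove $2h(S)\le\dim\mathfrak{p}$ directly via the two structural inequalities $h(S)\le\dim\mathfrak{z}(\mathfrak{k})$ and $\dim\mathfrak{z}(\mathfrak{k})\le\tfrac12\dim\mathfrak{p}$, the second coming from the injective map $\mathrm{ad}:\mathfrak{z}(\mathfrak{k})\hookrightarrow\mathfrak{so}(\mathfrak{p})$ and the bound on abelian subalgebras of $\mathfrak{so}(\mathfrak{p})$. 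Your approach has the advantage of avoiding both the rank bound on centers of simple groups and the classification-adjacent Fact~\ref{fact: classification2}, replacing them with a clean representation-theoretic estimate; the paper's approach has the advantage of staying entirely within facts already quoted in the appendix, and of making the extremal case $\widetilde{\mathrm{SL}_2(\mathbb{R})}$ visible as the equality case of the cited facts rather than of your inequalities.
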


\begin{proof}
We first check the result for simple Lie groups. If $h=0$, then the statement holds vacuously. Hence, using Fact~\ref{fact: centersimpleLie}, it suffices to consider the case where $h=1$.  Let $G =KAN$ be an Iwasawa decomposition. Then by Proposition~\ref{prop: helixandnoncompact}, we have $n-1 =\dim(AN) \geq 0$. Hence, $n>0$ and $G$ is not compact. From Fact~\ref{fact: classification2}, we have $\dim(AN) \geq 2$. Therefore $n \geq 3$. Hence, we get the desired conclusion for simple Lie groups.

Now consider the case where $G$ is a connected semisimple Lie group which is not simple. Using induction on dimension, we can assume we have proven the statement for all connected semisimple Lie groups of smaller dimensions. Using Fact~\ref{fact: simple}, we get an exact sequence of semisimple Lie groups $1 \to H \to G \to G/H \to 1$ with $0<\dim(H)< \dim(G)$. Replacing $H$ with its connected component if necessary, we can arrange that $H$ is connected. The desired conclusion then follows from  Proposition~\ref{prop: additivitydim1}.2.

For a general locally compact group $G$, from Proposition~\ref{prop: welldefinednomcompactdim}, we may assume $G$ is a Lie group. Corollary~\ref{cor: prop 2 (1)} and Fact~\ref{factLieid} allow us to reduce the problem to connected Lie groups. By Lemma~\ref{lem: Leviandquotientbycompact} and Lemma~\ref{dimensionsforopensubgroup}, the radical of $G$ only contributes to the noncompact Lie dimension of $G$. Using Fact~\ref{fact: Lie group decomp Levi} and Proposition~\ref{prop: additivitydim1}.1, we reduce the problem to connected semisimple Lie groups. 
\end{proof}

\section{Proof of Theorem~\ref{thm: mainsharp}}

Theorem~\ref{thm: mainsharpopen} is a version of Theorem~\ref{thm: mainsharp}.2 with compact sets replaced by open  sets. We will deduce Theorem~\ref{thm: mainsharp} from Theorem~\ref{thm: mainsharpopen} at the end of this section using the inner regularity of the Haar measure. It is convenient to prove  Theorem~\ref{thm: mainsharpopen} first because, intuitively, the constructed set $X$ is an open neighborhood of the maximal compact subgroup of $G$. On the other hand, in Theorem~\ref{thm: mainsharp}, we want a compact set as traditionally stated in the Brunn--Minkowski inequalities.

\begin{theorem}\label{thm: mainsharpopen}
 Suppose $G$ is a locally compact group with noncompact Lie dimension $n>0$,  $\mu_G$ is a left Haar measure, and $\nu_G$ is a right Haar measure. Then, for every $\varepsilon > 0$, there is an open set $X$ with positive left and right measure in $G$ such that
\[
\frac{\nu_G(X)^{\frac{1}{n}-\varepsilon}}{\nu_G(X^2)^{\frac{1}{n}-\varepsilon}}+ \frac{\mu_G(X)^{\frac{1}{n}-\varepsilon}}{\mu_G(X^2)^{\frac{1}{n}-\varepsilon}}>1.
\]
As a corollary, if $G$ is unimodular with $n>0$, for every $\varepsilon'>0$, there is an open set $X$ in $G$ such that $\mu_G(X^2) < (2^n+\varepsilon') \mu_G(X)$.
\end{theorem}

We first prove the theorem when $G$ is a unimodular Lie group.

\begin{proof}[Proof of Theorem \ref{thm: mainsharpopen}, unimodular Lie group case]

Since $G$ is unimodular, without loss of generality we assume that $\mu_G = \nu_G$. Let $d$ be the dimension of $G$. Let $K$ be a maximal compact subgroup of $G$ and let $m = \dim K$. Hence $n=d-m$ is the noncompact Lie dimension of $G$.

Since $K$ is closed, $G/K$ is a homogeneous (and smooth) manifold.  For $g \in G$, we let $[g]$ denote its quotient image in $G/K$.  Fix an arbitrary $G$-invariant (smooth) Riemannian metric on $G/K$ (such a metric exists by first finding a $K$-invariant Riemannian metric at $[\id]$ and then extend it onto the whole $G/K$ by the action of $G$). This metric induces a volume measure $\Vol$ on $G/K$.

Let $\pi$ be the projection from $G$ to $G/K$. For any Borel subset $U$ of $G/K$, $\pi^{-1} (U)$ is also Borel and hence $\mu_G$-measurable. For any $r>0$, we use $B_{r}$ to denote the (open) $r$-ball around $[\id]$ on $G/K$ under the chosen metric and use $D_r$ to denote $\pi^{-1} (B_{r})$. We claim that:\smallskip
\begin{enumerate}[(i)]
    \item  There exists a constant $b>0$ only depending on the metric on $G/K$ such that as Borel measures $\pi_* (\mu_G) = b\cdot \Vol$, and
    \item For any $r>0$, $D_r\!\cdot\! D_r \subseteq D_{2r}$.
\end{enumerate}
\smallskip

We postpone the proofs of claims (i) and (ii) to the end of this proof and first show how they lead to Theorem \ref{thm: mainsharp}. We can take $X$ to be $D_{\delta}$ for a sufficiently small $\delta>0$ (depending on $\varepsilon$) to be determined. Then by (i), $$\mu_G(X)=\pi_*(\mu_G (B_{\delta})) = b\cdot \Vol(B_{\delta}).$$ And by (ii), $X^2 \subseteq D_{2\delta}$ and hence as before, we get $\mu_G(X^2) \leq \mu_G(D_{2\delta}) = b\cdot \Vol(B_{2\delta})$. Note that the invariant metric on $G/K$ is smooth and thus $$\lim_{\delta \rightarrow 0} \frac{\Vol(B_{2\delta})}{\Vol(B_{\delta})} = 2^n.$$  Hence a sufficiently small $\delta$ can guarantee $\frac{\mu_G(X)^{\frac{1}{n}-\varepsilon}}{\mu_G(X^2)^{\frac{1}{n}-\varepsilon}} > \frac{1}{2}$ and we have proved Theorem \ref{thm: mainsharp} in this special case.

It remains to prove claims (i) and (ii). To see claim (i), note that $\Vol$ is $G$-invariant. We also see that $\pi_* (\mu_G)$ is $G$-invariant because $\mu_G (\pi^{-1} (U)) = \mu_G (g\pi^{-1} (U))= \mu_G (\pi^{-1} (gU))$ for any $g \in G$ and any Borel subset $U \subseteq G/K$. Since the $G$-invariant Borel measure on $G/K$ is unique up to a scalar (see Theorem 8.36 in \cite{knapp2013lie}), $\Vol$ has to be a scalar multiple of $\pi_* (\mu_G)$.

Finally we verify claim (ii). Taking arbitrary $g_1, g_2 \in D_r$ and it suffices to show $g_1 g_2 \in D_{2r}$. By definition, there is a piecewise smooth curve $\gamma_j$ connecting $[\id]$ and $[g_j]$ such that the length of $\gamma_j$ is strictly smaller than $r$ (for $j=1, 2$).  Note that by the invariance of the metric, $[g_1] \gamma_2$ must have the same length as $\gamma_2$. Let $\gamma$ be the curve formed by $[g_1] \gamma_2$ after $\gamma_1$. It is a curve connecting $[\id]$ and $[g_1 g_2]$ and by the reasoning above has two pieces and each of them has length strictly smaller than $r$. Hence $\gamma$ has length shorter than $2r$ and thus by definition $g_1 g_2 \in D_{2r}$. We have verified (ii).
\end{proof}

The following will be proved similarly to the above special case of Theorem~\ref{thm: mainsharpopen}.

\begin{proposition}\label{stableversionof13}
Given any unimodular Lie group $G$, let $n$ be its noncompact Lie dimension. Let $\tilde{\varepsilon} >0$ be fixed. Then there exists precompact open subsets $X$ and $X_1$ with $\mu_G(X)>0$ such that the closure $\overline{X} \subseteq X_1$ and $\mu_G(X_1\!\cdot\! X) < (2+\tilde{\varepsilon})^n \mu_G(X)$.
\end{proposition}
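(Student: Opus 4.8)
The plan is to re-run the proof of Theorem~\ref{thm: mainsharp} in the unimodular Lie group case, but upgrading claim (ii) into an open containment that leaves room for the closure. As before, set $d = \dim G$, let $K$ be a maximal compact subgroup with $m = \dim K$, so $n = d-m$; fix a $G$-invariant Riemannian metric on $G/K$ with induced volume $\Vol$, let $\pi\colon G \to G/K$ be the projection, and let $B_r$ be the open $r$-ball about $[\id]$ with $D_r = \pi^{-1}(B_r)$. Recall from that proof that $\pi_*(\mu) = b\cdot\Vol$ for a constant $b>0$ and that $D_r\cdot D_r \subseteq D_{2r}$. The issue is that in Proposition~\ref{stableversionof13} we need a slightly larger open set $X_1$ with $\overline X \subseteq X_1$, so we should take $X = D_\delta$ and $X_1 = D_{\delta'}$ for suitable $0 < \delta < \delta'$.

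First I would observe that $\overline{D_\delta} \subseteq D_{\delta'}$ whenever $\delta < \delta'$: indeed $\pi$ is continuous and open, $\overline{D_\delta} \subseteq \pi^{-1}(\overline{B_\delta})$, and $\overline{B_\delta}$ is contained in the closed $\delta$-ball, which lies inside the open ball $B_{\delta'}$ (here one uses that $G/K$ with its $G$-invariant Riemannian metric is a complete, in particular proper, metric space, so closed balls are compact and the closed $\delta$-ball is contained in the open $\delta'$-ball for $\delta < \delta'$). Next, the same curve-concatenation argument proving $D_r \cdot D_r \subseteq D_{2r}$ gives $D_{\delta'}\cdot D_\delta \subseteq D_{\delta + \delta'}$: if $g_1 \in D_{\delta'}$ and $g_2 \in D_\delta$, then joining a path from $[\id]$ to $[g_1]$ of length $<\delta'$ with the translate by $[g_1]$ of a path from $[\id]$ to $[g_2]$ of length $<\delta$ produces a path from $[\id]$ to $[g_1 g_2]$ of length $<\delta+\delta'$. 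Hence $\mu(X_1\cdot X) \le \mu(D_{\delta+\delta'}) = b\cdot\Vol(B_{\delta+\delta'})$ while $\mu(X) = b\cdot\Vol(B_\delta)$, and both are positive since these are nonempty open sets and are precompact by properness of the metric.

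Finally I would choose the parameters. Since the $G$-invariant metric on $G/K$ is smooth, $\lim_{\delta\to 0}\Vol(B_{(1+t)\delta})/\Vol(B_\delta) = (1+t)^n$ for any fixed $t>0$. Given $\tilde\varepsilon>0$, pick $t>0$ small enough that $(1+t)^2 < 2 + \tilde\varepsilon$ (note $(1+t)^2 \to 1 < 2+\tilde\varepsilon$ as $t\to 0$), and set $\delta' = (1+t)\delta$, so $\delta+\delta' = (2+t)\delta < (1+t)^2\,\delta$ is not quite what we want directly — so instead simply bound $\mu(X_1\cdot X)\le b\cdot\Vol(B_{(2+t)\delta})$ and use $\lim_{\delta\to0}\Vol(B_{(2+t)\delta})/\Vol(B_\delta) = (2+t)^n < (2+\tilde\varepsilon)^n$, choosing $t$ with $2+t < 2+\tilde\varepsilon$ and then $\delta$ small enough that the ratio $\Vol(B_{(2+t)\delta})/\Vol(B_\delta) < (2+\tilde\varepsilon)^n$. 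Then $X = D_\delta$, $X_1 = D_{(1+t)\delta}$ work: $\overline X \subseteq X_1$, $\mu(X)>0$, and $\mu(X_1\cdot X) < (2+\tilde\varepsilon)^n\mu(X)$. The only mildly delicate point is the first one — confirming $\overline{D_\delta}\subseteq D_{\delta'}$ and precompactness — which rests on the properness (completeness) of the $G$-invariant Riemannian metric on the homogeneous space $G/K$; everything else is a routine repetition of the Theorem~\ref{thm: mainsharp} argument with two radii in place of one.
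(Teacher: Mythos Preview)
Your proposal is correct and follows essentially the same route as the paper: take $X=D_\delta$, $X_1=D_{\delta'}$ with $\delta<\delta'$, use the curve-concatenation argument to get $D_{\delta'}\cdot D_\delta\subseteq D_{\delta+\delta'}$, and then choose the radii using the asymptotic $\Vol(B_{c\delta})/\Vol(B_\delta)\to c^n$. Two small remarks: the containment $\overline{B_\delta}\subseteq B_{\delta'}$ is immediate from the metric definition and does not require completeness; and for precompactness of $D_\delta=\pi^{-1}(B_\delta)$ in $G$ you need not only that $\overline{B_\delta}$ is compact in $G/K$ but also that $\pi$ is proper, which follows from compactness of $K$ (this is the paper's Lemma~\ref{lem: inverse image of compact sets}) --- you invoke ``properness of the metric'' but that alone lives on $G/K$, not $G$.
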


\begin{proof}
 We continue to use the notation from the proof. Let $X = D_{\delta}$ and $X_1 = D_{\delta_1}$, where $0<\delta<\delta_1$ are both to be determined.

We see that $X$ and $X_1$ are  open, being the preimage of open sets under $\pi$. Since
$B_{\delta}$  and $B_{\delta_1}$ are both precompact, Lemma \ref{lem: inverse image of compact sets} implies $X$ and $X_1$ are also precompact.
Moreover we have $\overline{X} \subseteq \pi^{-1} (\overline{B_{\delta}}) \subseteq \pi^{-1} (B_{\delta_1}) = X_1$. It remains to choose $\delta$ and $\delta_1$ such that $\mu_G(X_1\!\cdot\! X) < (2+\tilde{\varepsilon})^n \mu_G(X)$.

By the same reasoning as in the previous proof of Theorem \ref{thm: mainsharp} (unimodular Lie case), we see that $X_1\p X \subseteq D_{\delta_1 + \delta}$. Now,
\[\lim_{\delta \rightarrow 0} \frac{\Vol(B_{2\delta})}{\Vol(B_{\delta})} = 2^n,\quad \text{and}\quad \lim_{\delta_1 \rightarrow \delta} \frac{\Vol(B_{2\delta})}{\Vol(B_{\delta_1 + \delta})} = 1.
\]
Hence we can take $\delta$ sufficiently small, and then $\delta_1$ sufficiently close to $\delta$, such that
\[
\frac{\mu_G(X_1\!\cdot\!  X)}{\mu_G(X)} \leq \frac{\mu_G(D_{\delta_1+\delta})}{\mu_G(D_{\delta})} = \frac{\Vol(B_{\delta_1+\delta})}{\Vol(B_{\delta})} < (2+\tilde{\varepsilon})^n,
\]
which proves the proposition. 
\end{proof}

Next we use Proposition \ref{stableversionof13} to prove Theorem \ref{thm: mainsharpopen} for general Lie groups.

\begin{proof}[Proof of Theorem \ref{thm: mainsharpopen}, Lie group case]
We have already proved the theorem when $G$ is unimodular. In the rest of this proof, we assume $G$ is nonunimodular. Let $G_0$ be the connected component of $G$. Since $\mu_G|_{G_0}$ is a left Haar measure on $G_0$, and $\nu_G|_{G_0}$ is a right Haar measure on $G_0$, we may assume without loss of generality that $G=G_0$. As the only connected subgroups of $(\RR^{>0}, \times)$ are itself and $\{1\}$, and $G$ is not unimodular, the modular function $\Delta_G$ must be surjective. Hence, $\Delta_G$ is a quotient map by the first isomorphism for Lie groups Fact~\ref{fact: iso theorems Lie}.1.

Let $H$ be the kernel of the modular function on $G$. By Proposition \ref{prop: additivityofdimension2}.1, the noncompact Lie dimension of $H$ is $n-1$ where $n$ is the noncompact Lie dimension of $G$.
By Fact \ref{fact: modular function}.1, $H$ is unimodular. To avoid confusion, we will always use $\mu_G$ and $\nu_G$ for $\mu$ and $\nu$ below and use $\mu_H = \nu_H$ to denote a fixed Haar measure on $H$.

In light of Fact \ref{fact: Quotient Integral Formula}, we can fix a Haar measure $\d r$ on the multiplicative group $(\RR^{>0},\times) = G/H$ such that for any Borel function $f$ on $g$,
\begin{equation}\label{quotientintegralGandH}
    \int_G f(x)\d \mu_G(x)=\int_{G/H}\int_H f(rh) \d \mu_H(h)\d \mu_{G/H}(rH).
\end{equation}

Let $\mathfrak{g}$ and $\mathfrak{h}$ be the Lie algebras of $G$ and $H$, respectively. 
We fix an element $Z \in \mathfrak{g}$ such that $Z \notin \mathfrak{h}$. Note that $t \mapsto \Delta(\exp(tZ))$  is  a nontrivial continuous group homomorphism  from $(\RR,+)$ to $(\RR^{>0},\times)$.  As the only connected subsets of $(\RR^{>0},\times)$ are points and intervals, this map must be surjective, and hence an isomorphism by the first isomorphism for Lie groups (Fact~\ref{fact: iso theorems Lie}).  In light of the quotient integral formula (\ref{quotientintegralGandH}), we can choose an appropriate Haar measure $\d t$ on $\mathbb{R}$ such that for any Borel subset $A$ of $G$, we have the Fubini-type 
measure formula
\begin{equation}\label{fubinimeasureformula}
    \mu_G (A) = \int_{\Bbb{R}} \mu_H ((\exp(-tZ) A) \cap H) \d t.
\end{equation}
Without loss of generality we assume $\d t$ is the standard Lebesgue measure (otherwise we multiply $\mu_G$ by a constant).

With the preliminary discussions above, we now construct $X$ satisfying the inequality in Theorem \ref{thm: mainsharp}.

Before going into details of the construction, we first describe the intuition behind it. We arrange our $X$ to live very close to $H$ so that $\mu$ and $\nu$ are almost proportional on $X$ and $X^2$. We then realize that it suffices to choose our $X$ to be like a thickened copy of the almost sharp example of Theorem \ref{thm: mainsharp} for (the unimodular group) $H$.

More precisely, let $\tilde{\varepsilon} > 0$ be a small number (depending on $\varepsilon$) to be determined. let $\tilde{X}$ and $\tilde{X}_1$ be the $X$ and $X_1$, respectively,  in Proposition \ref{stableversionof13} where we replace $G$ by $H$. We now take $X = \{\exp(tZ)h: t \in [0, \tilde{\varepsilon}], h \in \tilde{X}\}$ and will show that $X^2$ is reasonably small when $\tilde{\varepsilon}$ is small enough.

By (\ref{fubinimeasureformula}), we have
\begin{equation}\label{measofXYeqn1}
    \mu_G (X^2) = \int_{\Bbb{R}} \mu_H ((\exp(-tZ) X^2) \cap H) \d t.
\end{equation}

Note that an arbitrary element in $X^2$ can be written as
\begin{align*}
&\ \exp(t_1 Z)h_1 \exp(t_2 Z)h_2 \\
=&\  \exp((t_1 + t_2)Z) (\exp(-t_2 Z) h_1 \exp(t_2 Z) h_2) \in \exp((t_1 + t_2)Z) H,
\end{align*}
where $t_1, t_2 \in [0, \tilde{\varepsilon}]$ and $h_1, h_2 \in H$. Hence (\ref{measofXYeqn1}) is reduced to
\begin{equation}\label{measofXYeqn2}
    \mu_G (X^2) = \int_{0}^{2\tilde{\varepsilon}} \mu_H ((\exp(-tZ) X^2) \cap H) \d t
\end{equation}
and moreover for any $0 \leq t_0 \leq 2\tilde{\varepsilon}$, we see from the above discussion that $$(\exp(-t_0 Z) X^2)\cap H  = \bigcup_{0 \leq t_1, t_2 \leq \tilde{\varepsilon}, t_1 + t_2 = t_0} (\exp(-t_1 Z) \tilde{X}\exp(t_1 Z)) \p \tilde{X}.$$

By Lemma \ref{openinclose} and Proposition \ref{stableversionof13}, when $\tilde{\varepsilon}$ is sufficiently small, which we will always assume, we have the above union contained in $\tilde{X}_1 \p \tilde{X}$. Now by (\ref{measofXYeqn2}),
\begin{equation}\label{measofXYeqn3}
    \mu_G (X^2) \leq \int_{0}^{2\tilde{\varepsilon}} \mu_H (\tilde{X}_1 \p \tilde{X}) \d t = 2\tilde{\varepsilon}\mu_H (\tilde{X}_1 \p \tilde{X}).
\end{equation}

On the other hand, by (\ref{fubinimeasureformula})
\begin{equation}\label{measofXeqn}
    \mu_G (X) = \tilde{\varepsilon}\mu_H (\tilde{X}).
\end{equation}

Combining (\ref{measofXYeqn3}) and (\ref{measofXeqn}) and using the measure properties of $\tilde{X}$ and $\tilde{X}_1$ guaranteed by Proposition \ref{stableversionof13}, we have
\begin{equation}\label{murelation}
    \frac{\mu_G(X)}{\mu_G (X^2)} \geq \frac{\mu_H (\tilde{X})}{2\mu_H (\tilde{X}_1 \p \tilde{X})} > \frac{1}{2(2+\tilde{\varepsilon})^{n-1}}.
\end{equation}
 Recall that the support of $X$ with respect to $\mu_G$ consists of $a \in X$ such that $\mu_G(X \cap U) >0 \}$ for all open neighborhoods of $a$. By Fact \ref{fact: modular function}.4, the support of $X$ with respect to $\mu_G$ is the same as that with respect to $\nu_G$, so we can refer to the support of $X$ without ambiguity.
Recall also that $\Delta (\exp(\cdot Z))$ is an isomorphism from $(\Bbb{R},+)$ to $(\Bbb{R}^{>0},\times)$.  Hence there exists a constant $C>0$ only depending on $Z$ such that on the support of $X$ we have $e^{-C\tilde{\varepsilon}} < \Delta < e^{C\tilde{\varepsilon}}$ and on the support of $X^2$ we have $e^{-2C\tilde{\varepsilon}} < \Delta < e^{2C\tilde{\varepsilon}}$. Thus by Fact \ref{fact: modular function}.4, we have $$\frac{\nu_G (X)}{\mu_G (X)} > e^{-C\tilde{\varepsilon}}$$ and $$\frac{\mu_G (X^2)}{\nu_G (X^2)} > e^{-2C\tilde{\varepsilon}}.$$

Combining the above inequalities with (\ref{murelation}), we have
\begin{equation}\label{nurelation}
    \frac{\nu_G(X)}{\nu_G (X^2)} > \frac{e^{-3C\tilde{\varepsilon}}}{2(2+\tilde{\varepsilon})^{n-1}}.
\end{equation}

Hence for the $X$ we constructed,
\begin{equation}\label{munucombinedalmostsharp}
\frac{\nu_G(X)^{\frac{1}{n}-\varepsilon}}{\nu_G(X^2)^{\frac{1}{n}-\varepsilon}}+ \frac{\mu_G(X)^{\frac{1}{n}-\varepsilon}}{\mu_G(X^2)^{\frac{1}{n}-\varepsilon}} >(1+e^{-3C\tilde{\varepsilon}(\frac{1}{n}-\varepsilon)})(2(2+\tilde{\varepsilon})^{n-1})^{-\frac{1}{n}+\varepsilon}.
\end{equation}
It suffices to take $\tilde{\varepsilon}$ small enough such that the right hand side of (\ref{munucombinedalmostsharp}) is $> 1$.
\end{proof}

With the Gleason--Yamabe Theorem and the results developed in Section 2, we are able to pass our Lie group constructions to general locally compact groups. 

\begin{proof}[Proof of Theorem~\ref{thm: mainsharpopen}]
 By Fact~\ref{fact: Gleason}, there is  open subgroup $G'$ of $G$ which is almost-Lie. Since $\mu_G|_{G'}$ is a left Haar measure on $G'$, and same holds $\nu_G|_{G'}$, we may assume without loss of generality that $G$ is almost-Lie.
 
With this assumption, there is 
  a short exact sequence $1 \to H \to G\xrightarrow{\pi} G/H \to 1$ where $H$ is a  compact subgroup, and  $G/H$ is a Lie group. Let $X$ be a subset of $G/H$ such that 
 \begin{equation}\label{eq: Lie case}
 \frac{\nu_{G/H}(X)^{\frac{1}{n}-\varepsilon}}{\nu_{G/H}(X^2)^{\frac{1}{n}-\varepsilon}}+ \frac{\mu_{G/H}(X)^{\frac{1}{n}-\varepsilon}}{\mu_{G/H}(X^2)^{\frac{1}{n}-\varepsilon}} >1, 
 \end{equation}
 where $n$ is the noncompact Lie dimension of $G/H$. Thus by the quotient integral formula, we have 
 \begin{align*}
\mu_G(\pi^{-1}(X))&=\int_{G/H}\mu_H(g^{-1}(\pi^{-1}(X))\cap H)\d\mu_{G/H}(g)\\
&=\int_{G/H}\mathbbm{1}_X(g)\d\mu_{G/H}(g)=\mu_{G/H}(X),
  \end{align*}
 and similarly $\nu_G(\pi^{-1}(X))=\nu_{G/H}(X)$. Observe that $\pi^{-1}(X^2)=\pi^{-1}(X) \cdot \pi^{-1}(X)$. Thus the desired conclusion follows from \eqref{eq: Lie case} and Propositions~\ref{prop: welldefinednomcompactdim}. 
\end{proof}

We now prove Theorem~\ref{thm: mainsharp}.

\begin{proof}[Proof of Theorem~\ref{thm: mainsharp}]

We consider first the case when $n=0$ and $G$ is a Lie group.  Then the identity component $G_0$ of $G$ is compact. Taking $X=G_0$, we have $X =X^2$ has positive left and right measures. In particular, we have $$\mu_G(X)=\mu_G(X^2) \text{ and } \nu_G(X)=\nu_G(X^2),$$  and Theorem \ref{thm: mainsharp} holds in this case. 

Suppose $n=0$ and $G$ is a locally compact group.  By Fact~\ref{fact: Gleason}, there is  open subgroup $G'$ of $G$ which is almost-Lie. Since $\mu_G|_{G'}$ is a left Haar measure on $G'$, and same holds $\nu_G|_{G'}$, we may replace $G$ with $G'$ and assume without loss of generality that $G$ is almost-Lie. Let $H$ be a compact subgroup of $G$ such that $G/H$ is a Lie group, and $\pi: G \to G/H$ is the quotient map. Choose compact $Y \subseteq G/H$ such that $Y^2 =Y$ has positive left and right measure. Choosing $X=\pi^{-1}(Y)$, we have  $X =X^2$ has positive left and right measures, and  Theorem \ref{thm: mainsharp} holds in this case too. 

In the rest of the proof, we assume $n>0$. Using Theorem~\ref{thm: mainsharpopen} we obtain an open subset $Y$ of $G$ such that 
\[
\frac{\nu_G(Y)^{\frac{1}{n}-\varepsilon/2}}{\nu_G(Y^2)^{\frac{1}{n}-\varepsilon/2}}+ \frac{\mu_G(Y)^{\frac{1}{n}-\varepsilon/2}}{\mu_G(Y^2)^{\frac{1}{n}-\varepsilon/2}}>1.
\]
By the inner regularity of $\mu_G$ and $\nu_G$, we can choose $X \subseteq Y$ such that $\mu_G(X) > \mu_G(Y) -\delta$ and $\nu_G(X)> \nu_G(Y)- \delta$ for any given $\delta>0$. On the other hand, for such $X$, we have $\mu_G(X^2) \leq \mu_G(Y^2)$ and $\nu_G(X^2)\leq \nu_G(Y^2)$ as $X^2 \subseteq Y^2$. Hence, by choosing $\delta$ sufficiently small,  we can arrange that $X$ satisfies the desired inequality~\eqref{eq: BM in theorem1.3} .
\end{proof}

\section{Reduction to outer terms of certain short exact sequences}

For $n \in  \ZZ^{\geq0}$ and $(x,y) \in \RR^2$, we set 
$$
\big\Vert (x,y) \big\Vert_{1/n} = 
\begin{cases}
({\color{black}|x|}^{1/n} + {\color{black}|y|}^{1/n})^{n} &\text{ if } n\neq 0,\\
\max\{ {\color{black}|x|}, {\color{black}|y|}\}  &  \text{ if } n=0.
\end{cases} 
$$ 
We say that the group $G$ satisfies  the {\bf Brunn--Minkowski inequality with exponent $n$}, abbreviated as BM($n$), if for all compact $X, Y\subseteq G$,
\[
\Bigg\Vert\Big(\frac{\nu(X)}{\nu(XY)},\frac{\mu(Y)}{\mu(XY)}\Big)\Bigg\Vert_{1/n}\leq1.
\]
When $G$ is unimodular and $n\geq 1$, the above is equivalent to having the inequality $ \mu(XY)^{1/n} \geq  \mu(X)^{1/n} + \mu(Y)^{1/n}.$ Note that $\frac{\nu(X)}{\nu(XY)}\leq 1$ and $\frac{\mu(Y)}{\mu(XY)}\leq 1$. Hence, every locally compact group $G$ satisfies the  Brunn--Minkowski inequality with exponent $n=0$. Moreover, if $n <n'$ and  $G$ satisfies the Brunn--Minkowski inequality with exponent $n'$, then it satisfies the Brunn--Minkowski inequality with exponent $n$.

Given a function $f: \Omega\to \RR$, for every $t\in\RR$, define the \emph{superlevel set} of $f$ 
\[
L^+_{f}(t):=\{x\in \Omega: f(x)\geq t\}.
\]
We will use this notation at various points in the later proofs. We use the following simple  {\color{black}consequence of Fubini's theorem concerning} the superlevel sets~\cite[Theorem 8.16]{Rudin}:
\begin{fact}\label{fact: 4.1}
Let $\mu$ be a positive measure on some $\sigma$-algebra in the set $\Omega$. 
Suppose $f: \Omega\to [0,\infty]$ is a compactly supportted  measurable function. For every $r>0$, 
\[
\int_\Omega f^r(x)\d \mu(x)=\int_{\RR^{\geq0}} rx^{r-1} \mu(L_f^+(x))\d x.
\]
\end{fact}

The next proposition is the main result of this section. The current statement of the proposition is proved by McCrudden as the main result in~\cite{Mccrudden69}. We give a simpler (but essentially the same) proof here for the sake of completeness. 

\begin{proposition}\label{lem: quotient unimodular}
Assume $G$ is a unimodular group, $n_1,n_2\geq0$ are integers, $H$ is a closed normal subgroup of $G$ satisfying $\BM(n_1)$, and the quotient group $G/H$ is  unimodular satisfying $\BM(n_2)$. Then $G$ satisfies $\BM(n_1+n_2)$. 
\end{proposition}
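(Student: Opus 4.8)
The plan is to implement the ``spillover'' argument described in the introduction, which reduces the general case to the case where the fibers of $X$ and $Y$ have (essentially) constant length and then uses the $\BM(n_2)$ hypothesis on $G/H$ fiberwise. Fix compact sets $X, Y \subseteq G$ and let $\pi: G \to G/H$ be the quotient map. For $t > 0$ consider the superlevel sets of the fiber-length functions
\[
\phi_X(g) = \mu_H(g^{-1}X \cap H), \qquad \phi_Y(g) = \mu_H(g^{-1}Y \cap H),
\]
viewed as functions on $G/H$ via the quotient integral formula (Fact~\ref{fact: Quotient Integral Formula}), so that $\mu_G(X) = \int_{G/H} \phi_X \, \d\mu_{G/H}$ and similarly for $Y$ and for $XY$. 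The key point is that if $g \in L^+_{\phi_X}(s)$ and $g' \in L^+_{\phi_Y}(t)$, then the fiber of $XY$ over $\pi(g)\pi(g')$ has length at least $(\,s^{1/n_1} + t^{1/n_1}\,)^{n_1}$ by $\BM(n_1)$ applied to the two fibers (using unimodularity of $G$ and $H$ to identify fibers with subsets of $H$ of the stated measures); hence $L^+_{\phi_{XY}}\!\big((s^{1/n_1}+t^{1/n_1})^{n_1}\big) \supseteq L^+_{\phi_X}(s)\,L^+_{\phi_Y}(t)$ in $G/H$.

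Next I would apply $\BM(n_2)$ to the compact sets $L^+_{\phi_X}(s)$ and $L^+_{\phi_Y}(t)$ in the unimodular group $G/H$ (after a harmless approximation to make them compact, or working with inner regularity), obtaining
\[
\mu_{G/H}\!\big(L^+_{\phi_{XY}}((s^{1/n_1}+t^{1/n_1})^{n_1})\big) \;\geq\; \Big(\mu_{G/H}(L^+_{\phi_X}(s))^{1/n_2} + \mu_{G/H}(L^+_{\phi_Y}(t))^{1/n_2}\Big)^{n_2}.
\]
Writing $F_X(s) = \mu_{G/H}(L^+_{\phi_X}(s))$, $F_Y(t)$, $F_{XY}(u)$ for the corresponding distribution functions, this gives a pointwise lower bound on $F_{XY}$ in terms of $F_X$ and $F_Y$. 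By the layer-cake formula (the Fact stated just before the proposition), $\mu_G(X) = \int_0^\infty F_X(s)\,\d s$, and likewise for $Y$ and $XY$. The remaining task is the purely one-dimensional optimization: show that whenever $F_{XY}\big((s^{1/n_1}+t^{1/n_1})^{n_1}\big) \geq \big(F_X(s)^{1/n_2}+F_Y(t)^{1/n_2}\big)^{n_2}$ for all $s,t$, one has
\[
\Big(\!\int_0^\infty F_{XY}\Big)^{1/(n_1+n_2)} \;\geq\; \Big(\!\int_0^\infty F_X\Big)^{1/(n_1+n_2)} + \Big(\!\int_0^\infty F_Y\Big)^{1/(n_1+n_2)}.
\]
This is exactly the content of McCrudden's ``spillover'': after rescaling one reduces to $\int F_X = \int F_Y$, and one estimates $\int F_{XY}$ from below by partitioning the $s$-axis and $t$-axis into many pieces on which $F_X$ and $F_Y$ are nearly constant, pairing the $i$-th level of $X$ with the $i$-th level of $Y$, and summing the resulting contributions $\ell_i w_i + \ell_{i-1}(w_{i-1}-w_i) + \cdots$ as $N \to \infty$; the continuous limit of this Riemann-sum bound is a one-variable calculus inequality (essentially the $\RR^2$ Brunn--Minkowski computed via Fubini with exponents $n_1$ horizontally and $n_2$ vertically).

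The main obstacle I expect is this last one-dimensional step done cleanly: one must handle the reparametrization $(s,t) \mapsto u = (s^{1/n_1}+t^{1/n_1})^{n_1}$ and the degenerate cases $n_1 = 0$ or $n_2 = 0$ (where ``length'' or ``width'' lower bounds degenerate to a max), and one must be careful that the superlevel sets are genuinely available as inputs to the $\BM$ hypotheses (compactness/measurability, positivity of measure, and the fact that $\phi_X$, $\phi_Y$ are upper semicontinuous so the superlevel sets are closed hence compact inside a compact projection). Once the Fubini/layer-cake translation is set up, the analytic heart is a single scalar inequality that can be proven either by the discretized spillover limiting argument or, more slickly, by recognizing it as Brunn--Minkowski on $\RR^2$ applied to the subgraph regions $\{(s,x): 0 \le x \le F_X(s)^{1/n_2}\}$ and its $Y$-analogue with the product being taken in the group $\RR \times \RR$ where the first coordinate carries the ``$n_1$-addition'' $s \oplus s' = (s^{1/n_1}+s'^{1/n_1})^{n_1}$; I would present whichever of these is shortest, most likely the direct spillover limit since it is self-contained.
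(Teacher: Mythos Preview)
Your proposal is correct and follows essentially the same architecture as the paper's proof: set up the fiber-length functions, use $\BM(n_1)$ on $H$ to get the superlevel-set containment $L^+_{\phi_X}(s)\,L^+_{\phi_Y}(t)\subseteq L^+_{\phi_{XY}}\big((s^{1/n_1}+t^{1/n_1})^{n_1}\big)$, apply $\BM(n_2)$ on $G/H$ to these sets, and then reduce to a one-dimensional inequality via the layer-cake formula, with the degenerate cases $n_1=0$ and $n_2=0$ handled separately. The only difference is in the execution of the final scalar step: the paper does not pass through a discretized spillover limit or invoke $\BM$ on $\RR^2$, but instead packages everything into a single function $F_\Omega(t)=t^{\alpha}\mu_{G/H}^{\beta/n_2}\big(L^+_{f_\Omega}(t^{n_1})\big)$ (with $\alpha=\tfrac{n_1-1}{n_1+n_2-1}$, $\beta=\tfrac{n_2}{n_1+n_2-1}$), shows via H\"older that $F_{XY}(t_1+t_2)\geq F_X(t_1)+F_Y(t_2)$, and then finishes with Kneser's one-dimensional Brunn--Minkowski plus one more H\"older application.
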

\begin{proof}

Suppose $\Omega$ is a compact subset of $G$. Let the ``fiber length function'' $f_\Omega: G/H\to \RR^{\geq0}$ be a measurable function such that for every $gH\in G/H$, $f_\Omega(gH)=\mu_H(g^{-1}\Omega\cap H)$. The case when both $n_1=n_2=0$ holds trivially.

Now we split the proof into three cases. \medskip

\noindent {\bf Case 1.} \emph{When $n_1\geq1$ and $n_1+n_2\geq2$.}\medskip

By the quotient integral formula (Fact~\ref{fact: Quotient Integral Formula}), we have
\begin{align}
    \mu_G^{1/(n_1+n_2)}(\Omega)&=\left(\int_{G/H}f_{\Omega}(x)\d \mu_{G/H}(x)\right)^{1/(n_1+n_2)} \nonumber \\
    &= \left(\int_{G/H}(f_{\Omega}^{1/{n_1}}(x))^{n_1}\d \mu_{G/H}(x)\right)^{1/(n_1+n_2)}. \nonumber
\end{align}    
Now by Fact~\ref{fact: 4.1} and the fact that $\mu_{G/H}(L^+_{f^{1/{n_1}}_{\Omega}}(t)) = \mu_{G/H}(L^+_{f_{\Omega}}(t^{n_1}))$, we get
    \begin{align}
    \mu_G^{1/(n_1+n_2)}(\Omega)&=\left(\int_{\RR^{>0}} n_1 t^{n_1-1}\mu_{G/H}\big(L^+_{f_{\Omega}}(t^{n_1})\big)\d t\right)^{1/(n_1+n_2)}. \label{eq: mu(AB) when n_1>1 and n_2>1}
\end{align}

Set $\alpha=\frac{n_1-1}{n_1+n_2-1}$, $\beta=\frac{n_2}{n_1+n_2-1}$, $\gamma = n_1+n_2-1$,  and 
\[
F_\Omega(t)=t^\alpha \mu_{G/H}^{\beta/n_2}\left(L_{f_\Omega}^+(t^{n_1})\right),
\]
for compact set $\Omega$ in $G$ and $t>0$ (Note that $F_\Omega$ is well-defined when $n_2=0$). Then (\ref{eq: mu(AB) when n_1>1 and n_2>1}) can be rewritten as 
\begin{equation}
    n_1^{-1/(\gamma+1)}\mu_G^{1/(\gamma+1)}(\Omega)=\left( \int_{\RR^{>0}}F_\Omega^\gamma(t)\d t\right)^{1/(\gamma+1)}. \label{eq: mu(AB) when n_1>1 and n_2>1 rewrite}
\end{equation}
Fix nonempty compact sets $X,Y \subseteq G$. By \eqref{eq: mu(AB) when n_1>1 and n_2>1 rewrite}, we need to show that 
\begin{equation}
    \left( \int_{\RR^{>0}}F_{XY}^\gamma(t)\d t\right)^{1/(\gamma+1)} \geq  \left( \int_{\RR^{>0}}F_X^\gamma(t)\d t\right)^{1/(\gamma+1)} +  \left( \int_{\RR^{>0}}F_Y^\gamma(t)\d t\right)^{1/(\gamma+1)}.  \label{eq: What we want case 1}
\end{equation}
We will do so in two steps. First, we will show the following convexity property 
\begin{equation}\label{eq: F convex}
   F_{XY}(t_1+t_2) \geq F_X(t_1)+F_Y(t_2).
\end{equation}
For every $t_1,t_2\in \RR^{>0}$, since $H$ satisfies $\BM(n_1)$, by definition we have
\begin{equation}\label{eq: containment of f_XY}
L^+_{f_X}(t_1^{n_1})L^+_{f_Y}(t_2^{n_1})\subseteq L^+_{f_{XY}}\left(\big(t_1+t_2\big)^{n_1}\right).
\end{equation}
Also, since $G/H$ satisfies $\BM(n_2)$, we have
\begin{equation}\label{eq: superlevel when n_1>1 and n_2>1}
\mu_{G/H}^{1/n_2}\left(L^+_{f_X}(t_1^{n_1})\right)+\mu_{G/H}^{1/n_2}\left(L^+_{f_Y}(t_2^{n_1})\right)\leq \mu_{G/H}^{1/n_2}\left(L^+_{f_{XY}}\left(\big(t_1+t_2\big)^{n_1}\right)\right). 
\end{equation}
Now by \eqref{eq: superlevel when n_1>1 and n_2>1},
\begin{align*}
&\, (t_1+t_2)^{n_1-1}\left(\mu_{G/H}^{1/n_2}\big(L^+_{f_{XY}}((t_1+t_2)^{n_1})\big)\right)^{n_2} \\
\geq&\,  (t_1+t_2)^{n_1-1}\left(\mu_{G/H}^{1/n_2}\left(L^+_{f_X}(t_1^{n_1})\right)+\mu_{G/H}^{1/n_2}\left(L^+_{f_Y}(t_2^{n_1})\right)\right)^{n_2}\\
=&\, \big\Vert \left(t_1^{\alpha},t_2^{\alpha}\right)  \big\Vert_{1/\alpha}^{\gamma}
\left\Vert \left(\mu_{G/H}^{\beta/n_2}\left(L^+_{f_X}(t_1^{n_1})\right) , \mu_{G/H}^{\beta/n_2}\left(L^+_{f_Y}(t_2^{n_1})\right)\right)  \right\Vert_{1/\beta}^{\gamma},
\end{align*} 
applying H\"older's inequality with exponents $1/\alpha$ and $1/\beta$, and using the fact that $n_1\geq1$ and $n_1+n_2\geq2$ (which would imply $1/\alpha, 1/\beta \in [1,\infty]$), we obtain
\begin{align*}
&\, (t_1+t_2)^{n_1-1}\left(\mu_{G/H}^{1/n_2}\big(L^+_{f_{XY}}((t_1+t_2)^{n_1})\big)\right)^{n_2} \\
\geq&\, \left(t_1^{\alpha}\mu_{G/H}^{\beta/n_2}\left(L^+_{f_X}(t_1^{n_1})\right)+t_2^{\alpha}\mu_{G/H}^{\beta/n_2}\left(L^+_{f_Y}(t_2^{n_1})\right)\right)^{\gamma}.
\end{align*} 
We remark that the above inequalities also make sense when $n_2=0$. In that case $\Vert (a,b)\Vert_{1/n_2}$ is to be understood as $\max\{a,b\}$ for every $a,b\in\RR^{\geq 0}$. The first line of the above inequality is $F_{XY}^\gamma (t_1+t_2)$ and the last line is $(F_X(t_1)+F_Y(t_2))^\gamma$. So we finished the first step.

We now prove \eqref{eq: What we want case 1}. As a consequence of the above convexity property~\eqref{eq: F convex}, when $a,b\in\RR$ and $F_X(a)\geq s_1$, $F_Y(b)\geq s_2$, we have $F_{XY}(a+b)\geq s_1+s_2$. This implies
\[
L^+_{F_X}(s_1)+L^+_{F_Y}(s_2)\subseteq L^+_{F_{XY}}(s_1+s_2).
\]
Thus using Kneser's inequality~\cite{kneser} for $\RR$ (i.e. the Brunn--Minkowski inequality for $\RR$), we have
\begin{equation}\label{eq: convexity of F_AB}
   \mu_\RR\big( L^+_{F_{XY}} ({\color{black}s_1}+{\color{black}s_2})\big) \geq \mu_\RR\big( L^+_{F_{X}} ({\color{black}s_1})\big)+\mu_\RR\big( L^+_{F_{Y}} ({\color{black}s_2})\big).
\end{equation}
Let $M_X=\mathrm{ess}\sup_{x} F_X(x)$, $M_Y=\mathrm{ess}\sup_{x} F_Y(x)$.
By Fact~\ref{fact: 4.1} we have
\begin{align}
    \int_{\RR^{>0}} F^\gamma_{XY}(s)\d s&\geq \int_0^{M_X+M_Y} \gamma {\color{black}s}^{\gamma-1} \mu_{\RR}\big( L^+_{F_{XY}}({\color{black}s})\big)\d {\color{black}s} \nonumber\\
    & = (M_X+M_Y)^{\gamma}\int_0^1 \gamma {\color{black}s}^{\gamma-1}\mu_\RR\big( L^+_{F_{XY}} (M_X{\color{black}s}+M_Y{\color{black}s}) \big)\d {\color{black}s}. \nonumber
\end{align}
By \eqref{eq: convexity of F_AB}, the above quantity is at least
\begin{align}
&(M_X+M_Y)^{\gamma}\left(\int_0^1 \gamma {\color{black}s}^{\gamma-1}\mu_\RR\big( L^+_{F_{X}} (M_X{\color{black}s}) \big)\d {\color{black}s}+\int_0^1 \gamma {\color{black}s}^{\gamma-1}\mu_\RR\big( L^+_{F_{Y}} (M_Y{\color{black}s}) \big)\d s\right) \nonumber\\
=&\, (M_X+M_Y)^{\gamma}\left( \frac{1}{M_X^\gamma}\int_{\RR^{>0}}F^\gamma_X({\color{black}s})\d {\color{black}s}+\frac{1}{M_Y^\gamma}\int_{\RR^{>0}}F^\gamma_Y({\color{black}s})\d {\color{black}s} \right), \label{eq: one dim F_AB}
\end{align}
and the latter is a change of variables. 

Finally, by \eqref{eq: mu(AB) when n_1>1 and n_2>1}{\color{black} and \eqref{eq: one dim F_AB}},
\begin{align*}
    &\,n_1^{-1/(\gamma+1)}\mu_G^{1/(\gamma+1)}(XY)
    =\left( \int_{\RR^{>0}} F^{\gamma}_{XY}(t)\d t\right)^{1/(\gamma+1)}\\
    =&\, \left((M_X+M_Y)^{\gamma}\left( \frac{1}{M_X^\gamma}\int_{\RR^{>0}}F^\gamma_X({\color{black}s})\d {\color{black}s}+\frac{1}{M_Y^\gamma}\int_{\RR^{>0}}F^\gamma_Y({\color{black}s})\d {\color{black}s} \right)\right)^{1/(\gamma+1)}.
\end{align*}    
By the fact $n_1+n_2\geq2$ we see that $\gamma\geq1$. Applying H\"older's inequality with exponents $(\gamma+1)/\gamma$ and $\gamma+1$, the above quantity is 
    \begin{align*}
    &\, \left\Vert \Big(M_X^{\frac{\gamma}{\gamma+1}},M_Y^{\frac{\gamma}{\gamma+1}}\Big)\right\Vert_{\frac{\gamma+1}{\gamma}} \cdot\left\Vert\left( \left(\frac{1}{M_X^\gamma}\int_{\RR^{>0}}F^\gamma_X(t)\d t\right)^{\frac{1}{\gamma+1}},\left(\frac{1}{M_Y^\gamma}\int_{\RR^{>0}}F^\gamma_Y(t)\d t \right)^{\frac{1}{\gamma+1}} \right)\right\Vert_{\gamma+1}\\
    \geq &\, \left( \int_{\RR^{>0}}F_X^\gamma(t)\d t\right)^{1/(\gamma+1)} +\left( \int_{\RR^{>0}}F_Y^\gamma(t)\d t\right)^{1/(\gamma+1)}\\
    =&\, n_1^{-1/(\gamma+1)}\mu_G^{1/(\gamma+1)}(X)+n_1^{-1/(\gamma+1)}\mu_G^{1/(\gamma+1)}(Y),
\end{align*}
this proves the case 1.\medskip

\noindent {\bf Case 2.} \emph{When $n_1= 1$ and $n_2=0$.}\medskip

In this case, the conclusion can be derived from \eqref{eq: mu(AB) when n_1>1 and n_2>1} directly. In particular,
using the fact that $G/H$ satisfies $\BM(0)$ and  $H$ satisfies $\BM(1)$, we have
\[
 \mu_{G/H}\left(L^{+}_{f_{XY}}\left(t_1+t_2\right)\right)\geq \max \left\{ \mu_{G/H}\left( L^+_{f_X}(t_1) \right), \mu_{G/H}\left( L^+_{f_Y}(t_2) \right)\right\}.
\]
Let $N_X=\sup_t f_X(t)$ and $N_Y=\sup_t f_Y(t)$. 
Therefore, by H\"older's inequality,
\begin{align*}
     \mu_G(XY)&= \int_{\RR^{>0}}\mu_{G/H}(L^+_{f_{XY}}(t))\d t \\
    &= \int_{\RR^{>0}} (N_X+N_Y) \mu_{G/H}(L^+_{f_{XY}}((N_X+N_Y)t))\d t \\
    &\geq(N_X+N_Y) \max\left\{  \int_0^1 \mu_{G/H}(L^+_{f_X}(N_Xt)\d t,\int_0^1  \mu_{G/H}(L^+_{f_Y}(N_Yt) \d t\right\}\\
    &\geq N_X\int_0^1 \mu_{G/H}(L^+_{f_X}(N_Xt))\d t  + N_Y\int_0^1 \mu_{G/H}(L^+_{f_Y}(N_Yt))\d t \\
    &= \mu_G(X) + \mu_G(Y).
\end{align*}
Thus $G$ satisfies $\BM(1)$.\medskip

\noindent {\bf Case 3.} \emph{When $n_1=0$ and $n_2\geq 1$.}\medskip

Applying the Brunn--Minkowski inequality with exponent $0$ on $H$, and the fact that $G/H$ satisfies $\BM(n_2)$, we obtain
\begin{equation}\label{eq: G when n_1=0}
 \mu_{G/H}^{1/n_2}\left(L^{+}_{f_{XY}}\left(\max\{t_1,t_2\}\right)\right)\geq   \mu_{G/H}^{1/n_2}\left( L^+_{f_X}(t_1) \right)+ \mu_{G/H}^{1/n_2}\left( L^+_{f_Y}(t_2) \right).
\end{equation}
Given a compact set $\Omega$ in $G$, we define 
\[
E_\Omega(t)=\mu_{G/H}^{1/n_2}(L^+_{f_\Omega}(t)),\quad  t > 0. 
\]
Clearly $E_\Omega$ is a non-increasing function. 
By \eqref{eq: G when n_1=0}, we have 
\begin{equation}\label{eq: property for E}
    E_{XY}(\max\{a_1,a_2\})\geq E_X(a_1)+E_Y(a_2)
\end{equation}
 for all $a_1,a_2$. This can be seen as a ``convexity property'' for $E$, but the maximum operator inside the function $E$ prevents us from using the same argument as used in Case 1 for $F$. On the other hand, using \eqref{eq: property for E}, we see that if $a,b\in\RR$ and $E_X(a)\geq s_1$ and $E_Y(b)\geq s_2$, then $E_{XY}(\max\{a,b\})\geq s_1+s_2$. Hence $L^+_{E_{X}}(s_1) \cup L^+_{E_{Y}}(s_2) \subseteq L^+_{E_{XY}}(s_1+s_2) $, and thus
\begin{equation}\label{relationofmuR}
\mu_\RR(L^+_{E_{XY}}({\color{black}s}_1+{\color{black}s}_2))\geq \max\{\mu_\RR(L^+_{E_{X}}({\color{black}s}_1)),\mu_\RR(L^+_{E_{Y}}({\color{black}s}_2))\}.
\end{equation}
Now we consider $\mu_G(XY)$. We have
\begin{align}\label{eq: the equation below 21}
    \mu_G^{1/n_2}(XY)
    =\left(\int_{\RR^{>0}}E_{XY}^{n_2}(s)\d s \right)^{1/n_2}
    =\left(\int_{\RR^{>0}} n_2{\color{black}s}^{n_2-1}\mu_{\RR}(L_{E_{XY}}^+({\color{black}s}))\d {\color{black}s} \right)^{1/n_2}.
\end{align}    
Let $P_X=\mathrm{ess}\sup_t E_X(t)$ and $P_Y=\mathrm{ess}\sup_t E_Y(t)$.  By \eqref{relationofmuR} and \eqref{eq: the equation below 21} we see
\begin{align*}
&\,n_2^{-1/n_2}\mu_G^{1/n_2}(XY)\\
   \geq &\,\left(\!(P_X+P_Y)^{n_2} \max\left\{  \int_0^1 {\color{black}s}^{n_2-1} \mu_{\RR}(L^+_{E_X}(P_X {\color{black}s}))\d {\color{black}s},\int_0^1 {\color{black}s}^{n_2-1} \mu_{\RR}(L^+_{E_Y}(P_Y {\color{black}s})) \d {\color{black}s}\right\}\!\right)^{1/n_2}\\
    \geq&\, \left(P_X^{n_2}\int_0^1 {\color{black}s}^{n_2-1}\mu_{\RR}(L^+_{E_X}(P_X {\color{black}s}))\d {\color{black}s} \right)^{1/n_2} + \left(P_Y^{n_2}\int_0^1 {\color{black}s}^{n_2-1}\mu_{\RR}(L^+_{E_Y}(P_Y {\color{black}s}))\d {\color{black}s} \right)^{1/n_2} \\
    =&\, n_2^{-1/n_2}\mu_G^{1/n_2}(X) + n_2^{-1/n_2}\mu_G^{1/n_2}(Y).
\end{align*}
This proves the case when $n_1=0$, and hence finishes the proof of the proposition.
\end{proof}

\section{Reduction to unimodular subgroups}
The main result of this section allows us to obtain a Brunn--Minkowski inequality for a nonunimodular group from a certain unimodular normal subgroup. 
We use $\mu_{\RR^\times}$ to denote a Haar measure on the multiplicative group $(\RR^{>0},\times)$. The next lemma concerns the case when the modular function on $X$ and on $Y$ is ``nearly constant''. 

\begin{lemma}\label{lem: similar modular value}
Suppose $G$ is a locally compact group, and the modular function $\Delta_G: G \to (\RR^{>0},\times)$ is a  quotient map of topological groups. Let $X,Y$ be compact subsets of $G$, and parameters $a,b,\varepsilon>0$ and $n\geq0$ an integer, such that for every $x\in X$, $\Delta_G(x)\in[a,a+\varepsilon)$ and for every $y\in Y$, $\Delta_G(y)\in[b,b+\varepsilon)$. Suppose $H=\ker(\Delta_G)$ satisfies $\BM(n)$. Then 
\[
\frac{\nu_G(X)^{1/(n+1)}}{\nu_G(XY)^{1/(n+1)}}+ \frac{\mu_G(Y)^{1/(n+1)}}{\mu_G(XY)^{1/(n+1)}}\leq 1+f(\varepsilon),
\]
where $f(\varepsilon)$ is an explicit function depending only on $a,b, n$ and $\varepsilon$, and $f(\varepsilon)\to 0$ as $\varepsilon\to 0$. Moreover, this convergence is uniform when $n$ is fixed and $a$ and $b$ vary over compact sets.
\end{lemma}
\begin{proof}
We first consider the case when $n\geq1$. Fix Haar measures $\mu_H,\mu_{\RR^\times}$ on $H$ and on $(\RR^{>0},\times)$, and these two measures will also uniquely determine a left Haar measure $\mu_G$ on $G$ and a right Haar measure $\nu_G$ on $G$ via the quotient integral formula.  For every compact subset $\Omega$ of $G$, define two functions $\ell_\Omega,r_\Omega:(\RR^{>0},\times)\to \RR^{\geq0}$ such that
\[
\ell_\Omega(g)=\mu_H(g^{-1}\Omega\cap H), \text{ and } r_\Omega(g)=\mu_H(\Omega g^{-1}\cap H).
\]
Note that given $g_1,g_2$ in $G$, $(X\cap Hg_1)(Y\cap g_2H)$ lies in 
\begin{equation}\label{eq: relation in lem 5.1}
Hg_1g_2H=(g_1g_2)(g_1g_2)^{-1}H(g_1g_2)H = H(g_1g_2)H(g_1g_2)^{-1}(g_1g_2)
\end{equation}
since $H$ is normal.

For every compact sets $X_1,X_2$ in $H$, and $g_1,g_2$ in $G$, by \eqref{eq: relation in lem 5.1}, $X_1g_1g_2X_2\subseteq g_1g_2H$. By Fact~\ref{fact: surjectivemodularonconnectedcomp}.2 and the fact that $H$ satisfies $\BM(n)$, we have
\begin{align}\label{eq: n>0 nonunimodular}
\mu^{1/n}_H((g_1g_2)^{-1}X_1g_1g_2X_2)&\geq \mu^{1/n}_H((g_1g_2)^{-1}X_1g_1g_2)+\mu^{1/n}_H(X_2) \nonumber \\
&=(\Delta_G(g_1)\Delta_G(g_2))^{-1/n}\mu^{1/n}_H(X_1)+\mu^{1/n}_H(X_2).
\end{align}
In light of this, for every $t_1,t_2\geq 0$ we have
\[
 L^+_{\ell_{XY}}\left(\Big(\inf_{x\in X, y\in Y}(\Delta_G(x)\Delta_G(y))^{-1/n}t_1+t_2\Big)^{n}\right)\supseteq L^+_{r_X}(t_1^n)+L^+_{\ell_Y}(t_2^n).
\]
Applying the Brunn--Minkowski inequality on $(\RR^{>0},\times)$,  we get
\begin{align}\label{eq: first inequality in lemma 5.1}
    &\mu_{\RR^\times}\left( L^+_{\ell_{XY}}\left(\Big(\inf_{x\in X, y\in Y}(\Delta_G(x)\Delta_G(y))^{-1/n}t_1+t_2\Big)^{n}\right) \right)\geq \mu_{\RR^\times}(L^+_{r_X}(t_1^n))+\mu_{\RR^\times}(L^+_{\ell_Y}(t_2^n)).
    \end{align}
    Similarly, for the right Haar measure on $H$, we have
\begin{align}    \label{eq: second inequality in lemma 5.1}
    &\mu_{\RR^\times}\left( L^+_{r_{XY}}\left(\Big(t_1+\inf_{x\in X,y\in Y}(\Delta_G(x)\Delta_G(y))^{1/n}t_2\Big)^{n}\right) \right)\geq \mu_{\RR^\times}(L^+_{r_X}(t_1^{n}))+\mu_{\RR^\times}(L^+_{\ell_Y}(t_2^{n})).
\end{align}
Let $M_X=\sup_x \mu_{\RR^\times}(L^+_{r_X}(x))$ and $M_Y=\sup_y \mu_{\RR^\times}(L^+_{\ell_Y}(y))$. By a change of variables and Fact~\ref{fact: 4.1}, \begin{align*}
    \mu_G(XY)=\int_{\RR^\times}\mu_H(g^{-1}XY\cap H)\d\mu_{\RR^\times}(g)=\int_{\RR^{>0}}nt^{n-1}\mu_{\RR^\times}(L^+_{\ell_{XY}}(t^{n}))\d t.
\end{align*}
By a change of variables the above quantity can be rewritten as 
\begin{align*}
    &\Big\Vert \Big(\frac{1}{(a+\varepsilon)(b+\varepsilon)}M_X,M_Y\Big)\Big\Vert_{1/n}\\
    &\quad \cdot\int_0^1 nt^{n-1}\mu_{\RR^\times}\left(L_{\ell_{XY}}^+\left(\Big(\Big(\frac{1}{(a+\varepsilon)(b+\varepsilon)}M_X\Big)^{1/n}t+M_Y^{1/n}t\Big)^n\right)\right)\d t.
\end{align*}
Using~\eqref{eq: first inequality in lemma 5.1}, the above quantity is at least 
\begin{align*}
   & \Big\Vert \Big(\frac{1}{(a+\varepsilon)(b+\varepsilon)}M_X,M_Y\Big)\Big\Vert_{1/n}\\
    &\quad \cdot\left(\int_0^1 nt^{n-1}\mu_{\RR^\times}\left(L_{r_{X}}^+(M_X t^n)\right)\d t + \int_0^1 nt^{n-1}\mu_{\RR^\times}\left(L_{\ell_{Y}}^+(M_Y t^n)\right)\d t \right)\\
    =&\, \Big\Vert \Big(\frac{1}{(a+\varepsilon)(b+\varepsilon)}M_X,M_Y\Big)\Big\Vert_{1/n}\left(\frac{1}{M_X}\nu_G(X)+\frac{1}{M_Y}\mu_G(Y)\right).
\end{align*}
Thus applying H\"older's inequality with exponents $(n+1)/n$ and $n+1$, we get
\begin{align}
   &\, \mu_G^{1/(n+1)}(XY) \nonumber\\
    \geq&\, \left\Vert \left(\Big(\frac{1}{(a+\varepsilon)(b+\varepsilon)}M_X\Big)^{\frac{1}{n+1}},M_Y^{\frac{1}{n+1}}\right)\right\Vert_{\frac{n+1}{n}}\left\Vert\left(\Big(\frac{1}{M_X}\nu_G(X)\Big)^{\frac{1}{n+1}}, \Big(\frac{1}{M_Y}\mu_G(Y)\Big)^{\frac{1}{n+1}}\right)\right\Vert_{n+1} \nonumber\\
    \geq&\, \left(\frac{1}{(a+\varepsilon)(b+\varepsilon)}\nu_G(X)\right)^{1/(n+1)}+\mu^{1/(n+1)}_G(Y).\label{eq: bounded modular muXY}
\end{align}

Similarly, for $\nu_G(XY)$ by~\eqref{eq: second inequality in lemma 5.1} and a change of variables we get
\begin{align*}
    \nu_G(XY)&=\int_{\RR^\times}\mu_H(XYg^{-1}\cap H)\d\mu_{\RR^\times}(g)\\
    &=\int_{\RR^{>0}}nt^{n-1}\mu_{\RR^\times}(L^+_{r_{XY}}(t^{n}))\d t\\
    &\geq \Big\Vert \Big(M_X,abM_Y\Big)\Big\Vert_{1/n}\left(\frac{1}{M_X}\nu_G(X)+\frac{1}{M_Y}\mu_G(Y)\right),
\end{align*}
 and using H\"older's inequality in a similar way as in~\eqref{eq: bounded modular muXY}, we obtain 
\begin{align}\label{eq: bounded modular nuXY}
    \nu_G^{1/(n+1)}(XY)\geq \nu_G^{1/(n+1)}(X)+\big(ab\mu_G(Y)\big)^{1/(n+1)}.
\end{align}

Therefore, combining \eqref{eq: bounded modular muXY} and \eqref{eq: bounded modular nuXY}, we conclude
\begin{align*}
    &\, \frac{\nu_G^{1/(n+1)}(X)}{\nu_G^{1/(n+1)}(XY)}+\frac{\mu_G^{1/(n+1)}(Y)}{\mu_G^{1/(n+1)}(XY)}\\
    \leq&\, \frac{1}{1+(Cab)^{1/(n+1)}}+\frac{1}{1+\big(\frac{1}{C(a+\varepsilon)(b+\varepsilon)}\big)^{1/(n+1)}}\\
    \leq&\, 1+\frac{(C(ab+\varepsilon(a+b+\varepsilon)))^{1/(n+1)}-(Cab)^{1/(n+1)}}{(1+(Cab)^{1/(n+1)})(1+(C(a+\varepsilon)(b+\varepsilon))^{1/(n+1)})}.
\end{align*}
where $C=\mu_G(Y)/\nu_G(X)$.

Hence
\begin{align*}
    \frac{\nu_G^{1/(n+1)}(X)}{\nu_G^{1/(n+1)}(XY)}+\frac{\mu_G^{1/(n+1)}(Y)}{\mu_G^{1/(n+1)}(XY)}\leq 1+f(\varepsilon)
\end{align*}
where $$f(\varepsilon) = \sup_{r>0} \frac{(r(ab+\varepsilon(a+b+\varepsilon)))^{1/(n+1)}-(rab)^{1/(n+1)}}{(1+(rab)^{1/(n+1)})(1+(r(a+\varepsilon)(b+\varepsilon))^{1/(n+1)})}$$
depends only on $a, b, n$ and $\varepsilon$ and we see $\lim_{\varepsilon \rightarrow 0} f(\varepsilon) = 0$ uniformly when $a,b$ takes values in a compact set by an elementary computation.\medskip

The remaining case is when $n=0$. Note that in this case, inequality \eqref{eq: n>0 nonunimodular} becomes
\begin{align*}
    \mu_H((g_1g_2)^{-1}X_1g_1g_2X_2)\geq \max\{(\Delta_G(g_1)\Delta_G(g_2))^{-1}\mu_H(X_1),\mu_H(X_2)\}.
\end{align*}
This implies for every $t_1,t_2$,
\begin{align*}
        \mu_{\RR^\times}\left( L^+_{\ell_{XY}}\max\left\{\inf_{x\in X, y\in Y}(\Delta_G(x)\Delta_G(y))^{-1}t_1,t_2\right\} \right)\geq \mu_{\RR^\times}(L^+_{r_X}(t_1))+\mu_{\RR^\times}(L^+_{\ell_Y}(t_2)).
\end{align*}
For any compact set $\Omega$ in $G$, define two functions $\Phi_\Omega,\Psi_\Omega: \RR\to\RR$, by
\[
\Phi_\Omega(t)=\mu_{\RR^\times}(L^+_{\ell_\Omega}(t)), \quad\text{and}\quad \Psi_\Omega(t)=\mu_{\RR^\times}(L^+_{r_\Omega}(t)). 
\]
Thus we have
\[
\mu_\RR(L^+_{\Phi_{XY}}(t_1+t_2))\geq \max\left\{\inf_{x\in X, y\in Y}(\Delta_G(x)\Delta_G(y))^{-1} \mu_\RR(L^+_{\Psi_X}(t_1)),\mu_\RR(L^+_{\Phi_Y}(t_2))\right\}.
\]
Let $N_X=\sup_x \mu_{\RR}(L^+_{\Psi_X}(x))$ and $N_Y=\sup_y \mu_{\RR}(L^+_{\Phi_Y}(y))$. By a change of variable, for $\mu_G(XY)$ we have
\begin{align}
    \mu_G(XY)&=\int_{\RR^{>0}}\mu_\RR(L^+_{\Phi_{XY}}(t)) \d t \nonumber\\
    &\geq (N_X+N_Y)\max\left\{\frac{1}{(a+\varepsilon)(b+\varepsilon)}\frac{\nu_G(X)}{N_X},\frac{\mu_G(Y)}{N_Y} \right\} \nonumber\\
    &\geq \frac{1}{(a+\varepsilon)(b+\varepsilon)}\nu_G(X)+\mu_G(Y).  \label{eq: n=0 nonunimodular left case}
\end{align}

Similarly, for every $t_1,t_2$ we also have
\begin{align*}
        \mu_{\RR^\times}\left( L^+_{r_{XY}}\max\left\{t_1,\inf_{x\in X, y\in Y}\Delta_G(x)\Delta_G(y)t_2\right\} \right)\geq \mu_{\RR^\times}(L^+_{r_X}(t_1))+\mu_{\RR^\times}(L^+_{\ell_Y}(t_2)),
\end{align*}
which implies
\[
\mu_\RR(L^+_{\Psi_{XY}}(t_1+t_2))\geq \max\left\{ \mu_\RR(L^+_{\Psi_X}(t_1)),\inf_{x\in X, y\in Y}\Delta_G(x)\Delta_G(y)\mu_\RR(L^+_{\Phi_Y}(t_2))\right\}.
\]
Therefore, for $\nu_G(XY)$ we get
\[
\nu_G(XY)\geq \nu_G(X)+ab\mu_G(Y).
\]
Together with \eqref{eq: n=0 nonunimodular left case}, as in the case when $n\geq1$, we get
\begin{align*}
    \frac{\nu_G(X)}{\nu_G(XY)}+\frac{\mu_G(Y)}{\mu_G(XY)}&\leq \frac{1}{1+Cab}+\frac{1}{1+\frac{1}{C(a+\varepsilon)(b+\varepsilon)}}\\
    &\leq 1+\varepsilon\frac{ C(a+b+\varepsilon)}{(1+Cab)(1+C(a+\varepsilon)(b+\varepsilon))},
\end{align*}
where $C=\mu_G(Y)/\nu_G(X)$. The conclusion follows by taking
\[
f(\varepsilon)=\sup_{r>0}\frac{\varepsilon r(a+b+\varepsilon)}{(1+rab)(1+r(a+\varepsilon)(b+\varepsilon))},
\]
and we can see that $f(\varepsilon)\to 0$ as $\varepsilon\to 0$ uniformly when $a,b$ taken values in a compact set by elementary computations.
\end{proof}

The next proposition is the main result of the section. As mentioned in the introduction, the proof uses a discretized ``spillover'' method. We remark that one can always make the proof continuous as we did in Section 4, but we give a discrete proof here since we believe this reflects our idea in a clearer way.

\begin{proposition}\label{prop: reduce to unimodular}
Suppose  $G$ is a locally compact group with $H=\ker(\Delta_G)$ satisfying $\BM(n)$. Suppose the map $\Delta_G: G \to (\RR^{>0},\times)$ is a  quotient map of topological groups, then $G$ satisfies $\BM(n+1)$.
\end{proposition}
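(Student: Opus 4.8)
The plan is to run a discretized version of McCrudden's ``spillover'' argument along the quotient homomorphism $\Delta_G\colon G\to(\RR^{>0},\times)$, using Lemma~\ref{lem: similar modular value} as the base estimate on thin slabs of $X$ and $Y$ on which $\Delta_G$ is nearly constant. We may assume $n\ge1$: the case $n=0$ runs the same way, with every $(\cdot)^{1/n}$ and $\|\cdot\|_{1/n}$ below replaced by the appropriate maximum, exactly as in the $n=0$ part of the proof of Lemma~\ref{lem: similar modular value}. Since $H=\ker(\Delta_G)$ is unimodular, we fix Haar measures via the quotient integral formula so that for every compact $\Omega\subseteq G$ one has $\mu_G(\Omega)=\int_{\RR^{>0}}\mu_H(g^{-1}\Omega\cap H)\,\d\mu_{\RR^\times}$ and $\nu_G(\Omega)=\int_{\RR^{>0}}\mu_H(\Omega g^{-1}\cap H)\,\d\mu_{\RR^\times}$, the integration variable being $\Delta_G(g)$.

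Fix compact $X,Y\subseteq G$ of positive measure and a small $\varepsilon>0$, to be sent to $0$ at the end. Since $\Delta_G(X)$ and $\Delta_G(Y)$ are compact in $\RR^{>0}$, choose arithmetic progressions $a_1<\dots<a_N$ and $b_1<\dots<b_M$ of common gap $\varepsilon$ so that $X=\bigsqcup_iX_i$ and $Y=\bigsqcup_jY_j$, where $X_i=X\cap\Delta_G^{-1}[a_i,a_i+\varepsilon)$ and $Y_j=Y\cap\Delta_G^{-1}[b_j,b_j+\varepsilon)$. Discarding null pieces and, by inner regularity of Haar measure, shrinking each remaining piece to a compact subset of almost full measure, we may take all the $X_i$ and $Y_j$ compact. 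For each pair $(i,j)$, Lemma~\ref{lem: similar modular value} applies to $(X_i,Y_j)$ with parameters $a=a_i$ and $b=b_j$; concretely, inequalities \eqref{eq: bounded modular muXY} and \eqref{eq: bounded modular nuXY} from its proof yield, with $c_{ij}:=a_ib_j$,
\[
\mu_G(X_iY_j)^{\frac{1}{n+1}}\ \ge\ \big((a_i+\varepsilon)^{-1}(b_j+\varepsilon)^{-1}\nu_G(X_i)\big)^{\frac{1}{n+1}}+\mu_G(Y_j)^{\frac{1}{n+1}},
\]
\[
\nu_G(X_iY_j)^{\frac{1}{n+1}}\ \ge\ \nu_G(X_i)^{\frac{1}{n+1}}+\big(c_{ij}\,\mu_G(Y_j)\big)^{\frac{1}{n+1}}.
\]

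The main step is to glue these local estimates into global bounds for $\mu_G(XY)$ and $\nu_G(XY)$. Each product satisfies $X_iY_j\subseteq\Delta_G^{-1}\big([c_{ij},(a_i+\varepsilon)(b_j+\varepsilon))\big)$, an interval of modular values of width $O(\varepsilon)$ about $c_{ij}=a_ib_j$; after passing to $\log$ this is, up to an $O(\varepsilon)$ error, a Minkowski sum on the line $\RR\cong\log(G/H)$. Ordering the slabs of $X$ and of $Y$ by modular value and running the spillover bookkeeping of Case~1 of Proposition~\ref{lem: quotient unimodular}, now in the presence of the modular twists $c_{ij}$ --- successively lower-bounding $XY\cap\Delta_G^{-1}(\{t\ge c\})$ for decreasing thresholds $c$ via the two displays above and telescoping the increments --- one obtains
\[
\frac{\nu_G(X)^{1/(n+1)}}{\nu_G(XY)^{1/(n+1)}}+\frac{\mu_G(Y)^{1/(n+1)}}{\mu_G(XY)^{1/(n+1)}}\ \le\ 1+o_\varepsilon(1),
\]
the error $o_\varepsilon(1)$ --- arising from the $f(\varepsilon)$ of Lemma~\ref{lem: similar modular value} and from the $O(\varepsilon)$ spreads above --- tending to $0$ as $\varepsilon\to0$, uniformly because the $a_i,b_j$ range over fixed compact sets. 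Letting $\varepsilon\to0$ gives $\BM(n+1)$ for $G$.

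The gluing is the step I expect to be hardest. In the fully unimodular spillover of Proposition~\ref{lem: quotient unimodular} the base and the fiber are both unimodular and the combination collapses to the one-sided inequality $\mu_G(XY)^{1/(n+1)}\ge\mu_G(X)^{1/(n+1)}+\mu_G(Y)^{1/(n+1)}$; here the modular value varies across $X$ and $Y$, there is no single twist constant to factor out, and the left- and right-measure estimates \eqref{eq: bounded modular muXY}--\eqref{eq: bounded modular nuXY} must be propagated through the spillover \emph{in tandem} and only recombined --- as at the very end of the proof of Lemma~\ref{lem: similar modular value} --- once the sums over the slabs have been assembled. The remaining technical points are ensuring every $\varepsilon$-dependent loss is genuinely uniform (so the limit $\varepsilon\to0$ is legitimate) and the preliminary inner-regularity approximation of the Borel slabs $X_i,Y_j$ by compact sets before the lemma can be invoked.
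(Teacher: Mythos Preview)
Your setup is right --- slice $X$ and $Y$ into thin modular slabs and invoke Lemma~\ref{lem: similar modular value} on each --- but the gluing step is not a proof: you gesture at ``running the spillover bookkeeping of Case~1 of Proposition~\ref{lem: quotient unimodular}, now in the presence of the modular twists $c_{ij}$,'' and then concede this is the hardest part. As you yourself note, the twists prevent the two displayed estimates from being combined into a single McCrudden-style convexity inequality, and you give no mechanism for propagating the $\mu$- and $\nu$-bounds ``in tandem'' through a spillover over all pairs $(i,j)$. That is the whole content of the proposition, and it is missing.

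The paper sidesteps this difficulty by a different partition. Rather than slicing $X$ and $Y$ into equal-\emph{width} modular slabs (with possibly different counts $N$ and $M$), it partitions each into the \emph{same number} $N$ of slabs of equal \emph{measure}: choose the $A_i$ so that $\nu_G(X_i)=\nu_G(X)/N$ and the $B_i$ so that $\mu_G(Y_i)=\mu_G(Y)/N$, each $A_i,B_i$ of length at most $\varepsilon$. Then the \emph{diagonal} products $X_iY_i$ are pairwise disjoint (their modular supports $A_iB_i\subseteq\RR^{>0}$ are ordered), so $\nu_G(XY)\ge\sum_i\nu_G(X_iY_i)$ and $\mu_G(XY)\ge\sum_i\mu_G(X_iY_i)$. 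Because $\nu_G(X)=N\nu_G(X_t)$ for every $t$, a single H\"older inequality yields
\[
\left(\frac{\nu_G(X)}{\nu_G(XY)}\right)^{\frac{1}{n+1}}
\le
\left(\frac{N\nu_G(X_t)}{\sum_i\nu_G(X_iY_i)}\right)^{\frac{1}{n+1}}
\le
\frac{1}{N}\sum_i\left(\frac{\nu_G(X_i)}{\nu_G(X_iY_i)}\right)^{\frac{1}{n+1}},
\]
and similarly for the $\mu$-term. Adding the two and applying Lemma~\ref{lem: similar modular value} to each diagonal pair $(X_i,Y_i)$ gives $\le 1+\tilde f(\varepsilon)$. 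No bookkeeping over off-diagonal products $X_iY_j$ is needed; the equal-measure partition and the diagonal disjointness replace the spillover you were worried about with a one-line averaging.
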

\begin{proof}
Since $X$ and $Y$ are compact, there are $a_1,a_2,b_1$ and $b_2>0$, such that 
\[
a_1=\inf_{x\in X}\Delta_G(x),\ a_2=\sup_{x\in X}\Delta_G(x),\ b_1=\inf_{y\in Y}\Delta_G(y),\ b_2=\sup_{y\in Y}\Delta_G(y). 
\]
We fix $\mu_G$ and $\nu_G$ as in the proof of Lemma~\ref{lem: similar modular value}, and let $\varepsilon>0$ be a sufficiently small number (depending on $a_1,a_2,b_1$ and $b_2$). Then by Fact \ref{fact: Quotient Integral Formula} and familiar properties of integrable functions on $\RR$, there is an $N>0$, such that we can partition $[a_1,a_2]$ and $[b_1,b_2]$ into $N$ subintervals, that is
\[
[a_1,a_2]=\bigcup_{i=1}^N A_i, \quad [b_1,b_2]=\bigcup_{i=1}^N B_i,
\]
such that each subinterval has length at most $\varepsilon$, and the intersection of $X$ with $\Delta_G^{-1}(A_i)$ has $\nu_G$-measure $\nu_G(X)/N$, the intersection of $Y$ with $\Delta_G^{-1}(B_i)$ has $\mu_G$-measure $\mu_G(Y)/N$, for every $1\leq i\leq N$. 

Let $X_i=X\cap \Delta_G^{-1}(A_i)$ and let $Y_i=Y\cap \Delta_G^{-1}(B_i)$. Then  $\nu_G(X)=\sum_{i=1}^N \nu_G(X_i)$ and $\mu_G(Y)=\sum_{i=1}^N \mu_G(Y_i)$. In particular, we have $\mu_G(XY)\geq \sum_{i=1}^N \mu_G(X_iY_i)$ and $\nu_G(XY)\geq \sum_{i=1}^N \nu_G(X_iY_i)$. Observe that given $1\leq i,j\leq N$ and $i\neq j$, $X_iY_i$ and $X_jY_j$ are disjoint. Indeed, the modulus of every element in $X_iY_i$ lies in $A_iB_i$ and the modulus of every element in $X_jY_j$ lies in $A_jB_j$. But $A_iB_i$ and $A_jB_j$ are disjoint subsets of $\Bbb{R}^{>0}$ when $i \neq j$. 

By Lemma~\ref{lem: similar modular value}, for every $1\leq i\leq N$, there is a function $f_i(\varepsilon)$, such that $f_i(\varepsilon) \rightarrow 0$ when $\varepsilon \rightarrow 0$  uniformly, and
\[
\frac{\nu_G^{1/(n+1)}(X_i)}{\nu_G^{1/(n+1)}(X_iY_i)}+\frac{\mu_G^{1/(n+1)}(Y_i)}{\mu_G^{1/(n+1)}(X_iY_i)}\leq 1+f_i(\varepsilon). 
\]
Take $\tilde{f}(\varepsilon)=\sup_i f_i(\varepsilon)$,  hence $\tilde{f}(\varepsilon)\to 0$ as $\varepsilon\to 0$. Therefore, for every $1\leq t\leq N$,
\begin{align}\label{eq: inter step}
    \frac{\nu_G^{1/(n+1)}(X)}{\nu_G^{1/(n+1)}(XY)}+\frac{\mu_G^{1/(n+1)}(Y)}{\mu_G^{1/(n+1)}(XY)}
    \leq  \left(\frac{N\nu_G(X_t)}{\sum_{i=1}^N\nu_G(X_iY_i)}\right)^{\frac{1}{n+1}}+\left(\frac{N\mu_G(Y_t)}{\sum_{i=1}^N\mu_G(X_iY_i)}\right)^{\frac{1}{n+1}}. 
\end{align}    
Also by H\"older's inequality with exponents $\frac{n+2}{n+1}$ and $n+2$, we observe that for every $t$ we have
\begin{align*}
   &\,\left( \sum_{i=1}^N\left( \frac{\nu_G(X_i)}{\nu_G(X_iY_i)} \right)^{\frac{1}{n+2}\cdot\frac{n+2}{n+1}}\right)^{\frac{n+1}{n+2}}\left(\sum_{i=1}^N \nu_G(X_iY_i)\right)^{\frac{1}{n+2}} \nonumber\\
   \geq&\,\sum_{i=1}^N \left( \frac{\nu_G(X_i)}{\nu_G(X_iY_i)} \right)^{\frac{1}{n+2}}\nu_G(X_iY_i)^{\frac{1}{n+2}} = N\nu_G^{\frac{1}{n+2}}(X_t). \label{placeholder}
\end{align*}
After simplification, the above inequality can be rewritten as
\begin{equation}\label{placeholder}
   N(N\nu_G(X_t))^{\frac{1}{n+1}}\leq \Big(\sum_{i=1}^N \Big(\frac{\nu_G(X_i)}{\nu_G(X_iY_i)}\Big)^{\frac{1}{n+1}}\Big) \Big(\sum_{i=1}^N\nu_G(X_iY_i)\Big)^{\frac{1}{n+1}} .
\end{equation}
Averaging \eqref{eq: inter step} over all $t$ and using inequality \eqref{placeholder}, we have
\begin{align*}
   &\, \frac{\nu_G^{1/(n+1)}(X)}{\nu_G^{1/(n+1)}(XY)}+\frac{\mu_G^{1/(n+1)}(Y)}{\mu_G^{1/(n+1)}(XY)}\\
    \leq &\, \frac{1}{N}\sum_{i=1}^N\left(\frac{\nu_G(X_i)}{\nu_G(X_iY_i)}\right)^{1/(n+1)}+\frac{1}{N}\sum_{i=1}^N\left(\frac{\mu_G(X_i)}{\mu_G(X_iY_i)}\right)^{1/(n+1)}\leq 1+\tilde{f}(\varepsilon).
\end{align*}
The desired conclusion follows by taking $\varepsilon\to 0$. 
\end{proof}

\section{Reduction to cocompact and codiscrete subgroups}

The main results in this section will help us to reduce the problem to cocompact subgroups or open normal subgroups. We make use of the following integral formula, see~\cite[Proposition 5.26, Consequence 1]{Knapp}.

\begin{fact}\label{fact: integral formula nonunimodular}
Let $G$ be a connected unimodular Lie group. Suppose $S,T$ are closed subgroups of $G$, such that $G=ST$, and the intersection $S \cap T$ is compact. Then there is a left Haar measure $\mu_S$ on $S$ and a right Haar measure $\nu_T$ on $T$, such that
\[
\int_G f(x) \d\mu_G(x) = \int_{S\times T} f(st) \d\mu_S(s)\d\nu_T(t),
\]
for every $f\in C_{c}(G)$. 
\end{fact}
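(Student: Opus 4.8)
The plan is to realize $G$ as a homogeneous space of the product group $L:=S\times T$ and then apply the quotient integral formula (Fact~\ref{fact: Quotient Integral Formula}); the unimodularity of $G$ is precisely what guarantees that the relevant invariant measure on that homogeneous space is a Haar measure.

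First I would let $L=S\times T$ act on $G$ by $(s,t)\cdot g:=sgt^{-1}$. Since $T=T^{-1}$ and $G=ST$, this action is transitive, and the stabilizer of $e\in G$ is $H:=\{(c,c):c\in S\cap T\}$, a compact subgroup of $L$ isomorphic to $S\cap T$. The orbit map $(s,t)\mapsto st^{-1}$ thus induces a continuous bijection $L/H\to G$, and because $G$ — hence $S$, $T$, and $L$ — is second countable, this map is in fact a homeomorphism. Under it the $L$-action on $L/H$ becomes the left-$S$/right-$T$ translation action on $G$, so an $L$-invariant Radon measure on $L/H$ is the same thing as a Radon measure on $G$ invariant under left translation by $S$ and right translation by $T$. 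As $G$ is unimodular, a left Haar measure $\mu_G$ is also right-invariant and is therefore such a measure; by uniqueness of invariant measures on homogeneous spaces it is, up to a positive scalar, the only one.

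Next I would apply the quotient integral formula to the closed subgroup $H\leq L$. Weil's existence condition $\Delta_L|_H=\Delta_H$ holds automatically: $H$ is compact, so $\Delta_H\equiv1$, while the image of the compact group $H$ under the continuous homomorphism $\Delta_L:L\to\RR^{>0}$ is the trivial subgroup. Hence $L/H$ carries a nonzero $L$-invariant measure, which I identify with $\mu_G$ as above. Fixing left Haar measures $\mu_S,\mu_T$ on $S,T$ (so that $\mu_S\times\mu_T$ is a left Haar measure on $L$) and the probability Haar measure on $S\cap T$, the quotient integral formula applied to $F(s,t):=f(st^{-1})$ for $f\in C_c(G)$ collapses the fibre integral over $H$, since $sc(tc)^{-1}=st^{-1}$ does not depend on $c\in S\cap T$, and, after rescaling $\mu_S$ to absorb the overall constant, yields
\[
\int_G f(x)\,\d\mu_G(x)=\int_{S\times T} f(st^{-1})\,\d\mu_S(s)\,\d\mu_T(t).
\]
Finally, substituting $t\mapsto t^{-1}$ in the $T$-variable replaces $\mu_T$ by the measure $E\mapsto\mu_T(E^{-1})$, which is a right Haar measure $\nu_T$ on $T$, and turns the right-hand side into $\int_{S\times T}f(st)\,\d\mu_S(s)\,\d\nu_T(t)$, which is exactly the claim.

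The only points needing care are that $L/H\to G$ is a homeomorphism rather than merely a continuous bijection — handled by second countability via standard homogeneous-space theory — and the bookkeeping of the three normalizations ($\mu_S$, $\mu_T$ versus $\nu_T$, and the Haar measure on $S\cap T$, taken to have total mass $1$) so that no stray constant survives. Beyond this the argument is a routine invocation of the quotient integral formula together with the unimodularity of $G$, and I do not anticipate a serious obstacle.
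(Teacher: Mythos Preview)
Your argument is correct and is essentially the standard proof; the paper itself does not prove this fact but simply cites \cite[Proposition~5.26, Consequence~1]{Knapp}, whose argument proceeds along the same lines (realize $G$ as the homogeneous space $(S\times T)/\{(c,c):c\in S\cap T\}$ and push down the product Haar measure).

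One small point: the paper's Fact~\ref{fact: Quotient Integral Formula} is stated only for closed \emph{normal} subgroups, whereas your diagonal $H=\{(c,c):c\in S\cap T\}$ is in general not normal in $L=S\times T$. You clearly know this, since you invoke Weil's condition $\Delta_L|_H=\Delta_H$, which is the hypothesis for the general homogeneous-space integral formula; just cite that version (e.g.\ \cite[Theorem~2.49]{Folland} in its full strength, or \cite[Theorem~8.36]{knapp2013lie}) rather than the paper's restricted statement. With that adjustment the proof goes through as written: compactness of $S\cap T$ makes the orbit map $L\to G$ proper (so $F(s,t)=f(st^{-1})$ has compact support), the fibre integral collapses, and the inversion $t\mapsto t^{-1}$ converts $\mu_T$ to a right Haar measure $\nu_T$.
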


The next proposition allows us to reduce the problem to closed cocompact subgroups with the same noncomapct Lie dimension. 

\begin{proposition} \label{Prop: Cocompact reduction}
Suppose  $G$ is a connected unimodular Lie group, $H$ is a connected closed subgroup of $G$ satisfying $\BM(n)$, $K$ is a connected unimodular subgroup of $G$, such that $G=KH$ and $K\cap H$ is compact. Then $G$ satisfies $\BM(n)$. 
\end{proposition}
\begin{proof}
We assume $n\geq1$, otherwise the result is trivial. Note that both $G$ and $K$ are unimodular. In light of this we will not be using $\nu_G$, $\nu_K$, etc. and only use $\mu_G = \nu_G$ and $\mu_K = \nu_K$ below.

We fix a  Haar measure $\mu_K$ on $K$, and a Haar measure $\mu_G$ on $G$. These measures will also uniquely determine a left Haar measure $\mu_H$ and a right Haar measure $\nu_H$ on $H$ such that we have the integral formula in Fact~\ref{fact: integral formula nonunimodular} and another similar formula involving $\d \mu_H(h)\d\mu_K(k)$. 
For a compact subset $\Omega$ of $G$, we define two functions $r_\Omega,\ell_\Omega: K\to \RR^{\geq0}$, such that 
\[
r_{\Omega}(k):=\nu_H(k\Omega\cap H),\quad \ell_\Omega(k):=\mu_H(\Omega k\cap H),
\]
for every $k\in K$. We also define two bivariate functions $R_\Omega,L_\Omega:K\times K\to\RR^{\geq0}$ that for every $k_1,k_2$ in $K$,
\[
R_{\Omega}(k_1,k_2):=\nu_H(k_1\Omega k_2\cap H),\quad L_\Omega(k_1,k_2):=\mu_H(k_1\Omega k_2\cap H).
\]

Thus Fact~\ref{fact: integral formula nonunimodular} gives us
\[
\mu_G(\Omega)=\int_K\nu_H(k^{-1}\Omega\cap H)\d\mu_K(k)=\int_K\mu_H(\Omega k^{-1}\cap H)\d\mu_K(k). 
\]
We define two probability measures $\pr_X$ and $\pr_Y$ on $K$ in the following way:
\[
\d\pr_X = \frac{r_X \d\mu_K}{\mu_G(X)}, \quad \d\pr_Y = \frac{ \ell_Y \d\mu_K}{\mu_G(Y)}. 
\]
Now, we choose a left coset $k_1H$ of $H$ in $G$ randomly with respect to the probability measure $\pr_X$, and choose a right coset $Hk_2$ of $H$ in $G$  randomly with respect to the probability measure $\pr_Y$. By the fact that $H$ satisfies $\BM(n)$, we get
\[
\left( \frac{r_X(k_1)}{R_{XY}(k_1,k_2)} \right)^{1/n}+\left( \frac{\ell_Y(k_2)}{L_{XY}(k_1,k_2)} \right)^{1/n}\leq 1. 
\]
This implies
\begin{equation}\label{eq: key}
\mathbb E_{\pr_X(k_1)}\mathbb E_{\pr_Y(k_2)}\left[ \left( \frac{r_X(k_1)}{R_{XY}(k_1,k_2)} \right)^{1/n}+\left( \frac{\ell_Y(k_2)}{L_{XY}(k_1,k_2)} \right)^{1/n} \right]\leq1. 
\end{equation}

On the other hand, by the definition of $\pr_X$,
\begin{equation}\label{eq: expections}
\mathbb E_{\pr_X(k_1)}\left( \frac{r_X(k_1)}{R_{XY}(k_1,k_2)} \right)^{\frac{1}{n}}= \frac{1}{\mu_G(X)}\int_K r_X^{\frac{n+1}{n}}(k_1)R^{-\frac{1}{n}}_{XY}(k_1,k_2) \d\mu_K(k_1).
\end{equation}
Applying H\"older's inequality with exponents $(n+1)/n$ and $n+1$, we get
\begin{align*}
&\,\int_K r_X(k_1)\d\mu_K(k_1) \\
\leq&\, \left(\int_K \Big|r_X(k_1)R^{-\frac{1}{n+1}}_{XY}(k_1,k_2)\Big|^{\frac{n+1}{n}} \d\mu_K(k_1)\right)^{\frac{n}{n+1}}\left(\int_K \Big|R_{XY}^{\frac{1}{n+1}}(k_1,k_2)\Big|^{n+1}\d\mu_K(k_1)\right)^{\frac{1}{n+1}}.
\end{align*}
Thus \eqref{eq: expections} gives us
\begin{align*}
    &\,\mathbb E_{\pr_X(k_1)}\left( \frac{r_X(k_1)}{R_{XY}(k_1,k_2)} \right)^{\frac{1}{n}}\\
\geq&\,\frac{1}{\mu_G(X)}\left( \int_K r_X(k_1)\d\mu_K(k_1) \cdot \left(\int_K R_{XY}(k_1,k_2)\d\mu_K(k_1)\right)^{-\frac{1}{n+1}} \right)^{\frac{n+1}{n}}.
\end{align*}
Finally, using Fact~\ref{fact: integral formula nonunimodular} and the fact that $G$ is unimodular, we conclude that
\[
\mathbb E_{\pr_X(k_1)}\left( \frac{r_X(k_1)}{R_{XY}(k_1,k_2)} \right)^{\frac{1}{n}}\geq \left( \frac{\mu_G(X)}{\mu_G(XY)}\right)^{\frac{1}{n}}.
\]

We have a similar inequality concerning $\mathbb E_{\pr_Y(k_2)}\left( \frac{\ell_Y(k_2)}{L_{XY}(k_1,k_2)} \right)^{\frac{1}{n}}$. Combining both inequalities with \eqref{eq: key}, we get
\begin{align*}
  \left( \frac{\mu_G(X)}{\mu_G(XY)}\right)^{\frac{1}{n}}+\left( \frac{\mu_G(Y)}{\mu_G(XY)}\right)^{\frac{1}{n}}\leq 1,
\end{align*}
and hence $G$ satisfies $\BM(n)$. 
\end{proof}

Using the proportionated averaging trick in a similar fashion, the next result allows us to reduce the problem to certain open subgroups. 

\begin{proposition} \label{prop: Reduceinequalitytoopensubgroups}
Let $G$ be a locally compact group, and let $G'$ be an open normal unimodular subgroup of $G$. Suppose $G'$ satisfies $\BM(n)$ for some integer $n\geq1$. Then $G$ satisfies $\BM(n)$. 
\end{proposition}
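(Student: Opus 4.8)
The plan is to carry out a discretized version of the proportionated averaging argument of Proposition~\ref{Prop: Cocompact reduction}, with the quotient $T:=G/G'$ -- which is \emph{discrete}, since $G'$ is open -- playing the role of the transversal subgroup $K$. Unlike in Proposition~\ref{Prop: Cocompact reduction}, the ambient group $G$ need not be unimodular; the point that makes the argument still go through is that $\Delta_G$ is trivial on $G'$ (since $G'$ is an open unimodular subgroup, $\Delta_G|_{G'}=\Delta_{G'}=1$), hence factors as $\Delta_G=\bar\Delta\circ\pi$ for a homomorphism $\bar\Delta\colon T\to(\RR^{>0},\times)$, so the failure of unimodularity is confined to the transversal and can be tracked through $\bar\Delta$. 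We may assume $X$ and $Y$ have positive measure, since otherwise the inequality is immediate from $\nu_G(X)\le\nu_G(XY)$ and $\mu_G(Y)\le\mu_G(XY)$.

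First I would record fiber data over $T$. Fix a left Haar measure $\mu_G$ on $G$, let $\mu_{G'}:=\mu_G|_{G'}$ (a bi-invariant Haar measure on $G'$), take the right Haar measure $\nu_G$ with $\d\nu_G(x)=\Delta_G(x)^{-1}\d\mu_G(x)$ (so $\nu_G|_{G'}=\mu_{G'}$), and choose representatives $g_t\in\pi^{-1}(t)$. For compact $\Omega\subseteq G$ set $\Omega_t:=\Omega\cap\pi^{-1}(t)$ and
$$a_\Omega(t):=\mu_{G'}(g_t^{-1}\Omega\cap G'),\qquad b_\Omega(t):=\mu_{G'}(\Omega g_t^{-1}\cap G');$$
these are independent of the choice of $g_t$, and invariance of $\mu_{G'}$ together with the definition of $\Delta_G$ give $a_\Omega(t)=\mu_G(\Omega_t)$, $b_\Omega(t)=\bar\Delta(t)^{-1}a_\Omega(t)$, and -- since a compact set meets only finitely many of the (clopen) cosets $\pi^{-1}(t)$ -- $\mu_G(\Omega)=\sum_{t\in T}a_\Omega(t)$ and $\nu_G(\Omega)=\sum_{t\in T}b_\Omega(t)$. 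In particular every ``integral over the transversal'' below is a finite sum over $T$.

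Next comes the key pointwise inequality. For $t_1,t_2\in T$ write $X_{t_1}=A'g_{t_1}$ and $Y_{t_2}=g_{t_2}B'$ with compact $A',B'\subseteq G'$, so $\mu_{G'}(A')=b_X(t_1)$ and $\mu_{G'}(B')=a_Y(t_2)$. Setting $c:=g_{t_1t_2}^{-1}g_{t_1}g_{t_2}\in G'$ (here $\pi$ being a homomorphism, i.e.\ normality of $G'$, is used) and $A'':=g_{t_1t_2}^{-1}A'g_{t_1t_2}$ (which lies in $G'$ by normality, with $\mu_{G'}(A'')=\bar\Delta(t_1t_2)\,b_X(t_1)$), one obtains $X_{t_1}Y_{t_2}=g_{t_1t_2}A''cB'\subseteq(XY)_{t_1t_2}$, so that $A''\cdot(cB')$ is a subset of $G'$ contained in $g_{t_1t_2}^{-1}(XY)_{t_1t_2}$, whose $\mu_{G'}$-measure is $a_{XY}(t_1t_2)$. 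Applying $\BM(n)$ for the unimodular group $G'$ to $A''$ and $cB'$ gives $a_{XY}(t_1t_2)^{1/n}\ge\left(\bar\Delta(t_1t_2)\,b_X(t_1)\right)^{1/n}+a_Y(t_2)^{1/n}$; dividing by $a_{XY}(t_1t_2)^{1/n}=\left(\bar\Delta(t_1t_2)\,b_{XY}(t_1t_2)\right)^{1/n}$ and cancelling $\bar\Delta(t_1t_2)$ yields
$$\left(\frac{b_X(t_1)}{b_{XY}(t_1t_2)}\right)^{1/n}+\left(\frac{a_Y(t_2)}{a_{XY}(t_1t_2)}\right)^{1/n}\le1\qquad\text{for all }t_1,t_2\in T.$$

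Finally, the averaging. On $T$ put the probability measures assigning weight $b_X(t)/\nu_G(X)$, respectively $a_Y(t)/\mu_G(Y)$, to each $t$; choose $t_1$ and $t_2$ independently from these; and take the expectation of the displayed inequality. For the first term, fix $t_2$, apply H\"older's inequality with exponents $\tfrac{n+1}{n}$ and $n+1$ exactly as in the proof of Proposition~\ref{Prop: Cocompact reduction}, and use $\sum_{t_1}b_{XY}(t_1t_2)=\sum_{s}b_{XY}(s)=\nu_G(XY)$ (legitimate because $t_1\mapsto t_1t_2$ is a bijection of $T$ and only finitely many summands are nonzero) to get $\mathbb{E}\left[(b_X(t_1)/b_{XY}(t_1t_2))^{1/n}\right]\ge\nu_G(X)^{1/n}/\nu_G(XY)^{1/n}$; symmetrically $\mathbb{E}\left[(a_Y(t_2)/a_{XY}(t_1t_2))^{1/n}\right]\ge\mu_G(Y)^{1/n}/\mu_G(XY)^{1/n}$. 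Adding the two estimates gives $\nu_G(X)^{1/n}/\nu_G(XY)^{1/n}+\mu_G(Y)^{1/n}/\mu_G(XY)^{1/n}\le1$, i.e.\ $G$ satisfies $\BM(n)$. I expect the only genuine subtlety to be the bookkeeping of the modular weights $\bar\Delta$ on $T$ in the middle two steps -- arranging the cancellations so that precisely $\nu_G(X)/\nu_G(XY)$ and $\mu_G(Y)/\mu_G(XY)$, rather than $\Delta_G$-twisted surrogates, survive -- while normality of $G'$ and the hypothesis $n\ge1$ (needed for the H\"older exponents) enter essentially, and the remainder is a routine transcription of the cocompact argument.
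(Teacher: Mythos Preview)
Your proposal is correct and follows essentially the same proportionated-averaging argument as the paper's own proof: both decompose $X$ and $Y$ into fibers over the discrete quotient $T=G/G'$, apply $\BM(n)$ for $G'$ to each pair of fibers, and then average with probability weights $b_X(t)/\nu_G(X)$ and $a_Y(t)/\mu_G(Y)$ (these are exactly the paper's weights $\Delta_G(g^{-1})\mu_{G'}(X_g)/\nu_G(X)$ and $\Delta_G(h)\mu_{G'}(Y_h)/\mu_G(Y)$, rewritten), finishing with the same H\"older step. The only cosmetic difference is that you absorb the modular factor into the functions $a_\Omega,b_\Omega$ from the start and bound the denominator via $\sum_{t_1}b_{XY}(t_1t_2)=\nu_G(XY)$, whereas the paper keeps the $\Delta_G$ weights explicit and bounds the corresponding sum by $\nu_G(XY)$ using disjointness of the translated fiber products; both routes are equivalent.
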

\begin{proof}
Let $\mu_{G'}$ be a left (and hence right) Haar measure on $G'$. By Fact~\ref{fact: surjectivemodularonconnectedcomp}.2, there is a left Haar measure $\mu_G$ and a right Haar measure $\nu_G$ on $G$, such that for every compact set $\Omega$ in $G$ we have
\[
\nu_G(\Omega)=\sum_{\alpha\in G/G'} \Delta_G(g_\alpha^{-1})\mu_{G'}(g_\alpha^{-1}\Omega\cap G'),\quad \mu_G(\Omega)=\sum_{\beta\in G'\backslash G} \Delta_G(g_\beta)\mu_{G'}(\Omega g_\beta^{-1}\cap G'),
\]
with  $g_\alpha$ and $g_\beta$ are representative elements of $\alpha$ and $\beta$ respectively. 

Now we fix two compact sets $X,Y$ in $G$. For every $\alpha\in G/G'$, let $X_\alpha=g_\alpha^{-1}X\cap G'$, and we similarly define $Y_\beta=Yg_\beta^{-1}\cap G'$ for every $\beta\in G'\backslash G$. Since $G'$ satisfies $\BM(n)$, we have that
\begin{equation}\label{eq: codiscrete induction}
 \left(\frac{\mu_{G'}(X_\alpha)}{\mu_{G'}(X_\alpha Y_\beta)}\right)^{1/n}+ \left(\frac{\mu_{G'}(Y_\beta)}{\mu_{G'}(X_\alpha Y_\beta)}\right)^{1/n}\leq 1.
\end{equation}
Now we choose $\alpha$ from $G/G'$ randomly with probability $\pr_X(\alpha)=\frac{\Delta_G(g_\alpha^{-1})\mu_{G'}(X_\alpha)}{\nu_G(X)}$. Therefore by H\"older's inequality,
\begin{align*}
\mathbb{E}_{\pr_X(\alpha)}\left(\frac{\mu_{G'}(X_\alpha)}{\mu_{G'}(X_\alpha Y_\beta)}\right)^{\frac1n}&=\frac{1}{\nu_G(X)}\sum_{\alpha\in G/G'}\frac{\left(\mu_{G'}(X_\alpha)\Delta_G(g_\alpha^{-1})\right)^{\frac{n+1}{n}}}{\left(\mu_{G'}(X_\alpha Y_\beta)\Delta_G(g_\alpha^{-1})\right)^{\frac{1}{n}}}\\
&\geq \left(\frac{\nu_G(X)}{\nu_G(XYg_\beta^{-1})}\right)^{\frac1n} = \left(\frac{\nu_G(X)}{\nu_G(XY)}\right)^{\frac1n}. 
\end{align*}
Similarly, we choose $\beta$ from $G'\backslash G$ randomly with probability $\pr_Y(\beta)=\frac{\Delta_G(g_\beta)\mu_{G'}(Y_\beta)}{\mu_G(Y)}$.
Again using H\"older's inequality, we conclude that
\[
\mathbb{E}_{\pr_Y(\beta)}\left(\frac{\mu_{G'}(Y_\beta)}{\mu_{G'}(X_\alpha Y_\beta)}\right)^{\frac1n}\geq \left(\frac{\mu_G(Y)}{\mu_G(XY)}\right)^{\frac1n}.
\]
Hence by \eqref{eq: codiscrete induction}, 
\begin{align*}
&\,\left(\frac{\nu_G(X)}{\nu_G(XY)}\right)^{\frac1n}+\left(\frac{\mu_G(Y)}{\mu_G(XY)}\right)^{\frac1n}\\
\leq&\, \mathbb{E}_{\pr_X(\alpha)}\mathbb{E}_{\pr_Y(\beta)}\left[\left(\frac{\mu_{G'}(X_\alpha)}{\mu_{G'}(X_\alpha Y_\beta)}\right)^{1/n}+ \left(\frac{\mu_{G'}(Y_\beta)}{\mu_{G'}(X_\alpha Y_\beta)}\right)^{1/n} \right]\leq1,
\end{align*}
and thus $G$ also satisfies $\BM(n)$. 
\end{proof}

The normality assumption in Proposition~\ref{prop: Reduceinequalitytoopensubgroups} is used to control the modular function via Fact~\ref{fact: surjectivemodularonconnectedcomp}.2. Thus when the ambient group $G$ is unimodular, the same proof of Proposition~\ref{prop: Reduceinequalitytoopensubgroups} will allow us to reduce the problem to a not necessarily normal open subgroup. We include the proof here for the sake of completeness. 

\begin{proposition} \label{prop: reducetoopensubgroup2}
Let $G$ be a unimodular group, and let $G'$ be an open subgroup of $G$. Suppose $G'$ satisfies $\BM(n)$ for some integer $n\geq 0$, then $G$ satisfies $\BM(n)$. 
\end{proposition}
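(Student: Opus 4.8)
The plan is to run the fiber‑length/spillover argument of Proposition~\ref{lem: quotient unimodular}, but with the \emph{discrete} coset space $G/G'$, carrying counting measure, in place of the base group $G/H$; the fiber is $G'$ itself and the ``base exponent'' is $n_2=0$. Were $G'$ normal this would be immediate from Proposition~\ref{lem: quotient unimodular} (take $H=G'$, $n_1=n$, $n_2=0$), so the entire point is to make the spillover work without a coset multiplication on the base.

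Set‑up: since $G'$ is open it is also closed; $\mu_{G'}:=\mu|_{G'}$ is a Haar measure on $G'$, and because the modular function of an open subgroup is the restriction of that of the ambient group, $G'$ is unimodular and $\mu$ is bi‑invariant (in particular $\mu(aKb)=\mu_{G'}(K)$ for compact $K\subseteq G'$ and $a,b\in G$). The coset space $G/G'$ is discrete, and for compact $\Omega\subseteq G$ the fiber‑length function $f_\Omega\colon G/G'\to\RR^{\ge0}$, $f_\Omega(gG'):=\mu_{G'}(g^{-1}\Omega\cap G')$, is well defined, supported on finitely many cosets, and obeys the discrete quotient‑integral formula $\mu(\Omega)=\sum_{\gamma}f_\Omega(\gamma)$. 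The case $n=0$ is trivial, so assume $n\ge1$.

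Single‑coset estimate: decompose $X$ along its left‑$G'$‑coset fibers, $X=\bigsqcup_i X_i$ with $X_i=a_i\hat X_i\subseteq a_iG'$, and $Y$ along its right‑$G'$‑coset fibers, $Y=\bigsqcup_j Y_j$ with $Y_j=\tilde Y_jb_j\subseteq G'b_j$, where $\hat X_i,\tilde Y_j$ are compact in $G'$. Then $X_iY_j=a_i(\hat X_i\tilde Y_j)b_j$ is a bi‑translate of a product happening \emph{inside} $G'$, so $\BM(n)$ for $G'$ gives $\mu(X_iY_j)\ge\bigl(\mu(X_i)^{1/n}+\mu(Y_j)^{1/n}\bigr)^n$. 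For fixed $j$ the cells $a_iG'b_j=(a_ib_j)\bigl(b_j^{-1}G'b_j\bigr)$ are distinct cosets of the open subgroup $b_j^{-1}G'b_j$, hence pairwise disjoint, and for fixed $i$ the cells $a_i(G'b_j)$ are disjoint over $j$; summing a column or a row and invoking the elementary inequality $\sum_k(u_k^{1/n}+v^{1/n})^n\ge\bigl((\sum_k u_k)^{1/n}+v^{1/n}\bigr)^n$ for $u_k,v\ge0$ (expand binomially and use $\|\cdot\|_\ell\ge\|\cdot\|_n$ for $\ell\le n$) yields $\mu(XY_j)\ge\bigl(\mu(X)^{1/n}+\mu(Y_j)^{1/n}\bigr)^n$ and $\mu(X_iY)\ge\bigl(\mu(X_i)^{1/n}+\mu(Y)^{1/n}\bigr)^n$.

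Spillover, and the main obstacle: it remains to combine these row/column bounds into $\mu(XY)\ge\bigl(\mu(X)^{1/n}+\mu(Y)^{1/n}\bigr)^n$, and this is where I expect essentially all of the work to lie. The difficulty is that, $G'$ not being normal, the grid of cells $\{a_iG'b_j\}$ has disjoint rows and disjoint columns but two cells in different rows \emph{and} different columns may overlap — the overlap being governed by the orbits of the left $G'$‑action on $G/G'$ — so $XY=\bigcup_{i,j}X_iY_j$ is not a disjoint union and $\mu(XY)$ is not $\sum_{i,j}\mu(X_iY_j)$. I plan to handle this by a McCrudden‑type spillover: order the fibers of $X$ by $\mu(X_i)$ and of $Y$ by $\mu(Y_j)$, express $\mu(XY)$ as the sum of the successive increments obtained by enlarging the current top‑part of $X$ (or $Y$) one fiber at a time, bound each increment below using the single‑coset estimate together with the triviality of $\BM(0)$ in the $G/G'$‑direction, and verify that the increments are nonnegative and telescope to $\mu(XY)^{1/n}\ge\mu(X)^{1/n}+\mu(Y)^{1/n}$, exactly as in the ``$l_Nw_N+l_{N-1}(w_{N-1}-w_N)$'' scheme recalled in the introduction; equivalently, this reduces to proving a weighted Brunn--Minkowski‑type inequality on the cell grid directly. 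The remaining ingredients — unimodularity of $G'$ and $\BM(n)$ for $G'$ — are routine once this combinatorial heart is in place.
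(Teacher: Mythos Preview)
Your setup is sound and your single-cell estimate $\mu(X_iY_j)\ge(\mu(X_i)^{1/n}+\mu(Y_j)^{1/n})^n$ is exactly right; the left/right split is a clean way to keep the product inside a translate of $G'$. Where you diverge from the paper is at the combination step. The paper does \emph{not} attempt to control the cell grid $\{a_iG'b_j\}$ or its overlaps at all. It works with a single left-coset fiber function $f_\Omega(gG')=\mu_{G'}(g^{-1}\Omega\cap G')$ and proves the superlevel-set inequality
\[
\bigl|L^+_{f_{XY}}\bigl((t_1+t_2)^n\bigr)\bigr|\ \ge\ \max\bigl\{\,|L^+_{f_X}(t_1^n)|,\ |L^+_{f_Y}(t_2^n)|\,\bigr\},
\]
by translating the long fibers of one factor by a single fat fiber of the other. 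Once this is in hand the finish is a two-line layer-cake/H\"older computation, identical in form to Case~2 of Proposition~\ref{lem: quotient unimodular} with $n_1=n$ and $n_2=0$: set $N_X=\sup f_X$, $N_Y=\sup f_Y$, rescale $t=(N_X^{1/n}+N_Y^{1/n})s$, and apply $\max\{a/c,b/d\}\ge(a+b)/(c+d)$.

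So the ``combinatorial heart'' you are bracing for never materialises in the paper's route: the superlevel-set formulation sidesteps the overlap problem entirely, since it only compares \emph{cardinalities of coset supports} at each height rather than disjointness of product cells. Your spillover sketch is essentially an unrolled version of this layer-cake, but carried out on the two-parameter grid it becomes genuinely harder to organise, and as written your plan does not make clear how the increments telescope when cells from different rows and columns collide. If you want to finish along your lines, the cleanest move is to collapse the grid back to a one-parameter picture by proving the superlevel inequality above directly from your single-cell estimate; your left/right decomposition actually makes the justification of that inequality more transparent than the paper's terse one-liner (which, read literally, leans on $g_iG'\mapsto g_i\tilde gG'$ being injective and on the conjugate $\tilde g^{-1}\hat X_i\tilde g$ landing back in $G'$, both of which need care when $G'$ is not normal).
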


\begin{proof}
When $n=0$, the conclusion follows from $\mu(XY) \geq \mu (Y)$. In the remainder of the proof we assume $n \geq 1$.

Let $\mu_G$ be a Haar measure on $G$, and let $\mu_{G'}$ be the restricted Haar measure of $\mu_G$ on $G'$. As $G'$ is open, it is also unimodular. By  Fact \ref{fact: homomorphism facts}.1,  for every compact set $\Omega$ in $G$ we have
\[
\mu_G(\Omega)=\sum_{g\Omega\in G/G'}\mu_{G'}(g\Omega\cap G')=\sum_{g\Omega\in G\backslash G'}\mu_{G'}(\Omega g\cap G'). 
\]

Now we fix two compact sets $X,Y$ in $G$. For every $\alpha\in G/G'$, let $X_\alpha=g_\alpha^{-1}X\cap G'$, and we similarly define $Y_\beta=Yg_\beta^{-1}\cap G'$ for every $\beta\in G'\backslash G$. Since $G'$ satisfies $\BM(n)$, we have that
\begin{equation}\label{eq: codiscrete induction}
 \left(\frac{\mu_{G'}(X_\alpha)}{\mu_{G'}(X_\alpha Y_\beta)}\right)^{1/n}+ \left(\frac{\mu_{G'}(Y_\beta)}{\mu_{G'}(X_\alpha Y_\beta)}\right)^{1/n}\leq 1.
\end{equation}
Now we choose $\alpha$ from $G/G'$ randomly with probability $\pr_X(\alpha)=\frac{\mu_{G'}(X_\alpha)}{\mu_G(X)}$ and $\beta$ from $G'\backslash G$ randomly with probability $\pr_Y(\beta)=\frac{\mu_{G'}(Y_\beta)}{\mu_G(Y)}$.
Then H\"older's inequality gives us,
\begin{align*}
\mathbb{E}_{\pr_X(\alpha)}\left(\frac{\mu_{G'}(X_\alpha)}{\mu_{G'}(X_\alpha Y_\beta)}\right)^{\frac1n}
\geq \left(\frac{\mu_G(X)}{\mu_G(XYg_\beta^{-1})}\right)^{\frac1n} = \left(\frac{\mu_G(X)}{\mu_G(XY)}\right)^{\frac1n},
\end{align*}
and
\[
\mathbb{E}_{\pr_Y(\beta)}\left(\frac{\mu_{G'}(Y_\beta)}{\mu_{G'}(X_\alpha Y_\beta)}\right)^{\frac1n}\geq \left(\frac{\mu_G(X)}{\mu_G(g_\alpha^{-1}XY)}\right)^{\frac1n} = \left(\frac{\mu_G(Y)}{\mu_G(XY)}\right)^{\frac1n}.
\]
Hence by \eqref{eq: codiscrete induction}, 
\begin{align*}
&\,\left(\frac{\nu_G(X)}{\nu_G(XY)}\right)^{\frac1n}+\left(\frac{\mu_G(Y)}{\mu_G(XY)}\right)^{\frac1n}\\
\leq&\, \mathbb{E}_{\pr_X(\alpha)}\mathbb{E}_{\pr_Y(\beta)}\left[\left(\frac{\mu_{G'}(X_\alpha)}{\mu_{G'}(X_\alpha Y_\beta)}\right)^{1/n}+ \left(\frac{\mu_{G'}(Y_\beta)}{\mu_{G'}(X_\alpha Y_\beta)}\right)^{1/n} \right]\leq1,
\end{align*}
and thus $G$ also satisfies $\BM(n)$. 
\end{proof}

\section{Proof of Theorems~\ref{thm: main Lie}, \ref{thm: main}, and~\ref{thm: mainreduction} and Corollary~\ref{cor: 1.6BG}}
\subsection{A dichotomy lemma} In this subsection, we prove a dichotomy result for the kernel of a continuous homomorphism to $(\RR^{>0},\times)$.

The following lemma records a fact on open maps between locally compact groups.

\begin{lemma} \label{lem: dichotomypreparation}
Suppose $G,H$ are  locally compact groups, $\phi: G \to H$ is a continuous and surjective group homomorphism, and there is an open subgroup $G'$ of $G$ such that $\phi|_{G'}$ is open. Then $\phi: G \to H$ is a quotient map of locally compact groups.
\end{lemma}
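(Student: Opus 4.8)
The plan is to reduce the statement to the single assertion that $\phi$ is an open map. Recall that for locally compact (Hausdorff) topological groups a continuous surjective homomorphism $\phi\colon G\to H$ is a quotient map exactly when it is open: the kernel $N=\ker\phi$ is closed because $H$ is Hausdorff, the induced continuous bijective homomorphism $G/N\to H$ is automatically continuous, and it is a homeomorphism if and only if $\phi$ sends open sets to open sets. Since $\phi$ is assumed continuous and surjective, the whole lemma therefore amounts to showing that $\phi(U)$ is open in $H$ for every open $U\subseteq G$.

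To establish openness I would first localize at the identity via the usual translation trick: it suffices to show that $\phi(W)$ is a neighborhood of $\id_H$ for every open neighborhood $W$ of $\id_G$. Indeed, granting this, for an open $U\subseteq G$ and a point $g\in U$ the set $g^{-1}U$ is an open neighborhood of $\id_G$, so $\phi(g^{-1}U)$ contains an open neighborhood of $\id_H$, whence $\phi(U)=\phi(g)\,\phi(g^{-1}U)$ contains an open neighborhood of $\phi(g)$; since every point of $\phi(U)$ arises this way, $\phi(U)$ is open.

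The identity-neighborhood claim is exactly where the hypothesis on $G'$ enters, and it is short. Given an open neighborhood $W$ of $\id_G$, the set $W\cap G'$ is open in $G$ and contained in $G'$, hence open in the subspace topology of $G'$ (because $G'$ is open in $G$), and it contains $\id_G$. Since $\phi|_{G'}$ is open, $\phi(W\cap G')$ is open in $H$ and contains $\phi(\id_G)=\id_H$; therefore $\phi(W)\supseteq\phi(W\cap G')$ is a neighborhood of $\id_H$, as needed.

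I do not expect a genuine obstacle here. The only points that need care are making explicit what ``quotient map of locally compact groups'' means—namely openness in the presence of the already-given continuity and surjectivity—and observing that an open subset of $G$ which happens to lie inside the open subgroup $G'$ is automatically open in $G'$, so that the hypothesis ``$\phi|_{G'}$ is open'' legitimately applies to $W\cap G'$. Everything else is the standard translation argument.
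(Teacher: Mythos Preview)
Your proof is correct and follows essentially the same route as the paper: both reduce the statement to openness of $\phi$ via the first isomorphism theorem, then exploit that open subsets of $G$ lying inside (a translate of) $G'$ map to open subsets of $H$. The only cosmetic difference is that the paper decomposes an arbitrary open $U$ directly as $\bigcup_{a\in G} U\cap aG'$ and writes $\phi(U\cap aG')=\phi(a)\,\phi|_{G'}(a^{-1}U\cap G')$, whereas you insert the standard reduction to identity neighborhoods first; the substance is the same.
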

\begin{proof}
By the first isomorphism theorem (Fact~\ref{fact: homomorphism facts}.1), it suffices to check that $\phi$ is open. Suppose $U$ is an open subset of $G$. Then $U = \bigcup_{a \in G} U \cap aG'$. For each $a \in G$, we have 
$$\phi(U \cap aG') = \phi(a) \phi|_{G'}( a^{-1}U \cap G'   ). $$
As $\phi|_{G'}$ is open, $\phi(U \cap aG')$ is open for each $a \in G$.
Hence, $\phi(U) = \bigcup_{a \in G} \phi(U \cap aG')$ is open in $H$, which is the desired conclusion.
\end{proof}

In the next lemma, we present our main dichotomy result.

\begin{lemma} \label{Lem: dichotomy2}
Suppose $G$ is a locally compact group, and $\pi: G \to (\RR^{>0}, \times)$ is a  continuous group homomorphism. Then exactly one of the following holds:
\begin{enumerate}
       \item we have the short exact sequence of locally compact groups 
    $$1  \to \ker\pi \to G \overset{\pi\ }{\to} (\RR^{>0}, \times) \to 1;$$ 
     \item $\ker \pi$ is an open subgroup of $G$.
     
\end{enumerate}
\end{lemma}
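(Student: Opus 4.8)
The plan is to reduce to a Lie quotient via the Gleason--Yamabe theorem and then split according to whether $\pi$ is trivial on an identity component. First I would use Fact~\ref{fact: Gleason}.1 to pick an open subgroup $G'$ of $G$ together with a compact normal subgroup $H \vartriangleleft G'$ such that $L := G'/H$ is a Lie group, writing $q : G' \to L$ for the canonical projection (which is a continuous open surjection). Since $\pi(H)$ is a compact subgroup of $(\RR^{>0},\times)$ it is trivial, so $H \leq \ker\pi$ and $\pi|_{G'}$ factors as $\pi|_{G'} = \bar\pi \circ q$ for a continuous homomorphism $\bar\pi : L \to (\RR^{>0},\times)$. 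Let $L_0$ be the identity component of $L$, which is open in $L$ by Fact~\ref{factLieid}.

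\emph{Case 1: $\bar\pi$ is trivial on $L_0$.} Then $\ker\bar\pi$ contains the open subgroup $L_0$, hence is open in $L$, so $\ker\pi\cap G' = \ker(\pi|_{G'}) = q^{-1}(\ker\bar\pi)$ is open in $G'$, and therefore an open subgroup of $G$. Any subgroup of $G$ containing an open subgroup is open, so $\ker\pi$ is open, which is conclusion (2). \emph{Case 2: $\bar\pi$ is nontrivial on $L_0$.} Choosing a one-parameter subgroup $t\mapsto \exp_L(tX)$ of $L_0$ on which $\bar\pi$ does not vanish, the map $t\mapsto \bar\pi(\exp_L(tX))$ is a nontrivial continuous homomorphism $(\RR,+)\to(\RR^{>0},\times)$, hence of the form $t\mapsto e^{ct}$ with $c\neq 0$, so it is surjective; thus $\bar\pi|_{L_0}$ is surjective, and being a continuous surjective homomorphism from the $\sigma$-compact locally compact group $L_0$, it is open (equivalently, use that continuous homomorphisms between Lie groups are smooth, so a nonzero differential makes $\bar\pi|_{L_0}$ a submersion, hence open). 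Now set $G'' = q^{-1}(L_0)$, an open subgroup of $G'$ and hence of $G$. Then $q|_{G''}:G''\to L_0$ is again a continuous open surjection, so $\pi|_{G''} = (\bar\pi|_{L_0})\circ(q|_{G''})$ is an open map, and $\pi(G)\supseteq\pi(G'') = (\RR^{>0},\times)$, so $\pi$ is surjective. By Lemma~\ref{lem: dichotomypreparation} applied with the open subgroup $G''$, the map $\pi:G\to(\RR^{>0},\times)$ is a quotient map of locally compact groups, which gives the short exact sequence of conclusion (1).

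Finally I would observe that (1) and (2) are mutually exclusive: if (1) held and $\ker\pi$ were open, then $G/\ker\pi\cong(\RR^{>0},\times)$ would be discrete, which is absurd. Since Cases~1 and~2 are exhaustive, exactly one of (1), (2) holds. The only non-formal ingredient is the assertion in Case~2 that $\bar\pi|_{L_0}$ is open; this rests on the open mapping theorem for $\sigma$-compact locally compact groups (equivalently, on automatic smoothness of continuous homomorphisms of Lie groups together with the submersion theorem), and I expect this to be the main point to pin down carefully, with everything else being routine bookkeeping with open subgroups and quotient maps.
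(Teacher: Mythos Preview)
Your proposal is correct and follows essentially the same approach as the paper: pass to a Lie quotient via Gleason--Yamabe, split on whether the induced map is trivial on the identity component, and in the nontrivial case use Lemma~\ref{lem: dichotomypreparation} after establishing openness on an open subgroup. The paper organizes this slightly differently by first proving the Lie case separately and then reducing to it, whereas you merge both steps, but the content is identical; your one point of uncertainty (openness of $\bar\pi|_{L_0}$) is exactly what the paper handles via Fact~\ref{fact: iso theorems Lie}.1 (the first isomorphism theorem for Lie groups), which directly gives that a continuous surjective homomorphism between second countable Lie groups is open.
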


\begin{proof}
It is easy to see that (1) and (2) are mutually exclusive, so we need to prove that we are always either in (1) or (2). Consider first the case when $G$ is a Lie group. Let $G_0$ be the identity component of $G$. Then $G_0$ is open by  Fact~\ref{factLieid}. Hence $\pi(G_0)$ is a connected subgroup of $(\RR^{>0},\times)$. As the only connected subsets of $(\RR^{>0},\times)$ are points and intervals, we deduce that $\pi(G_0)$ can only be $\{ 1\}$ or $(\RR^{>0},\times)$. In the former case,  $\ker \pi$ is open as a union of translates of $G_0$. Now suppose $\pi(G_0) = (\RR^{>0} ,\times)$. Since $G_0$ is a connected Lie group. Using   the first isomorphism theorem for Lie groups (Fact~\ref{fact: iso theorems Lie}.1), we get $\pi|_{G_0}$ is open. Applying Lemma~\ref{lem: dichotomypreparation}, we get that $\pi$ is a quotient map as desired.

We now deal with the general situation  where $G$ is locally compact. Using the Gleason--Yamabe Theorem (Fact~\ref{fact: Gleason}.1),  we obtain an almost-Lie open subgroup $G'$ of $G$. Since $G'$ is open, the natural embedding of $i: G'\to G$ induces a continuous homomorphism $\pi |_{G'}: G'\to(\RR^{>0},\times)$. Note that there is a compact normal subgroup $H$ of $G'$ such that $G'/H$ is a Lie group. Then $H\leq \ker(\pi |_{G'})$ since $\pi |_{G'}(H)$ is a compact subgroup of $(\RR^{>0},\times)$. Let $\phi: G' \to G'/H$ be the quotient map. Hence the homomorphisms induce a continuous group homomorphism $\psi$ from $G'/H$ to $(\RR^{>0},\times)$.
\begin{center}
\begin{tikzcd}
G' \arrow[r, "\phi"] \arrow[rd, "\pi |_{G'}"] \arrow[d, "i"]
& G'/H \arrow[d, dashed, "\psi"] \\
G \arrow[r, "\pi"] 
& (\RR^{>0},\times)
\end{tikzcd}
\end{center}
Note that the above diagram commutes. 
By the proven special case for Lie groups, we then either have the exact sequence
$$  1 \to \ker \psi \to  G'/H \to (\RR^{>0}, \times) \to 1 $$
or  $\ker \psi$ is open in $G'/H$. In the former case, $\pi |_{G'}$ is open  as a composition of open maps.  By Lemma~\ref{lem: dichotomypreparation}, we conclude that $\pi$ is a quotient map in this case.
In the latter case, $\ker (\pi |_{G'})$ is open in $G'$. Therefore $\ker \pi$  is open in $G$ because $\ker \pi$ is a union of translations of $\ker (\pi |_{G'})$.
\end{proof}

The modular function $\Delta_G: G \to (\RR^{>0}, \times)$ is a continuous group homomorphism by Fact~\ref{fact: modular function}.2, but generally not a quotient map. It is easy to construct examples where $G/(\ker \Delta_G)$ is discrete. The above proposition claims that these are the only two possibilities, which will be used in the later proofs.

\subsection{Proofs of the main theorems}
In this subsection, we prove Theorems~\ref{thm: main Lie} and \ref{thm: main}.  For the reader's convenience, Proposition~\ref{prop: availableinduction} gathers together all the induction steps we can do using the earlier results with the exception of Proposition~\ref{Prop: Cocompact reduction}, which will be used in the proof of Theorem~\ref{thm: main Lie} directly.

\begin{proposition} \label{prop: availableinduction}
Let $G$ be a locally compact group with noncompact Lie dimension $n$ and helix dimension $h$.  Then $G$ satisfies $\BM(n-h)$ if one of the following assumptions holds:
\begin{enumerate}
    \item Let $\Delta_G: G \to (\RR^{>0}, \times)$ be the modular function of $G$. With $n'$ and $h'$ the noncompact Lie dimension and the helix dimension of the locally compact group $\ker \Delta_G$ respectively, the locally compact group $\ker \Delta_G$ satisfies $\BM(n'-h')$.
    \item $G$ is unimodular, $G'$ is an open subgroup of $G$, $n'$ and $h'$ are the noncompact Lie dimension and the helix dimension of  $G'$ respectively, and the locally compact group $G'$ satisfies $\BM(n'-h')$.
    \item $G$ is  unimodular, $H$ is a compact normal subgroup of $G$, $n'$ and $h'$ are the noncompact Lie dimension and the helix dimension of  $G/H$ respectively, and the locally compact group $G/H$ satisfies $\BM(n'-h')$.
    \item There is an exact sequence of connected semisimple Lie groups 
    $$
    1 \to H \to G \to G/H \to 1,
    $$
   $n_1$ and $h_1$ are the noncompact Lie dimension and the helix dimension of  $H$ respectively, $n_2$ and $h_2$ are the noncompact Lie dimension and the helix dimension of  $G/H$ respectively, and the locally compact groups $H$ and $G/H$ satisfy $\BM(n_1-h_1)$ and $\BM(n_2-h_2)$ respectively. 
    \item There is an exact sequence of connected unimodular Lie groups 
    $$1 \to H \to G \to G/H \to 1$$
     $n_1$ and $h_1$ are the noncompact Lie dimension and the helix dimension of  $H$ respectively, $n_2$ and $h_2$ are the noncompact Lie dimension and the helix dimension of  $G/H$ respectively, $h_1=0$, $h_2=h$, and the locally compact groups $H$ and $G/H$ satisfy $\BM(n_1-h_1)$ and $\BM(n_2-h_2)$ respectively. 
\end{enumerate}
\end{proposition}

\begin{proof}

We first prove (1).  Note that by Fact \ref{fact: modular function}.1, $\ker \Delta_G$ is unimodular.
By Lemma~\ref{Lem: dichotomy2}, we either have the exact sequence of locally compact groups 
\[
1 \to \ker \Delta_G \to G \to (\RR^{>0}, \times) \to 1
\]
or $\ker \Delta_G$ is open in $G$. In the former case, by Proposition~\ref{prop: additivityofdimension2}, we have $n=n'+1$ and $h=h'$. Hence, in this case $G$ satisfies $\BM(n-h)$ by Proposition~\ref{prop: reduce to unimodular}. In the latter case, by Corollary~\ref{cor: prop 2 (1)}, $n=n'$ and $h=h'$. Here, we have that $G$ satisfies $\BM(n-h)$ by Proposition~\ref{prop: Reduceinequalitytoopensubgroups}. 

Next we prove (2). By Corollary~\ref{cor: prop 2 (1)}, we have $n=n'$ and $h=h'$. The desired conclusion then follows from  Proposition~\ref{prop: reducetoopensubgroup2}.

We now prove (3). By Corollary~\ref{cor: prop 2 (2)}, we have $n=n'$ and $h=h'$. Also by Corollary~\ref{cor: prop 2 (2)}, the compact group $H$ has noncompact Lie dimension and helix dimension $0$. Hence, using Proposition~\ref{lem: quotient unimodular}, we obtain the conclusion that we want.

 We prove (4). By Proposition~\ref{prop: additivitydim1}.1 and Proposition~\ref{prop: additivitydim1}.2 respectively, we have $n=n_1+n_2$ and $h=h_1+h_2$. Recall that semisimple groups are unimodular. Using Proposition~\ref{lem: quotient unimodular}, we learn that $G$ satisfies $\BM(n-h)$. 
 
 Finally, we prove (5). By Proposition~\ref{prop: additivitydim1}.1, we have $n=n_1+n_2$. Since the helix dimension of $H$ is $0$, and the helix dimension of $G/H$ is $h$, by Proposition~\ref{lem: quotient unimodular}, $G$ satisfies $\BM(n-h)$. 
\end{proof}

The following corollary says that when $G$ is a Lie group, we can further reduce the problem to connected unimodular groups.

\begin{corollary}\label{cor: lie -> unimodular & connected}
Let $G$ be a Lie group with noncompact Lie dimension $n$ and helix dimension $h$. Let $\Delta_G: G \to (\RR^{>0}, \times)$ be the modular function of $G$.  Let $G'=(\ker \Delta_G)_0$ be the identity component of $\ker \Delta_G$ with noncompact Lie dimension $n'$ and helix dimension $h'$. Then $G'$ is connected and unimodular, and if $G'$ satisfies $\BM(n'-h')$, $G$ satisfies $\BM(n-h)$.
\end{corollary}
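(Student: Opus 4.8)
The plan is straightforward, since this corollary is mostly a packaging of the reduction steps already established. First, $G'$ is connected by construction, being the identity component of $\ker\Delta_G$. It is unimodular because $\ker\Delta_G$ is a closed subgroup of the Lie group $G$, hence itself a Lie group, and is unimodular by Fact~\ref{fact: modular function}.1; its identity component $G'$ is open in $\ker\Delta_G$ by Fact~\ref{factLieid}, and the restriction of a bi-invariant Haar measure of $\ker\Delta_G$ to the open subgroup $G'$ is again bi-invariant, so $G'$ is unimodular.

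To deduce $\BM(n-h)$ for $G$ from $\BM(n'-h')$ for $G'$, I would apply Proposition~\ref{prop: availableinduction} twice. First, apply part (2) of that proposition with the unimodular group $\ker\Delta_G$ playing the role of ``$G$'' and with the open subgroup $G'=(\ker\Delta_G)_0$ playing the role of ``$G'$'': since $G'$ has noncompact Lie dimension $n'$, helix dimension $h'$, and satisfies $\BM(n'-h')$ by hypothesis, the proposition tells us that $\ker\Delta_G$ satisfies $\BM$ with the exponent governed by \emph{its own} noncompact Lie and helix dimensions. The point to watch is that Proposition~\ref{prop: availableinduction}(2) already phrases its hypotheses in terms of the open subgroup, and invoking Corollary~\ref{cor: prop 2 (1)} (which gives that $\ker\Delta_G$ and its open subgroup $G'$ have the same noncompact Lie dimension $n'$ and the same helix dimension $h'$) we see the conclusion is precisely that $\ker\Delta_G$ satisfies $\BM(n'-h')$.

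Second, apply Proposition~\ref{prop: availableinduction}(1) to the Lie group $G$: the locally compact group $\ker\Delta_G$ has noncompact Lie dimension $n'$ and helix dimension $h'$ and satisfies $\BM(n'-h')$ by the previous step, so the proposition yields that $G$ satisfies $\BM(n-h)$, as desired. There is no genuine obstacle in the argument; the only care needed is bookkeeping — keeping track of the fact that the hypothesis's $n'$ and $h'$ refer to $G'$, which by Corollary~\ref{cor: prop 2 (1)} agree with the corresponding invariants of $\ker\Delta_G$, so that the two applications of Proposition~\ref{prop: availableinduction} chain together correctly.
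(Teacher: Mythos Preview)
Your proposal is correct and follows essentially the same approach as the paper: apply Proposition~\ref{prop: availableinduction}(2) to pass from $G'=(\ker\Delta_G)_0$ to $\ker\Delta_G$, then Proposition~\ref{prop: availableinduction}(1) to pass from $\ker\Delta_G$ to $G$. Your version is more explicit about the bookkeeping (unimodularity of $G'$, matching of invariants via Corollary~\ref{cor: prop 2 (1)}), but the argument is the same.
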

\begin{proof}
Note that $(\ker \Delta_G)_0$ is open in $\ker \Delta_G$ by  Fact~\ref{factLieid}. The desired conclusion is then a consequence of Proposition~\ref{prop: availableinduction}.1 and Proposition~\ref{prop: availableinduction}.2.
\end{proof}

Now we are able to prove the main inequality \eqref{eq: BM for nonunimodular} for Lie groups. As mentioned earlier, the main strategy is induction on dimension.

\begin{proof}[Proof of Theorem~\ref{thm: main Lie}]

Consider first the case where $G$ is a solvable Lie group. Using Corollary~\ref{cor: lie -> unimodular & connected}, we can also assume that $G$ is connected and unimodular. Recall that $d$ is the topological dimension of $G$. The case when $d=0$ or $1$ is trivial, as every group satisfies $\BM(0)$, and the one dimensional solvable Lie group is either $\TT$ or $\RR$ by Fact~\ref{fact: classification}.1.  If $G$ is abelian, then it is isomorphic to $\TT^m \times \RR^{d-m} $. We get the desired conclusion by applying Proposition~\ref{prop: availableinduction}.5 repeatedly. Otherwise, from the solvability of $G$ we get the exact sequence 
\[
1 \to \overline{[G,G]} \to G \to G/\overline{[G,G]} \to 1
\]
with both $\overline{[G,G]}$ and $G/\overline{[G,G]}$ connected, solvable and having smaller dimensions than $G$. Note that $G/\overline{[G,G]}$ is abelian, and hence unimodular. Applying Proposition~\ref{prop: availableinduction}.5, and the statement of the theorem for abelian Lie groups, we get the desired conclusion for this case.

Consider next the case where $G$ is connected and semisimple. We may further assume that $G$ is a connected simple Lie group, otherwise by Fact~\ref{fact: simple}, we can always find a connected group $H\vartriangleleft G$ such that both $H$ and $G/H$ are connected semisimple Lie groups with lower dimension; by  Proposition~\ref{prop: availableinduction}.4, the Brunn--Minkowski inequality on $G$ can be obtained from the Brunn--Minkowski inequalities on $H$ and $G/H$. Now we write $G = KAN$ as in Fact~\ref{fact: Lie group decomp Iwasawa}. We first consider the case when $G$ has a finite center, and then $K$ is compact. 
Let $n$ be the noncompact Lie dimension of $G$. Hence, $n$ is the dimension of the solvable Lie group $Q=AN$. Note that $A$ and $N$ are simply connected by Fact~\ref{fact: Lie group decomp Iwasawa}. Hence their noncompact Lie dimensions are the same as their dimensions by Fact~\ref{fact: classification}.2. By Proposition~\ref{prop: additivitydim1}.1 and Fact~\ref{fact: Lie group decomp Iwasawa}, the noncompact Lie dimension of $Q$ is $n$, and hence $Q$ satisfies $\BM(n)$ from the solvable Lie case. We obtain the desired conclusion for $G$ by applying  Proposition~\ref{Prop: Cocompact reduction}.

Suppose the connected simple Lie group $G$ has a center of rank $h\geq1$. We again apply Proposition~\ref{Prop: Cocompact reduction} and obtain inequality \eqref{eq: BM for nonunimodular} for $G$ with exponent $\dim(AN)$. By Proposition~\ref{prop: helixandnoncompact}, we have $\dim(AN)=n-h$.  The desired conclusion for the connected semisimple Lie groups follows similarly from Fact~\ref{fact: simple} and Proposition~\ref{prop: availableinduction}.4.

Finally, we prove the statement for an arbitrary Lie group $G$. Using Corollary~\ref{cor: lie -> unimodular & connected} again, we can assume that $G$ is connected and unimodular.
Then by Fact~\ref{fact: Lie group decomp Levi} we obtain an exact sequence 
\[
1 \to Q \to G \to S \to 1,
\]
where $Q$ is a connected unimodular solvable group and $S$ is a connected semisimple Lie group. We then apply Proposition~\ref{prop: availableinduction}.5 and the earlier two cases to get the desired conclusion.
\end{proof}

Finally, we prove the inequality \eqref{eq: BM for nonunimodular} for all locally compact groups. 

\begin{proof}[Proof of Theorem~\ref{thm: main}] 
By Proposition~\ref{prop: availableinduction}.1, replacing $G$ by $\ker\Delta_G$ if necessary, we can assume that $G$ is unimodular. By the Gleason--Yamabe Theorem (Fact~\ref{fact: Gleason}.1), $G$ has an almost-Lie open subgroup. Now using Proposition~\ref{prop: availableinduction}.2, we can  further assume that $G$ is a unimodular almost-Lie group. Then we can choose a compact subgroup $K$ of $G$ such that $G/K$ is a unimodular Lie group. The desired conclusion then follows from Theorem~\ref{thm: main Lie} and Proposition~\ref{prop: availableinduction}.3.
\end{proof}

We briefly discuss Theorem~\ref{thm: mainreduction}, which is a consequence of the proof of Theorem~\ref{thm: main}.

\begin{proof}[Proof of Theorem~\ref{thm: mainreduction}] 
Repeating the arguments in the proofs of Proposition~\ref{prop: availableinduction}, Corollary~\ref{cor: lie -> unimodular & connected}, Theorem~\ref{thm: main Lie}, Theorem~\ref{thm: main}, and Fact~\ref{fact: simple} while ignoring the helix dimension, it suffices to prove the theorem when $G$ is a simple Lie group. 

From the hypothesis, we already have the desired conclusion under the further assumption that our simple Lie group $G$ is also simply connected. We now consider the general case. If $G$ has finite center, the result is a special case of Theorem~\ref{thm: main Lie}. So suppose the center $Z(G)$ of  $G$ is infinite. Let $\widetilde{G}$ be the universal cover of $G$, $Z(\widetilde{G})$ its center, and $\rho: \widetilde{G} \to G$ the covering map. Then $\ker \rho$ is a subgroup of  $Z(\widetilde{G})$ by Fact~\ref{fact: centerless2}. 
Using Fact~\ref{fact: centersimpleLie}, the center $Z(\widetilde{G})$ has rank at most $1$. By the earlier assumption, the center $Z(G)$ also has rank at least $1$.  Hence, by Fact~\ref{fact: centerless2}, both $Z(\widetilde{G})$ and $Z(G)$ must have rank $1$, and $\ker \rho$ is finite. Therefore, the desired conclusion for $G$ can be reduced to that of $\widetilde{G}$ by taking the inverse image under $\rho$, which we already know from the hypothesis. 
\end{proof}

\begin{proof}[Proof of Corollary~\ref{cor: 1.6BG}]
Let $G$ be a noncompact semisimple Lie group. 
By Proposition~\ref{prop: additivitydim1}.1, Fact~\ref{fact: Lie group decomp Iwasawa}.3 and Fact~\ref{fact: Lie group decomp Iwasawa}.4, and Fact~\ref{fact: classification2}, the noncompact Lie dimension of $G$ is at least $2$. The conclusion then follows from Theorem~\ref{thm: main Lie}.
\end{proof}

\appendix

\section{Some results about topological groups}

This section gathers some facts about topological groups which are needed in the proof. We begin with the three isomorphism theorems of topological groups. Note that the third isomorphism theorem is almost the same as the familiar result for groups, whereas the first two isomorphism theorems require extra assumptions; see ~\cite[Proposition III.2.24]{BourbakiTopology},~\cite[Proposition III.4.1]{BourbakiTopology}, and~\cite[Proposition III.2.22]{BourbakiTopology} for details. For Fact~\ref{fact: homomorphism facts}, we do not need to assume that $G$ is locally compact. The quotient $G/H$ is equipped with the quotient topology (i.e., $X \subseteq G/H$ is open if and only if it inverse image under the quotient map is open).

\begin{fact} \label{fact: homomorphism facts}
Suppose $H$ is a closed normal subgroup of $G$. Then we have the following:
\begin{enumerate}
    \item \emph{(First isomorphism theorem)} Suppose $\phi: G \to Q$ is a continuous surjective group homomorphism with $\ker \phi = H$.  Then the exact sequence of groups
        $$  1 \to H \to G \to Q \to 1 $$ 
    is an exact sequence of topological groups if and only if  $\phi$ is open; the former condition is equivalent to saying that  $Q$ is canonically isomorphic to $G/H$ as topological groups. 
    \item \emph{(Second isomorphism theorem)} Suppose  $S$ is a closed subgroup of $G$ and $H$ is compact. Then $S/(S \cap H)$ is canonically isomorphic to the image of $SH/H$ as topological groups. This is equivalent to saying that we have the exact sequence of topological groups
    $$   1 \to H \to SH \to S/(S \cap H)\to 1. $$
    \item \emph{(Third isomorphism theorem)}  Suppose $S \leq  G$ is closed, and $H \leq S$. Then $S/H$ is a closed subgroup of $G/H$. If $S\vartriangleleft G$ is normal, then $S/H$ is a normal subgroup of $G/H$, and we have the exact sequence of topological groups
    $$  1 \to S/H \to G/H \to G/S \to 1; $$
    this is the same as saying that $(G/H)/(S/H)$ is canonically isomorphic to $G/S$ as topological groups. 
\end{enumerate}
\end{fact}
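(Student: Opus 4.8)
The plan is to derive all three parts from two basic ingredients: that for any topological group $G$ with closed subgroup $H$ the quotient map $\pi_H\colon G\to G/H$ is a continuous, \emph{open}, surjective homomorphism, and that the corresponding isomorphisms of abstract groups are already supplied by the classical algebraic isomorphism theorems. The openness of $\pi_H$ is itself immediate: if $U\subseteq G$ is open then $\pi_H^{-1}(\pi_H(U))=UH=\bigcup_{h\in H}Uh$ is open, so $\pi_H(U)$ is open in the quotient topology. With these in hand, the topological content of each part reduces to checking that a certain canonical continuous bijection is a homeomorphism.

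For part (1), the algebraic first isomorphism theorem gives a group isomorphism $\bar\phi\colon G/H\to Q$ with $\phi=\bar\phi\circ\pi_H$; since $\pi_H$ is a quotient map, $\bar\phi$ is automatically continuous, and since $\pi_H$ is open and surjective, $\bar\phi$ is open if and only if $\phi=\bar\phi\circ\pi_H$ is open. As ``$1\to H\to G\to Q\to1$ is exact as topological groups'' means exactly that $\bar\phi$ is a homeomorphism, i.e.\ that $Q$ is canonically isomorphic to $G/H$, this yields the stated equivalence. For part (3), since $H\le S$ we have $\pi_H^{-1}(S/H)=S$, which is closed, so $S/H$ is closed in $G/H$; normality of $S/H$ in $G/H$ when $S\vartriangleleft G$ is purely algebraic. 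The composite $q=\pi_{S/H}\circ\pi_H\colon G\to(G/H)/(S/H)$ is then continuous, surjective, with kernel $S$, and open as a composite of open maps, so part (1) applied to $q$ gives that $(G/H)/(S/H)\cong G/S$ canonically as topological groups.

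Part (2) is where compactness of $H$ is genuinely used, and I expect it to be the only real obstacle. Algebraically $S/(S\cap H)\cong SH/H$ via the continuous bijection $\bar\psi$ induced by the restriction $q|_S\colon S\to SH/H$ of $\pi_H$; the point is to upgrade $\bar\psi$ to a homeomorphism, which fails for merely closed $H$ (taking $G=\RR^2$, $H$ a line of irrational slope, $S=\ZZ\times\{0\}$ makes $S/(S\cap H)$ discrete while $SH/H$ is not). When $H$ is compact, $\pi_H\colon G\to G/H$ is moreover a \emph{closed} map, since for closed $C$ the set $\pi_H^{-1}(\pi_H(C))=CH$ is closed by the fact that the product of a closed set and a compact set is closed (Lemma~\ref{lemma:productofclosedandcompact}); restricting to the closed subgroup $S$, the map $q|_S\colon S\to SH/H$ is therefore a closed continuous surjection onto $SH/H$. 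Factoring $q|_S=\bar\psi\circ\pi^S_{S\cap H}$ through the surjective quotient map $\pi^S_{S\cap H}\colon S\to S/(S\cap H)$, one sees $\bar\psi$ is closed: for $C$ closed in $S/(S\cap H)$, the set $(\pi^S_{S\cap H})^{-1}(C)$ is closed in $S$, and its image under $q|_S$, which equals $\bar\psi(C)$, is closed. Hence $\bar\psi$ is a continuous closed bijection, hence a homeomorphism, and (using again that $\pi_H|_{SH}\colon SH\to SH/H$ is an open quotient map) $1\to H\to SH\to S/(S\cap H)\to1$ is the asserted exact sequence of topological groups. The bookkeeping around openness of quotient maps in parts (1) and (3) is routine; the substantive step is establishing, and then correctly exploiting, the closedness of $\pi_H$ in part (2).
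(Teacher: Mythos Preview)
Your proof is correct. However, note that the paper does not actually prove this statement: it is recorded as a \emph{Fact} with citations to Bourbaki (\emph{Propositions III.2.24, III.4.1, and III.2.22}), so there is no argument in the paper to compare against. What you have written is a clean self-contained proof that the paper simply defers to the literature.

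A couple of minor remarks. First, in part (2) you tacitly use that the subspace topology and the quotient topology on $SH/H$ agree; this is true because $\pi_H$ is open and $SH$ is $\pi_H$-saturated, and you do flag it at the end, but it is worth making explicit earlier since your closedness argument for $q|_S$ lands in the subspace topology while the statement concerns the quotient. Second, your reuse of the symbol $q$ (for the composite $\pi_{S/H}\circ\pi_H$ in part (3) and then for $\pi_H$ in part (2)) is slightly confusing; distinct names would read more smoothly. Neither point affects the correctness of the argument.
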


Recall that a topological space $X$ is {\bf completely regular} if points can be separated from closed sets via continuous real-valued functions. More precisely,  for any closed set $C \subseteq X$ and any point $a \in X\setminus A$,  there exists a real-valued continuous function $f:X\to \RR$ such that $f(a)=1$ and $f(c) = 0$ for all $c \in C$.

\begin{fact}
The underlying topological space of a topological group is completely regular.
\end{fact}

We also need the following simple property of locally compact groups~\cite[Theorem~6.7]{Folland}.
\begin{fact}
Closed subgroups and quotient groups of locally compact groups are locally compact. 
\end{fact}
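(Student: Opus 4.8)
The plan is to prove that Fact~2 (closed subgroups and quotient groups of a locally compact group are locally compact) reduces to two essentially independent statements, which can then be established by direct topological arguments using the definition of local compactness (every point has a compact neighborhood) together with the fact that locally compact Hausdorff spaces have the property that every point has a neighborhood basis of compact sets.

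\medskip

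\textbf{Closed subgroups.} Let $H$ be a closed subgroup of a locally compact group $G$. I would argue purely topologically: a closed subspace of a locally compact (Hausdorff) space is locally compact. Given $h \in H$, pick a compact neighborhood $V$ of $h$ in $G$; then $V \cap H$ is closed in the compact set $V$, hence compact, and it is a neighborhood of $h$ in the subspace topology on $H$ since $V$ contains an open set of $G$ around $h$, whose trace on $H$ lies in $V \cap H$. Thus $H$ is locally compact. No group structure is even needed here beyond the fact that $H$ carries the subspace topology.

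\medskip

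\textbf{Quotient groups.} Let $H$ be a closed normal subgroup of $G$ and let $\pi : G \to G/H$ be the quotient map. The key fact is that $\pi$ is an open map: for open $U \subseteq G$, $\pi^{-1}(\pi(U)) = UH = \bigcup_{h \in H} Uh$ is a union of open sets, hence open, so $\pi(U)$ is open in the quotient topology. Now take any point $\pi(g) \in G/H$. Choose a compact neighborhood $V$ of $g$ in $G$, and choose an open set $W$ with $g \in W \subseteq V$. Then $\pi(W)$ is an open set containing $\pi(g)$, and $\pi(W) \subseteq \pi(V)$, where $\pi(V)$ is compact as the continuous image of the compact set $V$. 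Hence $\pi(V)$ is a compact neighborhood of $\pi(g)$, so $G/H$ is locally compact. (One should also note that $G/H$ is Hausdorff precisely because $H$ is closed — this is the standard fact that $G/H$ is Hausdorff iff $H$ is closed — which is implicitly part of the statement that $G/H$ is a locally compact \emph{group}.)

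\medskip

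\textbf{Main obstacle.} There is no serious obstacle: both halves are standard point-set topology once one records that quotient maps of topological groups are open. The only subtlety worth stating carefully is the Hausdorffness of $G/H$, which relies on $H$ being closed and on $G$ being a topological group (so that the diagonal-type argument applies); this is why the hypothesis ``closed'' cannot be dropped. I would simply cite \cite[Theorem~6.7]{Folland} for the full statement, perhaps after recording the openness of $\pi$ as the one ingredient used elsewhere in the paper, and leave the elementary verification above as the proof sketch.
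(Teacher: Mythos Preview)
Your proposal is correct and aligns with the paper's treatment: the paper does not give a proof at all but simply records this as a standard fact with the citation \cite[Theorem~6.7]{Folland}, exactly the reference you suggest. Your additional point-set sketch (closed subspaces of locally compact Hausdorff spaces are locally compact; quotient maps of topological groups are open, hence push compact neighborhoods to compact neighborhoods) is the standard argument behind that citation and is entirely sound.
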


The following lemma holds for all topological groups.

\begin{lemma} \label{lemma:productofclosedandcompact}
Suppose $X, Y \subseteq G$, $X$ is compact and $Y$ is closed. Then $XY$ is closed. 
\end{lemma}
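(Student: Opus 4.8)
The statement asserts that if $X$ is compact and $Y$ is closed in a topological group $G$, then $XY$ is closed. The plan is to show that the complement $G \setminus XY$ is open. Fix a point $g \in G \setminus XY$; I want to produce an open neighborhood of $g$ disjoint from $XY$. The key observation is that $g \notin XY$ means precisely that for every $x \in X$, the element $x^{-1}g$ is not in $Y$, i.e.\ $x^{-1}g \in G \setminus Y$, which is open.

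\textbf{Main steps.} First I would fix $g \notin XY$ and, for each $x \in X$, use continuity of the multiplication map together with openness of $G \setminus Y$ to choose open neighborhoods: since the map $(a,b) \mapsto a^{-1}b$ is continuous and sends $(x,g)$ into the open set $G \setminus Y$, there exist open neighborhoods $U_x \ni x$ and $V_x \ni g$ with $U_x^{-1} V_x \subseteq G \setminus Y$. Second, I would invoke compactness of $X$: the open cover $\{U_x : x \in X\}$ of $X$ admits a finite subcover $U_{x_1}, \dots, U_{x_k}$. Third, I would set $V = \bigcap_{i=1}^k V_{x_i}$, a finite intersection of open neighborhoods of $g$, hence an open neighborhood of $g$. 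Finally, I would check that $V \cap XY = \emptyset$: if some $v \in V$ were equal to $xy$ with $x \in X$, $y \in Y$, then $x \in U_{x_i}$ for some $i$, and $x^{-1}v \in U_{x_i}^{-1} V_{x_i} \subseteq G \setminus Y$, yet $x^{-1}v = y \in Y$, a contradiction. Hence $V \subseteq G \setminus XY$, so $G \setminus XY$ is open and $XY$ is closed.

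\textbf{Anticipated obstacle.} There is no serious obstacle here; this is a standard compactness-and-continuity argument. The only point requiring a little care is the correct bookkeeping of which neighborhood goes with which factor (the neighborhoods $U_x$ of points of $X$ versus the neighborhoods $V_x$ of the fixed point $g$), and making sure one takes the \emph{finite} intersection of the $V_{x_i}$ only after extracting the finite subcover — taking an infinite intersection of the $V_x$ would not be open in general. An alternative, essentially equivalent, route would be to use the tube lemma on $X \times \{g\}$ inside $X \times G$ relative to the open set $\{(x,h) : x^{-1}h \notin Y\}$, but the direct argument above is cleaner and self-contained.
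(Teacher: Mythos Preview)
Your proof is correct and follows essentially the same approach as the paper's: both show the complement of $XY$ is open by, for a fixed point outside $XY$, using continuity and openness of $G\setminus Y$ to produce local neighborhoods and then invoking compactness to pass to a finite subcover and a finite intersection. The only cosmetic difference is that the paper transfers the compactness to the set $X^{-1}a$ and works with neighborhoods of the identity (choosing $U_x$ with $xU_x^2\cap Y=\emptyset$), whereas you keep the compactness on $X$ and use pairs of neighborhoods $U_x\ni x$, $V_x\ni g$; both bookkeepings are equivalent.
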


\begin{proof}
Let $a$ be in $G \setminus XY$. Then $X^{-1}a$ is compact and $X^{-1}a\cap Y=\emptyset$. For each point $x \in  X^{-1}a$, we choose an open neighborhood of identity $U_x$ such that $xU^2_x \cap Y =\emptyset$. Then $(xU_x)_{x \in X^{-1}a}$ is an open cover of $X^{-1}a$. Using the fact that $X^{-1}a$ is compact, we get a subcover $(U_i)_{i=1}^k$. Set $U= \bigcap_{i=1}^k U_i$. It is easy to check that $X^{-1}aU \cap Y = \emptyset$. Then $aU\cap XY =\emptyset$, which implies that $XY$ is closed as $a$ can be chosen arbitrarily.  
\end{proof}

The next lemma records a simple fact of compact  subgroups.

\begin{lemma}\label{lem: inverse image of compact sets}
If $H$ is a compact subgroup of $G$, then the quotient map $\pi: G \to G/H$ is a proper map (i.e., the inverse images of compact subsets are compact). 
\end{lemma}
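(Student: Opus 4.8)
The plan is to combine local compactness of $G$ with the fact that the canonical map $\pi\colon G\to G/H$ is open. Fix a compact subset $C\subseteq G/H$; we must show $\pi^{-1}(C)$ is compact. For each $g\in G$, local compactness of $G$ supplies a compact neighbourhood $V_g$ of $g$, so that $g\in\operatorname{int}V_g$. Since $\pi$ is open — the set $\pi^{-1}(\pi(U))$ equals $UH=\bigcup_{h\in H}Uh$, which is open whenever $U$ is, and this argument does not use normality of $H$ — the sets $\pi(\operatorname{int}V_g)$, as $g$ ranges over a choice of representatives of the cosets lying in $C$, form an open cover of $C$. Extracting a finite subcover, we obtain $g_1,\dots,g_n\in G$ with $C\subseteq\bigcup_{i=1}^n\pi(V_{g_i})$, and hence
\[
\pi^{-1}(C)\;\subseteq\;\bigcup_{i=1}^n\pi^{-1}\bigl(\pi(V_{g_i})\bigr)\;=\;\bigcup_{i=1}^nV_{g_i}H.
\]

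To finish, I would observe that each $V_{g_i}H$ is compact, being the image of the compact set $V_{g_i}\times H$ under the continuous multiplication map $G\times G\to G$ (equivalently, this is Lemma~\ref{lemma:productofclosedandcompact} applied to the two compact, hence closed, sets $V_{g_i}$ and $H$). Thus the right-hand side above is a finite union of compact sets, so it is compact. Finally, $C$ is closed in $G/H$, since $G/H$ is Hausdorff because $H$, being compact, is closed in the Hausdorff group $G$; therefore $\pi^{-1}(C)$ is closed in $G$ by continuity of $\pi$. Being a closed subset of the compact set $\bigcup_{i=1}^nV_{g_i}H$, it is compact, as desired.

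I do not anticipate any genuine obstacle here: the argument is just the standard device of pulling a cover by compact neighbourhoods down through an open map, and the only auxiliary facts used — that the coset projection of a topological group is open, and that $G/H$ is Hausdorff when $H$ is closed — are routine. If one wished to dispense with local compactness of $G$ altogether, essentially the same idea still works: one first uses a Wallace-type thickening to enlarge, for each coset in $C$, a finite subcover of that coset to a tube of the form $V_gH$, then projects as above; but since $G$ is locally compact throughout the paper, the shorter route suffices.
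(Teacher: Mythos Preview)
Your proof is correct and follows essentially the same route as the paper's: use local compactness to produce compact neighbourhoods, push them forward through the open map $\pi$ to cover the compact target, extract a finite subcover, and observe that $\pi^{-1}(C)$ is a closed subset of a finite union of sets of the form $(\text{compact})\cdot H$. One small slip: your parenthetical citing Lemma~\ref{lemma:productofclosedandcompact} is off, since that lemma only gives that $V_{g_i}H$ is \emph{closed}, not compact; your primary justification via the continuous image of $V_{g_i}\times H$ is the correct one, so the argument stands.
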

\begin{proof}
Let $\Omega$ be a compact subset of $G/H$. In particular $\Omega$ is closed. Hence, $\pi^{-1}(\Omega)$ is closed, so it suffices to find a compact set containing $\pi^{-1}(\Omega)$. Since $G$ is locally compact, we can find an open covering $(U_i)_{i \in I}$ of $\pi^{-1}(\Omega)$ such that $U_i$ has compact closure $\overline{U_i}$ for each $i \in I$. Then $(\pi U_i)_{i \in I}$ is an open cover of $\Omega$ as $\pi$ is open. Using the assumption that $\Omega$ is compact, we get a finite $I' \subseteq I$ such that $(\pi(U_i))_{i \in I'}$ is an open cover of $\Omega$. Then $\bigcup_{i \in I'} \overline{U_i}H$ is a compact set containing $\pi^{-1}(\Omega)$. 
\end{proof}

\section{Measures and the modular function}
 
We say that a measure $\mu$ on the collection of Borel subsets of $G$ is a {\em left Haar measure}~\cite[Section 2.2]{Folland} if it satisfies the following properties: 
\begin{enumerate}
     \item (nonzero) $\mu(X)>0$ for all open $X\subseteq G$;
     \item (left-translation-invariant) $\mu(X) =\mu(aX)$ for all $a\in G$ and all measurable sets $X \subseteq G$;
    \item (inner regular for open sets) when $X$ is open, $\mu(X) =\sup \mu(K)$ with $K$ ranging over compact subsets of $X$;
    \item (outer regular for Borel sets) when $X$ is Borel, $\mu(X) =\inf \mu(U)$ and $U$ ranging over open subsets of $G$ containing $X$;
    \item (compactly finite) $\mu$ takes finite measure on compact subsets of $G$.
\end{enumerate}
The notion of a {\em right Haar measure} is obtained by making the obvious modifications to the above definition. The following classical result by Haar
guarantees the existence of such a measure on every locally compact group.
\begin{fact}\cite[Theorem 2.20]{Folland}\label{fact: haar measure}
Up to multiplication by a positive constant, there is a unique left Haar measure of $G$. A similar statement holds for right Haar measure.
\end{fact}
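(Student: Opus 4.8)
The plan is to establish existence and uniqueness separately, with the right-invariant statement then following formally from the left-invariant one. The structural inputs used throughout are exactly the standing hypotheses on $G$: it is locally compact and Hausdorff.

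\textbf{Existence.} I would run Weil's covering argument. Fix once and for all a compact set $K_0\subseteq G$ with nonempty interior. For a compact $K$ and an open neighbourhood $U$ of the identity, let $(K:U)$ be the least integer $N$ for which there are $g_1,\dots,g_N$ with $K\subseteq\bigcup_{i=1}^N g_iU$; this is finite by compactness of $K$, and $(K_0:U)\geq1$. Set $\mu_U(K)=(K:U)/(K_0:U)$. One checks directly that $\mu_U$ is monotone and left-invariant, that $0\leq\mu_U(K)\leq(K:K_0)$, that $\mu_U(K_0)=1$, that $\mu_U(K_1\cup K_2)\leq\mu_U(K_1)+\mu_U(K_2)$ in general, and that equality holds whenever $K_1U^{-1}\cap K_2U^{-1}=\emptyset$. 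Regard each $\mu_U$ as an element of the product space $\prod_{K}[0,(K:K_0)]$ indexed by the compact subsets of $G$, which is compact by Tychonoff's theorem, and take a cluster point $\mu$ of the net $(\mu_U)$ as $U$ runs through a neighbourhood basis of the identity. Then $\mu$ inherits monotonicity, left-invariance, subadditivity, and $\mu(K_0)=1$, and is moreover finitely additive on disjoint compacta, since two disjoint compact sets can (here local compactness is used) be separated by a small enough $U$ for the additivity of $\mu_U$ to survive the limit. This ``Haar content'' then extends to a left-invariant, inner and outer regular Borel measure that is finite on compacta, via $\mu^{\ast}(V)=\sup\{\mu(K):K\subseteq V\text{ compact}\}$ for open $V$ followed by outer regularity on arbitrary Borel sets; equivalently, $f\mapsto\int f\,d\mu$ is a nonzero positive left-invariant linear functional on $C_c(G)$ and one invokes the Riesz representation theorem.

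\textbf{Uniqueness.} Let $\mu$ and $\nu$ both be left Haar measures. It suffices to show that for every $f\in C_c(G)$ with $f\geq0$, $f\not\equiv0$, the ratio $\bigl(\int f\,d\mu\bigr)/\bigl(\int f\,d\nu\bigr)$ is independent of $f$. Fixing an auxiliary $g\in C_c(G)^{+}$ with $\int g\,d\nu>0$ and using the left-invariance of both measures together with Fubini's theorem (legitimate since all integrands are compactly supported), one rewrites a suitable double integral of $f(x)g(xy)$ against $d\mu(x)\,d\nu(y)$ in the two possible orders; the modular factors that appear depend only on $G$ and hence cancel upon taking the quotient, leaving $\bigl(\int f\,d\mu\bigr)/\bigl(\int f\,d\nu\bigr)=\bigl(\int g\,d\mu\bigr)/\bigl(\int g\,d\nu\bigr)$. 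The right-hand side does not involve $f$, so the common value is a constant $c>0$ and $\mu=c\,\nu$. For the right Haar case, observe that $X\mapsto\mu(X^{-1})$ sends a left Haar measure to a right Haar measure, giving existence; and applying the uniqueness statement just proved to the opposite group $G^{\mathrm{op}}$ yields uniqueness up to a positive scalar.

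\textbf{Main obstacle.} The delicate step lies entirely in the existence half: the approximants $\mu_U$ are merely subadditive, so one must argue that passing to a cluster point genuinely restores finite additivity on disjoint compacta, and then that the resulting set function is regular enough to extend to an honest Borel measure. This is precisely where local compactness is indispensable. By contrast, once existence is in hand, uniqueness is a routine Fubini computation.
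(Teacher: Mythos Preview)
The paper does not supply its own proof of this fact: it is stated as a \emph{Fact} with an explicit citation to Folland's textbook and is invoked as background. So there is nothing in the paper to compare your argument against beyond that reference.

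That said, your sketch is the classical construction and is essentially what appears in Folland. One small caution on the uniqueness half: your phrasing ``the modular factors that appear depend only on $G$ and hence cancel'' reads as if you already know the modular function is well-defined independently of the chosen Haar measure, which is circular at this stage. The clean version of the Fubini argument (as in Folland) avoids this by arranging the double integral so that no modular correction is needed, or by working with the quantity $\int f\,d\mu\cdot\int g\,d\nu$ directly and showing it equals $\int g\,d\mu\cdot\int f\,d\nu$ via the substitution $y\mapsto x^{-1}y$ in the $\nu$-integral and left-invariance of $\mu$. If you tighten that paragraph accordingly, the proposal is fine.
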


Given a locally compact group $G$, and $\mu$ is a left Haar measure on $G$. For every $x\in G$, recall that $\Delta_G: x\mapsto \mu_x/\mu$ is the \emph{modular function} of $G$, where $\mu_x$ is a left Haar measure on $G$ defined by $\mu_x(A)=\mu(Ax)$, for every measurable set $A$. When the image of $\Delta_G$ is $\{1\}$, we say $G$ is \emph{unimodular}. In general, $\Delta_G(x)$ takes values in $\RR^{>0}$. We use $(\RR^{>0}, \times)$ to denote the multiplicative group of positive real numbers together with the usual Euclidean topology. The next fact records some basic properties of the modular function; see~\cite[Section 2.4]{Folland}.
\begin{fact}\label{fact: modular function}
Let $G$ be a locally compact group. Assume $\mu$ is a left Haar measure and $\nu$ is a right Haar measure. 
\begin{enumerate}
    \item Suppose $H$ is a normal closed subgroup of $G$, then $\Delta_H=\Delta_G|_H$. In particular, if $H=\ker\Delta_G$, then $H$ is unimodular.
    \item The function $\Delta_G: G\to (\RR^{>0}, \times)$ is a continuous homomorphism. 
    \item For every $x\in G$ and every measurable set $A$, we have $\mu(Ax)=\Delta_G(x)\mu(A)$, and $\nu(xA)=\Delta_G^{-1}(x)\nu(A)$.
    \item There is a constant $c$ such that $\int_G f\d \mu = c \int_G f\Delta_G\d \nu$ for every $f\in C_c(G)$. 
\end{enumerate}
\end{fact}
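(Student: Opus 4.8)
The plan is to treat the four assertions in the order (2), (3), (4), (1): the second is the foundation, the third and fourth fall out of building a right Haar measure from $\mu$, and the first is the only point that needs a genuine idea.

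First I would prove (2). Multiplicativity is immediate from the definition: for $x,y\in G$ and a measurable set $A$ one has $\mu(Axy)=\Delta_G(y)\mu(Ax)=\Delta_G(y)\Delta_G(x)\mu(A)$, hence $\Delta_G(xy)=\Delta_G(x)\Delta_G(y)$. For continuity, choose $f\in C_c(G)$ with $\int_Gf\,\d\mu\ne 0$; substituting $y=zx$ (legitimate by left-invariance, with $\d\mu(zx)=\Delta_G(x)\,\d\mu(z)$) gives $\int_Gf(yx^{-1})\,\d\mu(y)=\Delta_G(x)\int_Gf\,\d\mu$, so $\Delta_G(x)=\big(\int_Gf\,\d\mu\big)^{-1}\int_Gf(yx^{-1})\,\d\mu(y)$, and the right-hand side is continuous in $x$ because $f$ is uniformly continuous with a fixed compact support.

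Then I would dispose of (3) and (4) together. The identity $\mu(Ax)=\Delta_G(x)\mu(A)$ is the definition of $\Delta_G$, so it remains to handle $\nu$. I would check that $\d\nu_0:=\Delta_G^{-1}\,\d\mu$ is a right Haar measure — right-invariance is a one-line change of variables using the definition of $\Delta_G$ together with (2), and regularity and compact-finiteness are inherited from $\mu$ — and then compute $\nu_0(xA)=\int_A\Delta_G(xz)^{-1}\,\d\mu(z)=\Delta_G(x)^{-1}\nu_0(A)$ by left-translating inside the integral. By the uniqueness part of Fact~\ref{fact: haar measure}, an arbitrary right Haar measure $\nu$ equals $c^{-1}\nu_0$ for some $c>0$, which gives the displayed formula in (3) and, since $\Delta_G\,\d\nu_0=\d\mu$, also the relation $\int_Gf\,\d\mu=c\int_Gf\Delta_G\,\d\nu$ of (4).

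The remaining assertion (1) is the one I expect to be the main obstacle. For $h\in H$ the inner automorphism $c_h\colon x\mapsto hxh^{-1}$ is a topological automorphism of $G$ that preserves $H$ (because $H$ is normal) and induces the identity on $G/H$; a direct computation using left-invariance gives $\mu_G(c_h(A))=\mu_G(Ah^{-1})=\Delta_G(h)^{-1}\mu_G(A)$ and likewise $\mu_H(c_h(B))=\Delta_H(h)^{-1}\mu_H(B)$. Thus $\Delta_G(h)=\Delta_H(h)$ is precisely the statement that the modulus of an automorphism is multiplicative along $1\to H\to G\to G/H\to1$; I would take this from the standard references, e.g.\ \cite[Section~2.4]{Folland}, rather than reprove it. The pitfall to flag is that one cannot shortcut this by disintegrating $\mu_G$ over $G/H$ with $\mu_H$ on the fibres — that disintegration exists exactly when $\Delta_G|_H=\Delta_H$, so such an argument would be circular — which is why the proof must be routed through the modulus-of-an-automorphism computation (equivalently, through the $\rho$-function form of Weil's formula). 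Finally, the ``in particular'' clause is immediate: if $H=\ker\Delta_G$ then $\Delta_H=\Delta_G|_H\equiv1$, so $H$ is unimodular.
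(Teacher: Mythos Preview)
The paper does not supply a proof of this statement: it is recorded as a \emph{Fact} with a bare reference to \cite[Section~2.4]{Folland}, so there is no in-paper argument to compare against. Your outline is correct and is essentially the standard treatment one finds in Folland; in particular your handling of (2), (3), (4) is the usual route, and for (1) you correctly isolate the real content --- multiplicativity of the modulus of an automorphism along the short exact sequence, applied to inner automorphisms $c_h$ which act trivially on $G/H$ --- and you are right to warn that invoking the quotient integral formula directly would be circular.
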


We use the following integral formula ~\cite[Theorem 2.49]{Folland} in our proofs.
\begin{fact}[Quotient integral formula]\label{fact: Quotient Integral Formula}
Let $G$ be a locally compact group, and let $H$ be a closed normal subgroup of $G$. Let $\mu_G$ and $\mu_H$ be left Haar measures on $G$ and on $H$, respectively. Then  there is a unique left Haar measure $\mu_{G/H}$ on $G/H$, such that
for every $f\in C_c(G)$, 
\[
\int_G f(x)\d \mu_G(x)=\int_{G/H}\int_H f(xh) \d \mu_H(h)\d \mu_{G/H}(xH).
\]
\end{fact}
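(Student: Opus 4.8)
The claimed identity is the classical Weil quotient integration formula; in the paper it is simply quoted from \cite[Theorem~2.49]{Folland}, but here is how one would prove it. The plan is to produce the left Haar measure $\mu_{G/H}$ by exhibiting a positive, left-$G$-invariant linear functional on $C_c(G/H)$ and then applying the Riesz representation theorem together with uniqueness of Haar measure (Fact~\ref{fact: haar measure}). For $f\in C_c(G)$ set $(Tf)(xH):=\int_H f(xh)\,\d\mu_H(h)$, where $\pi:G\to G/H$ is the quotient map. First I would verify the routine facts: $Tf$ is well defined on cosets (by left-invariance of $\mu_H$), $Tf$ is continuous (by uniform continuity of $f$), $\operatorname{supp}(Tf)\subseteq\pi(\operatorname{supp}f)$ is compact, $T$ is linear, and $T$ sends nonnegative functions to nonnegative functions; thus $T:C_c(G)\to C_c(G/H)$.

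The two substantive steps are: (a) $T$ is surjective, and in fact maps $C_c(G)^{\ge0}$ onto $C_c(G/H)^{\ge0}$; and (b) $Tf=0$ implies $\int_G f\,\d\mu_G=0$. For (a), given $g\in C_c(G/H)$ I would pick a compact $L\subseteq G$ with $\pi(L)\supseteq\operatorname{supp}g$ and some $\psi\in C_c(G)^{\ge0}$ with $\psi>0$ on $L$, so that $T\psi>0$ on $\operatorname{supp}g$; then $f(x):=\psi(x)\,g(\pi(x))/(T\psi)(\pi(x))$ on the open set $\{x:(T\psi)(\pi(x))>0\}$ and $f(x):=0$ elsewhere lies in $C_c(G)$ and satisfies $Tf=g$. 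For (b), use (a) to choose $\phi\in C_c(G)^{\ge0}$ with $T\phi\equiv1$ on $\pi(\operatorname{supp}f)$. Then
\[
\int_G f\,\d\mu_G=\int_G f(x)(T\phi)(\pi(x))\,\d\mu_G(x)=\int_H\!\int_G f(x)\phi(xh)\,\d\mu_G(x)\,\d\mu_H(h),
\]
and after a right translation $x\mapsto xh^{-1}$ in the inner integral (introducing a factor $\Delta_G(h)^{-1}$), an inversion $h\mapsto h^{-1}$ on $H$ (introducing a factor $\Delta_H(h)^{-1}$), and the identity $\Delta_G|_H=\Delta_H$, valid because $H$ is normal (Fact~\ref{fact: modular function}.1), the modular factors cancel and the expression collapses to $\int_G\phi(x)(Tf)(\pi(x))\,\d\mu_G(x)=0$.

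Granting (a) and (b), the functional $I(Tf):=\int_G f\,\d\mu_G$ on $C_c(G/H)$ is well defined by (b), linear, and nonnegative on $C_c(G/H)^{\ge0}$ by (a). It is invariant under the left $G$-action on $G/H$ because $T$ intertwines left translation on $G$ with the translation action of $G$ on $G/H$ and $\mu_G$ is left-invariant. By the Riesz representation theorem $I$ is integration against a Radon measure $\mu_{G/H}$, which is then a $G$-invariant (hence left Haar) measure on $G/H$, and the defining relation $I(Tf)=\int_G f\,\d\mu_G$ is precisely the asserted formula; uniqueness of $\mu_{G/H}$ given $\mu_G$ and $\mu_H$ is immediate since the formula pins down $I$ and therefore $\mu_{G/H}$. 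I expect the main obstacle to be the pair of lemmas (a) and (b): the surjectivity of $T$ requires care with compact supports, and the vanishing statement (b) is the real crux, since the computation closes up only because the normality of $H$ forces $\Delta_G|_H=\Delta_H$, so that no residual modular factor survives the Fubini-and-translation argument.
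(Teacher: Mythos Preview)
The paper does not prove this statement at all; it is recorded as a Fact with a citation to \cite[Theorem~2.49]{Folland}, and you yourself note this at the outset of your proposal. Your outline is precisely the standard Weil argument as presented in Folland: push forward via the averaging operator $T$, establish surjectivity of $T$ and the kernel condition $Tf=0\Rightarrow\int_G f\,\d\mu_G=0$, then invoke Riesz and uniqueness of Haar measure. The computation in step~(b) is correct --- the right-translation and inversion do produce factors $\Delta_G(h)^{-1}$ and (after the substitution) $\Delta_G(h)\Delta_H(h)^{-1}$, and these collapse to $1$ exactly because normality of $H$ gives $\Delta_G|_H=\Delta_H$. There is nothing to compare here: your write-up is a faithful sketch of the reference the paper cites.
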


The following fact is a consequence of a result of Haar measure on closed subgroups and quotients~\cite[Proposition VII. 2.7.10]{Bourbaki}.

\begin{fact}\label{fact: surjectivemodularonconnectedcomp}
Suppose $G$ is nonunimodular, and $\Delta_G: G \to (\RR^{>0}, \times)$ is the modular function of $G$, then we have the following:
\begin{enumerate}
    \item If $K\vartriangleleft G$ is a compact normal subgroup of $G$, $\Delta_{G/K}$ is the modular function of $G/K$, and $\pi: G \to G/K$ is the quotient map, then we have $\Delta_{G} = \Delta_{G/K} \circ \pi $.
    \item If $H\vartriangleleft G$ is a closed unimodular group, and $\mu_H$ is a Haar measure on $H$.  Suppose $G/H$ is unimodular, and $X$ is a compact subset of $H$. Then for every $g\in G$, $\mu_H(gXg^{-1})=\Delta_G(g)\mu_H(X)$. 
\end{enumerate}
\end{fact}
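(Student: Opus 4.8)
Both items are consequences of the cited Bourbaki proposition together with the quotient integral formula (Fact~\ref{fact: Quotient Integral Formula}); in each case the point is that a compactness or unimodularity hypothesis forces a ``conjugation module'' term to be trivial. I would prove them directly, as follows.

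\emph{Item (1).} Let $\mu_K$ be the Haar probability measure on the compact group $K$. Being the unique bi-invariant probability measure on $K$, it is preserved by every topological automorphism of $K$, in particular by each inner automorphism $k\mapsto g^{-1}kg$ coming from $g\in G$ (here normality of $K$ enters). Fix a left Haar measure $\mu_{G/K}$ on $G/K$ and let $\mu_G$ be the left Haar measure on $G$ produced by Fact~\ref{fact: Quotient Integral Formula}, so that $\int_G f\,d\mu_G=\int_{G/K}\int_K f(xk)\,d\mu_K(k)\,d\mu_{G/K}(\pi(x))$ for $f\in C_c(G)$. For $g\in G$ I would compute $\int_G f(xg)\,d\mu_G(x)$ through this formula: writing $kg=g(g^{-1}kg)$ and using that $\mu_K$ is invariant under $k\mapsto g^{-1}kg$, the element $g$ moves past the inner $K$-integral, and the outer integral becomes the right translate by $\pi(g)$ of the function $\pi(x)\mapsto\int_K f(xk)\,d\mu_K(k)$ on $G/K$. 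Comparing with Fact~\ref{fact: modular function}.3 applied on $G$ and on $G/K$ then yields $\Delta_G(g)=\Delta_{G/K}(\pi(g))$. Since the modular function does not depend on the normalization of Haar measure, this proves (1) for every choice of $\mu_G$.

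\emph{Item (2).} Apply the quotient integral formula to $1\to H\to G\overset{\pi}{\to}G/H\to1$ (legitimate since $H$ is normal, and $\mu_H$ is automatically bi-invariant because $H$ is unimodular): fix $\mu_H$ and a left Haar measure $\mu_G$, and let $\mu_{G/H}$ be the resulting left Haar measure with $\int_G f\,d\mu_G=\int_{G/H}\int_H f(yh)\,d\mu_H(h)\,d\mu_{G/H}(\pi(y))$. Fix $g\in G$; since $h\mapsto ghg^{-1}$ is a topological automorphism of $H$, there is $\delta(g)>0$ with $\mu_H(gYg^{-1})=\delta(g)\mu_H(Y)$ for compact $Y\subseteq H$, and the Fact amounts to pinning $\delta(g)$ down in terms of $\Delta_G(g)$. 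I would evaluate $\int_G f(gxg^{-1})\,d\mu_G(x)$ two ways. Directly: since $\mu_G(g^{-1}Ag)=\mu_G(Ag)=\Delta_G(g)\mu_G(A)$ by left-invariance and Fact~\ref{fact: modular function}.3, this integral equals $\Delta_G(g)\int_G f\,d\mu_G$. Via the quotient formula, writing $g(xh)g^{-1}=(gxg^{-1})(ghg^{-1})$: the substitution $h\mapsto ghg^{-1}$ in the inner $H$-integral produces a power of $\delta(g)$, while the outer integral over $G/H$ becomes conjugation by $\pi(g)$ and is therefore unchanged --- this is exactly where unimodularity of $G/H$ is used, via $\Delta_{G/H}\equiv1$. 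Matching the two evaluations identifies $\delta(g)$ with $\Delta_G(g)$, giving $\mu_H(gXg^{-1})=\Delta_G(g)\mu_H(X)$; the right-hand side is insensitive to the scaling of $\mu_H$, so the identity holds for any Haar measure on $H$.

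The only genuine difficulty is bookkeeping: one must keep careful track of left/right conventions and of the direction of conjugation so that the three elementary change-of-variable steps (on $K$ or on $H$, on $G$, and on the quotient) combine with the correct exponents. There is no new structural idea --- the hypotheses ``$K$ compact'' in (1) and ``$H$ and $G/H$ unimodular'' in (2) are precisely what annihilate the otherwise-present correction terms, namely the conjugation module on $K$ and the modular function of $G/H$.
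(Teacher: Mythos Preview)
The paper does not actually prove this fact; it is stated with a citation to Bourbaki and no argument. Your approach via the quotient integral formula is the natural one, and for item (1) your argument is complete and correct.

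For item (2) the outline is right but the bookkeeping --- exactly the thing you warn about in your last paragraph --- goes wrong at the final step. Carrying your computation through: the direct evaluation gives $\int_G f(gxg^{-1})\,d\mu_G=\Delta_G(g)\int_G f\,d\mu_G$, while in the quotient evaluation the substitution $h'=ghg^{-1}$ in the inner integral contributes a factor $\delta(g)^{-1}$ (since $(c_g)_*\mu_H(B)=\mu_H(g^{-1}Bg)=\delta(g^{-1})\mu_H(B)=\delta(g)^{-1}\mu_H(B)$), and the outer conjugation on the unimodular quotient contributes nothing. Matching the two therefore gives $\delta(g)^{-1}=\Delta_G(g)$, i.e.\ $\mu_H(gXg^{-1})=\Delta_G(g)^{-1}\mu_H(X)$, not $\Delta_G(g)\mu_H(X)$. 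A concrete check: in the $ax+b$ group with $H=\{(1,b)\}$ one has $\Delta_G((a,b))=a^{-1}$ under the paper's convention $\mu(Ax)=\Delta_G(x)\mu(A)$, while conjugation by $(a,b)$ scales $H\cong\RR$ by $a$, so $\mu_H(gXg^{-1})=a\,\mu_H(X)=\Delta_G(g)^{-1}\mu_H(X)$.

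In other words, your method is sound, but it proves the inverse of the displayed formula; the exponent in the paper's statement of (2) appears to be a typo, and your argument, done carefully, yields the correct version $\mu_H(gXg^{-1})=\Delta_G(g)^{-1}\mu_H(X)$.
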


\section{Almost-Lie groups and the Gleason--Yamabe Theorem}

In our proof we need the solution of Hilbert's 5th problem, which is known as the Gleason--Yamabe Theorem~\cite{Gleason,Yamabe}, to reduce the problem into Lie groups. For convenience, we introduce the following   terminology. A locally compact group  $G$ is an {\bf almost-Lie group} if every open neighborhood $U$ of the identity in $G$ contains a compact   $H\vartriangleleft G$ such that $G/H$ is a Lie group.

\begin{lemma} \label{lem: almostliesubgroupandquotient}
Suppose $G$ is an almost-Lie group. Then every open subgroup of $G$ and every quotient of $G$ by a closed normal subgroup is an almost-Lie group.
\end{lemma}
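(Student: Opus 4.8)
The plan is to verify the defining property of an almost-Lie group directly in each of the two cases, by transporting along the relevant map the compact normal subgroup that witnesses a Lie quotient of $G$.

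First I would treat an open subgroup $G'$ of $G$. Given an open neighbourhood $U$ of the identity in $G'$, openness of $G'$ in $G$ makes $U$ an open neighbourhood of the identity in $G$, so the almost-Lie hypothesis on $G$ yields a compact $H \vartriangleleft G$ with $H \subseteq U$ and $G/H$ a Lie group. Since $H \subseteq U \subseteq G'$, the subgroup $H$ is a compact normal subgroup of $G'$ as well; and because $H \leq G'$ with $G'$ open, the image $G'/H$ is an open subgroup of the Lie group $G/H$, hence itself a Lie group. Together with the fact that open subgroups of locally compact groups are locally compact, this shows $G'$ is almost-Lie.

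Next I would treat a quotient $G/N$ by a closed normal subgroup $N$, with quotient map $\rho: G \to G/N$ (note $G/N$ is locally compact as a quotient of a locally compact group by a closed subgroup). Given an open neighbourhood $V$ of the identity in $G/N$, set $U = \rho^{-1}(V)$, an open neighbourhood of the identity in $G$, and apply the almost-Lie hypothesis to get a compact $H \vartriangleleft G$ with $H \subseteq U$ and $G/H$ a Lie group. Then $\rho(H) = HN/N$ is a compact (hence closed, since $G/N$ is Hausdorff) normal subgroup of $G/N$ contained in $V$. It remains to see that $(G/N)/\rho(H)$ is a Lie group. Here I would use Lemma~\ref{lemma:productofclosedandcompact} to conclude that $HN$ is closed in $G$ (it is also normal, being the product of the normal subgroups $H$ and $N$), so that the third isomorphism theorem (Fact~\ref{fact: homomorphism facts}.3) gives $(G/N)/(HN/N) \cong G/HN \cong (G/H)/(HN/H)$ as topological groups; since $H$ is compact, $HN/H$ is a closed normal subgroup of the Lie group $G/H$, and a quotient of a Lie group by a closed normal subgroup is a Lie group. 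This completes the verification.

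The main obstacle is the topological bookkeeping required to ensure that every group occurring in the argument is closed, so that the isomorphism theorems and the quotient constructions are legitimate in the category of topological groups; the crucial point is the closedness of $HN$, which is exactly where Lemma~\ref{lemma:productofclosedandcompact} enters, together with the standard facts that open subgroups of Lie groups are Lie groups and quotients of Lie groups by closed normal subgroups are Lie groups.
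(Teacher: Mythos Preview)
Your proof is correct and follows essentially the same route as the paper's: in both parts you produce the compact normal witness inside the given neighbourhood and then identify the relevant quotient as a Lie group via two applications of the third isomorphism theorem. The only cosmetic difference is in establishing that $HN$ is closed: you invoke Lemma~\ref{lemma:productofclosedandcompact} directly, while the paper observes that $HN = \rho^{-1}(\rho(H))$ is the preimage of a compact (hence closed) set.
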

\begin{proof}
We first show that every open subgroup of $G$ is almost-Lie. Let $S$ be an open subgroup of $G$, and $U$ be an open neighborhood of identity in $S$. We need to find a compact subgroup $K$ of $S$ such that  $K \subseteq U$ and $S/K$ is a Lie group. Since $U$ is also a neighborhood of identity in $G$,  $U$ contains a compact normal subgroup $K$ of $G$ such that $G/K$ is a Lie group.  Note that $K\vartriangleleft S$. As $S$ is open, $S/K$ is open in $G/K$ and hence a Lie group as desired.

Next, suppose $H$ is a closed normal subgroup of $G$, and $\pi: G \to G/H$ is the quotient map. If $U$ is an open neighborhood of the identity in $G/H$, then $\pi^{-1}(U)$ is an open neighborhood of identity in $G$. Hence, we can get a normal compact subgroup $K$ of $G$ such that $K \subseteq \pi^{-1}(U)$ and that $G/K$ is a Lie group. Then $\pi(K)$ is a compact subgroup of $U$. With $S =\pi^{-1}( \pi(K))$, we have $\pi(K) = S/H$. Since $K$ is normal in $G$ we have $\pi(K)$ is normal in $G/H$ and thus $S$ is normal in $G$. Whence by the third isomorphism theorem (Fact~\ref{fact: homomorphism facts}.3), we conclude that $(G/H)/\pi(K) \cong G/S$. By the third isomorphism theorem again, we have $G/S\cong (G/K)/(S/K)$, thus $G/S$ is a Lie group.
\end{proof}

We use the following strong version of the Gleason--Yamabe Theorem. 

\begin{fact}\label{fact: Gleason}
We have the following:
\begin{enumerate}
    \item \emph{(Gleason--Yamabe Theorem)} Suppose  $G$ is a locally compact group. Then there is an open subgroup of $G$ which is an almost-Lie group.
    \item An almost-Lie group $G$ is a Lie group if and only if there is an open neighborhood $U$ of the identity in $G$ that contains no nontrivial compact subgroup of $G$. 
\end{enumerate}
\end{fact}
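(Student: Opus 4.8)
The plan is to dispatch the two parts separately, as they are of very different depth. Part (1), once one unwinds the definition of almost-Lie group given just above the statement, is verbatim the strong form of the solution to Hilbert's fifth problem: every locally compact group $G$ has an open subgroup $G'$ such that every identity neighborhood of $G'$ contains a compact normal subgroup $H \vartriangleleft G'$ with $G'/H$ a Lie group. So for (1) I would simply invoke \cite{Gleason,Yamabe} and observe that this is precisely the assertion that the open subgroup $G'$ is an almost-Lie group; there is nothing further to do.

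For part (2), the backward implication is immediate from the definition. Suppose $G$ is an almost-Lie group and $U$ is an identity neighborhood containing no nontrivial compact subgroup of $G$. By definition, $U$ contains a compact normal subgroup $H \vartriangleleft G$ with $G/H$ a Lie group; since $H \subseteq U$, the hypothesis forces $H = \{\id\}$, and therefore $G = G/H$ is a Lie group.

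For the forward implication of (2) the content is the classical ``no small subgroups'' property of Lie groups: a Lie group $G$ admits an identity neighborhood $U$ containing no nontrivial subgroup at all, a fortiori no nontrivial compact subgroup. I would establish this through the exponential chart. Fix a norm on the Lie algebra $\mathfrak{g}$ and $r > 0$ small enough that $\exp$ restricts to a diffeomorphism of the ball $B_{2r} = \{X \in \mathfrak{g} : \|X\| < 2r\}$ onto an open identity neighborhood, and set $U = \exp(B_r)$. Suppose $H \subseteq U$ is a subgroup and $h \in H$ with $h \neq \id$; write $h = \exp(X)$ with $0 < \|X\| < r$ and let $k \geq 1$ be least with $k\|X\| \geq r$. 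Then $k \geq 2$ and $r \leq k\|X\| < 2r$, so $kX \in B_{2r} \setminus B_r$; since $h^k = \exp(kX)$ (the points $X, 2X, \dots, kX$ lie on the one-parameter subgroup through $X$) and $\exp$ is injective on $B_{2r}$, we get $h^k \notin \exp(B_r) = U$, contradicting $h^k \in H \subseteq U$. Hence $H = \{\id\}$, which gives the forward direction.

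The only genuinely hard ingredient is part (1), which is not proved but cited: a self-contained argument would require the full Gleason--Montgomery--Zippin structure theory (Peter--Weyl for compact groups, Gleason-metric / one-parameter-subgroup techniques, and the approximation of locally compact groups by Lie groups). Part (2) carries no real difficulty once the definition of almost-Lie group is available together with the standard local description of a Lie group via its exponential map; the essential and elementary observation is just that a nonzero tangent vector, scaled by successive integers, must leave a fixed ball in $\mathfrak{g}$ while still inside the injectivity radius of $\exp$.
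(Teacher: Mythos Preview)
Your proposal is correct and matches the paper's treatment essentially verbatim: the paper cites \cite{Gleason,Yamabe} for (1), and for (2) notes only that the forward direction is the no-small-subgroups property of Lie groups and the backward direction is immediate. You have simply supplied the standard exponential-chart argument for the forward direction that the paper omits; the backward direction you give is exactly the one-line observation the paper intends.
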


 Fact~\ref{fact: Gleason}.2 is not officially part of the Gleason--Yamabe Theorem. However, the forward direction is an easy fact about the no small subgroup property of Lie groups, and the backward direction is a direct consequence of Fact~\ref{fact: Gleason}.1.

\section{Some results about Lie groups}

In this section, we gather some facts and lemmas about Lie groups and Lie algebras. Throughout the paper, all  Lie groups are finite dimensional  second countable  \emph{real} Lie groups.

\begin{fact}
Closed subgroups and quotient groups of Lie groups are Lie groups.
\end{fact}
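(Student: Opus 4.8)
The plan is to establish the two halves separately by classical arguments; both are standard and in the body of the paper this is used only as a black box, so ultimately one would simply cite a reference such as \cite{knapp2013lie}.

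\emph{Closed subgroups.} Let $H$ be a closed subgroup of a Lie group $G$ with Lie algebra $\mathfrak{g}$ and exponential map $\exp\colon\mathfrak{g}\to G$. Set
\[
\mathfrak{h}=\{X\in\mathfrak{g}: \exp(tX)\in H\text{ for all }t\in\RR\}.
\]
First I would check that $\mathfrak{h}$ is a Lie subalgebra: closure under scalars is immediate, closure under addition follows from the Lie product formula $\exp(X+Y)=\lim_k(\exp(X/k)\exp(Y/k))^k$ together with the closedness of $H$, and closure under the bracket follows likewise from the commutator formula. The key point is then the limiting lemma: if $X_k\to 0$ in $\mathfrak{g}$ with $X_k\neq 0$, $\exp(X_k)\in H$, and $X_k/|X_k|\to X$ for some fixed norm $|\cdot|$, then $X\in\mathfrak{h}$ — proved by approximating $tX$ by integer multiples $n_kX_k$ and invoking closedness of $H$ once more. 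From this one deduces that for a sufficiently small neighborhood $U$ of $0$ in $\mathfrak{g}$ one has $\exp(U)\cap H=\exp(U\cap\mathfrak{h})$ (otherwise extract a sequence as above whose limiting direction escapes $\mathfrak{h}$, a contradiction). This chart, translated around $H$ by left multiplication, exhibits $H$ as an embedded submanifold of $G$, on which the group operations are automatically smooth; hence $H$ is a Lie group.

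\emph{Quotient groups.} Let $H$ be a closed normal subgroup of $G$; by the previous part it is an embedded Lie subgroup. The right $H$-action on $G$ is free, and it is \emph{proper}: if $g_k\to g$ and $g_kh_k\to g'$ then $h_k=g_k^{-1}(g_kh_k)\to g^{-1}g'\in H$, so the $h_k$ eventually stay in a compact set. By the quotient manifold theorem, $G/H$ then carries a unique smooth structure making the projection $\pi\colon G\to G/H$ a smooth submersion. Multiplication and inversion on $G/H$ are smooth because their composites with the surjective submersions $\pi\times\pi$ and $\pi$ agree with $\pi$ precomposed with multiplication and inversion on $G$, and a map out of the target of a surjective submersion is smooth as soon as its composite with that submersion is. Therefore $G/H$ is a Lie group.

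The genuine obstacle is the closed subgroup theorem, and within it the limiting lemma controlling how $H$ can accumulate at the identity in directions transverse to $\mathfrak{h}$; the Lie-algebra identities, the translation-to-an-atlas argument, and the properness-plus-quotient-manifold step are all routine.
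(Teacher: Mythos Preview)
Your proof is correct and follows the classical route: Cartan's closed subgroup theorem for the first assertion and the properness-plus-quotient-manifold argument for the second. The paper, however, gives no proof at all --- this is stated as a bare \texttt{fact} in the appendix, treated as standard background (the surrounding facts are cited from Hilgert--Neeb and Knapp, and this one would be e.g.\ \cite[Theorem~9.3.7 and Theorem~11.1.10]{hilgert2011structure}). You anticipated exactly this when you wrote that ``in the body of the paper this is used only as a black box,'' and that is precisely what happens; there is nothing further to compare.
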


The {\em identity component} of a topological group $G$ is the connected component containing the identity element. The identity component of a topological group $G$ might not be open even if $G$ is locally compact. For instance, there are nondiscrete totally disconnected locally compact groups. For these groups, the identity component only consists of the identity element, and it is not open because the topology is not discrete. Nevertheless, the following holds for Lie groups~\cite[Proposition 9.1.15]{hilgert2011structure}.
 
\begin{fact}\label{factLieid}
If $G$ is a Lie group, then the identity component of $G$ is open and is contained in every open subgroup of $G$. 
\end{fact}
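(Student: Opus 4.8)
The plan is to combine two standard observations: a Lie group is locally Euclidean, hence locally connected, and an open subgroup of a topological group is automatically closed. Neither input lies deep, so the proof will be short; the work is just in assembling the pieces in the right order.

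First I would prove that the identity component $G_0$ is open. Since $G$ is a real Lie group it is in particular a topological manifold, so it is locally connected: each point has a neighborhood basis consisting of connected open sets. In a locally connected space every connected component is open, and $G_0$ is by definition the connected component of the identity, so $G_0$ is open in $G$. This is precisely the step where the manifold structure is used: for a general topological group, connectedness of $G$ does not force $G_0$ to be open, as the surrounding text recalls for nondiscrete totally disconnected groups.

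Next I would show $G_0 \subseteq H$ for an arbitrary open subgroup $H \leq G$. Observe that $H$ is also closed, because its complement $G \setminus H$ is the union of the nonidentity left cosets $gH$, each of which is open, being the image of the open set $H$ under the homeomorphism $x \mapsto gx$. Hence $H \cap G_0$ is a subset of $G_0$ that is simultaneously open and closed in the subspace topology, and it is nonempty since it contains the identity. Because $G_0$ is connected, this forces $H \cap G_0 = G_0$, i.e. $G_0 \subseteq H$, which is the desired conclusion.

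I do not expect a genuine obstacle here. The only points requiring a moment's care are the distinction between connectedness and local connectedness flagged above, and the elementary fact that open subgroups of a topological group are closed. As an alternative to the clopen argument in the last paragraph, one could partition $G_0$ into the left cosets of the open subgroup $H \cap G_0$ of $G_0$, note that these are pairwise disjoint nonempty open sets, and invoke connectedness of $G_0$ to conclude there is only one such coset, namely $H \cap G_0$ itself.
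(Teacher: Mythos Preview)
Your argument is correct and entirely standard: local connectedness of manifolds gives openness of $G_0$, and the clopen argument (or equivalently the coset-partition variant you mention) gives $G_0 \subseteq H$ for any open subgroup $H$. The paper does not supply a proof of this fact at all; it simply records it with a citation to \cite[Proposition 9.1.15]{hilgert2011structure}, so there is nothing to compare against beyond noting that your write-up is a clean self-contained justification of what the paper takes as background.
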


In Fact~\ref{fact: homomorphism facts}, we introduce the three isomorphism theorems of topological groups. When $G$ is a Lie group, we can weaken the assumption required for the first two isomorphism theorems; see \cite[Proposition 3.11.2, Proposition 3.31]{BourbakiLie}.

\begin{fact}\label{fact: iso theorems Lie}
Suppose $G$ is a Lie group, and $H$ is a closed normal subgroup of $G$. Then we have the following:
\begin{enumerate}
    \item \emph{(First isomorphism theorem for Lie groups)} Suppose $Q$ is a  Lie group, $\phi: G \to Q$ is a surjective and continuous group homomorphism, and $G$ has countably many connected components. Then $Q$ is isomorphic as a topological group to $G/H$.  
    \item \emph{(Second isomorphism theorem for Lie groups)} Suppose $G$ is a finite dimensional Lie group, $S$ is a closed subgroup of $G$, and $SH$ is a closed subgroup of $G$. Then $S/(S \cap H)$ is canonically isomorphic to the image of $SH/H$ as Lie groups. This is also equivalent to saying that we have the exact sequence of Lie groups
    $$   1 \to H \to SH \to S/(S \cap H)\to 1. $$
\end{enumerate} 
\end{fact}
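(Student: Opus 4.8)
The statement is exactly what is cited from \cite{BourbakiLie}; here is the argument one would give for it. Both parts are refinements of the topological-group isomorphism theorems recorded in Fact~\ref{fact: homomorphism facts}, and in each case the only thing that must be supplied beyond that fact is the openness of a suitable continuous surjective homomorphism. The plan is therefore to first establish the open mapping theorem in the form needed: if $G$ is a $\sigma$-compact locally compact group and $\phi: G\to Q$ is a continuous surjective homomorphism onto a locally compact group, then $\phi$ is open. A Lie group with countably many connected components (in particular any second countable Lie group, which is our standing convention) is $\sigma$-compact, so this applies in all cases at hand.

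For part (1): given an open neighborhood $U$ of the identity in $G$, I would choose a compact symmetric neighborhood $V$ of the identity with $V^{2}\subseteq U$, use $\sigma$-compactness to write $G=\bigcup_{n}g_{n}V$, and push forward to get $Q=\bigcup_{n}\phi(g_{n})\phi(V)$. Each $\phi(V)$ is compact, hence closed, so since $Q$ is a Baire space (being locally compact Hausdorff) some $\phi(g_{n})\phi(V)$, and therefore $\phi(V)$ itself, has nonempty interior; picking $\phi(v_{0})$ in that interior with $v_{0}\in V$, the set $\phi(v_{0})^{-1}\phi(V)=\phi(v_{0}^{-1}V)\subseteq\phi(V^{2})\subseteq\phi(U)$ is a neighborhood of the identity inside $\phi(U)$, and translating shows $\phi$ is open. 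Then Fact~\ref{fact: homomorphism facts}.1 gives that $Q$ is canonically isomorphic to $G/H$ as topological groups; since both are Lie groups, the continuous bijective homomorphism between them is automatically a diffeomorphism (continuous homomorphisms of Lie groups are smooth), hence a Lie group isomorphism.

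For part (2): the preliminary point is that $SH/H$ is a \emph{closed} subgroup of the Lie group $G/H$: with $\pi: G\to G/H$ the quotient map, $\pi^{-1}(SH/H)=SH$ is closed by hypothesis, so $SH/H$ is closed and is itself a Lie group. Now consider the continuous homomorphism $\psi: S\to SH/H$, $s\mapsto sH$; it is surjective with kernel $S\cap H$, a closed normal subgroup of $S$. Since $S$ is a closed subgroup of a second countable Lie group, it has countably many connected components, so part (1) applies to $\psi$ and yields $S/(S\cap H)\cong SH/H$ as topological groups, upgraded to a Lie group isomorphism as before; the exact-sequence formulation $1\to H\to SH\to S/(S\cap H)\to1$ is then immediate.

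The only genuinely nonformal ingredient is the Baire category step in the open mapping theorem; the rest is bookkeeping, together with two standard black boxes (locally compact Hausdorff spaces are Baire, and continuous homomorphisms between Lie groups are smooth). The step I would be most careful to state is the observation in part (2) that $SH/H$ is closed, hence a Lie group — this is precisely where the hypothesis that $SH$ is a closed subgroup enters, and the conclusion fails without it.
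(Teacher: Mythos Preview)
Your argument is correct. The paper does not prove this fact at all but simply cites \cite[Proposition 3.11.2, Proposition 3.31]{BourbakiLie}, so there is no ``paper's own proof'' to compare against; what you have written is the standard Baire-category open mapping argument that underlies the Bourbaki reference, and each step (the $\sigma$-compactness of a Lie group with countably many components, the Baire step producing an interior point of $\phi(V)$, the deduction that $\phi(U)$ is a neighborhood of the identity, and in part (2) the observation that $SH/H$ is closed so that part (1) applies to $\psi:S\to SH/H$) is sound.
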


 We also need the following fact about maximal compact subgroups consisting of Theorem 14.1.3 (iii) and Theorem 14.3.13 (i) (a) of \cite{hilgert2011structure}:

\begin{fact} \label{fact: maximal compact} Suppose $G$ is a Lie group with finitely many connected components. Then we have the following:
\begin{enumerate}
    \item All maximal compact subgroups of $G$ are conjugate.
    \item If $0 \to H \to G \overset{\pi\ }{\to} G/H \to 0$ is an exact sequence of connected Lie groups, and $K$ is a maximal compact subgroup of $G$, then $K \cap H$ is a maximal compact subgroup of $H$, and $\pi(K)$ is a maximal compact subgroup of $G/H$.
\end{enumerate}
\end{fact}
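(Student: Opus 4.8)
The statement is the Cartan--Iwasawa--Malcev theorem together with one of its standard corollaries, and the plan is to prove part (1) by the usual Hadamard-manifold argument and then to deduce part (2) from it essentially formally. For part (1): the existence of a maximal compact subgroup $K$ is itself part of the theorem (it uses that compact subgroups of $G$ have bounded dimension, with identity components stabilising along chains). Given $K$, the crucial point is that $G/K$ carries a complete $G$-invariant Riemannian metric of non-positive sectional curvature; then $G/K$ is a Hadamard manifold, $G$ acts on it properly and isometrically, and the stabiliser of $gK$ is $gKg^{-1}$. Granting this, any compact subgroup $C\le G$ acts isometrically on $G/K$ and hence fixes a point --- the circumcentre of a $C$-orbit, by Cartan's fixed-point theorem --- which gives $g^{-1}Cg\le K$ for some $g\in G$; taking $C$ to be a second maximal compact subgroup yields conjugacy, and taking $C$ arbitrary shows that every compact subgroup lies in a conjugate of $K$. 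I record for later use that this construction applies to any Lie group with finitely many components: for such a group with maximal compact $\hat K$, the space $G/\hat K$ is a Hadamard manifold, hence contractible (diffeomorphic to a Euclidean space via the exponential map).

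To build the invariant metric one uses the structure theory. Via the Levi decomposition $1\to Q\to G_0\to S\to1$ with $Q$ the solvable radical, one assembles a left-invariant metric from a Cartan decomposition $\mathfrak s=\mathfrak k_{\mathfrak s}\oplus\mathfrak p$ of the semisimple part --- coming from a Cartan involution $\theta$, which exists by Fact~\ref{fact: ExistenceCartaninvolution} and makes $-\kappa_{\mathfrak s}(\,\cdot\,,\theta\,\cdot\,)$ positive definite --- together with the torus-plus-Euclidean structure of $Q$, and then descends it to $G/K$, checking completeness and that the sectional curvature is $\le0$; the case of finitely many components is handled by passing to $G_0$ and arguing relatively. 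I expect this metric construction and curvature computation to be the genuine obstacle; everything else is formal manipulation with proper isometric actions. This is exactly what is carried out in~\cite{hilgert2011structure}, which is why we cite the statement rather than reprove it.

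For part (2), first note: \emph{if $P$ is a closed subgroup of $G$ with finitely many components and $K\le P$, then $K$ is a maximal compact subgroup of $P$} (a compact subgroup of $P$ containing $K$ is a compact subgroup of $G$ containing $K$, hence equals $K$), \emph{and therefore $P/K$ is contractible} by the remark at the end of the first paragraph. Now $\pi(K)$ is compact; let $L$ be any compact subgroup of $G/H$ with $\pi(K)\le L$ and put $G'=\pi^{-1}(L)$. Since $H$ is connected and $G'/H\cong L$ is compact, $G'$ has finitely many components; moreover $K\le G'$, and $HK$ is closed by Lemma~\ref{lemma:productofclosedandcompact}, with $HK/H\cong K/(K\cap H)$ compact so that $HK$ too has finitely many components. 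Applying the observation to $G'$ and to $HK$, both $G'/K$ and $HK/K$ are contractible, so the long exact homotopy sequence of the fibre bundle $HK/K\to G'/K\to G'/HK$ forces $G'/HK$ to be weakly contractible; being a manifold it is contractible, and being a Hausdorff quotient of the compact group $L$ (it equals $L/\pi(K)$) it is compact, hence a single point. Therefore $G'=HK$, so $L=\pi(G')=\pi(HK)=\pi(K)$, and $\pi(K)$ is maximal compact. Finally, $HK\le G$ has finitely many components, so the observation gives that $HK/K\cong H/(K\cap H)$ is contractible; were $K\cap H$ not maximal compact in $H$, choose a maximal compact $K'$ of $H$ properly containing $K\cap H$: then $H/K'$ is contractible (first paragraph) while $K'/(K\cap H)$ is a nontrivial compact homogeneous space, hence not contractible, and the fibre bundle $K'/(K\cap H)\to H/(K\cap H)\to H/K'$ with contractible base would make $H/(K\cap H)$ non-contractible --- a contradiction. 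Hence $K\cap H$ is maximal compact in $H$, which completes the plan.
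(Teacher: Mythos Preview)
The paper does not prove this statement at all: it is recorded as a \emph{Fact}, attributed to Theorem~14.1.3(iii) and Theorem~14.3.13(i)(a) of \cite{hilgert2011structure}, and used as a black box. You have noticed this yourself (``which is why we cite the statement rather than reprove it''), so there is nothing to compare against on the paper's side.

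That said, your sketch is sound and follows the standard route. For part~(1) the Hadamard-manifold/Cartan fixed-point argument is indeed the textbook proof, and the takeaway you record for later use --- that $P/\hat K$ is diffeomorphic to a Euclidean space whenever $P$ has finitely many components and $\hat K$ is maximal compact --- is exactly what is needed. For part~(2) your contractibility/fibre-bundle argument is correct: the key points (that $HK$ is a closed subgroup since $H$ is normal and $K$ is compact, that $G'/K \to G'/HK$ is a fibre bundle with fibre $HK/K$, that a weakly contractible manifold is contractible by Whitehead, and that a compact boundaryless manifold which is contractible must be a point) all hold. One small wording remark: in the last step you say the bundle ``would make $H/(K\cap H)$ non-contractible''; more precisely, the contractible base forces the total space to be homotopy equivalent to the fibre $K'/(K\cap H)$, which is a nontrivial compact homogeneous space and hence not contractible, contradicting the already-established contractibility of $H/(K\cap H)\cong HK/K$. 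The logic is fine; only the phrasing is slightly compressed.
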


We also use the following classification results for Lie groups. Fact~\ref{fact: classification} can be found on  \cite[p. 212]{Encylopedia}.  Fact~\ref{fact: classification}(2) is from Corollary~3 of \cite[Theorem~2.3.1]{Encylopedia} and \cite[Proposition~4.1.2]{Encylopedia}.

 \begin{fact}  \label{fact: classification} 
 Let $G$ be a connected Lie group. 
 \begin{enumerate}
     \item If $G$ has dimension $1$, then it is  isomorphic to either $\RR$ or $\TT$ as a topological group.
     \item Suppose $G$ is a solvable group with dimension $d$, and  the maximal compact subgroups of $G$ have dimension $m$. Then $G$ is diffeomorphic to $\TT^m \times \RR^{d-m}$. Moreover, if $G$ is compact, then $G\cong \TT^d$. 
 \end{enumerate}
\end{fact}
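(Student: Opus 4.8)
The plan is to prove the two parts separately, using (1) as the one-dimensional case needed in (2).

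For (1): let $G$ be connected with $\dim G = 1$, so $\mathfrak{g}$ is one-dimensional and hence abelian. The simply connected Lie group with abelian Lie algebra $\mathfrak{g}$ is $(\RR,+)$ — for abelian $\mathfrak{g}$ the exponential map $\exp\colon \mathfrak{g}\to\widetilde{G}$ is a homomorphism, and a bijective one here, so the universal cover $\widetilde{G}$ of $G$ is isomorphic to $\RR$. Then $G\cong\widetilde{G}/\Gamma$ for a discrete subgroup $\Gamma\leq\RR$ (automatically central). The only discrete subgroups of $\RR$ are $\{0\}$ and the lattices $a\ZZ$, $a>0$, so $G\cong\RR$ or $G\cong\RR/\ZZ=\TT$. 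That is all of (1).

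For (2): the decisive preliminary point is that a compact connected solvable Lie group is a torus. Indeed, the Lie algebra $\mathfrak{k}$ of a compact Lie group is reductive, $\mathfrak{k}=\mathfrak{z}(\mathfrak{k})\oplus[\mathfrak{k},\mathfrak{k}]$ with $[\mathfrak{k},\mathfrak{k}]$ semisimple; solvability forces $[\mathfrak{k},\mathfrak{k}]=0$, so $\mathfrak{k}$ is abelian, whence $K$ is a torus (its universal cover is $\RR^{\dim K}$, and compactness makes the covering kernel a full-rank lattice). In particular every compact connected subgroup of our solvable $G$ is a torus; a maximal compact subgroup $K$ of $G$ is connected (since $G$ is connected), hence $K\cong\TT^m$, and by Fact~\ref{fact: maximal compact}.1 all maximal compact subgroups are conjugate, so $m$ is well defined and equals the $m$ of the statement. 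The remaining content is the diffeomorphism, and here I would invoke the Cartan--Iwasawa--Malcev theorem: a connected Lie group is diffeomorphic to the product of any maximal compact subgroup and a Euclidean space. Applied here it gives that $G$ is diffeomorphic to $\TT^m\times\RR^{d-m}$. Finally, if $G$ is compact then $G$ is a maximal compact subgroup of itself, so $m=d$, and the preliminary point gives $G\cong\TT^d$ as a topological group.

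If one prefers to avoid the citation, (2) can instead be proved by induction on $d=\dim G$, with base cases $d=0$ trivial and $d=1$ supplied by (1). Since $G$ is solvable it contains a codimension-one closed connected normal subgroup $N$; this gives an exact sequence $1\to N\to G\xrightarrow{\pi} G/N\to1$ with $\dim(G/N)=1$, so $G/N\cong\RR$ or $\TT$ by (1), and $N$ is diffeomorphic to $\TT^{m'}\times\RR^{d-1-m'}$ by induction. When $G/N\cong\RR$ the extension splits through a one-parameter subgroup, so $G$ is diffeomorphic to $N\times\RR$, and Fact~\ref{fact: maximal compact}.2 shows the maximal compact subgroup of $G$ lies in $N$, giving $m=m'$. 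The genuinely delicate case is $G/N\cong\TT$: one pulls $\pi$ back along the universal cover $\RR\to\TT$ to a Lie group $G^{\ast}$ sitting in a split extension $1\to N\to G^{\ast}\to\RR\to1$, recovers $G=G^{\ast}/\ZZ$ with the $\ZZ$ central, and transports the product decomposition of $G^{\ast}$ down to $G$ while keeping track of the position of the maximal torus. This last bookkeeping is the main obstacle of the argument, and it is precisely what the Cartan--Iwasawa--Malcev theorem packages away in the first approach.
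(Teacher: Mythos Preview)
The paper does not prove this statement: it is recorded in Appendix~D as a background ``Fact'' with no proof or reference attached. So there is no paper proof to compare against; I can only assess your argument on its own merits.

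Your proof of (1) is correct and is the standard one.

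Your first approach to (2) is also correct. Two small comments: the claim that the maximal compact subgroup $K$ is connected is not an automatic consequence of $G$ being connected---it is itself part of the Cartan--Iwasawa--Malcev package (the diffeomorphism $G\cong K\times\RR^{d-m}$ forces $K$ to be a deformation retract of $G$, hence connected). So you are really invoking the full theorem at that step already, and the rest of the argument (compact connected solvable implies torus) is just identifying $K$. This is fine, but the parenthetical ``(since $G$ is connected)'' undersells what is being used. The argument that a compact connected solvable Lie group has abelian Lie algebra is correct; alternatively one can avoid reductivity by noting that the closure of the commutator subgroup is both compact connected semisimple and solvable, hence trivial.

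Your second, inductive approach is a reasonable sketch and honestly flags the hard step. The existence of a codimension-one closed connected normal subgroup in a connected solvable Lie group follows, as you implicitly use, from the existence of a codimension-one ideal in any nonzero solvable Lie algebra; this is fine. The case $G/N\cong\TT$ is indeed where the work is, and you are right that this is exactly what Cartan--Iwasawa--Malcev packages up.
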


We say that a topological group $G$ (not necessarily simply connected) is a covering group of a topological group $G'$ with covering homomorphism $\rho$ if $\rho: G\to G'$ is a topological group homomorphism which is also a covering map. 
The following is a consequence of \cite[Theorem 9.5.4]{hilgert2011structure}:

\begin{fact} \label{fact: covering group} 
Suppose that $G$ and $G'$ are connected Lie groups and that $G$ is a covering group of $G'$ with covering homomorphism $\rho$. Then $\ker \rho$ is a closed normal subgroup of the center $Z(G)$ of $G$.
\end{fact}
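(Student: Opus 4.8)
The plan is to verify the three assertions contained in the statement — that $\ker\rho$ is closed, that it is normal, and that it lies in $Z(G)$ — in turn, since only the last requires any real argument.

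Closedness and normality are formal. As $G'$ is Hausdorff, $\{\id_{G'}\}$ is closed in $G'$, and $\rho$ is continuous, so $\ker\rho = \rho^{-1}(\{\id_{G'}\})$ is closed in $G$; and the kernel of any group homomorphism is normal. Next I would record that $\ker\rho$ is discrete: since $\rho$ is a covering map it is a local homeomorphism, so there is an open neighbourhood $U$ of $\id_G$ on which $\rho$ is injective, whence $U\cap\ker\rho=\{\id_G\}$; translating $U$ shows $\ker\rho$ has no accumulation points, i.e.\ it is a (closed) discrete subgroup.

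The one genuine step is the general principle that a discrete normal subgroup $N$ of a connected topological group $G$ is central. For a fixed $n\in N$, consider the conjugation map $c_n\colon G\to G$, $g\mapsto gng^{-1}$. By normality of $N$ its image is contained in $N$, so $c_n$ may be viewed as a continuous map from the connected space $G$ into the discrete space $N$; hence $c_n$ is constant, and evaluating at $g=\id_G$ gives $c_n\equiv n$, i.e.\ $gng^{-1}=n$ for all $g\in G$, so $n\in Z(G)$. Applying this with $N=\ker\rho$ (discrete by the previous paragraph, and $G$ connected by hypothesis) yields $\ker\rho\leq Z(G)$, completing the proof.

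I do not expect any real obstacle here; the proof is an assembly of standard facts. The only point worth stating with care is the division of labour: the covering hypothesis on $\rho$ is used \emph{only} to extract discreteness of $\ker\rho$, after which connectedness of $G$ alone forces centrality.
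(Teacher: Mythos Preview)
Your proof is correct and is the standard textbook argument. The paper itself does not prove this statement: it is recorded as a Fact and attributed to \cite[Theorem 9.5.4]{hilgert2011structure}, so there is no ``paper's own proof'' to compare against beyond that citation. Your write-up supplies exactly the elementary argument one would expect behind such a citation.
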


We end this section with a lemma about conjugate actions on compact sets in Lie groups.

\begin{lemma}\label{openinclose}
For a Lie group $G$ and a closed normal subgroup $H$, if a precompact $A \subseteq H$ such that the closure of $A$ is contained in $B$ and $B$ is a relative open subset in $H$, then the following holds: when $g \in G$ is sufficiently close to $\id_G$, we have $gAg^{-1} \subseteq B$.
\end{lemma}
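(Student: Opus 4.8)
The plan is to derive the statement from the continuity of the conjugation action together with the tube lemma.

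First I would observe that, since $H$ is a normal subgroup of $G$, the conjugation map $c \colon G \times H \to H$, $c(g,h) = ghg^{-1}$, is well defined; it is continuous because multiplication and inversion in $G$ are continuous and $H$ carries the subspace topology. Because $A$ is precompact and $H$ is closed in $G$, the closure $\overline{A}$ (taken in $G$, equivalently in $H$, since $H$ is closed) is a \emph{compact} subset of $H$, and by hypothesis $\overline{A} \subseteq B$.

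Next, consider $W := c^{-1}(B)$, which is open in $G \times H$ since $c$ is continuous and $B$ is relatively open in $H$. For every $a \in \overline{A}$ we have $c(\id_G, a) = a \in \overline{A} \subseteq B$, so $\{\id_G\} \times \overline{A} \subseteq W$. Now apply the tube lemma: since $\overline{A}$ is compact, there is an open neighborhood $U$ of $\id_G$ in $G$ with $U \times \overline{A} \subseteq W = c^{-1}(B)$. Then for every $g \in U$ and every $a \in A \subseteq \overline{A}$ we get $gag^{-1} = c(g,a) \in B$, that is, $gAg^{-1} \subseteq B$, which is exactly the desired conclusion.

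There is no real obstacle here; the only points requiring care are (i) that normality of $H$ is precisely what makes $c$ take values in $H$, and (ii) that the invocation of the tube lemma needs $\overline{A}$ to be compact — which is the sole reason the precompactness hypothesis on $A$ is assumed.
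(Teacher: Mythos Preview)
Your argument is correct. The conjugation map $c$ lands in $H$ precisely by normality, $\overline{A}$ is compact because $A$ is precompact and $H$ is closed, and the tube lemma then yields the desired neighborhood $U$ of $\id_G$ in one stroke.

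The paper proceeds differently: it argues by contradiction via sequences. If the conclusion failed, one would find $g_n \to \id_G$ and $h_n \in A$ with $g_n h_n g_n^{-1} \notin B$; passing to a subsequence (using precompactness of $A$) so that $h_n \to h \in \overline{A}$, continuity of multiplication gives $g_n h_n g_n^{-1} \to h \in \overline{A} \subseteq B$, contradicting that each $g_n h_n g_n^{-1}$ lies in the relatively closed set $H \setminus B$. That argument implicitly uses that Lie groups are first countable (so sequences detect closures and neighborhoods). Your tube-lemma proof avoids this and would go through verbatim for any topological group $G$ with closed normal $H$; it is slightly more general and arguably cleaner, at the cost of invoking a named lemma rather than a bare-hands sequence chase.
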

\begin{proof}
We prove the lemma by contradiction. Assuming there exist sequences $g_n \rightarrow \id$ and $\{h_n\} \subseteq A$ such that $g_n h_n g_n^{-1} \notin B$. Since $A$ is precompact we may assume $h_n \rightarrow h \in \overline{A}$. But then $g_n h_n g_n^{-1} \rightarrow h \in \overline{A}$. This contradicts the fact that each $g_n h_n g_n^{-1}$ is in the closed set $H \setminus B$ that does not meet $\overline{A}$. Hence the assumption is false and the conclusion holds.
\end{proof}

\section{Solvable and Semisimple Lie groups}

From ~\cite[Section 9.1]{hilgert2011structure}, there is a functor $\bm{\mathrm{L}}$ from the category of Lie groups to the category of Lie algebras that assigns to each Lie group $G$ its Lie algebra $\bm{\mathrm{L}}(G)$ and to a Lie group morphism $\phi: G \to H $ its tangent morphism $\bm{\mathrm{L}}(\phi): \bm{\mathrm{L}}(G) \to \bm{\mathrm{L}}(H)$ of Lie algebras. We will adopt a more colloquial language in this paper, invoking this functor implicitly.

\begin{fact} \label{fact: CorrespondenceLiegroupandalgebra}
Suppose $G$ and $H$ are Lie groups, and $\mathfrak{g}$ and $\mathfrak{h}$ are their Lie algebras. If $H$ is a subgroup of $G$, then $\mathfrak{h}$ is a subalgebra of $\mathfrak{g}$. If $H$ is a normal subgroup of $G$, then $\mathfrak{h}$ is an ideal in $\mathfrak{g}$, and $\mathfrak{g}/\mathfrak{h}$ is canonically isomorphic to the Lie algebra of  $G/H$.
\end{fact}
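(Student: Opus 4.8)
The plan is to read everything off from the functor $\bm{\mathrm{L}}$, using only three standard properties of it: it is covariant, it carries immersions to injective linear maps and submersions to surjective linear maps, and for any morphism $\phi$ one has $\ker\bm{\mathrm{L}}(\phi)=T_e(\ker\phi)$. None of the three assertions requires connectedness or countability hypotheses, so I would treat them one at a time in full generality.

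For the subalgebra claim, I would apply $\bm{\mathrm{L}}$ to the inclusion $\iota\colon H\hookrightarrow G$. Since $H$ is a (closed) subgroup, $\iota$ is a morphism of Lie groups and an immersion, so $\bm{\mathrm{L}}(\iota)\colon\mathfrak{h}\to\mathfrak{g}$ is an injective homomorphism of Lie algebras; identifying $\mathfrak{h}$ with its image then exhibits $\mathfrak{h}$ as a Lie subalgebra of $\mathfrak{g}$.

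Assuming next that $H\vartriangleleft G$, I would show $[\mathfrak{g},\mathfrak{h}]\subseteq\mathfrak{h}$ directly. Fix $X\in\mathfrak{g}$ and $Y\in\mathfrak{h}$. Because $H$ is normal, $\exp(tX)\exp(sY)\exp(-tX)\in H$ for all $s,t\in\RR$; differentiating in $s$ at $s=0$ gives $\mathrm{Ad}(\exp(tX))Y\in\mathfrak{h}$ for every $t$, and then, since $\mathfrak{h}$ is a (topologically) closed linear subspace of $\mathfrak{g}$, differentiating this curve in $t$ at $t=0$ yields $\mathrm{ad}(X)Y=[X,Y]\in\mathfrak{h}$. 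Hence $\mathfrak{h}$ is an ideal of $\mathfrak{g}$. Equivalently one may phrase this via $\mathrm{Ad}$: each conjugation $c_g$ is an automorphism of $H$, so $\mathrm{Ad}(g)\mathfrak{h}=\mathfrak{h}$, whence $\mathrm{Ad}$ maps $G$ into the stabilizer of $\mathfrak{h}$ in $\mathrm{GL}(\mathfrak{g})$, whose Lie algebra is $\{B:B\mathfrak{h}\subseteq\mathfrak{h}\}$, and differentiating gives $\mathrm{ad}(\mathfrak{g})\mathfrak{h}\subseteq\mathfrak{h}$.

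Finally, for the quotient, I would apply $\bm{\mathrm{L}}$ to the quotient map $\pi\colon G\to G/H$, which (as $H$ is closed) is a surjective morphism of Lie groups and a submersion. Then $\bm{\mathrm{L}}(\pi)\colon\mathfrak{g}\to\bm{\mathrm{L}}(G/H)$ is surjective with kernel $T_e(\pi^{-1}(e_{G/H}))=T_eH=\mathfrak{h}$, so the first isomorphism theorem for Lie algebras produces an isomorphism $\mathfrak{g}/\mathfrak{h}\xrightarrow{\ \sim\ }\bm{\mathrm{L}}(G/H)$; that it intertwines $\bm{\mathrm{L}}(\pi)$ with the algebra quotient map $\mathfrak{g}\to\mathfrak{g}/\mathfrak{h}$ is exactly the ``canonical'' in the statement. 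I do not expect any real obstacle; the only places wanting mild care are the normal case — one must genuinely use that $\mathfrak{h}$ is topologically closed so the limit defining $\mathrm{ad}(X)Y$ lands in $\mathfrak{h}$ — and the fact that $\pi$ is a submersion (not merely continuous and surjective) so that $\bm{\mathrm{L}}(\pi)$ is onto.
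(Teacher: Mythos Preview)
Your argument is correct and is the standard textbook approach. Note, however, that the paper does not prove this statement at all: it is labeled a \textbf{Fact} and is imported from \cite[Section 9.1]{hilgert2011structure}, so there is no ``paper's own proof'' to compare against. What you have written is precisely the kind of proof one finds in that reference --- apply the Lie functor $\bm{\mathrm{L}}$ to the inclusion (an immersion) for the subalgebra claim, differentiate the $\mathrm{Ad}$-invariance of $\mathfrak{h}$ coming from normality to get the ideal claim, and apply $\bm{\mathrm{L}}$ to the quotient map (a submersion) together with the first isomorphism theorem for Lie algebras for the quotient claim.
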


Suppose $\mathfrak{g}$ is the Lie algebra of $G$. The exponential function $\exp: \mathfrak{g}\to G$ is defined as in~\cite[Section 9.2]{hilgert2011structure}. We will use the functoriality of the exponential function~\cite[Proposition 9.2.10]{hilgert2011structure} 

\begin{fact} \label{fact:functoriality of exponential function}
Suppose $G$ and $H$ are Lie groups, $\phi: G \to H$ is a homomorphism of Lie groups,  $\mathfrak{g}$ and $\mathfrak{h}$ are the Lie algebras of $G$ and $H$, $\alpha:  \mathfrak{g} \to \mathfrak{h}$ is the tangent morphism of $\phi$, and $\exp_G: \mathfrak{g} \to G$ and $\exp_H: \mathfrak{h} \to H$ are the exponential maps. Then $\exp_H \circ \alpha = \exp_G \circ \phi$. In other words, the following diagram commutes:
\begin{center}
\begin{tikzcd}
G \arrow[r, "\phi"]  
& H  \\
\mathfrak{g} \arrow[r, "\alpha"] \arrow[u, "\exp_G"]
& \mathfrak{h} \arrow[u,"\exp_H"]
\end{tikzcd}
\end{center}
\end{fact}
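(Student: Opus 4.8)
The plan is to reduce everything to the defining property of the exponential map: for $X\in\mathfrak{g}$, the curve $\gamma_{G,X}\colon t\mapsto\exp_G(tX)$ is the unique continuous one-parameter subgroup $(\RR,+)\to G$ whose velocity at $t=0$ is $X$, and likewise in $H$ (this is how $\exp$ is defined in the paper, \cite[Section 9.2]{hilgert2011structure}). So it suffices to identify, for each fixed $X\in\mathfrak{g}$, the curve $\sigma(t):=\phi(\exp_G(tX))$ with the one-parameter subgroup of $H$ determined by $\alpha(X)$, and then specialize to $t=1$; the full commutativity of the diagram is just that identity rewritten.

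First I would check that $\sigma$ is a one-parameter subgroup of $H$: it is continuous as a composite of continuous maps, and since both $t\mapsto\exp_G(tX)$ and $\phi$ are homomorphisms, $\sigma(s+t)=\phi(\exp_G(sX)\exp_G(tX))=\phi(\exp_G(sX))\phi(\exp_G(tX))=\sigma(s)\sigma(t)$. Next I would compute its velocity at $0$: by the chain rule this is $\alpha$ applied to the velocity of $t\mapsto\exp_G(tX)$ at $0$, and the latter is $X$, so $\dot\sigma(0)=\alpha(X)$, using that $\alpha$ is the tangent morphism (the differential at the identity) of $\phi$. By the uniqueness clause of the defining property, now applied in $H$, we conclude $\sigma(t)=\exp_H(t\,\alpha(X))$ for all $t\in\RR$; taking $t=1$ gives $\phi(\exp_G X)=\exp_H(\alpha X)$, which is exactly the claim.

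For completeness I would also record the equivalent vector-field formulation: the left-invariant vector field $X^{L}$ on $G$ and the left-invariant vector field $(\alpha X)^{L}$ on $H$ are $\phi$-related, because $\phi$ intertwines left translations on $G$ and $H$ and its differential at the identity sends $X$ to $\alpha(X)$; hence $\phi$ carries the integral curve of $X^{L}$ through $e_G$, which is $t\mapsto\exp_G(tX)$, to the integral curve of $(\alpha X)^{L}$ through $e_H$, which is $t\mapsto\exp_H(t\,\alpha X)$. This phrasing makes clear that the only analytic input is the existence-and-uniqueness theorem for integral curves of a smooth vector field.

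There is no real obstacle here; this is a classical fact of Lie theory, and for the purposes of the paper it may simply be cited. The single point requiring a little care is that the characterization of $\exp$ being invoked is itself a uniqueness statement for an ordinary differential equation, so a genuinely self-contained treatment must either take that characterization as the definition of $\exp$ (as the paper effectively does) or deduce it from the standard uniqueness theorem for the flow of the left-invariant vector field $X^{L}$; granting it, the remainder is a one-line manipulation of the homomorphism property together with the chain rule.
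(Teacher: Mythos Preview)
Your argument is correct and is exactly the standard proof (and indeed the one in \cite[Proposition 9.2.10]{hilgert2011structure}, the reference the paper cites). Note, however, that the paper does not supply its own proof of this statement: it is recorded as a \emph{Fact} in the appendix with a citation, so there is nothing further to compare against. One small remark: the displayed equation in the statement, $\exp_H \circ \alpha = \exp_G \circ \phi$, is a typo (the right-hand side does not type-check); the commuting diagram and your conclusion $\phi\circ\exp_G=\exp_H\circ\alpha$ give the intended identity.
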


Suppose $\mathfrak{g}$ is a Lie algebra. The {\it derived Lie algebra} $[\mathfrak{g}, \mathfrak{g}]$ of $\mathfrak{g}$ is the subalgebra of $\mathfrak{g}$ generated by the Lie brackets of the pairs of elements of $\mathfrak{g}$. We say that $\mathfrak{g}$ is {\it solvable} if the derived sequence
$$  \mathfrak{g} \geq [\mathfrak{g}, \mathfrak{g}] \geq [[\mathfrak{g}, \mathfrak{g}],  [\mathfrak{g}, \mathfrak{g}]] \geq \ldots $$
eventually arrives at the trivial Lie algebra $\{0\}$. A Lie group is {\it solvable} if its Lie algebra is solvable. The following is a consequence of~\cite[Proposition 5.4.3]{hilgert2011structure}: 

\begin{fact} \label{fact: preservationsolvable}
Every subalgebra and quotient algebra of a solvable Lie algebra is solvable. Hence, every closed subgroup and quotient group of a solvable Lie group is solvable.
\end{fact}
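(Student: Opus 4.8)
The plan is to prove the Lie-algebra assertion first by a direct induction on the derived series, and then transfer it to Lie groups via the Lie functor $\bm{\mathrm{L}}$ together with Fact~\ref{fact: CorrespondenceLiegroupandalgebra}. Recall that $\mathfrak{g}$ being solvable means its derived series $\mathfrak{g}^{(0)} = \mathfrak{g}$, $\mathfrak{g}^{(i+1)} = [\mathfrak{g}^{(i)}, \mathfrak{g}^{(i)}]$ satisfies $\mathfrak{g}^{(n)} = 0$ for some $n$.

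For a subalgebra $\mathfrak{h} \leq \mathfrak{g}$, I would show by induction on $i$ that $\mathfrak{h}^{(i)} \subseteq \mathfrak{g}^{(i)}$: the base case is the hypothesis $\mathfrak{h} \subseteq \mathfrak{g}$, and the inductive step is immediate from monotonicity of the bracket, since $\mathfrak{h}^{(i)} \subseteq \mathfrak{g}^{(i)}$ forces $[\mathfrak{h}^{(i)}, \mathfrak{h}^{(i)}] \subseteq [\mathfrak{g}^{(i)}, \mathfrak{g}^{(i)}]$. Then $\mathfrak{h}^{(n)} \subseteq \mathfrak{g}^{(n)} = 0$, so $\mathfrak{h}$ is solvable. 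For a quotient algebra $\mathfrak{g}/\mathfrak{a}$ with projection $\pi$, I would instead prove $\pi(\mathfrak{g}^{(i)}) = (\mathfrak{g}/\mathfrak{a})^{(i)}$ by induction, using that a surjective Lie-algebra homomorphism carries the derived subalgebra onto the derived subalgebra (because $\pi([\mathfrak{x},\mathfrak{y}]) = [\pi(\mathfrak{x}),\pi(\mathfrak{y})]$ and $\pi$ commutes with taking generated subalgebras); then $(\mathfrak{g}/\mathfrak{a})^{(n)} = \pi(\mathfrak{g}^{(n)}) = \pi(0) = 0$, so $\mathfrak{g}/\mathfrak{a}$ is solvable.

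For the Lie group statement, suppose $G$ is a solvable Lie group, so $\bm{\mathrm{L}}(G)$ is solvable by definition. If $H \leq G$ is a closed subgroup, then $H$ is again a Lie group and, by Fact~\ref{fact: CorrespondenceLiegroupandalgebra}, $\bm{\mathrm{L}}(H)$ is a subalgebra of $\bm{\mathrm{L}}(G)$, hence solvable by the first part; thus $H$ is solvable. If $H \vartriangleleft G$ is a closed normal subgroup, then $G/H$ is a Lie group with $\bm{\mathrm{L}}(G/H)$ canonically isomorphic to $\bm{\mathrm{L}}(G)/\bm{\mathrm{L}}(H)$ by the same fact, which is solvable by the quotient case, so $G/H$ is solvable.

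I expect no genuine obstacle here; the argument is a routine induction. The only points needing a word of care are that closed subgroups and quotients of Lie groups are themselves Lie groups (so that $\bm{\mathrm{L}}(H)$ is meaningful), which is recorded earlier in the appendix, and the elementary fact that a surjective Lie-algebra morphism maps each term of the derived series onto the corresponding term of the target's derived series; both are standard and can be dispatched in a line each.
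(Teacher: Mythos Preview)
Your proof is correct and entirely standard. Note, however, that the paper does not actually give its own proof of this statement: it is recorded as a Fact with a citation to \cite[Proposition 5.4.3]{hilgert2011structure}, so there is nothing to compare against beyond observing that your argument is exactly the routine derived-series induction one would find behind that reference.
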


The following is another consequence of~\cite[Proposition 5.4.3, Theorem 5.6.6]{hilgert2011structure}:

\begin{fact} \label{fact: radical}
Suppose $\mathfrak{g}$ is a Lie algebra. Then $\mathfrak{g}$ has a largest solvable ideal $\mathfrak{q}$.
If $G$ is a Lie group with Lie algebra $\mathfrak{g}$ and $\exp: \mathfrak{g} \to G$ is the exponential map, then $Q =\langle\exp(\mathfrak{q})\rangle$ is the largest closed connected solvable normal subgroup of $G$. Hence, $Q$ is a characteristic subgroup of $G$.
\end{fact}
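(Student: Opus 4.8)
The plan is to prove the purely algebraic half first — that $\mathfrak g$ has a unique largest solvable ideal $\mathfrak q$, its radical — and then transport it to $G$ through the exponential map. For the algebra, the key observation is that if $\mathfrak a,\mathfrak b$ are solvable ideals of $\mathfrak g$ then $\mathfrak a+\mathfrak b$ is again a solvable ideal: it is an ideal as a sum of ideals, and it is solvable because $(\mathfrak a+\mathfrak b)/\mathfrak a\cong\mathfrak b/(\mathfrak a\cap\mathfrak b)$ is a quotient of the solvable algebra $\mathfrak b$ (hence solvable by Fact~\ref{fact: preservationsolvable}), $\mathfrak a$ is solvable, and an extension of a solvable algebra by a solvable ideal is solvable. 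Since $\dim\mathfrak g<\infty$, one then picks a solvable ideal $\mathfrak q$ of maximal dimension; for any solvable ideal $\mathfrak a$, the solvable ideal $\mathfrak q+\mathfrak a$ contains $\mathfrak q$, so $\mathfrak q+\mathfrak a=\mathfrak q$ by maximality, i.e.\ $\mathfrak a\subseteq\mathfrak q$. Thus $\mathfrak q$ is the unique largest solvable ideal of $\mathfrak g$, hence in particular the largest solvable subalgebra containing every solvable ideal.

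Next I would pass to the group. Since $\mathfrak q$ is a subalgebra (indeed an ideal) of $\mathfrak g=\bm{\mathrm L}(G)$, the set $Q:=\langle\exp(\mathfrak q)\rangle$ is the unique connected immersed Lie subgroup of $G$ with $\bm{\mathrm L}(Q)=\mathfrak q$, the restriction $\exp|_{\mathfrak q}$ being the exponential map of $Q$ by functoriality (Fact~\ref{fact:functoriality of exponential function}). As $\bm{\mathrm L}(Q)=\mathfrak q$ is solvable, $Q$ is solvable by definition. For any $g\in G$, the automorphism $\mathrm{Ad}(g)$ of $\mathfrak g$ carries $\mathfrak q$ onto a solvable ideal of the same dimension, so $\mathrm{Ad}(g)\mathfrak q=\mathfrak q$ by the maximality just established; hence $gQg^{-1}$ is the connected subgroup with Lie algebra $\mathfrak q$, that is $gQg^{-1}=Q$, so $Q\vartriangleleft G$. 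Running the same argument with the differential of an arbitrary continuous automorphism of $G$ in place of $\mathrm{Ad}(g)$ shows that $Q$ is characteristic.

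The one genuinely topological point is that $Q$ is closed, and here I would use that the commutator map is continuous, so that $[\overline A,\overline B]\subseteq\overline{[A,B]}$ for subgroups $A,B$ of any topological group; iterating this, the derived series of $\overline Q$ satisfies $D^{k}(\overline Q)\subseteq\overline{D^{k}(Q)}$ for all $k$. Since $Q$ is solvable, $D^{N}(Q)=\{e\}$ for some $N$, whence $D^{N}(\overline Q)=\{e\}$ and $\overline Q$ is solvable. Now $\overline Q$ is a closed connected normal subgroup of $G$ (the closure of the connected normal $Q$), so $\bm{\mathrm L}(\overline Q)$ is a solvable ideal of $\mathfrak g$ by Fact~\ref{fact: CorrespondenceLiegroupandalgebra}; by the maximality of $\mathfrak q$ it is contained in $\mathfrak q=\bm{\mathrm L}(Q)$, and it obviously contains $\bm{\mathrm L}(Q)$, so $\bm{\mathrm L}(\overline Q)=\bm{\mathrm L}(Q)$. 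Two connected Lie subgroups with the same Lie algebra coincide, so $\overline Q=Q$.

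Finally, for maximality: if $H$ is a connected solvable normal subgroup of $G$, then $\bm{\mathrm L}(H)$ is a solvable ideal of $\mathfrak g$, hence $\bm{\mathrm L}(H)\subseteq\mathfrak q$; since $H$ is connected, $H=\langle\exp(\bm{\mathrm L}(H))\rangle\subseteq\langle\exp(\mathfrak q)\rangle=Q$, so $Q$ is the largest closed connected solvable normal subgroup of $G$. I expect the main obstacle to be the closedness step: the rest is a routine dictionary between $\mathfrak g$ and $G$, but closedness really needs the lemma that the closure of a connected solvable Lie subgroup stays solvable. I would also note that "largest solvable subgroup" must be understood among \emph{normal} ones — a Borel subgroup of $\mathrm{SL}_2(\RR)$ is a closed connected solvable subgroup not contained in the (here trivial) radical.
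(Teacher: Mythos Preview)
The paper does not prove this statement at all: it is recorded in the appendix as a \emph{Fact}, imported directly from \cite[Proposition 5.4.3]{hilgert2011structure}, with no argument given. Your proposal therefore cannot be compared to a proof in the paper, but it is worth assessing on its own merits.

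Your argument is the standard one and is correct in substance. The algebraic step (sum of two solvable ideals is a solvable ideal, hence a maximal-dimensional one contains all others) is exactly right. On the group side, identifying $Q=\langle\exp(\mathfrak q)\rangle$ as the connected subgroup with Lie algebra $\mathfrak q$, getting normality and the characteristic property via $\mathrm{Ad}$ and tangent maps of automorphisms, and deducing closedness from ``closure of a solvable subgroup is solvable'' together with uniqueness of the connected subgroup with a given Lie algebra, are all fine. The one place to be slightly more careful is the line ``two connected Lie subgroups with the same Lie algebra coincide'': this is true, but you are implicitly using that $\overline{Q}$ is itself connected (closure of a connected set) so that $\overline{Q}=(\overline{Q})_0$, and then invoking uniqueness of the integral subgroup for $\mathfrak q$.

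You are also right to flag that the statement as written is imprecise: $\mathfrak q$ is the largest solvable \emph{ideal}, not the largest solvable subalgebra, and $Q$ is the largest closed connected solvable \emph{normal} subgroup, not the largest closed connected solvable subgroup. Your Borel-in-$\mathrm{SL}_2(\RR)$ example makes the point cleanly. The paper uses Fact~\ref{fact: radical} only through these corrected readings (e.g.\ in Lemma~\ref{lem: Leviandquotientbycompact} and Fact~\ref{fact: Lie group decomp Levi}), so this is a wording issue in the appendix rather than a mathematical one, but it is good that you caught it.
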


The subalgebra $\mathfrak{q}$ as in Fact~\ref{fact: radical} is called the {\it radical} of $\mathfrak{g}$, and the subgroup $Q$ as in Fact~\ref{fact: radical} is called the {\it radical} of $G$. A Lie algebra is {\it semisimple} if it has a trivial radical. A lie group is {\it semisimple} if its Lie algebra is semisimple, or equivalently, if it has trivial radical. The following results follows from ~\cite[Theorem 5.6.6, Corollary 5.6.14]{hilgert2011structure}: 

\begin{fact}\label{fact: Lie group decomp Levi} Let $G$ be a  connected Lie group and let $Q$ be its radical. Then $S= G/Q$ is a semisimple Lie group. 
\end{fact}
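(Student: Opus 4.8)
The plan is to reduce the statement to a purely Lie-algebraic claim and then transport it back to the group via the correspondence of Fact~\ref{fact: CorrespondenceLiegroupandalgebra}. Write $\mathfrak{g} = \bm{\mathrm{L}}(G)$ and let $\mathfrak{q}$ be the radical of $\mathfrak{g}$ as in Fact~\ref{fact: radical}, i.e.\ the largest solvable ideal of $\mathfrak{g}$. By that same fact, $Q = \langle \exp(\mathfrak{q}) \rangle$ is the radical of $G$; it is a closed, connected, characteristic (hence normal) subgroup with $\bm{\mathrm{L}}(Q) = \mathfrak{q}$. Consequently $S = G/Q$ is again a (connected) Lie group and Fact~\ref{fact: CorrespondenceLiegroupandalgebra} identifies its Lie algebra canonically with $\mathfrak{g}/\mathfrak{q}$. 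Since a Lie group is semisimple exactly when its Lie algebra is, it suffices to show that $\mathfrak{g}/\mathfrak{q}$ has trivial radical.

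First I would record the elementary closure property that an extension of a solvable Lie algebra by a solvable Lie algebra is solvable: if $0 \to \mathfrak{a} \to \mathfrak{b} \to \mathfrak{c} \to 0$ is exact with $\mathfrak{a},\mathfrak{c}$ solvable, then some term of the derived series of $\mathfrak{b}$ already lies in $\mathfrak{a}$ because $\mathfrak{c}$ is solvable, and from there the derived series of $\mathfrak{b}$ is trapped inside the solvable ideal $\mathfrak{a}$ and so terminates at $0$. This short derived-series computation is the only genuine ingredient. Then, writing $\pi \colon \mathfrak{g} \to \mathfrak{g}/\mathfrak{q}$ for the quotient map and letting $\mathfrak{r}$ be the radical of $\mathfrak{g}/\mathfrak{q}$, the preimage $\pi^{-1}(\mathfrak{r})$ is an ideal of $\mathfrak{g}$ containing $\ker \pi = \mathfrak{q}$, and it fits into the exact sequence $0 \to \mathfrak{q} \to \pi^{-1}(\mathfrak{r}) \to \mathfrak{r} \to 0$ with both end terms solvable. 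By the closure property, $\pi^{-1}(\mathfrak{r})$ is a solvable ideal of $\mathfrak{g}$; maximality of the radical forces $\pi^{-1}(\mathfrak{r}) \subseteq \mathfrak{q}$, hence $\mathfrak{r} = 0$. Thus $\mathfrak{g}/\mathfrak{q}$ is semisimple, and therefore so is $S = G/Q$.

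The only delicate point — and the nearest thing to an obstacle — is the bookkeeping that ties the algebra-level radical to the group-level radical: that $\bm{\mathrm{L}}(Q) = \mathfrak{q}$ and that $Q$ is closed and normal so that $G/Q$ is a Lie group with Lie algebra $\mathfrak{g}/\mathfrak{q}$. All of this is already contained in Fact~\ref{fact: radical} and Fact~\ref{fact: CorrespondenceLiegroupandalgebra}, so in practice the proof reduces to the one-line derived-series argument above.
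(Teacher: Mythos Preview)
Your argument is correct and is precisely the standard proof: reduce to the Lie-algebra level via Fact~\ref{fact: CorrespondenceLiegroupandalgebra}, then use maximality of the radical together with the closure of solvability under extensions to conclude that $\mathfrak{g}/\mathfrak{q}$ has trivial radical. The bookkeeping you flag (that $Q$ is closed and normal with $\bm{\mathrm{L}}(Q)=\mathfrak{q}$) is indeed handled by Fact~\ref{fact: radical}.

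There is nothing to compare against: the paper does not supply its own proof of this statement. It is recorded as a \emph{Fact} and attributed to \cite[Proposition~5.4.3]{hilgert2011structure}; the argument you wrote is essentially what that reference contains.
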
 

A Lie algebra is {\it simple} if it is semisimple and contains no ideals other than itself and the trivial ideal $\{0\}$. A Lie group is {\it simple} if its Lie algebra is simple. Note that a simple Lie group is not necessarily simple as a group. We use the following fact for simple Lie groups.

\begin{fact}\label{fact: simple}
A connected Lie group $G$ is a simple Lie group if and only if all its normal proper subgroups are discrete, and  contained in $Z(G)$. 
\end{fact}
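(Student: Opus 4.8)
The plan is to prove the two implications separately, passing between $G$ and its Lie algebra $\mathfrak g$ via the correspondence between closed connected normal subgroups and ideals (Fact~\ref{fact: CorrespondenceLiegroupandalgebra}), and using that the only ideals of a simple Lie algebra are $0$ and $\mathfrak g$ and that a simple Lie algebra is by definition non-abelian. (I take $G$ nontrivial throughout, since the trivial group is not a simple Lie group.)

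\emph{Forward direction.} Suppose $\mathfrak g$ is simple and let $N$ be a proper normal subgroup of $G$. Passing to the closure $\overline N$, again a closed normal subgroup, and then to its identity component $(\overline N)_0$, we obtain a closed connected normal subgroup of $G$ whose Lie algebra is an ideal of $\mathfrak g$, hence $0$ or $\mathfrak g$. If it equals $\mathfrak g$, then $(\overline N)_0$ is an open subgroup of the connected group $G$ (Fact~\ref{factLieid}), so $(\overline N)_0 = \overline N = G$ and $N$ is a proper \emph{dense} normal subgroup; this case is ruled out below. If it equals $0$, then $(\overline N)_0 = \{e\}$, so $\overline N$ is a closed $0$-dimensional Lie group, i.e.\ discrete, and hence $N$ is a discrete subset of $G$; moreover $N$ is central, since for each $n \in N$ the continuous map $g \mapsto g n g^{-1}$ sends the connected group $G$ into the discrete set $N$ and is therefore constant with value $n$. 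Thus $N$ is discrete and $N \subseteq Z(G)$.

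\emph{Reverse direction.} I argue by contraposition. If $G$ is not simple then $\mathfrak g$ is not simple, so either $\mathfrak g$ has a proper nonzero ideal $\mathfrak a$, or $\mathfrak g$ is abelian. In the first case let $A = \langle \exp \mathfrak a \rangle$ be the connected immersed subgroup with Lie algebra $\mathfrak a$; since $\mathrm{Ad}(g)$ preserves the ideal $\mathfrak a$ for every $g \in G$ we have $g A g^{-1} \subseteq A$, so $A$ is normal, and $\dim A = \dim \mathfrak a$ with $0 < \dim \mathfrak a < \dim \mathfrak g = \dim G$, so $A$ is a proper normal subgroup that is not discrete. In the second case $G$ is a connected abelian Lie group of positive dimension, hence isomorphic to $\mathbb{R}^a \times \TT^b$ with $a + b \geq 1$, and the countable (hence proper) subgroup $\mathbb{Q}^a \times (\mathbb{Q}^b/\ZZ^b)$ is dense, hence not discrete. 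In either case the right-hand condition fails.

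\emph{The main obstacle} is the case excluded in the forward direction: a proper dense normal subgroup $N$ of a connected $G$ with $\mathfrak g$ simple. Here $N \not\subseteq Z(G)$, for otherwise the closed subgroup $Z(G)$ would be dense and hence all of $G$, forcing $\mathfrak g$ abelian. Passing to $G' := G/Z(G) \cong \mathrm{Ad}(G)$, which has trivial center and simple Lie algebra, the image $\overline N := N Z(G)/Z(G)$ is a normal subgroup of $G'$. If $\overline N = \{e\}$ then $N \subseteq Z(G)$, a contradiction; if $\overline N = G'$ then $N Z(G) = G$, so $G/N$ is a quotient of the abelian group $Z(G)$, whence $[G,G] \subseteq N$, but a simple Lie algebra is perfect so $[G,G] = G$ and $N = G$, again a contradiction. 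Thus everything reduces to the \emph{abstract simplicity of the adjoint group}: a connected linear Lie group with trivial center and simple Lie algebra has no proper nontrivial normal subgroup. This is the heart of the matter and a classical (if nontrivial) fact in the structure theory of semisimple Lie groups — for isotropic groups it follows from the Bruhat decomposition together with a commutator argument showing that a nontrivial normal subgroup meeting a unipotent one-parameter subgroup must contain all such subgroups and hence generate $G'$, while the anisotropic (compact) simple groups are treated separately — and in the write-up I would invoke it from the literature.
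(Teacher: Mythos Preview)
The paper records this statement as a \textbf{Fact} in its appendix of background Lie theory and supplies no proof; there is therefore nothing in the paper to compare your argument against.

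On its own merits your argument is correct and well organized. You rightly isolate the only subtle point in the forward direction---ruling out a proper \emph{dense} normal subgroup---and reduce it to the abstract simplicity of the adjoint group $G/Z(G)$, together with the perfectness $[G,G]=G$ of a connected group with perfect (here: simple) Lie algebra. Both ingredients are standard, if nontrivial, results in the structure theory of semisimple Lie groups, and your deferral to the literature for them is appropriate; indeed this is precisely why the paper states the result as a Fact rather than proving it. The reverse direction is unproblematic. One small caution: the implication ``$\mathfrak g$ perfect $\Rightarrow [G,G]=G$'' genuinely requires the equality and not merely density of $[G,G]$, so in a fully self-contained write-up you would want to cite that explicitly (e.g.\ as a property of connected semisimple groups).
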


Suppose $\mathfrak{g}$ is a finite dimensional Lie algebra. For $x \in \mathfrak{g}$, let $\text{ad}x: \mathfrak{g} \to \mathfrak{g}, y \mapsto [x,y]$. Then $\text{ad}$ is an endomorphism of $\mathfrak{g}$. The {\em Cartan--Killing form} of $\kappa_\mathfrak{g}: \mathfrak{g} \times \mathfrak{g} \to \RR $ is given by $$\kappa_\mathfrak{g}(x,y)=\text{tr}(\text{ad} x\  \text{ad} y).$$
The Cartan--Killing form is invariant under an automorphism of $\mathfrak{g}$; this follows from a direct computation and the fact that if $\rho$ is an automorphism of $\mathfrak{g}$, then $\mathrm{ad}\rho(x)=\rho\circ\mathrm{ad}x\circ\rho^{-1}$.
The following fact is from \cite[Lemma 5.5.8]{hilgert2011structure}.

\begin{fact} \label{fact: orthogonaldecomposition}
Suppose $\mathfrak{g}$ is a Lie algebra, $\kappa_\mathfrak{g}$ is the Cartan--Killing form of $\mathfrak{g}$, and $\mathfrak{h}$ is an ideal of $\mathfrak{g}$. 
Then the orthogonal space $\mathfrak{h}^\perp$ of $\mathfrak{h}$ with respect to $\kappa_\mathfrak{g}$ is also an ideal. 
If $\mathfrak{g}$ is semisimple, then $\mathfrak{g} = \mathfrak{h}\oplus \mathfrak{h}^\perp$ and $\kappa_\mathfrak{g}= \kappa_\mathfrak{h}\oplus \kappa_{\mathfrak{h}^\perp}$ where $\kappa_\mathfrak{h}$ and $\kappa_{\mathfrak{h}^\perp}$ are the Cartan--Killing form of $\mathfrak{h}$ and $\mathfrak{h}^\perp$.
\end{fact}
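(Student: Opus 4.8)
The plan is to prove the two assertions separately: the claim that $\mathfrak{h}^\perp$ is an ideal uses only the invariance of the Killing form, while the splitting in the semisimple case rests on Cartan's criteria.

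First I would record the \emph{associativity} (invariance) identity $\kappa_\mathfrak{g}([x,y],z)=\kappa_\mathfrak{g}(x,[y,z])$ for all $x,y,z\in\mathfrak{g}$, which is immediate from $\mathrm{ad}[x,y]=[\mathrm{ad}x,\mathrm{ad}y]$ and the cyclic invariance of the trace. To see that $\mathfrak{h}^\perp$ is an ideal, take $x\in\mathfrak{h}^\perp$, $y\in\mathfrak{g}$, and any $z\in\mathfrak{h}$; since $\mathfrak{h}$ is an ideal we have $[y,z]\in\mathfrak{h}$, hence $\kappa_\mathfrak{g}([x,y],z)=\kappa_\mathfrak{g}(x,[y,z])=0$, so $[x,y]\in\mathfrak{h}^\perp$. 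This step uses nothing about semisimplicity.

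Next I would establish the auxiliary fact that for \emph{any} ideal $\mathfrak{i}\vartriangleleft\mathfrak{g}$ one has $\kappa_\mathfrak{g}|_{\mathfrak{i}\times\mathfrak{i}}=\kappa_\mathfrak{i}$: for $x,y\in\mathfrak{i}$ the operator $\mathrm{ad}_\mathfrak{g}x\circ\mathrm{ad}_\mathfrak{g}y$ carries $\mathfrak{g}$ into $\mathfrak{i}$, so computing its trace in a basis of $\mathfrak{g}$ extending one of $\mathfrak{i}$ shows $\mathrm{tr}_\mathfrak{g}(\mathrm{ad}_\mathfrak{g}x\,\mathrm{ad}_\mathfrak{g}y)=\mathrm{tr}_\mathfrak{i}(\mathrm{ad}_\mathfrak{i}x\,\mathrm{ad}_\mathfrak{i}y)$. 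Applied with $\mathfrak{i}=\mathfrak{h}$ and $\mathfrak{i}=\mathfrak{h}^\perp$ this supplies the diagonal blocks $\kappa_\mathfrak{h}$ and $\kappa_{\mathfrak{h}^\perp}$, while the off-diagonal block vanishes by the very definition of $\mathfrak{h}^\perp$; so $\kappa_\mathfrak{g}=\kappa_\mathfrak{h}\oplus\kappa_{\mathfrak{h}^\perp}$ follows once the vector-space decomposition is in hand.

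For the semisimple case I would invoke Cartan's criterion: $\mathfrak{g}$ has trivial radical if and only if $\kappa_\mathfrak{g}$ is nondegenerate. Nondegeneracy gives $\dim\mathfrak{h}+\dim\mathfrak{h}^\perp=\dim\mathfrak{g}$, so it suffices to show $\mathfrak{h}\cap\mathfrak{h}^\perp=0$. Now $\mathfrak{i}:=\mathfrak{h}\cap\mathfrak{h}^\perp$ is an ideal on which $\kappa_\mathfrak{g}$ vanishes identically; by the auxiliary fact $\kappa_\mathfrak{i}\equiv0$, so Cartan's solvability criterion makes $\mathfrak{i}$ solvable, and since $\mathfrak{g}$ has no nonzero solvable ideal we conclude $\mathfrak{i}=0$. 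Hence $\mathfrak{g}=\mathfrak{h}\oplus\mathfrak{h}^\perp$ as a direct sum of ideals. The only genuine input beyond this bookkeeping is the pair of Cartan criteria (nondegeneracy of $\kappa_\mathfrak{g}$ $\Leftrightarrow$ semisimplicity, and the solvability criterion $\kappa_\mathfrak{g}\equiv0\Rightarrow$ solvable); since the paper takes ``trivial radical'' as the definition of semisimplicity, these must be cited, and I expect that to be the main point requiring care rather than any computation.
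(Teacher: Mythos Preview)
Your argument is correct and is the standard textbook proof of this result. However, there is nothing to compare: the paper does not supply a proof of this statement at all. It is listed as a ``Fact'' in Appendix~E and is simply cited from \cite[Lemma 5.5.8]{hilgert2011structure} without argument. Your write-up is essentially the proof one finds in that reference, so in effect you have filled in what the paper chose to outsource.
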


The following fact follows from \cite[Lemma 5.5.13]{hilgert2011structure}. It is also a consequence of Fact~\ref{fact: orthogonaldecomposition} and the alternative characterization of semisimple Lie algebras as those whose Cartan--Killing form is nondegenerate.

\begin{fact} \label{fact: preservationsemisimple}
Every ideal and quotient algebra of a semisimple Lie algebra is semisimple. Hence, every closed normal subgroup and quotient group of a semisimple Lie group is semisimple.
\end{fact}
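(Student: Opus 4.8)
The plan is to deduce both assertions from the Lie algebra case, and to prove the latter with the Cartan--Killing form. The one external input is the standard fact (Cartan's criterion) that a finite-dimensional real Lie algebra $\mathfrak{g}$ is semisimple if and only if its Cartan--Killing form $\kappa_\mathfrak{g}$ is nondegenerate; the paper already appeals to this characterization in the remark following the statement. The only elementary auxiliary observation needed is that a symmetric bilinear form which splits as an orthogonal direct sum $\kappa_1\oplus\kappa_2$ is nondegenerate exactly when both $\kappa_1$ and $\kappa_2$ are.

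\textbf{Ideal case.} Let $\mathfrak{h}$ be an ideal of a semisimple Lie algebra $\mathfrak{g}$. Apply Fact~\ref{fact: orthogonaldecomposition} to $\mathfrak{h}$: since $\mathfrak{g}$ is semisimple we get $\mathfrak{g}=\mathfrak{h}\oplus\mathfrak{h}^\perp$ as Lie algebras and $\kappa_\mathfrak{g}=\kappa_\mathfrak{h}\oplus\kappa_{\mathfrak{h}^\perp}$. As $\mathfrak{g}$ is semisimple, $\kappa_\mathfrak{g}$ is nondegenerate, hence by the observation above so is each summand; in particular $\kappa_\mathfrak{h}$ is nondegenerate, and Cartan's criterion gives that $\mathfrak{h}$ is semisimple (symmetrically, so is $\mathfrak{h}^\perp$).

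\textbf{Quotient case.} Keep the notation above. Both $\mathfrak{h}$ and $\mathfrak{h}^\perp$ are ideals, so $[\mathfrak{h},\mathfrak{h}^\perp]\subseteq\mathfrak{h}\cap\mathfrak{h}^\perp=\{0\}$, where the last equality holds because $\kappa_\mathfrak{g}$ is nondegenerate on $\mathfrak{g}=\mathfrak{h}\oplus\mathfrak{h}^\perp$. Hence the projection $\mathfrak{g}\to\mathfrak{h}^\perp$ along $\mathfrak{h}$ is a surjective Lie algebra homomorphism with kernel $\mathfrak{h}$, yielding a Lie algebra isomorphism $\mathfrak{g}/\mathfrak{h}\cong\mathfrak{h}^\perp$. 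By the ideal case $\mathfrak{h}^\perp$ is semisimple, hence so is $\mathfrak{g}/\mathfrak{h}$.

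\textbf{Transfer to Lie groups.} If $G$ is a semisimple Lie group and $N$ is a closed normal subgroup, then by Fact~\ref{fact: CorrespondenceLiegroupandalgebra} the Lie algebra $\mathfrak{n}=\mathbf{L}(N)$ is an ideal of the semisimple $\mathfrak{g}=\mathbf{L}(G)$, hence semisimple by the ideal case, so $N$ is semisimple by definition; likewise $\mathbf{L}(G/N)\cong\mathfrak{g}/\mathfrak{n}$ by the same fact, which is semisimple by the quotient case, so $G/N$ is semisimple. There is essentially no obstacle here; the only points meriting a line of care are the identification $\mathfrak{g}/\mathfrak{h}\cong\mathfrak{h}^\perp$ (i.e.\ that the two complementary ideals commute, which is forced by nondegeneracy of $\kappa_\mathfrak{g}$) and, on the group side, that the Lie-algebra correspondence of Fact~\ref{fact: CorrespondenceLiegroupandalgebra} is being applied to closed normal subgroups.
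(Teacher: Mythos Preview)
Your proposal is correct and follows exactly the route the paper indicates: the paper does not give a proof but remarks that the fact ``is also a consequence of Fact~\ref{fact: orthogonaldecomposition} and the alternative characterization of semisimple Lie algebras as those whose Cartan--Killing form is nondegenerate,'' and you have spelled out precisely that argument, together with the routine transfer to Lie groups via Fact~\ref{fact: CorrespondenceLiegroupandalgebra}.
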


The first and second assertions in the following fact are immediate consequences of Facts~\ref{fact: radical}, ~\ref{fact: preservationsolvable}, \ref{fact: preservationsemisimple}   

\begin{fact} \label{fact: centerless1}
If $G$ is a connected semisimple  Lie group, then its center $Z(G)$ is a finitely generated discrete group, the quotient map $\rho: G \to G/Z(G)$ is a covering map. 
\end{fact}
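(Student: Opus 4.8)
The final statement to be proved is Fact~\ref{fact: centerless1}: for a connected semisimple Lie group $G$, the center $Z(G)$ is a finitely generated discrete group and the quotient map $\rho\colon G\to G/Z(G)$ is a covering map. Since the excerpt explicitly flags this as ``an immediate consequence of Facts~\ref{fact: radical}, \ref{fact: preservationsolvable}, \ref{fact: preservationsemisimple}'' together with the reference \cite[Proposition 5.4.3]{hilgert2011structure} (and in fact the cited source \cite[Section 9.1]{hilgert2011structure}), the proof is meant to be short and structural rather than computational. The plan is to deduce discreteness of $Z(G)$ from the fact that $Z(G)$ is a closed abelian normal subgroup, hence its Lie algebra is an abelian (in particular solvable) ideal of the semisimple Lie algebra $\mathfrak g = \bm{\mathrm L}(G)$, which must therefore be trivial; a closed subgroup with trivial Lie algebra is discrete. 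Finite generation then follows because $G$ is connected, and a standard fact about central extensions of connected Lie groups by discrete central subgroups forces the subgroup to be a quotient of $\pi_1(G/Z(G))$, which is finitely generated; alternatively one invokes \cite[Theorem 9.5.4]{hilgert2011structure} directly. Finally, $\rho$ is a covering map because the quotient of a Lie group by a discrete normal subgroup is always a covering.

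\textbf{Key steps, in order.} First I would record that $Z(G)$ is a closed subgroup of $G$: it is the kernel of the conjugation (adjoint) action, hence closed. Second, by Fact~\ref{fact: CorrespondenceLiegroupandalgebra}, the Lie algebra $\mathfrak z$ of $Z(G)$ is an ideal of $\mathfrak g$; moreover since $Z(G)$ is abelian, $\mathfrak z$ is an abelian ideal, hence solvable. Since $G$ is semisimple, $\mathfrak g$ has trivial radical by definition, so by Fact~\ref{fact: radical} the largest solvable ideal of $\mathfrak g$ is $0$; therefore $\mathfrak z = 0$. A closed subgroup of a Lie group whose Lie algebra is $0$ is $0$-dimensional, hence discrete. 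Third, for finite generation: $G$ is connected, and $G/Z(G)$ is a connected Lie group (semisimple by Fact~\ref{fact: preservationsemisimple}); the quotient map $\rho\colon G\to G/Z(G)$ is a covering map with deck group $Z(G)$ acting by central translations, so $Z(G)$ is isomorphic to a quotient of $\pi_1(G/Z(G))$. But the fundamental group of a connected (finite-dimensional, second countable) Lie group is finitely generated — indeed it is countable and, for semisimple groups, finitely generated by Cartan's theorem on the polar decomposition $G \simeq K \times \RR^N$ with $K$ compact. Hence $Z(G)$ is finitely generated. Fourth, that $\rho$ is a covering map: a surjective homomorphism of Lie groups with discrete kernel is a local homeomorphism near the identity (pick a neighborhood of $\id$ containing no nontrivial element of $Z(G)$), and translating gives that $\rho$ is an evenly-covered fibration, i.e., a covering map.

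\textbf{Main obstacle.} The only genuinely nontrivial input is the finite generation of $Z(G)$; everything else is formal. If one does not want to invoke $\pi_1$ of a Lie group being finitely generated, the cleanest route is to cite \cite[Theorem 9.5.4]{hilgert2011structure}, which is already referenced in the excerpt for Fact~\ref{fact: covering group} and yields: for a connected semisimple Lie group $G$ with universal cover $\widetilde G$, the center $Z(\widetilde G)$ is finitely generated and $Z(G)$ is a quotient of it, hence finitely generated. I expect the intended proof is essentially a one-paragraph appeal to this theorem plus the ideal-theoretic argument for discreteness; the subtle point to get right is simply that discreteness must be established \emph{before} one can talk about covering maps and fundamental groups, so the logical order matters. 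I would therefore present the discreteness argument first, the covering-map statement second (as it is immediate once discreteness is known), and the finite generation last, citing \cite[Theorem 9.5.4]{hilgert2011structure}.
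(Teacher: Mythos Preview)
Your proposal is correct and follows precisely the route the paper indicates: the paper gives no detailed proof at all, merely asserting that the fact is ``an immediate consequence of Facts~\ref{fact: radical}, \ref{fact: preservationsolvable}, \ref{fact: preservationsemisimple},'' and your argument unpacks exactly that citation (abelian ideal $\Rightarrow$ solvable ideal $\Rightarrow$ contained in the radical $=0$ $\Rightarrow$ discrete, then covering map, then finite generation). If anything, you have supplied more justification for finite generation than the paper does; the paper's cited facts alone do not obviously yield it, and your appeal to $\pi_1$ of a connected Lie group (or to \cite[Theorem 9.5.4]{hilgert2011structure}) is the natural completion.
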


The following fact is a consequence of~\cite[Proposition 9.5.2 and Theorem 9.5.4]{hilgert2011structure}.

\begin{fact} \label{fact: centerless2}
If $G$ and $G'$ are connected   Lie groups, $\rho: G \to G'$ is covering map, $Z(G)$ and $Z(G')$ are the centers of $G$ and $G'$. Then we have $\ker \rho \leq Z(G)$ and $Z(G') = Z(G)/\ker \rho$.
\end{fact}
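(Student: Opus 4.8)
The plan is to establish the two assertions in turn, the first being essentially a restatement of Fact~\ref{fact: covering group}. Since $\rho$ is a covering map, $\ker\rho$ is a discrete normal subgroup of $G$, and a discrete normal subgroup of a connected group is central: for fixed $n\in\ker\rho$ the map $G\to\ker\rho$, $g\mapsto gng^{-1}$, is continuous from a connected space into a discrete one, hence constant and equal to its value $n$ at the identity. Thus $\ker\rho\leq Z(G)$; alternatively one may simply invoke Fact~\ref{fact: covering group}.

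The crux of the remaining claim $Z(G')=Z(G)/\ker\rho$ is the identity $\rho^{-1}(Z(G'))=Z(G)$, which I would prove by two inclusions. For ``$\supseteq$'', note that $\rho$ is surjective onto the connected group $G'$, so any $g'\in G'$ can be written $g'=\rho(g)$; if $z\in Z(G)$ then $\rho(z)g'=\rho(zg)=\rho(gz)=g'\rho(z)$, whence $\rho(z)\in Z(G')$. For ``$\subseteq$'', suppose $\rho(z)\in Z(G')$. Then for every $g\in G$ we have $\rho(zgz^{-1}g^{-1})=\rho(z)\rho(g)\rho(z)^{-1}\rho(g)^{-1}=e$, so the map $c_z:G\to\ker\rho$, $g\mapsto zgz^{-1}g^{-1}$, is a well-defined continuous map from a connected space to a discrete space; it is therefore constant, and since $c_z(e)=e$ we conclude $zgz^{-1}g^{-1}=e$ for all $g$, i.e.\ $z\in Z(G)$.

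Finally I would deduce the isomorphism. The restriction $\rho|_{Z(G)}:Z(G)\to Z(G')$ is a continuous surjective homomorphism (surjectivity follows from $\rho^{-1}(Z(G'))=Z(G)$ together with surjectivity of $\rho$) with kernel $\ker\rho$. Moreover it is open: if $U=V\cap Z(G)$ with $V$ open in $G$, then $\rho(U)=\rho(V)\cap Z(G')$, the inclusion $\subseteq$ being clear while for $\supseteq$ any point of $\rho(V)\cap Z(G')$ lifts into $V$ and, lying over $Z(G')$, lifts into $V\cap\rho^{-1}(Z(G'))=V\cap Z(G)=U$; since $\rho$ is open, $\rho(U)$ is then open in $Z(G')$. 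By the first isomorphism theorem for topological groups (Fact~\ref{fact: homomorphism facts}.1) we obtain that $Z(G')$ is canonically isomorphic to $Z(G)/\ker\rho$ as topological groups, completing the proof. The only mildly delicate point is keeping the topology straight, i.e.\ checking that the natural algebraic isomorphism is a homeomorphism; this is exactly what the identity $\rho^{-1}(Z(G'))=Z(G)$ buys us, as it forces $\rho|_{Z(G)}$ to be open. Everything else is a routine connectedness-versus-discreteness argument.
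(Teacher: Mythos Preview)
Your proof is correct. The paper does not actually prove this statement: it records it as a Fact and cites \cite[Proposition 9.5.2 and Theorem 9.5.4]{hilgert2011structure}, so there is nothing to compare at the level of strategy. What you have supplied is a clean, self-contained argument where the paper simply invokes a reference.

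The key identity $\rho^{-1}(Z(G'))=Z(G)$ is exactly the right thing to isolate, and both inclusions are handled by the same ``continuous map from a connected space to a discrete space is constant'' principle; this is the standard route. Your verification that $\rho|_{Z(G)}$ is open via $\rho(V\cap Z(G))=\rho(V)\cap Z(G')$ is also correct---the only point worth stating more explicitly is that when you say a point of $\rho(V)\cap Z(G')$ ``lifts into $V$'', you mean \emph{some} lift lies in $V$ (which is immediate from the definition of $\rho(V)$), and that particular lift then automatically lies in $Z(G)$ by the identity $\rho^{-1}(Z(G'))=Z(G)$. With that minor clarification the argument is complete and would serve perfectly well as a proof in the paper.
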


The first assertion in the following fact is known as Weyl's theorem on Lie groups with semisimple compact Lie algebra~\cite[Theorem 12.1.17]{hilgert2011structure}. 
\begin{fact}\label{fact: semisimple compact}
If $G$ is a connected semisimple Lie group with compact Lie algebra, then $G$ is compact and $Z(G)$ is finite.
\end{fact}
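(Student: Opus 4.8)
The plan is to pass to the adjoint group, where compactness is immediate from the invariance of the Cartan--Killing form, and then to recover compactness of $G$ itself by controlling its universal cover. Let $\mathfrak{g}$ be the Lie algebra of $G$; by hypothesis $\mathfrak{g}$ is semisimple and its Cartan--Killing form $\kappa_{\mathfrak{g}}$ is negative definite. First I would observe that $\mathrm{Aut}(\mathfrak{g}) \subseteq GL(\mathfrak{g})$ is compact: it is closed in $GL(\mathfrak{g})$, being cut out by the polynomial relations $\phi([x,y]) = [\phi x, \phi y]$, and, since $\kappa_{\mathfrak{g}}$ is preserved by automorphisms, it sits inside the compact orthogonal group $O(\mathfrak{g}, -\kappa_{\mathfrak{g}})$ of the positive definite form $-\kappa_{\mathfrak{g}}$. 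Hence its identity component $\mathrm{Inn}(\mathfrak{g})$ is compact too. Because $G$ is connected, the adjoint map $\mathrm{Ad}\colon G \to \mathrm{Inn}(\mathfrak{g})$ is a surjective continuous homomorphism with kernel $Z(G)$, so $\overline{G} := G/Z(G) \cong \mathrm{Inn}(\mathfrak{g})$ is a compact connected semisimple Lie group; by Fact~\ref{fact: centerless1} the subgroup $Z(G)$ is discrete, so $G$ is a covering group of $\overline{G}$.

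Next I would show that the universal cover $\widetilde{G}$ of $G$ --- which is also the universal cover of $\overline{G}$, since both have Lie algebra $\mathfrak{g}$ --- is compact. Equip $\overline{G}$ with the bi-invariant Riemannian metric $g$ induced by the positive definite form $-\kappa_{\mathfrak{g}}$ on $\mathfrak{g}$. The standard curvature formulas for bi-invariant metrics give nonnegative sectional curvature and Ricci tensor equal to $\tfrac{1}{4}g$; in particular $\mathrm{Ric} \geq c\,g$ for a positive constant $c$. Since $\overline{G}$ is complete, the Bonnet--Myers theorem forces its universal cover $\widetilde{G}$ to be compact (equivalently, $\pi_1(\overline{G})$ is finite). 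This is precisely Weyl's theorem and is the step I expect to be the main obstacle, in the sense that it is the only genuinely non-formal input; the geometric argument just sketched is one way to supply it.

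Finally I would conclude. Let $\rho\colon \widetilde{G} \to G$ be the covering map and $\Gamma = \ker\rho$. By Fact~\ref{fact: centerless2}, $\Gamma \leq Z(\widetilde{G})$ and $Z(G) = Z(\widetilde{G})/\Gamma$. Since $\widetilde{G}$ is compact, its discrete central subgroup $Z(\widetilde{G})$ is finite, and therefore $\Gamma$ and $Z(G)$ are finite; moreover $G = \widetilde{G}/\Gamma$ is compact as a continuous image of a compact group. Apart from Bonnet--Myers, the argument uses only the invariance of the Killing form and the covering-space facts (Facts~\ref{fact: centerless1} and \ref{fact: centerless2}) already recorded in the appendices.
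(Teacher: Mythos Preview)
The paper does not give its own proof of this fact; it is stated as Weyl's theorem and cited from \cite[Theorem 12.1.17]{hilgert2011structure}. Your argument is correct and is in fact the classical proof: compactness of the adjoint group via the negative definite Killing form, followed by Bonnet--Myers applied to the bi-invariant metric $-\kappa_{\mathfrak{g}}$ to force compactness of the universal cover (hence finiteness of $\pi_1$ and of $Z(\widetilde{G})$). This is essentially the route taken in the cited reference as well, so there is nothing to contrast.
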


The following fact is a consequence of Fact~\ref{fact: semisimple compact} and the result in~\cite{Sirota}. This can also be proven directly using~\cite[Proposition 13.1.10 (ii)]{hilgert2011structure}; we thank Jinpeng An for pointing this out to us. 

\begin{fact}\label{fact: centersimpleLie}
 If $G$ is a simply connected simple Lie group, then the center $Z(G)$ of $G$ has rank at most $1$.
\end{fact}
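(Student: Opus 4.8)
The plan is to reduce, via the Iwasawa decomposition, to a statement about the isotropy representation, and then finish with Schur's lemma.

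First I would dispose of the compact case: if $G$ is compact, then Fact~\ref{fact: semisimple compact} gives that $Z(G)$ is finite, so $\mathrm{rank}(Z(G))=0$. So assume $G$ is noncompact, with simple noncompact Lie algebra $\mathfrak g$. Fix a Cartan involution with Cartan decomposition $\mathfrak g=\mathfrak k\oplus\mathfrak p$ and the associated Iwasawa decomposition $G=KAN$ (Fact~\ref{fact: Lie group decomp Iwasawa}), so that $K$ is connected with Lie algebra $\mathfrak k$, $Z(G)\subseteq K$, and $G$ is diffeomorphic to $K\times A\times N$ with $A,N$ simply connected; hence $K$ is simply connected. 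Since $\mathfrak k$ is a compact Lie algebra, $\mathfrak k=\mathfrak z(\mathfrak k)\oplus[\mathfrak k,\mathfrak k]$ with $[\mathfrak k,\mathfrak k]$ compact semisimple, so $K\cong\RR^{r}\times K_{ss}$, where $r=\dim\mathfrak z(\mathfrak k)$ and $K_{ss}$ is a simply connected compact semisimple Lie group, whose center is finite by Fact~\ref{fact: semisimple compact}. Thus $Z(K)\cong\RR^{r}\times Z(K_{ss})$; since $Z(G)$ is a discrete subgroup of $Z(K)$ (Fact~\ref{fact: centerless1}), projecting to $\RR^{r}$ exhibits $Z(G)$ as an extension of a discrete subgroup of $\RR^{r}$ (free abelian of rank $\le r$) by a finite group, whence $\mathrm{rank}(Z(G))\le r=\dim\mathfrak z(\mathfrak k)$.

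It then remains to prove $\dim\mathfrak z(\mathfrak k)\le 1$. I would equip $\mathfrak p$ with the (positive definite) restriction of the Killing form as an inner product; then $\operatorname{ad}(X)|_{\mathfrak p}$ is skew-symmetric for every $X\in\mathfrak k$. As $\mathfrak g$ is semisimple it has trivial center, so an $X\in\mathfrak z(\mathfrak k)$ with $[X,\mathfrak p]=0$ also has $[X,\mathfrak k]=0$ (by centrality in $\mathfrak k$), hence $[X,\mathfrak g]=0$ and $X=0$; therefore $X\mapsto\operatorname{ad}(X)|_{\mathfrak p}$ embeds $\mathfrak z(\mathfrak k)$ as an abelian Lie subalgebra of skew-symmetric operators inside the commutant $\operatorname{End}_{\mathfrak k}(\mathfrak p)$. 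The key input is that, because $\mathfrak g$ is simple, the representation of $\mathfrak k$ on $\mathfrak p$ is irreducible (this is precisely~\cite[Proposition 13.1.10(ii)]{hilgert2011structure}; the usual proof shows that an orthogonal $\mathfrak k$-invariant splitting $\mathfrak p=V_1\oplus V_2$ would force $[V_1,V_2]=0$, thereby producing two commuting nonzero ideals of $\mathfrak g$). By the real form of Schur's lemma, $\operatorname{End}_{\mathfrak k}(\mathfrak p)$ is isomorphic to $\RR$, $\CC$, or $\mathbb H$, whose spaces of skew-symmetric elements have dimension $0$, $1$, $3$ respectively. In the first two cases $\dim\mathfrak z(\mathfrak k)\le 1$ immediately; in the $\mathbb H$ case the image of $\mathfrak z(\mathfrak k)$ is an abelian subalgebra of the imaginary quaternions under the commutator bracket, and two imaginary quaternions commute only when they are parallel, so again $\dim\mathfrak z(\mathfrak k)\le 1$.

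The main obstacle is the irreducibility of the isotropy representation of $\mathfrak k$ on $\mathfrak p$ for simple $\mathfrak g$; the rest is routine bookkeeping with the Iwasawa decomposition together with an elementary Schur-lemma computation. If one prefers to avoid invoking irreducibility, one can instead verify $\dim\mathfrak z(\mathfrak k)\le 1$ directly from the classification of simple real Lie algebras, or, as the surrounding text indicates, combine Fact~\ref{fact: semisimple compact} with the result of~\cite{Sirota}.
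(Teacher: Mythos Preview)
The paper does not supply its own proof of this Fact; it merely records it as a consequence of Fact~\ref{fact: semisimple compact} together with a result of \cite{Sirota}, and remarks that it can alternatively be proven directly using \cite[Proposition~13.1.10(ii)]{hilgert2011structure}. Your argument is precisely a fleshing-out of this second alternative: after reducing via the Iwasawa decomposition and the structure of the simply connected $K$ to the bound $\dim\mathfrak z(\mathfrak k)\le 1$, you invoke the irreducibility of the isotropy representation of $\mathfrak k$ on $\mathfrak p$ (which is exactly \cite[Proposition~13.1.10(ii)]{hilgert2011structure}) and finish with the real Schur lemma together with the observation that abelian subalgebras of the imaginary quaternions are one-dimensional. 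The argument is correct and follows the route the paper itself points to.
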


Suppose $\mathfrak{g}$ is a finite dimensional Lie algebra with Cartan--Killing form $\kappa_\mathfrak{g}$. A Lie algebra automorphism $\tau$ of $\mathfrak{g}$  is a {\it Cartan involution} if $\tau^2=\id_{\mathfrak{g}}$ and $(x, y) \mapsto -\kappa_{\mathfrak{g}}(x, \tau(y))$ is a positive definite bilinear form. The following fact is~\cite[Theorem 13.2.10]{hilgert2011structure}
\begin{fact} \label{fact: ExistenceCartaninvolution}
Let $\mathfrak{g}$ be a semisimple Lie algebra. Then $\mathfrak{g}$ has a Cartan involution $\tau$.
\end{fact}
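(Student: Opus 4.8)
The plan is the classical route through complexification and compact real forms. Write $\mathfrak{g}^{\CC}=\mathfrak{g}\otimes_{\RR}\CC$ for the complexification; since $\kappa_{\mathfrak{g}}$ is nondegenerate (Cartan's criterion, as $\mathfrak{g}$ is semisimple) and the Killing form of $\mathfrak{g}^{\CC}$ is the $\CC$-bilinear extension of $\kappa_{\mathfrak{g}}$, the complex Lie algebra $\mathfrak{g}^{\CC}$ is again semisimple. Let $\sigma$ be the conjugation of $\mathfrak{g}^{\CC}$ with respect to $\mathfrak{g}$, i.e.\ the unique conjugate-linear involutive Lie algebra automorphism with fixed point set $\mathfrak{g}$. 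It suffices to produce a \emph{compact real form} $\mathfrak{u}$ of $\mathfrak{g}^{\CC}$ --- a real form on which the Killing form is negative definite --- whose conjugation $\tau$ commutes with $\sigma$. Indeed, if $\sigma\tau=\tau\sigma$ then $\tau$ preserves $\mathfrak{g}=(\mathfrak{g}^{\CC})^{\sigma}$, and $\theta:=\tau|_{\mathfrak{g}}$ is an involutive automorphism of $\mathfrak{g}$ with $\pm1$-eigenspaces $\mathfrak{g}\cap\mathfrak{u}$ and $\mathfrak{g}\cap i\mathfrak{u}$; since $\kappa_{\mathfrak{g}}$ is the restriction of the complex Killing form, it is negative definite on the first, positive definite on the second, and the two are orthogonal, so $(x,y)\mapsto-\kappa_{\mathfrak{g}}(x,\theta y)$ is positive definite and $\theta$ is a Cartan involution.

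\emph{Constructing a compact real form.} Fix a Cartan subalgebra $\mathfrak{h}$ of $\mathfrak{g}^{\CC}$ with root space decomposition $\mathfrak{g}^{\CC}=\mathfrak{h}\oplus\bigoplus_{\alpha}\mathfrak{g}_{\alpha}$, and for each root $\alpha$ let $t_{\alpha}\in\mathfrak{h}$ represent $\alpha$ via the Killing form. By Chevalley's normalization one may choose root vectors $X_{\alpha}\in\mathfrak{g}_{\alpha}$ with $[X_{\alpha},X_{-\alpha}]=t_{\alpha}$, $\kappa^{\CC}(X_{\alpha},X_{-\alpha})=1$, and with all structure constants $N_{\alpha,\beta}$ in $[X_{\alpha},X_{\beta}]=N_{\alpha,\beta}X_{\alpha+\beta}$ real. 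Then
\[
\mathfrak{u}=\sum_{\alpha>0}\RR\,i t_{\alpha}\ \oplus\ \sum_{\alpha>0}\RR\,(X_{\alpha}-X_{-\alpha})\ \oplus\ \sum_{\alpha>0}\RR\,i(X_{\alpha}+X_{-\alpha})
\]
is a real subalgebra (closure under brackets is an explicit check using that the $N_{\alpha,\beta}$ and the values $\alpha(t_{\beta})=\kappa^{\CC}(t_{\alpha},t_{\beta})$ are real), it is a real form, and the Killing form restricts to a negative definite form on it --- negative definite on the $i t_{\alpha}$ because $\kappa^{\CC}$ is positive definite on the real span of the $t_{\alpha}$, and on the root directions because $\kappa^{\CC}(X_{\alpha}-X_{-\alpha},X_{\alpha}-X_{-\alpha})=\kappa^{\CC}(i(X_{\alpha}+X_{-\alpha}),i(X_{\alpha}+X_{-\alpha}))=-2$, with distinct blocks orthogonal. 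Let $\tau$ be the conjugation of this compact real form.

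\emph{Aligning $\mathfrak{u}$ with $\mathfrak{g}$.} Give $\mathfrak{g}^{\CC}$ the Hermitian inner product $\langle X,Y\rangle:=-\kappa^{\CC}(X,\tau Y)$, which is positive definite precisely because $\mathfrak{u}$ is a compact real form. Using $\kappa^{\CC}(\sigma A,\sigma B)=\overline{\kappa^{\CC}(A,B)}=\kappa^{\CC}(\tau A,\tau B)$, one checks that the $\CC$-linear automorphism $\lambda:=\sigma\tau$ is self-adjoint, so $N:=\lambda^{2}$ is positive definite; its real powers $N^{s}$ ($s\in\RR$) are again Lie algebra automorphisms of $\mathfrak{g}^{\CC}$, since on the eigenspaces of $N$ the bracket satisfies $[V_{c},V_{c'}]\subseteq V_{cc'}$, a relation preserved by $c\mapsto c^{s}$. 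From $\sigma\lambda\sigma=\lambda^{-1}$ one gets $\sigma N^{s}\sigma=N^{-s}$, and then a short computation shows that $N^{1/4}\tau N^{-1/4}$ commutes with $\sigma$. Replacing $\mathfrak{u}$ by the compact real form $N^{1/4}\mathfrak{u}$, whose conjugation is exactly $N^{1/4}\tau N^{-1/4}$, we may assume $\sigma\tau=\tau\sigma$, and the first paragraph then finishes the proof.

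\emph{Main obstacle.} The one genuinely substantial input is Chevalley's normalization --- that the root vectors can be chosen so that all the $N_{\alpha,\beta}$ are real (indeed integers, up to sign); this rests on the combinatorics of $\alpha$-strings of roots and is a theorem in its own right. Everything else is linear algebra and careful bookkeeping with the two conjugations, the only mildly technical point being the verification, sketched above, that the real powers of the positive operator $N$ remain automorphisms.
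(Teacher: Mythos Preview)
Your proof sketch is correct and is the classical argument (essentially Cartan's original approach, as presented for instance in Helgason or in Knapp, Theorem~6.11): build a compact real form of the complexification via a Chevalley basis, then use the polar-decomposition trick to conjugate it so that its conjugation commutes with the conjugation fixing $\mathfrak{g}$.

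There is nothing to compare against in the paper itself: the statement is recorded as a \emph{Fact} and simply cited from \cite[Theorem~13.2.10]{hilgert2011structure}, with no proof given. So your write-up is not an alternative to the paper's argument but a self-contained substitute for the external reference. The approach you take is in fact the same one that Hilgert--Neeb follow at that theorem, so in that sense your sketch matches the cited source. The one point you correctly flag as the ``main obstacle'' --- the Chevalley normalization making the structure constants real --- is indeed the nontrivial ingredient; everything after that (self-adjointness of $\lambda=\sigma\tau$, that $N^{s}$ remains an automorphism via the eigenspace bracket relation $[V_{c},V_{c'}]\subseteq V_{cc'}$, and the commutation identity for $N^{1/4}\tau N^{-1/4}$) is routine and your outline handles it adequately.
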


We refer the reader to~\cite[Section 6.4]{knapp2013lie} for the full definition of Iwasawa decomposition; we will need the following fact which is a consequence of~\cite[Theorem 6.31, Theorem 6.46]{knapp2013lie} and~\cite[Theorem 13.3.8]{hilgert2011structure}. 
\begin{fact}[Iwasawa decomposition] \label{fact: Lie group decomp Iwasawa} Suppose $G$ is a connected semisimple Lie group with Lie algebra  $\mathfrak{g}$, $\tau$ is a Cartan involution of $\mathfrak{g}$, $\mathfrak{k}$ the subalgebra of $\mathfrak{g}$ fixed by $\tau$,  and $\exp: \mathfrak{g} \to G$ is the exponential map. Then there is an Iwasawa decomposition $G=KAN$ such that the following holds:
\begin{enumerate}
    \item the multiplication map $$\Phi: K \times A \times N \rightarrow G: (k, a, n) \mapsto kan$$ is a diffeomorphism.
    \item $K= \exp(\mathfrak{k)}$ is a connected closed subgroup of $G$, $Z(G) \subseteq K$, and $K$ is a maximal compact subgroup of $G$ if $Z(G)$ is finite. 
    \item $A$ is an abelian closed subgroup of $G$, $N$ is a nilpotent closed subgroup of $G$, and both $A$ and $N$ are simply connected.
    \item  $Q=AN$, we have that $Q$ is a solvable closed subgroup of $G$, and $N\vartriangleleft Q$.
\end{enumerate}
\end{fact}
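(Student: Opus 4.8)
The plan is to prove this standard structure theorem the way the cited sources (Knapp; Hilgert--Neeb) do: first establish the Iwasawa splitting at the level of the Lie algebra $\mathfrak g$, then exponentiate and verify the group-theoretic refinements. Throughout I write $\langle X,Y\rangle:=-\kappa_{\mathfrak g}(X,\tau Y)$, which is positive definite because $\tau$ is a Cartan involution.

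\textbf{Lie algebra step.} Since $\tau^2=\id$, we get the Cartan decomposition $\mathfrak g=\mathfrak k\oplus\mathfrak p$ into the $(\pm1)$-eigenspaces, with $\mathrm{ad}(X)$ self-adjoint for $\langle\cdot,\cdot\rangle$ when $X\in\mathfrak p$ and skew-adjoint when $X\in\mathfrak k$. Choose a maximal abelian subspace $\mathfrak a\subseteq\mathfrak p$; then $\{\mathrm{ad}(H):H\in\mathfrak a\}$ is a commuting family of self-adjoint operators, giving the restricted root space decomposition $\mathfrak g=\mathfrak g_0\oplus\bigoplus_{\lambda\in\Sigma}\mathfrak g_\lambda$. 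Fixing a regular element of $\mathfrak a$ splits $\Sigma=\Sigma^+\sqcup(-\Sigma^+)$, and I set $\mathfrak n:=\bigoplus_{\lambda\in\Sigma^+}\mathfrak g_\lambda$, which is nilpotent since $[\mathfrak g_\lambda,\mathfrak g_\mu]\subseteq\mathfrak g_{\lambda+\mu}$. Using $\tau\mathfrak g_\lambda=\mathfrak g_{-\lambda}$, the fact that $X+\tau X\in\mathfrak k$ for $X\in\mathfrak g_\lambda$, and $\mathfrak g_0=\mathfrak a\oplus(\mathfrak g_0\cap\mathfrak k)$, a dimension count yields the vector-space direct sum $\mathfrak g=\mathfrak k\oplus\mathfrak a\oplus\mathfrak n$, in which $\mathfrak k$ is a subalgebra and $\mathfrak a\oplus\mathfrak n$ is a solvable subalgebra with $\mathfrak n$ an ideal of it.

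\textbf{Group step.} Set $K:=\exp(\mathfrak k)$, $A:=\exp(\mathfrak a)$, $N:=\exp(\mathfrak n)$, and $Q:=AN$. One checks that $\exp$ restricts to a diffeomorphism onto the closed simply connected nilpotent group $N$, that $A\cong\RR^{\dim\mathfrak a}$ is closed abelian simply connected, and that $Q$ is a closed solvable subgroup with $N\vartriangleleft Q$; this gives parts (3) and (4). That $K$ is a closed subgroup follows from its being the preimage under $\mathrm{Ad}$ of the maximal compact of the adjoint group. The multiplication map $\Phi\colon K\times A\times N\to G$ has differential at the identity equal to the addition map $\mathfrak k\oplus\mathfrak a\oplus\mathfrak n\xrightarrow{\ \sim\ }\mathfrak g$, so $\Phi$ is a local diffeomorphism at $(e,e,e)$ and, by translation, everywhere. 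To upgrade this to part (1), I would pass to the symmetric space $G/K$: one shows the orbit map identifies $A\times N$ with $G/K$ as a manifold, using that $AN$ acts simply transitively on the simply connected, nonpositively curved $G/K$; combining with the $K$-coordinate gives global bijectivity of $\Phi$, and with the local-diffeomorphism property this makes $\Phi$ a diffeomorphism.

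\textbf{The center, maximality, and the main obstacle.} By construction $K=\exp(\mathfrak k)$ is connected. The same argument also produces the Cartan decomposition $G=K\exp(\mathfrak p)$; if $z=k\exp(X)$ lies in $Z(G)$, conjugating by an arbitrary $g\in G$ and invoking uniqueness of the Cartan decomposition forces $\mathrm{Ad}(g)X=X$ for all $g$, so $X$ lies in the center of the semisimple algebra $\mathfrak g$, hence $X=0$ and $z=k\in K$; thus $Z(G)\subseteq K$. When $Z(G)$ is finite, $K$ is compact because $\mathrm{Ad}(K)$ is the maximal compact of $\mathrm{Ad}(G)$ and $K\to\mathrm{Ad}(K)$ has finite kernel $Z(G)\subseteq K$; and $K$ is maximal among compact subgroups since any compact subgroup of $G$ has a fixed point on the Cartan--Hadamard manifold $G/K$ and is therefore conjugate into $K$. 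The step I expect to be the main obstacle is the global bijectivity of $\Phi$: the Lie-algebra decomposition only gives the infinitesimal statement, and promoting it to a genuine diffeomorphism of manifolds genuinely requires the geometry of $G/K$ (completeness, simple connectivity, nonpositive curvature) — equivalently, in the linear picture, a $QR$-type factorization after choosing a faithful representation — rather than formal Lie-theoretic manipulation alone.
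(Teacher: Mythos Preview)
Your sketch is correct and follows precisely the standard route of Knapp \cite[Theorems 6.31, 6.46]{knapp2013lie} and Hilgert--Neeb \cite[Corollary 12.2.3]{hilgert2011structure}: restricted-root decomposition of $\mathfrak g$ via a maximal abelian $\mathfrak a\subseteq\mathfrak p$, exponentiation to $K$, $A$, $N$, and the upgrade of the local diffeomorphism $\Phi$ to a global one through the geometry of $G/K$. The paper, however, does not give its own proof of this statement at all --- it records it as a Fact and simply cites those references --- so there is nothing further to compare; your proposal is exactly a condensed version of the proofs the paper defers to.
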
 

The following fact is a consequence of the definition of Iwasawa decomposition in~\cite[Section 6.4]{knapp2013lie}.

\begin{fact} \label{fact: classification2}
If $G$ is a noncompact semisimple Lie group with Iwasawa decomposition $G=KAN$, then $AN$ has dimension at least $2$.
\end{fact}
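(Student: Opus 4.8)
The plan is to descend to the Lie algebra and read off the dimensions of $A$ and $N$ from the restricted root space decomposition that underlies the Iwasawa decomposition. Write $\mathfrak g$ for the Lie algebra of $G$, fix a Cartan involution $\tau$ with eigenspace decomposition $\mathfrak g=\mathfrak k\oplus\mathfrak p$ into its $(+1)$- and $(-1)$-eigenspaces, let $\mathfrak a\subseteq\mathfrak p$ be a maximal abelian subspace, and let $\mathfrak g=\mathfrak m\oplus\mathfrak a\oplus\bigoplus_{\lambda\in\Sigma}\mathfrak g_\lambda$ be the restricted root space decomposition, where $\mathfrak m$ is the centralizer of $\mathfrak a$ in $\mathfrak k$ and $\Sigma$ is the set of nonzero restricted roots; then $A=\exp(\mathfrak a)$, $\mathfrak n=\bigoplus_{\lambda\in\Sigma^+}\mathfrak g_\lambda$, and $N=\exp(\mathfrak n)$ in an Iwasawa decomposition $G=KAN$ as in Fact~\ref{fact: Lie group decomp Iwasawa}. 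Since $\Phi\colon K\times A\times N\to G$ is a diffeomorphism, $\dim(AN)=\dim A+\dim N$, so it suffices to prove $\dim\mathfrak a\ge 1$ and $\dim\mathfrak n\ge 1$.

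First I would show $\dim\mathfrak a\ge 1$. If $\mathfrak p=0$, then $\mathfrak g=\mathfrak k$ has negative definite Cartan--Killing form, i.e.\ $\mathfrak g$ is a compact Lie algebra; by Fact~\ref{fact: semisimple compact} this forces $G$ to be compact, contrary to hypothesis. Hence $\mathfrak p\ne 0$, and a maximal abelian subspace $\mathfrak a$ of $\mathfrak p$ is nonzero, giving $\dim A=\dim\mathfrak a\ge 1$.

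Next I would show $\dim\mathfrak n\ge 1$, equivalently $\Sigma\ne\emptyset$. Suppose $\Sigma=\emptyset$. Then $\mathfrak g=\mathfrak m\oplus\mathfrak a$, and $\mathfrak a$ commutes with $\mathfrak m$ (by definition of $\mathfrak m$ as the centralizer of $\mathfrak a$) and with itself, so $\mathfrak a$ lies in the center of $\mathfrak g$. But $\mathfrak g$ is semisimple, hence centerless, which forces $\mathfrak a=0$ and contradicts the previous step. So $\Sigma\ne\emptyset$; since $\tau$ interchanges $\mathfrak g_\lambda$ and $\mathfrak g_{-\lambda}$ there is a positive restricted root, whence $\mathfrak n\ne 0$ and $\dim N=\dim\mathfrak n\ge 1$. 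Combining the two bounds yields $\dim(AN)=\dim A+\dim N\ge 2$.

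The only point requiring care is the appeal to the restricted root space decomposition and to the fact that $\mathrm{ad}(H)$ for $H\in\mathfrak a\subseteq\mathfrak p$ is diagonalizable over $\RR$ (so the eigenspace decomposition exists and the zero eigenspace is exactly $\mathfrak m\oplus\mathfrak a$); this is standard structure theory (e.g.\ Knapp), and is precisely the content packaged into the definition of Iwasawa decomposition cited in Fact~\ref{fact: Lie group decomp Iwasawa}. Alternatively one can avoid naming $\Sigma$: noncompactness gives $\mathfrak a\ne 0$, semisimplicity (centerlessness) gives $[\mathfrak a,\mathfrak g]\ne 0$, so some $\mathrm{ad}(H)$ with $H\in\mathfrak a$ has a nonzero real eigenvalue, and its eigenspace contributes to $\mathfrak n$ after fixing a positivity. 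I expect no genuine obstacle here; the bound is sharp, with $\mathrm{SL}_2(\RR)$ (where $\dim A=\dim N=1$) the extremal example.
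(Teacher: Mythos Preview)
Your argument is correct and is precisely the standard one: the paper does not give a proof but only cites \cite[Section~6.4]{knapp2013lie}, and the restricted root space reasoning you wrote out (noncompactness $\Rightarrow \mathfrak p\neq 0\Rightarrow\mathfrak a\neq 0$; semisimplicity $\Rightarrow \mathfrak a$ not central $\Rightarrow \Sigma\neq\emptyset\Rightarrow\mathfrak n\neq 0$) is exactly what that reference yields. One minor remark: the hypothesis that $G$ is connected is implicit here since the paper's Iwasawa decomposition (Fact~\ref{fact: Lie group decomp Iwasawa}) is stated only for connected $G$, so your appeal to Fact~\ref{fact: semisimple compact} is justified.
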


\section*{Acknowledgements}
The authors would like to thank Ehud Hrushovski for introducing the authors to this problem, J\'ozsef Balogh, Anand Pillay, and Lei Zhang for helpful discussions, Takashi Satomi for pointing out several typos and errors, Richard Gardner, Vitali Milman, and Rolf Schneider for several historical remarks, and Guoxian Song for drawing the figure. Special thanks to Jinpeng An for suggesting many useful references, and for carefully reading the paper and pointing out an important error in the first version of the manuscript. Finally, we would like to thank the anonymous referee for a very detailed report with many useful suggestions.

\bibliographystyle{amsplain}
\bibliography{ref}

\providecommand{\bysame}{\leavevmode\hbox to3em{\hrulefill}\thinspace}
\providecommand{\MR}{\relax\ifhmode\unskip\space\fi MR }
\providecommand{\MRhref}[2]{%
  \href{http://www.ams.org/mathscinet-getitem?mr=#1}{#2}
}
\providecommand{\href}[2]{#2}
\begin{thebibliography}{10}

\bibitem{AJTZ}
Jinpeng An, Yifan Jing, Chieu-Minh Tran, and Ruixiang Zhang, \emph{On the small
  measure expansion phenomenon in connected noncompact nonabelian groups},
  arXiv:2111.05236 (2021).

\bibitem{BJO}
El\'{\i}as Baro, Eric Jaligot, and Margarita Otero, \emph{Commutators in groups
  definable in o-minimal structures}, Proc. Amer. Math. Soc. \textbf{140}
  (2012), no.~10, 3629--3643. \MR{2929031}

\bibitem{BourbakiLie}
Nicolas Bourbaki, \emph{Lie groups and {L}ie algebras. {C}hapters 1--3},
  Elements of Mathematics (Berlin), Springer-Verlag, Berlin, 1989, Translated
  from the French, Reprint of the 1975 edition. \MR{979493}

\bibitem{BourbakiTopology}
\bysame, \emph{General topology. {C}hapters 1--4}, Elements of Mathematics
  (Berlin), Springer-Verlag, Berlin, 1998, Translated from the French, Reprint
  of the 1989 English translation. \MR{1726779}

\bibitem{Bourbaki}
\bysame, \emph{Integration. {II}. {C}hapters 7--9}, Elements of Mathematics
  (Berlin), Springer-Verlag, Berlin, 2004, Translated from the 1963 and 1969
  French originals by Sterling K. Berberian. \MR{2098271}

\bibitem{Brunn}
Hermann Brunn, \emph{{\"U}ber {O}vale und {E}ifl\"achen}, Inaugural
  Dissertation, M\"unchen: Akademische Buchdruckerei von F. Straub. (1887).

\bibitem{christ2012near}
Michael Christ, \emph{Near equality in the {B}runn-{M}inkowski inequality},
  arXiv:1207.5062 (2012).

\bibitem{Fanlo}
Arturo~Rodriguez Fanlo, \emph{On piecewise hyperdefinable groups},
  arXiv:2011.11669v2 (2021).

\bibitem{FanloHru}
Arturo~Rodriguez Fanlo and Ehud Hrushovski, \emph{On metric approximate
  subgroups}, in preparation (2021).

\bibitem{FigaliJerisonearlier}
Alessio Figalli and David Jerison, \emph{Quantitative stability for sumsets in
  {$\Bbb{R}^n$}}, J. Eur. Math. Soc. (JEMS) \textbf{17} (2015), no.~5,
  1079--1106. \MR{3346689}

\bibitem{Figalli}
\bysame, \emph{Quantitative stability for the {B}runn-{M}inkowski inequality},
  Adv. Math. \textbf{314} (2017), 1--47. \MR{3658711}

\bibitem{Folland}
Gerald~B. Folland, \emph{A course in abstract harmonic analysis}, Studies in
  Advanced Mathematics, CRC Press, Boca Raton, FL, 1995. \MR{1397028}

\bibitem{Gardner}
Richard~J. Gardner, \emph{The {B}runn-{M}inkowski inequality}, Bull. Amer.
  Math. Soc. (N.S.) \textbf{39} (2002), no.~3, 355--405. \MR{1898210}

\bibitem{Gleason}
Andrew~M. Gleason, \emph{Groups without small subgroups}, Ann. of Math. (2)
  \textbf{56} (1952), 193--212. \MR{49203}

\bibitem{BenBook}
Ben Green, \emph{Some open problems}, unpublished.

\bibitem{Gromov}
Mikhael Gromov, \emph{Isoperimetry of waists and concentration of maps}, Geom.
  Funct. Anal. \textbf{13} (2003), no.~1, 178--215. \MR{1978494}

\bibitem{HenstockMacbeath}
Ralph Henstock and Murray Macbeath, \emph{On the measure of sum-sets. {I}.
  {T}he theorems of {B}runn, {M}inkowski, and {L}usternik}, Proc. London Math.
  Soc. (3) \textbf{3} (1953), 182--194. \MR{56669}

\bibitem{hilgert2011structure}
Joachim Hilgert and Karl-Hermann Neeb, \emph{Structure and geometry of {L}ie
  groups}, Springer Monographs in Mathematics, Springer, New York, 2012.
  \MR{3025417}

\bibitem{HrushovskiPC}
Ehud Hrushovski, \emph{personal communication}.

\bibitem{HrushovskiLieModel}
\bysame, \emph{Stable group theory and approximate subgroups}, J. Amer. Math.
  Soc. \textbf{25} (2012), no.~1, 189--243. \MR{2833482}

\bibitem{JT20}
Yifan Jing and Chieu-Minh Tran, \emph{Measure growth in compact semisimple
  {L}ie groups and the {K}emperman inverse problem}, arXiv:2303.15628 (2020).

\bibitem{Kemperman}
Johannes Kemperman, \emph{On products of sets in a locally compact group},
  Fund. Math. \textbf{56} (1964), 51--68. \MR{202913}

\bibitem{Knapp}
Anthony~W. Knapp, \emph{Representation theory of semisimple groups}, Princeton
  Mathematical Series, vol.~36, Princeton University Press, Princeton, NJ,
  1986, An overview based on examples. \MR{855239}

\bibitem{knapp2013lie}
\bysame, \emph{Lie groups beyond an introduction}, second ed., Progress in
  Mathematics, vol. 140, Birkh\"{a}user Boston, Inc., Boston, MA, 2002.
  \MR{1920389}

\bibitem{kneser}
Martin Kneser, \emph{Summenmengen in lokalkompakten abelschen {G}ruppen}, Math.
  Z. \textbf{66} (1956), 88--110. \MR{81438}

\bibitem{LeoMasnou}
Gian~Paolo Leonardi and Simon Masnou, \emph{On the isoperimetric problem in the
  {H}eisenberg group {${\Bbb H}^n$}}, Ann. Mat. Pura Appl. (4) \textbf{184}
  (2005), no.~4, 533--553. \MR{2177813}

\bibitem{Lyusternik}
Lazar'~A. Lyusternik, \emph{Die {B}runn–{M}inkowskische ungleichnung f\"ur
  beliebige messbare mengen}, Comptes Rendus de l'Acad\'emie des Sciences de
  l'URSS. Nouvelle S\'erie. III (1935), 55--58.

\bibitem{Macbeath60}
Murray Macbeath, \emph{On the measure of product sets in a topological group},
  J. London Math. Soc. \textbf{35} (1960), 403--407. \MR{126501}

\bibitem{Mccrudden69}
Michael McCrudden, \emph{On the {B}runn-{M}inkowski coefficient of a locally
  compact unimodular group}, Proc. Cambridge Philos. Soc. \textbf{65} (1969),
  33--45. \MR{233921}

\bibitem{MccruddenExample}
\bysame, \emph{On critical pairs of product sets in a certain matrix group},
  Proc. Cambridge Philos. Soc. \textbf{67} (1970), 569--581. \MR{269811}

\bibitem{GeometryOfNumbers}
Hermann Minkowski, \emph{Geometrie der {Z}ahlen}, Leipzig: Teubner. (1896).

\bibitem{Monti}
Roberto Monti, \emph{Brunn-{M}inkowski and isoperimetric inequality in the
  {H}eisenberg group}, Ann. Acad. Sci. Fenn. Math. \textbf{28} (2003), no.~1,
  99--109. \MR{1976833}

\bibitem{Rudin}
Walter Rudin, \emph{Real and complex analysis}, third ed., McGraw-Hill Book
  Co., New York, 1987. \MR{924157}

\bibitem{Sirota}
A.~I. Sirota, \emph{Centers of non-compact simple {L}ie groups}, Soviet Math.
  Dokl. \textbf{1} (1960), 1021--1024. \MR{0133401}

\bibitem{TaoBlog}
Terence Tao, \emph{The {B}runn-{M}inkowski inequality for nilpotent groups},
  available at \texttt{https://terrytao.wordpress.com/2011/09/16/} (2011).

\bibitem{Encylopedia}
\`E.~B. Vinberg (ed.), \emph{Lie groups and {L}ie algebras, {III}},
  Encyclopaedia of Mathematical Sciences, vol.~41, Springer-Verlag, Berlin,
  1994. \MR{1349140}

\bibitem{Yamabe}
Hidehiko Yamabe, \emph{A generalization of a theorem of {G}leason}, Ann. of
  Math. (2) \textbf{58} (1953), 351--365. \MR{58607}

\end{thebibliography}

\end{document}